\newcommand{\FF}{{\mathbb{F}}}
\newcommand{\QQ}{{\mathbb{Q}}}
\newcommand{\ba}{{\mathbf{a}}}
\newcommand{\bb}{{\mathbf{b}}}
\newcommand{\bC}{{\mathbf{C}}}
\newcommand{\bG}{{\mathbf{G}}}
\newcommand{\bH}{{\mathbf{H}}}
\newcommand{\bL}{{\mathbf{L}}}
\newcommand{\bM}{{\mathbf{M}}}
\newcommand{\bP}{{\mathbf{P}}}
\newcommand{\bT}{{\mathbf{T}}}
\newcommand{\bX}{{\mathbf{X}}}
\newcommand{\cE}{{\mathcal{E}}}
\newcommand{\ad}{{\operatorname{ad}}}
\newcommand{\df}{{\operatorname{def}}}
\newcommand{\Ind}{{\operatorname{Ind}}}
\newcommand{\Irr}{{\operatorname{Irr}}}
\newcommand{\reg}{{\operatorname{reg}}}
\newcommand{\SC}{{\operatorname{sc}}}
\newcommand{\Syl}{{\operatorname{Syl}}}
\newcommand{\Chevie}{{\sf{Chevie}}}
\newcommand\RLG{{R_\bL^\bG}}
\newcommand\RMG{{R_\bM^\bG}}
\newcommand\RTG{{R_\bT^\bG}}
\newcommand\sR{{{}^*\!R}}
\newcommand\sRMG{{\sR_\bM^\bG}}
\newcommand\sRTG{{\sR_\bT^\bG}}
\newcommand\bGa{\bG_{\mathbf{a}}}
\newcommand\bGb{\bG_{\mathbf{b}}}
\newcommand{\tbG}{{\tilde\bG}}
\newcommand{\tbL}{{\tilde\bL}}
\newcommand{\bbG}{{\overline\bG}}
\newcommand{\bbL}{{\overline\bL}}
\newcommand{\bJ}{{\bar J}}
\newcommand\blangle{{\big\langle}}
\newcommand\brangle{{\big\rangle}}
\newcommand{\Ph}[1]{\Phi_{#1}}
\newcommand{\tw}[1]{{}^{#1}\!}
\newcommand{\wst}{{\widehat{st}}}
\newcommand{\tchi}{{\tilde\chi}}
\let\al=\alpha
\let\be=\beta
\let\eps=\epsilon
\let\vhi=\varphi
\let\la=\lambda
\let\ti=\times
\newtheorem{thm}{Theorem}[section]
\newtheorem{lem}[thm]{Lemma}
\newtheorem{cor}[thm]{Corollary}
\newtheorem{prop}[thm]{Proposition}
\newtheorem{thmA}{Theorem}
\newtheorem{corA}[thmA]{Corollary}
\theoremstyle{definition}
\newtheorem{exmp}[thm]{Example}
\newtheorem{defn}[thm]{Definition}
\theoremstyle{remark}
\newtheorem{rem}[thm]{Remark}
\begin{document}

\title[Jordan correspondence and block distribution]{Jordan correspondence and\\ block distribution of characters}


\date{\today}

\author{Radha Kessar}
\address{Department of Mathematics, University of Manchester,  Alan Turing Building, Oxford Road, Manchester  M139PL,
         United Kingdom}
\email{radha.kessar@manchester.ac.uk}
\author{Gunter Malle}
\address{FB Mathematik, TU Kaiserslautern, Postfach 3049,
         67653 Kaisers\-lautern, Germany.}
\email{malle@mathematik.uni-kl.de}

\thanks{The work of the first author was partially supported by EPSRC grant
 EP/T004592/1. The second author gratefully acknowledges financial support by
 the DFG --Project-ID 286237555--TRR 195.
 The authors would like to thank the Isaac Newton Institute for Mathematical
 Sciences for support and hospitality during the programme \emph{Groups,
 Representations and Applications} when part of the work on this paper was
 undertaken. This work was supported by: EPSRC grant number EP/R014604/1}

\keywords{Brauer $\ell$-blocks, Lusztig's Jordan decomposition, Robinson's conjecture}

\subjclass[2010]{20C15, 20C20, 20C33}

\dedicatory{Dedicated to Michel Enguehard}

\begin{abstract}
We complete the determination of the $\ell$-block distribution of characters
for quasi-simple exceptional groups of Lie type up to some minor ambiguities
relating to non-uniqueness of Jordan decomposition. For this, we first
determine the $\ell$-block distribution for finite reductive groups whose
ambient algebraic group defined in characteristic different from $\ell$ has
connected centre. As a consequence we derive a compatibility between
$\ell$-blocks, $e$-Harish-Chandra series and Jordan decomposition.
Further we apply our results to complete the proof of Robinson's conjecture on
defects of characters.
\end{abstract}

\maketitle


\section{Introduction}   \label{sec:intro}

A fundamental ingredient in the understanding of the modular representation
theory of a finite (simple) group is the distribution of its irreducible
complex characters into Brauer $\ell$-blocks for the primes $\ell$ dividing
the group order. For example, such information has recently been used in a
crucial way in the proof of several deep conjectures, like the Alperin--McKay
conjecture for the prime $\ell=2$, or Brauer's height zero conjecture.

In this paper we contribute to the determination of these $\ell$-blocks for
quasi-simple groups, in particular to the case of finite exceptional groups of
Lie type for bad primes~$\ell$. A parametrisation of these blocks (for simply
connected types) in terms of $e$-cuspidal pairs had been completed in our
previous paper \cite{KM}; here we describe the subdivision of the corresponding
Lusztig series along those blocks, first for groups arising from algebraic
groups with connected centre (Theorem~\ref{thm:thmA}), and then using Clifford
theory for the quasi-simple groups themselves (Proposition~\ref{prop:sc}) 

\par
For unipotent blocks this subdivision had already previously been obtained by
Brou\'e--Malle--Michel \cite{BMM} for large primes $\ell$, by Cabanes--Enguehard
\cite{CE94} for good primes and by Enguehard \cite{En00} in general (up to
some indeterminacies), while for arbitrary blocks at good primes $\ell$ this
question had been studied by Cabanes--Enguehard \cite{CE99}, and Enguehard
\cite{En08,En13}, albeit in a different formulation. Here, we settle
the remaining cases, thereby arriving at the following result:

\begin{thmA}   \label{thm:thmA}
 Let $\bX$ be a connected reductive group in characteristic~$p$ with connected
 centre and simple, simply connected derived subgroup, with a Frobenius map
 $F:\bX\rightarrow\bX$. Let $\bG$ be an $F$-stable Levi subgroup of $\bX$, let
 $\ell\ne p$ be a prime and $s$ be a semisimple $\ell'$-element in the dual
 group~$\bG^{*F}$. Then for every $\ell$-element
 $t\in C_{\bG^*}(s)^F$ there exists a map $\bJ_t^\bG$ from the set of unipotent
 $\ell$-blocks of $C_{\bG^*}(st)^F$ to the set of $t$-twin $\ell$-blocks in
 $\cE_\ell(\bG^F,s)$ such that if $\eta\in\cE(C_{\bG^*}(st)^F,1)\cap\Irr(b)$
 for a unipotent $\ell$-block $b$ of $C_{\bG^*}(st)^F$, then its Jordan
 correspondent in $\cE(\bG^F,st)$ belongs to $\bJ_t^\bG(b)$.
\end{thmA}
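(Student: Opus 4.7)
The plan is to reduce Theorem~A to the statement for unipotent $\ell$-blocks by means of the Bonnaf\'e--Rouquier Morita equivalence, and then invoke the unipotent block distribution results of Brou\'e--Malle--Michel, Cabanes--Enguehard and Enguehard, supplemented at the remaining bad primes by the $e$-cuspidal-pair classification from \cite{KM}.

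Since $\bX$ has connected centre and simply connected derived subgroup, so does its Levi subgroup $\bG$; dually, $\bG^*$ has simply connected derived subgroup, so $C_{\bG^*}(s)$ is connected, and being the centraliser of a semisimple element it is in fact a Levi subgroup of $\bG^*$. Choose an $F$-stable Levi subgroup $\bL$ of $\bG$ in duality with $\bL^*:=C_{\bG^*}(s)$. The Bonnaf\'e--Rouquier theorem then provides a Morita equivalence via $\RLG$ between the sum of $\ell$-blocks in $\cE_\ell(\bL^F,s)$ and the sum of $\ell$-blocks in $\cE_\ell(\bG^F,s)$, inducing a block bijection that carries Jordan correspondents to Jordan correspondents. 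Furthermore, since $s\in Z(\bL^*)^F$ and $\bL$ has connected centre, there is an $\ell'$-linear character $\hat s\in\Irr(\bL^F)$ with $\cE(\bL^F,s)=\hat s\cd\cE(\bL^F,1)$, and tensoring with $\hat s$ identifies the unipotent $\ell$-blocks of $\bL^F$ with the $\ell$-blocks in $\cE_\ell(\bL^F,s)$.

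Next, since $s$ is central in $\bL^*$ and $s,t$ are the commuting $\ell'$- and $\ell$-parts of $st$, one has $C_{\bG^*}(st)=C_{\bL^*}(t)$ (both connected). Identifying unipotent blocks of $C_{\bG^*}(st)^F$ with those of $C_{\bL^*}(t)^F$, the problem reduces to the case $s=1$: for each $\ell$-element $t\in\bL^{*F}$, construct a map $\bJ_t^\bL$ from unipotent $\ell$-blocks of $C_{\bL^*}(t)^F$ to unipotent $\ell$-blocks of $\bL^F$ that matches Jordan correspondents on $\cE(C_{\bL^*}(t)^F,1)$. Then $\bJ_t^\bG$ is defined as the composite of $\bJ_t^\bL$ with the two bijections above. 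For this reduced statement, one uses the $e$-cuspidal pair parametrisation from \cite{KM}: a unipotent block $b$ of $C_{\bL^*}(t)^F$ corresponds to an $e$-cuspidal pair $(\bM_0,\la_0)$ in $C_{\bL^*}(t)$; applying Jordan decomposition inside an appropriate Levi of $\bL$ produces an $e$-cuspidal pair $(\bM,\la)$ in $\bL$, and $\bJ_t^\bL(b)$ is the $\ell$-block of $\bL^F$ containing~$\la$.

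The main obstacle is the unipotent case of the reduced problem at bad primes for exceptional types, where the indeterminacies left open by Enguehard \cite{En00,En08,En13} must be pinned down. The argument will combine explicit Lusztig-induction computations (in part via \Chevie), Clifford theory, and the compatibility of $e$-Harish-Chandra series with Lusztig induction proved in earlier sections, with the caveat that the resulting $\bJ_t^\bG$ is only canonical up to the ``$t$-twin'' ambiguity, which precisely reflects the well-known non-uniqueness of Jordan decomposition.
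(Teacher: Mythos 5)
There is a genuine gap, and it sits at the very first step of your reduction. You assert that, because $\bG^*$ has connected centre (so that $C_{\bG^*}(s)$ is connected), the centraliser $C_{\bG^*}(s)$ ``is in fact a Levi subgroup of $\bG^*$''. This is false. Connectedness of the centraliser of a semisimple element does not make it a Levi subgroup: it is only a connected reductive subgroup of maximal rank (a pseudo-Levi, obtained via Borel--de Siebenthal). The element $s$ is called \emph{isolated} precisely when $C_{\bG^*}(s)$ is contained in no proper Levi subgroup of $\bG^*$, and non-central isolated elements exist in abundance in the groups at issue --- e.g.\ $s$ an involution in $E_8$ with $C_{\bG^*}(s)$ of type $E_7A_1$ or $D_8$, or an element of order $6$ with centraliser of type $A_5A_2A_1$. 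For such $s$ there is no Levi $\bL$ dual to $C_{\bG^*}(s)$, so the Bonnaf\'e--Rouquier Morita equivalence you invoke is not available, and the subsequent identification $\cE(\bL^F,s)=\hat s\cdot\cE(\bL^F,1)$ (which needs $s$ central in $\bL^*$) has nothing to apply to. Your reduction is valid exactly for non-isolated $s$ --- this is Proposition 4.8 of the paper --- but the entire substance of the theorem lies in the isolated, non-central case, which your argument does not touch.

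Consequently the ``main obstacle'' is misidentified: it is not the unipotent case at bad primes (that was essentially settled by Enguehard, up to the twin ambiguities), but the isolated non-unipotent series, which cannot be transported to a unipotent series of any subgroup of $\bG$ by a Morita equivalence. The paper handles these by a separate induction: first the case $t$ central (using the explicit $e$-Harish-Chandra theory for isolated series established in \cite{KM} and Section~6, together with the compatibility of Jordan decomposition with $R_\bL^\bG$ up to twins), then good primes via Cabanes--Enguehard, and finally a case-by-case analysis for each exceptional type and bad prime, using decomposition maps $d^{1,\bG}$, Deligne--Lusztig characters $\RTG(\hat s)$ and \Chevie{} computations to show that the relevant characters of $\cE(\bG^F,st)$ land in the predicted block. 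None of this is replaceable by the reduction you propose. (A minor further point: the twin ambiguity does not stem from non-uniqueness of Jordan decomposition per se, but from the indistinguishability, under Lusztig induction, of Galois-conjugate cuspidal unipotent characters such as $E_6[\theta]$, $E_6[\theta^2]$ in constituents over $E_8$.)
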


This shows that \cite[Thm(iii)]{CE94} and \cite[Thm~B]{En00} continue to hold
for bad primes $\ell$ and non-unipotent blocks, up to the small ambiguities
around twin blocks, only occurring in type $E_8$.
Let us point out that those also arise in unipotent blocks for which it seems
they have not been resolved in \cite{En00}, either. Our map $\bJ_t^\bG$ is
similar to the one introduced in \cite{En00}; it will be explained in
Proposition~\ref{prop:J_t} and Section~\ref{subsec:t-twins}. The meaning of
``$t$-twin block'' will be
defined in Section~\ref{subsec:t-twins}. In most cases, it is just a single
$\ell$-block, but in a few cases in $\bG^F=E_8(q)$ with $q\equiv-1\bmod\ell$ it
consists of a union of two blocks, see Table~\ref{tab:twins}.

As a consequence we obtain the following compatibility between $\ell$-blocks,
$e$-Harish-Chandra series and Jordan decomposition; here $e$ is the order of
$q$ modulo~$\ell$, respectively modulo~4 when $\ell=2$:

\begin{corA}   \label{cor:cor1}
 Let $\bG$ and $\ell$ be as in Theorem~$\ref{thm:thmA}$. If
 $\chi,\chi'\in\Irr(\bG^F)$ lie in the same Lusztig series say $\cE(\bG^F,r)$,
 for some semisimple $r\in \bG^{*F}$, and the Jordan correspondents of $\chi$
 and $\chi'$ in $\cE(C_{\bG^*}(r)^F,1)$ lie in the same unipotent
 $e$-Harish-Chandra series, then $\chi$ and $\chi'$ lie in the same
 $r_\ell$-twin $\ell$-block of $\bG^F$.
 \end{corA}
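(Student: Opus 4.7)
The plan is to reduce the statement to Theorem~\ref{thm:thmA} via Jordan decomposition of the semisimple label~$r$. Write $r=st$ where $s=r_{\ell'}$ is the $\ell'$-part and $t=r_\ell$ is the $\ell$-part of $r$, so that $t\in C_{\bG^*}(s)^F$ is an $\ell$-element and $\cE(\bG^F,r)=\cE(\bG^F,st)\subseteq\cE_\ell(\bG^F,s)$ by the theorem of Brou\'e--Michel. Since the derived subgroup of~$\bX$ is simply connected, the centre of~$\bG^*$ is connected, hence $C_{\bG^*}(st)$ is a connected reductive $F$-stable subgroup. Jordan decomposition applied in $C_{\bG^*}(st)^F$ then produces unipotent correspondents $\eta,\eta'\in\cE(C_{\bG^*}(st)^F,1)$ of $\chi$ and~$\chi'$.

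The crucial step is to deduce from the hypothesis that $\eta$ and $\eta'$ lie in a common unipotent $\ell$-block $b$ of $C_{\bG^*}(st)^F$. Since $\bG$ and $C_{\bG^*}(st)$ arise from the same Frobenius endomorphism, the integer~$e$ is the same for both groups, so the notion of $e$-Harish-Chandra series in $C_{\bG^*}(st)^F$ that appears in the hypothesis is the one governed by the same~$e$. The parametrisation of unipotent $\ell$-blocks of finite reductive groups with connected centre by (conjugacy classes of) $e$-cuspidal pairs, established in our previous paper \cite{KM} uniformly for all primes~$\ell$, implies in particular that each unipotent $e$-Harish-Chandra series of $C_{\bG^*}(st)^F$ is contained in a single unipotent $\ell$-block. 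The hypothesis on $\eta$ and $\eta'$ therefore yields the desired common block~$b$.

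Finally, I would invoke Theorem~\ref{thm:thmA} for the $\ell'$-element~$s$, the $\ell$-element~$t$ and the unipotent block~$b$ of $C_{\bG^*}(st)^F$: it produces a $t$-twin $\ell$-block $\bJ_t^\bG(b)\subseteq\cE_\ell(\bG^F,s)$ such that the Jordan correspondent of every character in $\cE(C_{\bG^*}(st)^F,1)\cap\Irr(b)$ lies in $\bJ_t^\bG(b)$. Applied to $\eta$ and $\eta'$ this places $\chi$ and $\chi'$ together in $\bJ_t^\bG(b)$, which since $t=r_\ell$ is an $r_\ell$-twin $\ell$-block, as claimed.

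I expect the main obstacle to be the verification used in the middle step, namely that at bad primes~$\ell$ no unipotent $e$-Harish-Chandra series can spread across several unipotent $\ell$-blocks of $C_{\bG^*}(st)^F$; this is precisely the hard content of the block parametrisation from \cite{KM}. By contrast, the outer reduction via the $\ell$-adic decomposition $r=st$ and the application of Theorem~\ref{thm:thmA} are formal once that parametrisation is in place.
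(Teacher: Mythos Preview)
Your argument follows exactly the paper's route: decompose $r=st$ into its $\ell'$- and $\ell$-parts, observe that the Jordan correspondents $\eta,\eta'$ lie in the same unipotent $\ell$-block $b$ of $C_{\bG^*}(st)^F$, and then apply Theorem~\ref{thm:thmA}. The paper's proof is two sentences and does precisely this.

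The only correction concerns the middle step. The fact that a unipotent $e$-Harish-Chandra series of $C_{\bG^*}(st)^F$ is contained in a single $\ell$-block is \emph{not} what \cite{KM} proves: that paper treats \emph{quasi-isolated} (in particular non-unipotent) blocks of simply connected exceptional groups. The relevant reference for unipotent blocks at arbitrary (in particular bad) primes is Enguehard \cite[Thm~A and~A.bis]{En00}, which is what the paper cites. Also, your parenthetical that $C_{\bG^*}(st)$ has connected centre is unjustified and in general false; fortunately \cite{En00} does not require it, so this does not affect the argument.
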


On the way we also complete, correct and extend our results in \cite{KM}:
we deal with the isolated 5-blocks of $E_8(q)$ in Lusztig series indexed by
isolated 6-elements that had been omitted there, showing that all results
from \cite{KM} carry over (Theorem~\ref{thm:E8,l=5}), and we correct
information on
$3$-blocks of $E_7(q)$ and $E_8(q)$ that arose from a misinterpretation of
results in \cite{En00} (Proposition~\ref{prop:corr non-central}). In addition,
in Proposition~\ref{prop:E6E7-defgrp} we parametrise the isolated blocks in
groups of adjoint types $E_6$ and~$E_7$, and in Lemma~\ref{lem:dec RLG} we
settle the last open instances from \cite{BMM} of the decomposition of Lusztig
induction of unipotent characters.
\smallskip

\emph{Robinson's conjecture} \cite{Ro96} asserts that for any $\ell$-block $B$
of a finite group $G$ with defect group $D$ we have
$$\ell^{\df(\chi)}\ge |Z(D)|\qquad\text{for all $\chi\in\Irr(B)$}$$
with equality only when $D$ is abelian, where
$\df(\chi):=\log_\ell(|G|_\ell/\chi(1)_\ell)$ is the \emph{defect of $\chi$}
and $Z(D)$ denotes the centre of the defect group $D$. This can be considered
as a block-wise analogue of the well-known fact that $\chi(1)$ divides
$|G:Z(G)|$ for any irreducible character $\chi\in\Irr(G)$.
We combine our results on block distribution with information on defect groups
from \cite{Ruh22} to verify Robinson's conjecture for isolated 2-blocks of
exceptional type groups and thus complete the proof of this conjecture:

\begin{thmA}   \label{thm:Robinson}
 Robinson's conjecture holds for all blocks of all finite groups.
\end{thmA}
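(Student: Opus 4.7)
The plan is to combine the known reductions of Robinson's conjecture to quasi-simple groups with the block distribution result Theorem~\ref{thm:thmA} and defect group data from \cite{Ruh22}. Robinson's inequality is well-behaved under normal subgroups of index prime to $\ell$, central factor groups, and covering groups, so by the classification of finite simple groups the conjecture reduces to verification for all $\ell$-blocks of quasi-simple groups. For the sporadic and alternating groups, for Lie-type groups in their defining characteristic, and for non-isolated blocks (or good-prime blocks) of Lie-type groups in non-defining characteristic, Robinson's conjecture was previously known; isolated blocks in exceptional types at odd bad primes were settled in our earlier work \cite{KM}. This leaves only the isolated $2$-blocks of the simply connected exceptional groups $F_4(q)$, ${}^2\!E_6(q)$, $E_7(q)$ and $E_8(q)$.

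For each such block $B$, contained say in $\cE_\ell(\bG^F,s)$ with $s\in\bG^{*F}$ isolated of $2'$-order, Theorem~\ref{thm:thmA} identifies the characters of $B$ (up to the $t$-twin ambiguity) via Jordan decomposition with the unipotent characters in a unipotent $\ell$-block of a centraliser $C_{\bG^*}(st)^F$, whose degrees are available from \Chevie. Multiplying by the prime-to-$p$ part of $|\bG^F:C_{\bG^*}(s)^F|$ gives the degrees and hence the defects $\df(\chi)$ of all $\chi\in\Irr(B)$. On the other hand, the defect groups $D$ of these isolated $2$-blocks and the orders of their centres $Z(D)$ have been determined in \cite{Ruh22}. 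Both sides of $\ell^{\df(\chi)}\ge|Z(D)|$ are thus known explicitly, and the inequality can be checked block by block.

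The main obstacle is the case-by-case nature of this verification in $E_8(q)$, where some defect groups are genuinely non-abelian and where the $t$-twin phenomenon of Theorem~\ref{thm:thmA} means that two distinct blocks cannot always be separated by our methods; the inequality must therefore be checked for every possible distribution of characters consistent with a twin pair. A further delicate point is the equality clause: for non-abelian $D$ one needs strict inequality for every $\chi\in\Irr(B)$, while for abelian $D$ one must exhibit some $\chi\in\Irr(B)$ realising equality, which amounts to locating a height-zero character via the corresponding unipotent block of the centraliser. Once these finitely many configurations have been treated, Robinson's conjecture follows in full generality.
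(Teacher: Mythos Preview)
Your overall plan---reduce to isolated $2$-blocks of exceptional groups, then compare character defects against $|Z(D)|$ using the block distribution and defect-group data---matches the paper's strategy, but several points are inaccurate or incomplete.

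First, the reduction history is garbled. The odd-prime case was settled in \cite{FLLMZ}, not \cite{KM} (which concerns Brauer's height-zero conjecture). What \cite{FLLMZ,FLLMZb} leave open are the quasi-isolated $2$-blocks of exceptional quasi-simple groups in \emph{odd} characteristic, and your list of surviving types omits $G_2(q)$, $\tw3D_4(q)$ and $E_6(q)$; moreover one must treat the central quotients $S$ as well as $G$ itself (relevant for $E_6$ and $E_7$).

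Second, you misread the conjecture. Robinson's statement is $\ell^{\df(\chi)}\ge|Z(D)|$ for all $\chi$, with equality \emph{only} when $D$ is abelian. There is no requirement to exhibit a height-zero character when $D$ is abelian; in fact the paper disposes of all abelian-defect blocks at once via \cite{KM}. Your ``further delicate point'' about realising equality is therefore a phantom obstacle.

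Third, and most seriously, the proposal is a plan rather than a proof: ``can be checked block by block'' is not a verification. The paper's actual argument works because of structural shortcuts you do not mention. For the principal (and most non-principal) unipotent blocks one computes $Z(P)$ for $P\in\Syl_2(G)$ directly and invokes \cite[Lemma~3.1]{FLLMZ}, which says the conjecture holds whenever $|Z(D)|=2$; this already kills $G_2$, $\tw3D_4$, $F_4$, $E_8$ and $\overline{E_7}$. For isolated non-unipotent blocks with classical-type centraliser, \cite[Prop.~E]{Ruh22} shows the defect group is a Sylow $2$-subgroup of the centraliser, and Jordan decomposition gives a defect-preserving bijection to $\cE_2(C,1)$, so a counter-example would descend to the principal block of a strictly smaller group---contradicting minimality. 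Only a handful of blocks (the $E_6A_2$ centralisers in $E_8$, and the $E_7$ cases) then require the explicit degree inspection you sketch. The $t$-twin ambiguity, which you flag as a main obstacle, plays no role in these remaining verifications.
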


The paper is built up as follows. In Section~\ref{sec:general} we recall and
collect some background results in particular on Jordan decomposition of
characters. In Section~\ref{sec:extensions} we extend some of our earlier
results to the present setting, based upon which, in Section~\ref{sec:blocks}
we prove our main results Theorem~\ref{thm:thmA} and Corollary~\ref{cor:cor1}.
In Section~\ref{sec:sc type} we apply Clifford
theory to describe the blocks of quasi-simple groups of types $E_6$ and $E_7$.
In Section~\ref{sec:exc type} we correct and extend results in
\cite{KM}; in particular we parametrise the isolated $\ell$-blocks of simple
groups of adjoint exceptional type and prove the theorems stated in
Section~\ref{sec:extensions}. Finally, the proof of Robinson's conjecture is
given in Section~\ref{sec:Rob}.
\medskip

\noindent
{\bf Acknowledgements:} Our general approach and several of our proofs are
heavily inspired by the pioneering impressive work of Michel Enguehard, to whom
this paper is dedicated. We thank the anonymous referee for their thorough
reading of our paper and the helpful comments and questions.

\section{On Jordan decomposition and block distribution}   \label{sec:general}

We refer to \cite{GM20} for basic notions from Deligne--Lusztig character
theory.

\subsection{Notation and background results}
Throughout this subsection, $\bG$ is a connected reductive linear algebraic
group over an algebraic closure of a finite field, and
$F:\bG\rightarrow\bG$ is a Frobenius endomorphism endowing $\bG$ with an
$\FF_q$-structure for some prime power $q$. By $\bG^*$ we denote a group
in duality with $\bG$ with respect to some fixed $F$-stable maximal torus
of $\bG$, with corresponding Frobenius endomorphism also denoted by $F$.
The notions recalled here make sense independently of whether $\bG$ has
connected centre or not.
\par
For $e$ a positive integer and any $F$-stable torus $\bT\le\bG$, let $\bT_e$
denote its Sylow $e$-torus (see e.g.~\cite{BMM} for terminology on Sylow
$e$-theory). An $F$-stable Levi subgroup $\bL\le\bG$ is called \emph{$e$-split}
if $\bL=C_\bG(Z^\circ(\bL)_e)$ or equivalently if $\bL= C_\bG(\bT)$ for some
$e$-torus $\bT$ of $\bG$. A character $\la\in\Irr(\bL^F)$ is called
\emph{$e$-cuspidal} if $^*\!R_{\bM\le\bP}^\bL(\la)=0$ for all proper $e$-split
Levi subgroups $\bM<\bL$ and any parabolic subgroup $\bP$ of $\bL$ containing
$\bM$ as Levi complement, where $\tw*R_{\bM\le\bP}^\bL$ denotes Lusztig
restriction. It is known that this property is independent of the chosen
parabolic subgroup $\bP$ unless possibly if $\bG^F$ has a component of type
$\tw2E_6(2)$ or $E_8(2)$ (see \cite[Thm~3.3.8]{GM20}).
\par
Let $s\in{\bG^*}^F$ be semisimple. Choose some Jordan decomposition for
$\bG^F$ (as in \cite[Thm~11.5.1]{DM20}). Then $\chi\in\cE(\bG^F,s)$ is
\emph{$e$-Jordan-cuspidal} if $Z^\circ(C_{\bG^*}^\circ(s))_e=Z^\circ(\bG^*)_e$
and $\chi$ corresponds under Jordan decomposition to the $C_{\bG^*}(s)^F$-orbit
of an $e$-cuspidal unipotent character of $C_{\bG^*}^\circ(s)^F$. 
If $\bL\le\bG$ is $e$-split and $\la\in\Irr(\bL^F)$ is $e$-Jordan-cuspidal,
then $(\bL,\la)$ is called an \emph{$e$-Jordan-cuspidal pair} of $\bG$.
By \cite[Prop.~1.10(ii)]{CE99}, $e$-cuspidality implies
$e$-Jordan-cuspi\-dality; a list of situations where the converse is true is
given in Remark~2.2 and Section~4 of \cite{KM15} (see also
Theorem~\ref{thm:[KM13, 1.2]}(f)).

\subsection{Generalities on Jordan decomposition}
Suppose now that $\bG$ is connected reductive with connected centre. Let
$(\bG^*,F)$ be dual to $(\bG,F)$ with respect to a fixed duality. Fix a
semisimple element $s\in\bG^{*F}$, and denote as usual by
$\cE(C_{\bG^*}(s)^F,1)$ the set of unipotent characters of $C_{\bG^*} (s)^F$
and deviating temporarily from the standard notation, denote by
$\cE(\bG^F,\bG^{*F},s)$ (usually denoted $\cE(\bG^F,s))$ the Lusztig series of
$\Irr(\bG^F)$ corresponding to $s$. Denote by
$$\Psi_{\bG,\bG^*,s}:\cE(C_{\bG^*}(s)^F,1)\to\cE (\bG^F, \bG^{*F},s)$$
the inverse of the Jordan decomposition map of \cite[Thm~7.1]{DM90} (see also
\cite[Thm~2.1]{SFTV}). 
 
For each pair $\bL\leq\bG$, $\bL^*\leq\bG^*$ of $F$-stable Levi subgroups in
dual conjugacy classes, we fix a duality between $(\bL,F)$ and $(\bL^*,F)$
induced by the duality between $(\bG,F)$ and $(\bG^*,F)$ as described for
example in \cite[Prop.~11.4.1]{DM20} and the ensuing discussion). Note that if
$\bL_i$ (respectively $\bL_i^*$), $i=1,2$, are $\bG^F$-conjugate (respectively
$\bG^{*F}$-conjugate) $F$-stable Levi subgroups of~$\bG$ (respectively $\bG^*$)
in dual conjugacy classes, then for any $g^*\in\bG^{*F} $ such that
$\bL_1^* =\tw{g^*}\bL_2^*$, there exists $g\in\bG^F$ such that
$\bL_2=\tw{g}\bL_1$ and such that conjugation by $g$ and $g^*$ are dual
isomorphisms (in the sense of \cite[Sec.~1.7.11]{GM20} or
\cite[Def.~4.4]{SFTV}) from $\bL_1$ to $\bL_2$ and from $\bL_2^*$ to $\bL_1^*$,
respectively. In this situation, if $s\in\bL_1^{*F}$, then setting
$t =g{^*}^{-1}sg^*$, \cite[Thm~7.1(vi)]{DM90} gives 
$$\Psi_{\bL_2,\bL_2^*,t}(\chi) = \tw{g}(\Psi_{\bL_1,\bL_1^*,s} (\tw{g^*}\chi))
  \qquad\text{for all $\chi\in\cE(C_{\bL_2^*}(t)^F, 1)$.}\leqno{(\dagger)}$$
Here $\tw{g^*}\chi$ is the character of $C_{\bL_1^*}(s)^F$ defined by
$\tw{g^*}\chi(x) = \chi({g^*}^{-1}xg^*)$, $x\in C_{\bL_1^*}(s)^F$, and
similarly, for any $\tau\in\Irr(\bL_1^F)$, $\tw{g}\tau$ is the character
of $\bL_2^F$ defined by $\tw{g}\tau(y) = \tau(g^{-1}yg)$, $y\in\bL_2^F$.
Further, if $\bL= \bL_1=\bL_2$, $\bL^*= \bL_1^*=\bL_2^*$, and $g^*\in\bL^{*F}$
then $g$ can be chosen in $\bL^F$. In this case, since inner
automorphisms act trivially on characters, the above equation yields
$\Psi_{\bL,\bL^*,t}(\chi) = \Psi_{\bL,\bL^*,s}(\tw{g^*}\chi)$ for all
$\chi\in\cE_\ell(C_{\bL^*}(t)^F,1)$.

We say that a pair $(\bL,\la)$, where $\bL\leq\bG$ is an $F$-stable Levi
subgroup and $\la\in\Irr(\bL^F)$, \emph{lies below} $(\bG^F,s)$ if
$\la\in\cE(\bL^F,\bL^{*F},s)$ for some $\bL^*$ containing $s$ and in duality
with $\bL$.
We say that a $\bG^F$-class of pairs $(\bL,\la)$ lies below $(\bG^F,s)$ if
some element of the class lies below $(\bG^F,s)$. Let $\bC^*:=C_{\bG^*}(s)$.

\begin{lem}   \label{lem:genjor}
 There is a bijection between $\bC^{*F}$-classes of pairs $(\bL^*,\mu)$, for
 $\bL^*\le\bG^*$ an $F$-stable Levi subgroup with $s\in\bL^*$ and
 $\mu\in\cE(C_{\bL^*}(s)^F,1)$, and $\bG^F$-classes of pairs below $(\bG^F,s)$.
 It sends the $ \bC^{*F}$-class of $(\bL^*,\mu)$ to the $\bG^F$-class of
 $(\bL,\Psi_{\bL,\bL^*,s}(\mu))$ where $\bL$ is dual to $\bL^*$.
\end{lem}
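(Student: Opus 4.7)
The plan is to verify the assignment $[(\bL^*,\mu)]\mapsto[(\bL,\Psi_{\bL,\bL^*,s}(\mu))]$ is a well-defined bijection by separately establishing well-definedness, surjectivity, and injectivity, with the transformation rule $(\dagger)$ as the main technical tool throughout. First note that if $\bL,\bL'$ are both $F$-stable duals of $\bL^*$, they lie in a single $\bG^F$-class; applying $(\dagger)$ with $g^*=1$ shows that replacing $\bL$ by $\bL'$ replaces the image pair by a $\bG^F$-conjugate one, so the target class depends only on $(\bL^*,\mu)$ and not on the choice of dual Levi subgroup.

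For well-definedness at the level of $\bC^{*F}$-classes, assume $(\bL_2^*,\mu_2)=(\tw{g^*}\bL_1^*,\tw{g^*}\mu_1)$ for some $g^*\in\bC^{*F}$. Setting $h^*:=g^{*-1}$ we have $\bL_1^*=\tw{h^*}\bL_2^*$ and $h^{*-1}sh^*=g^*sg^{*-1}=s$, since $g^*$ centralises $s$. Applying $(\dagger)$ produces $g\in\bG^F$ with $\bL_2=\tw{g}\bL_1$ and, taking $\chi=\mu_2$ and noting $\tw{h^*}\mu_2=\tw{g^{*-1}}\tw{g^*}\mu_1=\mu_1$,
$$\Psi_{\bL_2,\bL_2^*,s}(\mu_2)=\tw{g}\bigl(\Psi_{\bL_1,\bL_1^*,s}(\tw{h^*}\mu_2)\bigr)=\tw{g}\bigl(\Psi_{\bL_1,\bL_1^*,s}(\mu_1)\bigr),$$
so the image pairs are $\bG^F$-conjugate. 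Surjectivity is immediate from the definition of ``lying below $(\bG^F,s)$'': any such $(\bL,\la)$ satisfies $\la\in\cE(\bL^F,\bL^{*F},s)$ for some dual $\bL^*$ containing $s$, and the bijectivity of $\Psi_{\bL,\bL^*,s}$ supplies a unique $\mu\in\cE(C_{\bL^*}(s)^F,1)$ with $\la=\Psi_{\bL,\bL^*,s}(\mu)$.

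The main obstacle is injectivity, where one must extract a conjugator in $\bC^{*F}$ from data only $\bG^F$-conjugate on the character side. Suppose $\tw{g}(\bL_1,\Psi_{\bL_1,\bL_1^*,s}(\mu_1))=(\bL_2,\Psi_{\bL_2,\bL_2^*,s}(\mu_2))$ for some $g\in\bG^F$. The duality between Levi classes provides $g^*\in\bG^{*F}$ with $\bL_1^*=\tw{g^*}\bL_2^*$ whose conjugation is dual to that of $g$, and $(\dagger)$ then gives
$$\Psi_{\bL_2,\bL_2^*,t}(\tw{g^{*-1}}\mu_1)=\tw{g}\Psi_{\bL_1,\bL_1^*,s}(\mu_1)=\Psi_{\bL_2,\bL_2^*,s}(\mu_2),$$
where $t=g^{*-1}sg^*$. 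Since Lusztig series of $\bL_2^F$ attached to distinct $\bL_2^{*F}$-classes of semisimple elements are disjoint, $s$ and $t$ must be $\bL_2^{*F}$-conjugate, say $h^{*-1}sh^*=t$ for some $h^*\in\bL_2^{*F}$. Set $k^*:=g^*h^{*-1}$; then $k^*sk^{*-1}=s$ so $k^*\in\bC^{*F}$, and since $h^*\in\bL_2^*$ we get $\tw{k^*}\bL_2^*=\tw{g^*}\bL_2^*=\bL_1^*$. Finally, applying the ``same Levi'' specialisation of $(\dagger)$ to $h^*\in\bL_2^{*F}$ yields
$$\Psi_{\bL_2,\bL_2^*,t}(\tw{g^{*-1}}\mu_1)=\Psi_{\bL_2,\bL_2^*,s}(\tw{h^*g^{*-1}}\mu_1)=\Psi_{\bL_2,\bL_2^*,s}(\tw{k^{*-1}}\mu_1),$$
so the injectivity of $\Psi_{\bL_2,\bL_2^*,s}$ forces $\mu_2=\tw{k^{*-1}}\mu_1$, i.e.\ $(\bL_1^*,\mu_1)=(\tw{k^*}\bL_2^*,\tw{k^*}\mu_2)$, establishing the required $\bC^{*F}$-conjugacy.
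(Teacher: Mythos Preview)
Your proof is correct and follows essentially the same approach as the paper's: both arguments establish well-definedness, surjectivity, and injectivity separately, with $(\dagger)$ as the key tool, and both handle injectivity by first obtaining a $\bG^{*F}$-conjugator $g^*$, then correcting it by an element $h^*\in\bL_2^{*F}$ (coming from the $\bL_2^{*F}$-conjugacy of $s$ and $t$) to land in $\bC^{*F}$. The only cosmetic difference is that the paper works with a single $\bL$ dual to both $\bL_i^*$ (so $g\in N_\bG(\bL)^F$), whereas you keep separate $\bL_1,\bL_2$ and introduce $k^*=g^*h^{*-1}$ explicitly.
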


\begin{proof}
The proof is a consequence of ($\dagger$). We give the details. First note that
for any dual pair $\bL, \bL^*$ of $F$-stable Levi subgroups with $s \in \bL^*$
and any $\mu\in\cE(C_{\bL^*}(s)^F,1)$, the pair $(\bL,\Psi_{\bL,\bL^*,s}(\mu))$
lies below $(\bG^F,s)$.
Suppose that $(\bL_i,\la_i)$, $i=1,2$ are two pairs such that $\bL_i$ is dual
to $\bL^*$ and $\la_i = \Psi_{\bL_i,\bL^*,s}(\mu)$. Applying ($\dagger$) with
$\bL_2^*= \bL_1^*$ and $g^*=1$ yields that $(\bL_1,\la_1)$ and $(\bL_2,\la_2)$
are $\bG^F$-conjugate. Thus there is a well-defined and surjective map from the
set of pairs $ (\bL^*,\mu)$, $\bL^*$ an $F$-stable Levi subgroup of $\bG^*$ with
$s\in \bL^*$ and $\mu\in\cE(C_{\bL^*}(s)^F,1)$, to the set of $\bG^F$-classes
of pairs below $(\bG^F,s)$ sending the pair $(\bL^*,\mu)$ to the class of
$(\bL,\Psi_{\bL,\bL^*,s}(\mu))$ where $\bL$ is dual to $\bL^*$.
\par
Now let $(\bL_i^*,\mu_i)$, $i=1,2$, be such that $\bL_i^*$ is an $F$-stable Levi
subgroup of $\bG^*$, $s\in\bL_i^*$, $\mu_i\in\cE(C_{\bL_i^*}(s)^F,1)$.
If $(\bL_1^*,\mu_1)$ and $ (\bL_2^*, \mu_2)$ are $\bC^{*F}$-conjugate, say
$(\bL_2^*,\mu_2)=\,^x(\bL_1^*,\mu_1)$, $x\in \bC^{*F}$, then applying
($\dagger$) with $g^*= x^{-1}$ and any $\bL_1 =\bL_2$ dual to the $\bL_i^*$
yields that $(\bL_1^*,\mu_1)$ and $ (\bL_2^*,\mu_2)$ have the same image under
the above map. Conversely, suppose that $(\bL_1^*,\mu_1)$ and $(\bL_2^*,\mu_2)$
have the same image. To complete the proof it suffices to show they are
$\bC^{*F}$-conjugate. Let $\bL$ be dual to the $\bL_i^*$s and set
$\la_i = \Psi_{\bL, \bL_i^*,s}(\mu_i)$. By hypothesis, there exists
$g\in N_\bG(\bL)^F$ such that
$\la_2=\tw{g}\la_1 =\tw{g} \Psi_{\bL,\bL_1^*,s}(\mu_1)$. So, by~($\dagger$),
there exists $g^*\in\bG^{*F}$ such that $\bL_1^* = \tw{g^*}\bL_2^*$ and 
$$\la_2= \tw{g}(\Psi_{\bL,\bL_1^*,s}(\mu_1))
  = \Psi_{\bL,\bL_2^*,t}(\tw{{g^*}^{-1}}\mu_1)$$
with $t=g^{*^{-1}}s g^*$. In particular, $\la_2\in\cE(\bL^F\!,\bL_2^{*F}\!,t)$.
Since by assumption $\la_2\in\cE(\bL^F\!,\bL_2^{*F}\!\!,s)$, $t$ and $s$ are
$\bL_2^{*F}$-conjugate, say $t = {h^*}^{-1}s h^*$ with $h^*\in\bL_2^{*F}$.
Thus, by the remarks after~($\dagger$) applied with $h^*$ in place of $g^*$,
$\bL_2^*$ in place of $\bL^*$ and $\tw{{g^*}^{-1}}\mu_1$ in place of $\chi$,
$$\la_2=\Psi_{\bL,\bL_2^*,t}(\tw{{g^*}^{-1}}\mu_1)
  =\Psi_{\bL,\bL_2^*,s} (\tw{h^*{g^*}^{-1}}\mu_1).$$
Since we also have $\la_2 =\Psi_{\bL,\bL_2^*,s}(\mu_2)$, it follows that
$\mu_2 = \tw{h^*{g^*}^{-1}}\mu_1$. As $h^*\in\bL_2^{*F}$ we obtain
$$(\bL_2^*,\mu_2) = \tw{h^*{g^*}^{-1}}(\bL_1^*,\mu_1) $$
with $h^*{g^*}^{-1}\in\bC^{*F}$, as required.
\end{proof}

From now on for any pair of dual $F$-stable Levi subgroups $\bL,\bL^*$ with
$s\in \bL^*$, we revert to the notation $\cE(\bL^F,s)$ for
$\cE(\bL^F,\bL^{*F},s)$ and if $s\in \bL^{*F}$ we denote by
$$\pi_{s}^\bL:\cE(\bL^F,s)\longrightarrow\cE(C_{\bL^*}(s)^F,1)$$
the Jordan decomposition inverse to $\Psi_{\bL,\bL^*,s}$.

We are going to define some maps between $e$-Jordan cuspidal pairs which will
then induce maps between $\ell$-blocks.

\begin{prop} \label{prop:En 15}
 For any $e\ge1$, there is a natural bijection between the $\bC^{*F}$-classes
 of unipotent $e$-cuspidal pairs in $\bC^*$ and the $\bG^F$-classes of
 $e$-Jordan cuspidal pairs below $(\bG^F,s)$. It sends the class of
 $(\bL_s^*,\la_s)$ to the class of $(\bL,\la)$, with
 $\bL^*=C_{\bG^*}(Z^\circ(\bL_s^*)_e)$ and $\la_s=\pi_s^\bL(\la)$, where
 $\bL,\bL^*$ are in duality.
\end{prop}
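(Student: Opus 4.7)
The plan is to reduce the claim to Lemma~\ref{lem:genjor} by restricting its bijection to the sub-collections cut out by $e$-cuspidality. The hypothesis that $\bG$ has connected centre forces $\bC^*:=C_{\bG^*}(s)$ to be connected reductive, so Sylow $e$-theory applies inside~$\bC^*$ and, in particular, $e$-split Levi subgroups of $\bC^*$ are defined.

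First, I would unwind the definitions to determine which pairs $(\bL^*,\mu)$ on the $\bC^{*F}$-class side of Lemma~\ref{lem:genjor} correspond to $e$-Jordan cuspidal pairs $(\bL,\la)$ below $(\bG^F,s)$: this happens exactly when (a) $\bL^*$ is $e$-split in $\bG^*$ (equivalently, by duality, $\bL$ is $e$-split in $\bG$); (b) $Z^\circ(C_{\bL^*}(s))_e=Z^\circ(\bL^*)_e$; and (c) $\mu\in\cE(C_{\bL^*}(s)^F,1)$ is $e$-cuspidal in $C_{\bL^*}(s)^F$.

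Next, I would set up a $\bC^{*F}$-equivariant bijection between the $F$-stable $e$-split Levi subgroups $\bL^*\le\bG^*$ that contain $s$ and satisfy~(b) and the $F$-stable $e$-split Levi subgroups $\bL_s^*$ of $\bC^*$, implemented by $\bL^*\mapsto C_{\bL^*}(s)$ with inverse $\bL_s^*\mapsto C_{\bG^*}(Z^\circ(\bL_s^*)_e)$. The verification is direct: if $\bL^*=C_{\bG^*}(\bT)$ with $\bT=Z^\circ(\bL^*)_e$, then $C_{\bL^*}(s)=C_{\bC^*}(\bT)$ is $e$-split in $\bC^*$ with $Z^\circ(C_{\bL^*}(s))_e=\bT$ by~(b); conversely, for $\bL_s^*$ $e$-split in $\bC^*$ and $\bT:=Z^\circ(\bL_s^*)_e$, the group $\bL^*:=C_{\bG^*}(\bT)$ is an $e$-split Levi of $\bG^*$ containing $s$ with $C_{\bL^*}(s)=\bL_s^*$, and the containment $\bT\le Z^\circ(\bL^*)_e$ is forced to be equality because $Z^\circ(\bL^*)_e$ is a central $e$-torus of $\bL_s^*$ and hence lies in $Z^\circ(\bL_s^*)_e=\bT$; in particular~(b) then holds automatically.

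Under this matching, condition~(c) becomes exactly the statement that $\la_s:=\mu$ is unipotent $e$-cuspidal in $\bL_s^{*F}$. Composing with Lemma~\ref{lem:genjor} then delivers the desired bijection, with the explicit description $\bL^*=C_{\bG^*}(Z^\circ(\bL_s^*)_e)$ and $\la_s=\pi_s^\bL(\la)$ built into the construction. The main technical hurdle is extracting condition~(b) from $e$-splitness of $\bL_s^*$ in $\bC^*$, i.e.\ the Sylow-$e$ identity $Z^\circ(C_{\bG^*}(\bT))_e=\bT$ for $\bT$ the Sylow $e$-torus of $Z^\circ(\bL_s^*)$; this is handled by the central $e$-torus argument just sketched, with the connectedness of $\bC^*$ playing an essential role.
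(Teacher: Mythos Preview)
Your proposal is correct and follows essentially the same route as the paper: set up the $\bC^{*F}$-equivariant bijection $\bL_s^*\leftrightarrow\bL^*=C_{\bG^*}(Z^\circ(\bL_s^*)_e)$ between $e$-split Levi subgroups of $\bC^*$ and $e$-split Levi subgroups of $\bG^*$ containing $s$ with $Z^\circ(C_{\bL^*}(s))_e=Z^\circ(\bL^*)_e$, and then compose with Lemma~\ref{lem:genjor}. You in fact supply more detail than the paper does on why these maps are mutually inverse (the central $e$-torus argument), which the paper simply asserts.
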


\begin{proof}
First of all, we note that the maps $\bM^*\mapsto C_{\bG^*}(Z^\circ(\bM^*)_e)$,
$\bL^*\mapsto C_{\bL^*}(s)$ are mutually inverse bijections between the set of
$e$-split Levi subgroups $\bM^*$ of $\bC^* $ and the set of $e$-split Levi
subgroups $\bL^*$ of $\bG^* $ which contain $s$ and for which additionally 
$ Z^\circ(C_{\bL^*}(s))_e= Z^\circ(\bL^*)_e $. These bijections are compatible
with the action of $\bC^{*F}$ and hence induce inverse bijections between the
set of $\bC^{*F}$-classes of $e$-split Levi subgroups of $\bC^* $ and the set
of $\bC^{*F}$-classes of $e$-split Levi subgroups $\bL^*$ of $\bG^*$
which contain $s$ and for which additionally
$Z^\circ(C_{\bL^*}(s))_e= Z^\circ(\bL^*)_e$.
\par
Now the composition of the bijection which sends the $\bC^{*F}$-class of
$(\bL_s,\la_s)$ to the $\bC^{*F}$-class of $(C_{\bG^*}(Z^\circ(\bL_s)_e),\la_s)$
with the bijection of Lemma~\ref{lem:genjor} yields the result.
\end{proof}

\subsection{Compatibility with central products}   \label{subsec:compatibility}
We state a compatibility result of central products with various constructions
which surely is well-known to the experts. We begin with a general fact. Recall
that an isotypy between algebraic groups $\bX_0$, $\bX$ is a morphism of
algebraic groups $f:\bX_0 \to\bX$ with central kernel and with image containing
$[\bX,\bX]$.

\begin{lem}   \label{lem:centraliser-isotypy}
 Let $f:\bX_0\to\bX$ be an isotypy of connected reductive algebraic groups. 
 \begin{enumerate} [\rm(a)]
  \item The map $\bL\mapsto\bL_0:= f^{-1}(\bL)$ induces a bijection between the
   sets of Levi subgroups of~$\bX$ and of~$\bX_0$. If
   $\bL$ and $\bL_0$ correspond, then $\bL= Z(\bX)f(\bL_0)$ and
   $[\bL,\bL]= f([\bL_0,\bL_0])$. In particular, $f:\bL_0\to\bL$ is an isotypy.
  \item Let $s\in\bX_0$ be a semisimple element such that $C_{\bX_0}(s)$ and
   $C_\bX(f(s))$ are both connected. Then the induced map
   $C_{\bX_0}(s)\to C_\bX(f(s))$ is an isotypy. Suppose that $\bL_0$ is a
   Levi subgroup of $\bX_0$ containing $s$ and let $\bL\leq\bX$ be the Levi
   subgroup corresponding to $\bL_0$ via $f$. Then the Levi subgroups
   $C_{\bL_0}(s) \leq C_{\bX_0}(s)$ and  $C_\bL(f(s)) \leq C_\bX(f(s))$ 
   correspond via the induced isotypy between $C_{\bX_0}(s)$ and $C_\bX(f(s))$.
 \end{enumerate}
\end{lem} 

\begin{proof}
Part~(a) is well-known. For the first assertion of (b), it suffices to show
that $f(C_{\bX_0}(s))$ contains $[C_\bX(f(s)), C_\bX(f(s))]$. Let
$\bX_1:=f^{-1}(C_\bX(f(s)))$. Since $[\bX,\bX]\leq f(X_0)$,
$C_{[\bX,\bX]}(f(s))\leq f(\bX_1)$. On the other hand, since
$\bX = Z(\bX)[\bX,\bX]$, we have $C_\bX(f(s)) =Z(\bX)C_{[\bX,\bX]}(f(s))$.
Thus, $C_\bX(f(s)) = Z(\bX)f(\bX_1)$.  Since $\ker(f)$ is central in $\bX_0$,
$[\bX_1,s]\leq Z(\bX_0)$ and the map $\bX_1\to Z(\bX_0)$ sending
$x\in\bX_1$ to $[x,s]$ is a group homomorphism with kernel $C_{\bX_0}(s)$.
In particular, $\bX_1/C_{\bX_0}(s)$ is abelian, hence so is
$f(\bX_1)/f(C_{\bX_0}(s)) $ and we obtain 
$$f(C_{\bX_0}(s)) \geq [f(\bX_1), f(\bX_1)] = [C_\bX(f(s)), C_\bX(f(s))]. $$
The second assertion of (b) follows from part (a).
\end{proof}

Suppose that $\bG=\bG_1\bG_2$ is a central product of connected reductive,
connected centre and $F$-stable subgroups $\bG_i$, $i=1,2$. Let
$\tbG= \bG_1\times\bG_2$ and let $\vhi:\tbG\to\bG$ be the canonical epimorphism
given by multiplication. We assume $\ker(\vhi)$ is a (central) torus of
$\tilde\bG$. In particular, $\vhi$ is an isotypy. Let $\vhi^*:\bG^*\to\tbG^*$
denote a dual isotypy and note that $\tbG^* = \bG_1^*\times\bG_2^*$, with
$\bG_i^*$ dual to $\bG_i$. 
For an $F$-stable Levi subgroup $\bL$ of $\bG$ with dual Levi subgroup
$\bL^* \leq \bG^*$, let $\tbL:=\vhi^{-1}(\bL)$ be the corresponding Levi
subgroup of $\tbG$. Then $\tbL= \bL_1 \times \bL_2$ with $\bL_i$ an $F$-stable
Levi subgroup of $\bG_i$, with dual $\tbL^*:= Z^\circ(\tbG^*)\vhi^*(\bL^*)$ of
the form $\bL_1^*\times \bL_2^*$, with $\bL_i^*$ dual to $\bL_i$. Note that the
Levi subgroups $\bL^*\leq\bG^*$ and $\tbL^*\leq\tbG^*$ correspond to each other
via $\vhi^*$.

Let $s\in\bL^{*F}$ be semisimple. Then $\vhi^*(s)=(s_1,s_2)$ with
$s_i\in\bL_i^{*F}$ and $C_{\tbL^*}(\vhi^*(s)) = C_{\bL_1^*}(s_1)\times C_{\bL_2^*}(s_2)$. Further, $C_{\bL^*} (s) $ and $C_{\tbL^*}(\vhi^*(s))$ are
corresponding Levi subgroups under the isotypy $\vhi^*:C_{\bG^*}(s)\to C_{\tbG^*}(\vhi^*(s))$ (see Lemma~\ref{lem:centraliser-isotypy}). So, 
$\vhi^*$ induces a canonical bijection (see \cite[Prop.~11.3.8]{DM20})
$$\widehat{\ }:\cE(C_{\bL^*}(s)^F,1)\to\cE(C_{\tbL^*}(\vhi^*(s))^F,1);$$
we write $\hat\al=\al_1\otimes\al_2$ with $\al_i\in\cE (C_{\bL_i^*}(s_i)^F,1)$.
For $\la\in \cE(\bL^F,s)$, set $\tilde\la := \la\circ\vhi$.
Then $\tilde\la\in\cE(\tbL^F,\vhi^*(s))$ and is of the form
$\la_1\otimes\la_2$, $\la_i\in\cE(\bL_i^F,s_i)$. By properties of Jordan
decomposition \cite[Thm~7.1(vi),(vii)] {DM90}, we have that if
$\al= \pi_s^\bL (\la)$, then $\hat\al = \pi_{\vhi^*(s)}^\tbL(\tilde\la)$ and
$\al_i =\pi_s^{\bL_i}(\la_i)$, $i=1,2$.

\begin{prop}   \label{prop:JoEn 2.1.5}
 Let $\bL \leq \bG$ be an $F$-stable Levi subgroup with dual Levi subgroup
 $\bL^* \leq \bG^* $ and let $ s \in \bL^{*F}$ be semisimple. With the notation
 above, and denoting by $X'$ the derived subgroup of a group $X$, we have the
 following.
 \begin{enumerate}[\rm(a)]
  \item $(\bL,\la)$ is an $e$-Jordan cuspidal pair for $\bG$ if and only if
   $(\tbL, \tilde \la )$ is an $e$-Jordan cuspidal pair for $\tbG$, if and only
   if $(\bL_i,\la_i)$ are $e$-Jordan cuspidal pairs for $\bG_i$, $i=1,2$. 
  \item Suppose that $(\bL,\la)$ is an $e$-Jordan cuspidal pair for $\bG$. If
   $(\bL,\la)$ corresponds to the unipotent $e$-cuspidal pair
   $(C_{\bL^*}(s),\al)$ of $C_{\bG^*}(s)$ via Proposition~$\ref{prop:En 15}$,
   then $(\tbL,\tilde\la)$ corresponds to the unipotent $e$-cuspidal pair
   $(C_{\tbL^*}(\vhi^*(s)),\hat\al)$ of $C_{\tbG^*}(\vhi^*(s))$ and
   $(\bL_i,\la_i)$ correspond to the unipotent $e$-cuspidal pairs
   $(C_{\bL_i^*}(s_i),\al_i)$ of $C_{\bG_i^*}(s_i)$ for $i=1,2 $. 
  \item Let $\bM\leq\bG$ be an $F$-stable Levi subgroup with
   $\bL\leq \bM$ and let $\chi\in\Irr(\bM^F)$. Then, keeping the notational
   convention as above, $\chi $ is a constituent of $R_\bL^\bM(\la)$ if and
   only if $\tchi=\chi_1\otimes\chi_2$ with $\chi_i$ a constituent of
   $R_{\bL_i}^{\bM_i}(\la_i)$.
  \item Let $\bM\leq\bG$ be an $F$-stable Levi subgroup with dual Levi
   subgroup  $\bM^*\leq\bG^*$ containing $s$ and let
   $\be\in\cE(C_{\bM^*}(s)^F,1)$. Suppose that
   $$(C_{\bM^*}(s)',\be|_{{C_{\bM^*}(s)'}^F})\qquad\text{and}\qquad
     (C_{\bL^*}(s)',\al|_{{C_{\bL^*}(s)'}^F})$$
   are $C_{\bG^*} (s)^F$-conjugate. Then,
   $$(C_{\tilde \bM^*}(\vhi^*(s))', \hat\be|_{{C_{\tilde \bM^*}(\vhi^*(s))'}^F })\qquad\text{and}\qquad 
 (C_{\tilde \bL^*} (\vhi^*(s))',\hat \al|_{{C_{\tilde \bL^*} (\vhi^*(s))'}^F})$$
   are $C_{\tbG^*}(\vhi^*(s))^F$-conjugate and consequently,
   $$(C_{\bM_i^*}(s_i)',\be_i|_{{C_{\bM_i^*}(s_i)'}^F})\qquad\text{and}\qquad 
 (C_{\bL_i^*} (s_i)', \al_i|_{{C_{\bL_i^*} (s_i)'}^F})$$
   are $C_{\bG_i^*}(s_i)^F$-conjugate for $i=1,2$.
  \end{enumerate}
\end{prop}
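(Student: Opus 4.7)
The plan is to transport all four statements through the central isogeny $\vhi\colon\tbG\to\bG$ and its dual $\vhi^*$, using three structural facts. First, $\vhi$ has connected toral kernel, so $\bL\mapsto\vhi^{-1}(\bL)$ is an $F$-equivariant bijection between $F$-stable Levi subgroups of $\bG$ and $\tbG$ inducing a bijection between $e$-split ones. Second, $\vhi^*$ is a closed immersion with $\tbG^*=Z^\circ(\tbG^*)\vhi^*(\bG^*)$, and hence restricts to $F$-equivariant isomorphisms on derived subgroups $\vhi^*\colon\bG^{*\prime}\to(\tbG^*)'$, and similarly on derived subgroups of centralizers of semisimple elements. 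Third, every construction in sight (Lusztig restriction, centralizers, connected centres, $e$-split Levis) factors componentwise on the direct product $\tbG^*=\bG_1^*\times\bG_2^*$.

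For (a), I would first check that the $e$-split condition on $\bL$ and the equality $Z^\circ(C_{\bL^*}(s))_e=Z^\circ(\bL^*)_e$ transfer between $\bG$, $\tbG$ and each $\bG_i$ by the first two facts. For unipotent $e$-cuspidality, I would use that Lusztig restriction on a direct product factors as a tensor product, so $\hat\al=\al_1\otimes\al_2$ is $e$-cuspidal on $C_{\bL_1^*}(s_1)^F\times C_{\bL_2^*}(s_2)^F$ iff each $\al_i$ is; and since unipotent characters together with Lusztig restriction are determined by values on the derived subgroup, this is also equivalent to $e$-cuspidality of $\al$ on $C_{\bL^*}(s)^F$. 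For (b), one then runs the bijection of Proposition~\ref{prop:En 15} through $\vhi$ and $\vhi^*$, invoking the identifications $\hat\al=\pi^{\tbL}_{\vhi^*(s)}(\tilde\la)$ and $\al_i=\pi^{\bL_i}_{s_i}(\la_i)$ recorded just before the proposition to ensure commutativity at each step.

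For (c), Lusztig induction commutes with surjective morphisms with central toral kernel, so $R^{\tbM}_{\tbL}(\tilde\la)=R^\bM_\bL(\la)\circ\vhi$. Since $\ker(\vhi)^F$ is central in $\tbG^F$, inflation along $\vhi$ preserves multiplicities of irreducible constituents, so $\chi$ is a constituent of $R^\bM_\bL(\la)$ if and only if $\tchi$ is a constituent of $R^{\tbM}_{\tbL}(\tilde\la)$. The latter factors on $\tbM^F=\bM_1^F\times\bM_2^F$ as $R^{\bM_1}_{\bL_1}(\la_1)\otimes R^{\bM_2}_{\bL_2}(\la_2)$, whose constituents are exactly the $\chi_1\otimes\chi_2$ with $\chi_i$ a constituent of $R^{\bM_i}_{\bL_i}(\la_i)$.

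The main obstacle is (d), where one must intertwine the canonical bijection $\widehat{\phantom{x}}$ with both restriction to derived subgroups and conjugation inside the full centralizer. By the second structural fact, $\vhi^*$ restricts to an $F$-equivariant isomorphism
$$C_{\bG^*}(s)'\stackrel{\sim}{\longrightarrow}C_{\tbG^*}(\vhi^*(s))'=C_{\bG_1^*}(s_1)'\times C_{\bG_2^*}(s_2)',$$
which I would use in two ways. First, by the definition of $\widehat{\phantom{x}}$ together with the fact that unipotent characters are determined by the derived subgroup, the restrictions $\al|_{{C_{\bL^*}(s)'}^F}$ and $\hat\al|_{{C_{\tbL^*}(\vhi^*(s))'}^F}$ correspond under this isomorphism (and likewise for $\be$). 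Second, conjugation by $x\in C_{\bG^*}(s)^F$ on $C_{\bG^*}(s)'$ corresponds via $\vhi^*$ to conjugation by $\vhi^*(x)\in C_{\tbG^*}(\vhi^*(s))^F$ on $C_{\tbG^*}(\vhi^*(s))'$. Combining these transports the assumed $C_{\bG^*}(s)^F$-conjugacy to the desired $C_{\tbG^*}(\vhi^*(s))^F$-conjugacy, after which the componentwise statement is immediate: conjugacy in a direct product decomposes into conjugacy in each factor.
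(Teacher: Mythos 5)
Your proposal is correct and takes essentially the same route as the paper's proof, which simply records these facts as standard: parts (a) and (b) are cited to Enguehard, (c) to the commutation of Lusztig induction with $\vhi$, and (d) to $\vhi^*$ being an isotypic morphism. Your write-up merely makes explicit the standard transports through $\vhi$ and $\vhi^*$ (isomorphism on derived subgroups, componentwise factorisation, inflation preserving multiplicities) that the paper leaves implicit.
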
 

\begin{proof}
Parts (a) and (b) are implicit in \cite[Prop.~2.1.5]{En13} and part (c) is a
consequence of the commutation of Lusztig induction with $\vhi$, see
\cite[Cor.~9.2]{DM90}. The first assertion of part (d) follows from
Lemma~\ref{lem:centraliser-isotypy} and the bijection between unipotent
characters induced by isotypies. The second assertion follows from the first
through properties of direct products.
\end{proof}

\subsection{Generalities on block distribution} 
Let $\ell$ be a prime not dividing $q$ and let $e:=e_\ell(q)$ denote the order
of $q$ modulo $\ell$ if $\ell$ is odd and the order of $q$ modulo~$4$ if
$\ell=2$. We denote by $\cE(\bG^F,\ell')$ the set
of irreducible characters of $\bG^F$ lying in a Lusztig series $\cE(\bG^F,s)$
for some semisimple $\ell'$-element $s\in\bG^{*F}$. Recall from
\cite[Def.~2.4]{KM} that a character $\chi\in\cE(\bG^F,\ell')$ is said to be of
\emph{central $\ell$-defect} if $|\chi(1)|_\ell |Z(\bG)^F|_\ell =|\bG^F|_\ell$
or equivalently if the $\ell$-block of $\bG^F$ containing $\chi$ has a central
defect group (see \cite[Prop.~2.5(c)]{KM}) and $\chi$ is said to be of
\emph{quasi-central $\ell$-defect} if some (and hence any)
character of $[\bG, \bG]^F$ covered by $\chi$ is of central $\ell$-defect. By
\cite[Prop.~2.5(b)]{KM}, if $\chi$ is of central $\ell$-defect, then it is of
quasi-central $\ell$-defect. For unipotent characters, the converse holds and
the terms ``quasi-central $\ell$-defect" and ``central $\ell$-defect" are used
interchangeably:

\begin{lem}   \label{lem:qcisc}
 Any $\chi\in\cE(\bG^F,1)$ of quasi-central $\ell$-defect is of central
 $\ell$-defect and is also $e$-cuspidal.
\end{lem}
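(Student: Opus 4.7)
The plan is to derive both conclusions from two standard features of unipotent characters of $\bG^F$: their triviality on the connected centre $Z^\circ(\bG)^F$ (so that they inflate from $\bG^F/Z^\circ(\bG)^F$), and the clean Clifford-theoretic description of their restriction to $H^F$, where $H:=[\bG,\bG]$.

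For the first assertion, pick a constituent $\chi_0\in\Irr(H^F)$ of $\chi|_{H^F}$; by definition of quasi-central $\ell$-defect it satisfies $\chi_0(1)_\ell\,|Z(H)^F|_\ell = |H^F|_\ell$. Since $\bG = H\cdot Z^\circ(\bG)$ with finite kernel $H\cap Z^\circ(\bG)\le Z(H)$, one has exact control over the $\ell$-parts of $[\bG^F:H^F]$, of $|Z(\bG)^F|/|Z(H)^F|$, and of $|Z^\circ(\bG)^F|$. Combining the Clifford degree formula $\chi(1)=[\bG^F : I_{\bG^F}(\chi_0)]\,\chi_0(1)$ (the ramification index being a $p'$-number for unipotent characters) with the triviality of $\chi$ on $Z^\circ(\bG)^F$ one arrives at the identity $\chi(1)_\ell\,|Z(\bG)^F|_\ell = |\bG^F|_\ell$, i.e.\ central $\ell$-defect.

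For the second assertion, central $\ell$-defect forces the $\Phi_e$-part of $\chi(1)$ to be the maximum possible among unipotent character degrees, namely the full $\Phi_e$-part of $|\bG^F|_{p'}/|Z^\circ(\bG)^F|$. If $\chi$ were not $e$-cuspidal, then ${}^*\!R_\bL^\bG(\chi)\ne 0$ for some proper $e$-split Levi $\bL<\bG$, so by adjunction $\chi$ is a constituent of $R_\bL^\bG(\la)$ for some unipotent $\la\in\Irr(\bL^F)$. The degree formula $\bigl(R_\bL^\bG\la\bigr)(1)=\pm[\bG^F:\bL^F]_{p'}\,\la(1)$, combined with the fact that a proper $e$-split Levi strictly increases the $\Phi_e$-multiplicity of $|Z^\circ(\bL)^F|$ relative to $|Z^\circ(\bG)^F|$, forces the $\Phi_e$-part of $\chi(1)$ to fall strictly below this maximum, a contradiction.

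The principal obstacle is the index bookkeeping in the first step, reconciling the Clifford factor with the isogeny structure of $\bG = H\cdot Z^\circ(\bG)$ and with the triviality of unipotent central characters; once central $\ell$-defect is in hand, $e$-cuspidality follows by a routine cyclotomic polynomial degree comparison exploiting that $\ell\mid\Phi_e(q)$.
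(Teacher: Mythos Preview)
Your approach differs substantially from the paper's, and both of your steps contain gaps.

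\medskip
\textbf{The paper's argument.} The paper does not do any index bookkeeping. Instead it invokes two black boxes: by \cite[Prop.~2.5(f)]{KM}, a unipotent character of quasi-central $\ell$-defect is the \emph{unique} unipotent character in its $\ell$-block $b$; and by \cite[Thm~A, A.bis]{En00}, $\Irr(b)\cap\cE(\bG^F,1)$ contains a full $e$-Harish-Chandra series above some unipotent $e$-cuspidal pair $(\bL,\la)$ with $\la$ of central $\ell$-defect. Uniqueness then forces this series to be $\{\chi\}$, hence $\bL=\bG$ and $\la=\chi$, giving both conclusions at once.

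\medskip
\textbf{The gap in your first step.} Granting that the restriction of a unipotent $\chi$ to $H^F=[\bG,\bG]^F$ is irreducible (so $\chi(1)=\chi_0(1)$) and that $|\bG^F|=|Z^\circ(\bG)^F|\,|H^F|$, the passage from $\chi_0(1)_\ell\,|Z(H)^F|_\ell=|H^F|_\ell$ to $\chi(1)_\ell\,|Z(\bG)^F|_\ell=|\bG^F|_\ell$ is equivalent to
\[
[Z(\bG)^F:Z(H)^F]_\ell=|Z^\circ(\bG)^F|_\ell.
\]
This is \emph{false} in general. For instance if $Z(\bG)$ is connected (the paper's running context), then $Z(H)=Z^\circ(\bG)\cap H\le Z^\circ(\bG)=Z(\bG)$, so $[Z(\bG)^F:Z(H)^F]=|Z^\circ(\bG)^F|/|Z(H)^F|$, and the displayed identity fails precisely when $\ell\mid |Z(H)^F|$ (e.g.\ $\bG=\mathrm{GL}_n$ with $\ell\mid\gcd(n,q-1)$). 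You flag the bookkeeping as ``the principal obstacle'' but do not explain why the hypothesis on $\chi_0$ rules out $\ell\mid|Z(H)^F|$; the triviality of $\chi$ on $Z^\circ(\bG)^F$ does not help here. In fact it \emph{is} true that no such $\chi_0$ exists when $\ell\mid|Z(H)^F|$, but establishing that is essentially the content of the lemma and requires input of the kind the paper cites.

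\medskip
\textbf{The gap in your second step.} Even assuming central $\ell$-defect, your $e$-cuspidality argument conflates the $\ell$-part of $\chi(1)$ with its $\Phi_e$-part. For $\ell$ odd, the $\ell$-part of $|\bG^F|/\chi(1)$ receives contributions from $\Phi_{e\ell^k}(q)$ for all $k\ge0$ (and from the ``bad'' denominator in the generic degree), so knowing it equals $|Z(\bG)^F|_\ell$ does not immediately pin down the $\Phi_e$-exponent. Moreover, the degree formula $R_\bL^\bG(\la)(1)=\pm[\bG^F:\bL^F]_{p'}\la(1)$ bounds the degree of the virtual character, not of its individual constituents, so one cannot directly read off an upper bound on the $\Phi_e$-part of $\chi(1)$ from it. There is a correct argument along these lines (cf.\ \cite{BMM}), but it is not the routine comparison you describe.
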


\begin{proof}
Let $b$ be the $\ell$-block of $\bG^F$ containing $\chi$. By
\cite[Prop.~2.5(f)]{KM}, $\chi$ is the unique unipotent character in its
$\ell$-block. On the other hand, by \cite[Thms~A and A.bis]{En00},
$\Irr(b)\cap\cE(\bG^F,1)$ contains the $e$-Harish-Chandra series of $\bG^F$
above some $e$-cuspidal unipotent pair $(\bL,\la)$ with $\la$ of central
$\ell$-defect. But this means that $\bL=\bG $ and $\la=\chi $.
\end{proof} 

We record here another fact that will be used later.

\begin{lem}   \label{lem:2unipquasicentraldefect}
 Suppose that all components of $[\bG,\bG]$ are of classical type and let
 $\chi\in\cE(\bG^F,1)$. If $\chi$ is of (quasi)-central $2$-defect, then $\bG$
 is a torus.
\end{lem}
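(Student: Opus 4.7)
After applying Lemma~\ref{lem:qcisc}, we may assume that $\chi$ is of
central $2$-defect and $e$-cuspidal, where $e=e_2(q)\in\{1,2\}$. The
strategy is to show that these conditions force $[\bG,\bG]=1$, so that
$\bG=Z^\circ(\bG)$ is a torus.

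The first step is a reduction to the case $\bG$ simply connected and
quasi-simple of classical type. Decompose $[\bG,\bG]$ as a central
product of its simple components $\bG_1,\ldots,\bG_r$ and consider the
direct-product cover
$\vhi\colon\tbG:=Z^\circ(\bG)\times\bG_1\times\cdots\times\bG_r\to\bG$,
whose kernel is a central torus. The inflation $\tchi:=\chi\circ\vhi$
decomposes as a tensor product $\bigotimes_{i=1}^r\chi_i$ of unipotent
characters. Combining the arithmetic of $\vhi$ with the classical bound
$\chi_i(1)_2\cd|Z(\bG_i)^F|_2\le|\bG_i^F|_2$ (which holds for any
irreducible $\chi_i$, since $\chi_i(1)$ divides $|\bG_i^F:Z(\bG_i)^F|$),
central $2$-defect of $\chi$ in $\bG^F$ propagates to central $2$-defect
of each factor $\chi_i$ in $\bG_i^F$: the sum of defects equals the sum
of maximum possible defects, so each summand must be at its maximum. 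Thus
it suffices to prove that no nontrivial simply connected simple classical
group carries a unipotent character of central $2$-defect.

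For this I would invoke the classification of $e$-cuspidal unipotent
characters of simple classical groups (see e.g.\ \cite[Prop.~3.3]{BMM}):
in type~$A$ they correspond to $e$-core partitions, and in types
$B$, $C$, $D$ to $e$-core symbols. Each such character has an explicit
degree formula as a product of cyclotomic factors $q^k\pm1$, and the
lifting-the-exponent formulas for $v_2(q^k\pm1)$ (with the two subcases
$q\equiv\pm1\pmod4$) allow one to compute $v_2(\chi(1))$ explicitly. A
direct comparison with $v_2(|\bG^F|)$ and $v_2(|Z(\bG)^F|)$ then shows
that the equality $v_2(\chi(1))+v_2(|Z(\bG)^F|)=v_2(|\bG^F|)$ cannot hold
once $\bG$ has positive rank. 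In type~$A$ with $e=1$ the assertion is
even vacuous, no partition of $n\ge2$ being a $1$-core; in the remaining
cases it follows from elementary $2$-adic inequalities comparing the
hook-length sums over the relevant $e$-core symbols to the uniformly
bounded quantity $v_2(|Z(\bG)^F|)$.

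The main technical obstacle is the bookkeeping in these $2$-adic
comparisons for the orthogonal and symplectic types, especially
$^2\!D_n$, where both cyclotomic factors $q\pm1$ intervene with
multiplicities depending on the parities of the symbol entries and
$|Z(\bG)^F|_2$ can reach $4$. The computations nevertheless follow the
same pattern: any non-empty $e$-core symbol contributes too many even
hook lengths for the central $2$-defect equation to be satisfied.
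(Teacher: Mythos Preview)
Your reduction contains a factual slip: the kernel of the multiplication map $\vhi\colon Z^\circ(\bG)\times\bG_1\times\cdots\times\bG_r\to\bG$ is a \emph{finite} central subgroup, not a torus (it sits inside the product of the finite centres $Z(\bG_i)$). This does not by itself doom the approach, but it means the arithmetic relating $|Z(\bG)^F|_2$, $|\bG^F|_2$ and the corresponding quantities for the factors is more delicate than you indicate, and the passage from ``simple component of $[\bG,\bG]$'' to ``simply connected simple'' also needs justification (the $\bG_i$ need not be simply connected, and while unipotent characters are isogeny-invariant, the quantity $|Z(\bG_i)^F|_2$ is not). The symbol-by-symbol $2$-adic bookkeeping you outline for types $B$, $C$, $D$ could in principle be carried out, but you have not actually done it, and it is genuinely tedious.

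The paper's proof bypasses all of this with a three-line block-theoretic argument. The key input is that for groups all of whose components are classical, the principal $2$-block is the \emph{only} unipotent $2$-block (\cite[Thm~13]{CE93} together with the table in \cite[p.~348]{En00}). Hence if some unipotent $\chi$ has central $2$-defect, the principal block has a central defect group, i.e., a Sylow $2$-subgroup of $\bG^F$ is contained in $Z(\bG^F)$; this is impossible unless $[\bG,\bG]=1$. No degree computations, no case analysis over types, no reduction to simple factors. Your approach trades this structural fact for an explicit verification that would, even if completed, be far longer and more error-prone.
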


\begin{proof}
By \cite[Thm~13]{CE93} and the table in \cite[p.~348]{En00}, the principal
2-block is the only unipotent 2-block of ~$\bG^F$. Hence the hypothesis implies
that the principal $2$-block of $\bG^F$ has central defect groups, that is,
the Sylow $2$-subgroups of $\bG^F$ are contained in $Z(\bG^F)$.
The result follows.
\end{proof}

Now fix $s\in\bG^{*F}$ a semisimple $\ell'$-element. The following is an
extension of the inductive argument given on Page~367 of \cite{En00}. For an
$F$-stable subgroup $\bH$ of $\bG$ we denote by $d^{1,\bH}$ the
decomposition map on the set of class functions of $\bH^F$ defined by
$d^{1,\bH}(\chi)(x): = \chi(x)$ if $x\in\bH^F$ is $\ell$-regular and
$d^{1,\bH}(\chi)(x): = 0$ otherwise. Note that we are diverging from the
more customary (but also more clumsy) notation $d^{1,\bH^F}$.

\begin{lem}   \label{lem:in Levi}
 Let $t\in C_{\bG^*}(s)^F$ be an $\ell$-element and $\chi\in\cE(\bG^F,st)$.
 Then there exists $\gamma\in\cE(\bG^F,s)$ with
 $\langle d^{1,\bG}(\chi),\gamma\rangle\ne0$.
\end{lem}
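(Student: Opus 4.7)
The plan is to combine the Broué--Michel partition of Lusztig series into $\ell$-blocks with a basic-set argument inside the block containing $\chi$.

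First I would locate $\chi$ inside an $\ell$-block. Since $s$ is an $\ell'$-element and $t\in C_{\bG^*}(s)^F$ is an $\ell$-element commuting with $s$, the element $st$ has $\ell'$-part $s$, and so $\chi$ lies in the geometric Lusztig series
$$\cE_\ell(\bG^F,s) \;:=\; \bigsqcup_{u} \cE(\bG^F,su),$$
the union running over $C_{\bG^*}(s)^F$-conjugacy classes of $\ell$-elements $u$. By the Broué--Michel theorem $\cE_\ell(\bG^F,s)$ is a union of $\ell$-blocks of $\bG^F$; in particular the $\ell$-block $B$ containing $\chi$ is contained in $\cE_\ell(\bG^F,s)$, and hence every irreducible Brauer constituent of $d^{1,\bG}(\chi)$ lies in $\operatorname{IBr}(B)$.

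Next I would invoke the basic-set property: $\cE(\bG^F,s)\cap\Irr(B)$ forms a basic set of $B$, meaning that the $\ZZ$-span of its image under the decomposition map equals $\ZZ\operatorname{IBr}(B)$. For good primes this is due to Geck; the bad-prime cases needed for the exceptional-type applications in the present paper are covered by work of Geck--Hiss, Enguehard \cite{En00,En08}, and the authors \cite{KM}. Picking any irreducible Brauer constituent $\phi$ of $d^{1,\bG}(\chi)$ (so $d_{\chi\phi}\ne0$ and $\phi\in\operatorname{IBr}(B)$), the basic-set property yields some $\gamma\in\cE(\bG^F,s)\cap\Irr(B)$ with decomposition number $d_{\gamma\phi}\ne0$. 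Since $d^{1,\bG}(\chi)$ and $d^{1,\bG}(\gamma)$ then share the Brauer constituent $\phi$, the pairing
$$\langle d^{1,\bG}(\chi),\gamma\rangle \;=\; \langle d^{1,\bG}(\chi),d^{1,\bG}(\gamma)\rangle \;=\; \sum_{\psi\in\operatorname{IBr}(B)} d_{\chi\psi}\, d_{\gamma\psi} \;\ge\; d_{\chi\phi}\, d_{\gamma\phi} \;>\;0$$
is non-zero, as required.

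The main technical obstacle is the basic-set input in the bad-prime regime. An alternative route, inspired by the inductive argument on p.~367 of \cite{En00}, proceeds by induction on $|t|$: one handles the trivial case $t=1$ by taking $\gamma=\chi$, and in general reduces via Lusztig restriction to a proper $e$-split Levi subgroup $\bL<\bG$ whose dual still contains $st$ but centralises a smaller $\ell$-element, then transports the non-vanishing of the pairing across the Levi inclusion using the compatibility of $d^{1,\cdot}$ with Lusztig functors. Either approach rests on careful bookkeeping of how the decomposition map interacts with Lusztig series and with duality, which is precisely the ground prepared in Section~\ref{sec:general}.
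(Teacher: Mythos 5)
There is a genuine gap in your main argument: it rests on the assertion that $\cE(\bG^F,s)\cap\Irr(B)$ is a basic set for the block $B$, and that the bad-prime cases "are covered by" \cite{En00,En08,KM}. This is not so. The basic-set property for $\cE(\bG^F,\ell')$ is a theorem of Geck--Hiss only for \emph{good} primes (with a condition on the centre); for exceptional groups at bad primes --- which is exactly the regime this lemma is needed for --- it is a well-known open problem, and none of \cite{En00}, \cite{En08} or \cite{KM} proves it (indeed, for bad $\ell$ even the linear independence of $\{d^{1,\bG}(\gamma):\gamma\in\cE(\bG^F,\ell')\}$ can fail). So the first route is unavailable. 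A secondary, repairable, inaccuracy: your identity $\langle d^{1,\bG}(\chi),\gamma\rangle=\sum_\psi d_{\chi\psi}d_{\gamma\psi}$ treats $\operatorname{IBr}(B)$ as orthonormal for the scalar product on $\ell'$-class functions, which it is not (the Gram matrix is the inverse Cartan matrix); sharing a Brauer constituent does not by itself force the pairing to be non-zero. One should instead argue that if $d^{1,\bG}(\chi)$ were orthogonal to $d^{1,\bG}(\gamma)$ for all $\gamma$ in a spanning set, positive definiteness of the form would give $d^{1,\bG}(\chi)=0$, contradicting $\chi(1)\ne0$. Finally, your "alternative route" is only a sketch, and the reduction it proposes (induction on $|t|$ via proper $e$-split Levi subgroups) breaks down precisely when $C_{\bG^*}(st)$ is contained in no proper $e$-split Levi subgroup, which is the case one cannot avoid.

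The actual proof is much more elementary and uses no block-theoretic input at all. Write the regular character of $\bG^F$ as a rational combination of the $\RTG(\reg_{\bT^F})$ over $F$-stable maximal tori; since $\langle\chi,\reg_{\bG^F}\rangle=\chi(1)\ne0$, some torus $\bT$ has $\sRTG(\chi)(1)\ne0$. Decompose $\sRTG(\chi)=\sum_{\sigma,\tau}a_{\sigma,\tau}\,\sigma\tau$ with $\sigma$ of $\ell'$-order and $\tau$ of $\ell$-power order, and pick $\sigma$ with $\sum_\tau a_{\sigma,\tau}\ne0$. On the torus the decomposition map simply strips the $\ell$-part of a linear character, so adjunction and the commutation of $d^1$ with $\sRTG$ give $\langle d^{1,\bG}(\chi),\RTG(\sigma)\rangle=|\bT^F_\ell|^{-1}\sum_\tau a_{\sigma,\tau}\ne0$; any constituent $\gamma$ of $\RTG(\sigma)$ with non-zero pairing lies in $\cE(\bG^F,s)$ because $(\bT,\sigma)$ corresponds to the $\ell'$-part of $st$. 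If you want to salvage your approach you would have to prove the spanning half of the basic-set statement in the bad-prime setting, which is a substantially harder problem than the lemma itself.
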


\begin{proof}
The regular character of $\bG^F$ is a linear combination of class functions of
the form $\RTG(\reg_{\bT^F})$, where $\bT$ runs over the $F$-stable maximal tori
of $\bG$ and $\reg_{\bT^F}$ is the regular character of $\bT^F$ (see
\cite[Cor.~10.2.6]{DM20}). Hence, there exists some $F$-stable maximal torus
$\bT\le\bG$ such that 
$$a:= \sRTG(\chi)(1) = \blangle \sRTG(\chi),\reg_{\bT^F} \brangle
  = \blangle\chi,\RTG (\reg_{\bT^F})\brangle\ne0. $$
Let $\sRTG(\chi) = \sum_{\sigma, \tau} a_{\sigma,\tau}\sigma\tau$ where
$\sigma$ runs over $\Irr(\bT^F)_{\ell'}$ and $\tau$ runs over
$\Irr(\bT^F)_\ell$ and note that $a = \sum_{\sigma,\tau}a_{\sigma,\tau}$.
Let $\sigma\in\Irr(\bT^F)_{\ell'}$ with $\sum_\tau a_{\sigma,\tau}\ne 0$.
For any $\tau\in\Irr(\bT^F)_\ell$, we have
$d^{1,\bT}(\sigma\tau) = d^{1,\bT}(\sigma)$ and hence 
$$\blangle d^{1,\bG}(\chi),\RTG(\sigma)\brangle
  = \blangle d^{1, \bT}(\sRTG(\chi)), \sigma \brangle
  = \frac{1}{|\bT^F_\ell|} \sum_{\tau} a_{\sigma, \tau} \ne 0. $$
Thus, there is a constituent $\gamma$ of $\RTG(\sigma)$ with
$\blangle d^{1,\bG}(\chi),\gamma\brangle\ne 0$. By the above displayed
equation, we have $a_{\sigma,\tau}\ne 0$ for some $\tau\in \Irr(\bT^F)_\ell$,
implying $\gamma\in\cE(\bG^F,s)$ as desired.
\end{proof}

Now assume $\bG$ has connected centre. For any $\ell$-element $t\in\bC^{*F}$
recall the Digne--Michel Jordan decomposition
$$\pi_{st}^\bG:\cE(\bG^F,st)\longrightarrow\cE(C_{\bG^*}(st)^F,1)$$
discussed above. For any maximal torus $\bT^*\le C_{\bG^*}(s)$ denote by
$\hat s\in\Irr(\bT^F)$ the linear character associated to $s$ by duality
\cite[(8.14)]{CE}.

\begin{lem}   \label{lem:RTG}
 Let $t\in C_{\bG^*}(s)^F$ be an $\ell$-element and $\chi\in\cE(\bG^F,st)$. Let
 $\bT^*\le C_{\bG^*}(st)$ be a maximal torus such that
 $\langle\pi_{st}^\bG(\chi),R_{\bT^*}^{C_{\bG^*}(st)}(1)\rangle\ne0$ and
 $\bT^*$ is $C_{\bG^*}(st)^F$-conjugate to any of its $\bG^{*F}$-conjugates
 contained in $C_{\bG^*}(st)$.
 Then $\chi$ lies in the same $\ell$-block of $\bG^F$ as some
 constituent of $\RTG(\hat s)$.
\end{lem}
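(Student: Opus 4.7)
The plan is to upgrade Lemma~\ref{lem:in Levi} by showing that $\chi$ is an honest constituent of $\RTG(\widehat{st})$, and then converting this into a Brauer-character statement via a calculation analogous to the proof of Lemma~\ref{lem:in Levi}, but specialized to $\sigma=\hat s$.

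\textbf{Step 1 (Jordan decomposition).} Since $st\in Z(C_{\bG^*}(st))\le\bT^*$, the $\ell'$- and $\ell$-parts $s$, $t$ of $st$ both lie in $\bT^{*F}$, hence $\widehat{st}\in\Irr(\bT^F)$ is defined and factors as $\widehat{st}=\hat s\hat t$ with $\hat s\in\Irr(\bT^F)_{\ell'}$ and $\hat t\in\Irr(\bT^F)_\ell$. Let $\bT\le\bG$ be an $F$-stable maximal torus dual to $\bT^*$. By the compatibility of Jordan decomposition with Deligne--Lusztig induction (\cite[Thm~7.1]{DM90}) applied to the semisimple element $st$, together with the conjugacy hypothesis on $\bT^*$ which forces the sum over $C_{\bG^*}(st)^F$-classes of $\bG^{*F}$-conjugates of $\bT^*$ in $C_{\bG^*}(st)$ to collapse to a single term, we obtain
\[
\langle\chi,\RTG(\widehat{st})\rangle_{\bG^F}
   =\epsilon_\bG\epsilon_{C_{\bG^*}(st)}\langle\pi_{st}^\bG(\chi),R_{\bT^*}^{C_{\bG^*}(st)}(1)\rangle\ne0.
\]

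\textbf{Step 2 (reduction to a Brauer sum).} Expanding $\sRTG(\chi)=\sum_{\sigma,\tau}a_{\sigma,\tau}\,\sigma\tau$ as in the proof of Lemma~\ref{lem:in Levi}, with $\sigma\in\Irr(\bT^F)_{\ell'}$ and $\tau\in\Irr(\bT^F)_\ell$, Step~1 gives $a_{\hat s,\hat t}\ne0$. Since $d^{1,\bG}$ commutes with $\sRTG$ and $\hat t|_{\bT^F_{\ell'}}=1$, a direct evaluation of the Brauer-character scalar product on the abelian group $\bT^F$ yields
\[
\langle d^{1,\bG}(\chi),d^{1,\bG}(\RTG(\hat s))\rangle_{\mathrm{Br},\bG^F}
=\sum_{\tau\in\Irr(\bT^F)_\ell}a_{\hat s,\tau}.
\]
If this sum is nonzero, then $d^{1,\bG}(\chi)$ and $d^{1,\bG}(\RTG(\hat s))$ share an irreducible Brauer constituent, placing $\chi$ into the same $\ell$-block as some ordinary constituent of $\RTG(\hat s)$, which is the desired conclusion.

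\textbf{Step 3 (nonvanishing; the main step).} The coefficient $a_{\hat s,\tau}=\langle\chi,\RTG(\hat s\tau)\rangle$ vanishes unless $\chi\in\cE(\bG^F,st_\tau)$, equivalently unless $st_\tau$ is $\bG^{*F}$-conjugate to $st$. For each such $\tau$, pick $g^*\in\bG^{*F}$ with $g^{*-1}(st)g^*=st_\tau$; necessarily $g^*\in C_{\bG^*}(s)^F$. Conjugating the conjugacy hypothesis by $g^{*-1}$ transports it to the analogous statement in $C_{\bG^*}(st_\tau)$ for the maximal torus $\bT^{*\prime}:=g^{*-1}\bT^*g^*$. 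Applying the equivariance relation~$(\dagger)$ to relate $\pi_{st_\tau}^\bG$ to $\pi_{st}^\bG$, and Jordan decomposition to the triple $(\bG,st_\tau,\bT^{*\prime})$, and using that $\epsilon_{C_{\bG^*}(st_\tau)}=\epsilon_{C_{\bG^*}(st)}$ is constant on the $\bG^{*F}$-orbit of $st$, we find
\[
a_{\hat s,\tau}=\epsilon_\bG\epsilon_{C_{\bG^*}(st)}\langle\pi_{st}^\bG(\chi),R_{\bT^*}^{C_{\bG^*}(st)}(1)\rangle=a_{\hat s,\hat t},
\]
independent of $\tau$. Consequently every nonzero $a_{\hat s,\tau}$ equals the same nonzero value, and the sum in Step~2 is a positive integer multiple of $a_{\hat s,\hat t}$, hence nonzero.

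The main obstacle is Step~3: one must check that Jordan decomposition behaves $\bG^{*F}$-equivariantly across the various conjugates of $st$, using the transport of the conjugacy hypothesis under conjugation by $g^*$ together with the naturality statement $(\dagger)$, to ensure that the contributions $a_{\hat s,\tau}$ add constructively rather than cancel.
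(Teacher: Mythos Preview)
Your proof is correct and follows the same overall plan as the paper: show $\chi$ is a constituent of $\RTG(\widehat{st})$, then prove the relevant coefficients in $\sRTG(\chi)$ all have the same sign so that the Brauer inner product with $\RTG(\hat s)$ does not vanish. The difference is only in how non-cancellation is established. The paper invokes \cite[Lemma~21]{En00}, which under the conjugacy hypothesis gives $\sRTG(\chi)=m\sum_w\widehat{st}^{\,w}$ with a single multiplicity~$m$, and then observes that $\sum_w\langle d^{1,\bT}(\hat s)^w,d^{1,\bT}(\hat s)\rangle>0$. You instead compute each $a_{\hat s,\tau}$ directly via Jordan decomposition and the equivariance relation~$(\dagger)$, showing they all equal $a_{\hat s,\hat t}$; this is effectively a self-contained reproof of the piece of \cite[Lemma~21]{En00} that is actually needed. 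Both routes use the conjugacy hypothesis at the same place, namely to identify $R_{g^*\bT^*g^{*-1}}^{C_{\bG^*}(st)}(1)$ with $R_{\bT^*}^{C_{\bG^*}(st)}(1)$.

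Two minor remarks on presentation. In Step~1 the conjugacy hypothesis is not needed: property~(iii) of \cite[Thm~7.1]{DM90} already gives the identity for a single torus without any sum over conjugates. In Step~3, the torus you should be conjugating $\bT^*$ by to land back in $C_{\bG^*}(st)$ is $g^*$ (yielding $g^*\bT^*g^{*-1}$), not $g^{*-1}$ as written; with that correction the conjugacy hypothesis applies exactly as you intend.
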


\begin{proof}
Since $\langle\pi_{st}^\bG(\chi),R_{\bT^*}^{C_{\bG^*}(st)}(1)\rangle\ne0$, by
the properties of Jordan decomposition we have
$\langle\chi,\RTG(\wst)\rangle\ne0$, for $\bT$ dual to $\bT^*$. Now under our
assumption on $\bT^*$, by \cite[Lemma~21]{En00} we have
$$\sRTG(\chi)=m\ \sum_{w}\wst^w$$
for $w$ ranging over certain elements of the relative Weyl group of $\bT$,
including $w=1$, and where $m\ne0$. Then
$$\blangle d^{1,\bG}(\chi),\RTG(\hat s)\brangle
  =\blangle d^{1,\bT}(\sRTG(\chi)),d^{1,\bT}(\hat s)\brangle
  =m\cdot \sum_{w}\blangle d^{1,\bT}(\hat s)^w,d^{1,\bT}(\hat s)\brangle\ne0.
$$
Thus $\chi$ lies in the $\ell$-block of some constituent of $\RTG(\hat s)$.
\end{proof}

The next result uses the notation $\bGa$, $\bGb$ introduced in
\cite[Not.~2.3]{CE94}. Recall that $Z^\circ(\bG)\le\bG_\ba$ while $\bG_\bb$ is
semisimple.

\begin{lem}   \label{lem:Ga}
 Suppose that $Z(\bG)$ is connected, $\bG=\bG_\ba $ and that $\ell$ is odd. Let
 $s\in\bG^{*F}$ be a semisimple $\ell'$-element. Then there is a unique
 $\bG^F$-conjugacy class of $e$-Jordan cuspidal pairs below $(\bG^F,s)$ and a
 unique $\ell$-block in $\cE_\ell(\bG^F,s)$, say $b$. Further, denoting by
 $(\bL,\la)$ an $e$-Jordan cuspidal pair below $(\bG^F,s)$, every element of
 $\Irr(b)\cap\cE(\bG^F,s)$ is a constituent of $\RLG(\la)$.
\end{lem}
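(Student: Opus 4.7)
The plan is to reduce everything to unipotent characters of the centralizer $\bC^*:=C_{\bG^*}(s)$, which is a connected reductive Levi of $\bG^*$ by the connected centre hypothesis, and then transfer back via Jordan decomposition together with Lemmas~\ref{lem:in Levi} and~\ref{lem:RTG}.

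First, by Proposition~\ref{prop:En 15} the $\bG^F$-classes of $e$-Jordan cuspidal pairs below $(\bG^F,s)$ correspond bijectively to $\bC^{*F}$-classes of unipotent $e$-cuspidal pairs of $\bC^*$. Under the hypothesis $\bG=\bG_\ba$ and $\ell$ odd, I would verify that $\bC^*$ inherits the $\ba$-property of \cite[Not.~2.3]{CE94}; from the Cabanes--Enguehard structural analysis in type $\ba$ it then follows that $\bC^*$ admits only a single $\bC^{*F}$-class of unipotent $e$-cuspidal pairs. This yields the first uniqueness assertion and produces the unipotent $e$-cuspidal pair $(C_{\bL^*}(s),\la_s)$ of $\bC^*$ with $\la_s=\pi_s^\bL(\la)$ corresponding to $(\bL,\la)$.

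Next, the $e$-Harish-Chandra theorem in type~$\ba$ at odd $\ell$ gives that every unipotent character of $\bC^{*F}$ is a constituent of $R_{C_{\bL^*}(s)}^{\bC^*}(\la_s)$. Transferring through Jordan decomposition, using the compatibility of Lusztig induction with Jordan decomposition recalled in Section~\ref{subsec:compatibility} (cf.\ \cite[Thm~7.1(vi)]{DM90}), shows that every $\chi\in\cE(\bG^F,s)$ appears as a constituent of $\RLG(\la)$, which is the last assertion of the Lemma. To place everything into one block, I would pick an $F$-stable maximal torus $\bT\le\bL$ satisfying the hypothesis of Lemma~\ref{lem:RTG} (which holds for a suitable conjugate, once the structural analysis of the previous paragraph guarantees a single $\bC^{*F}$-class for the relevant dual torus). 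Lemma~\ref{lem:RTG} then places every such $\chi$ into the $\ell$-block of some constituent of $\RTG(\hat s)$; hence all of $\cE(\bG^F,s)$ lies in a single $\ell$-block $b$.

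Finally, to extend to $\cE_\ell(\bG^F,s)$, I invoke Lemma~\ref{lem:in Levi}: for any $\ell$-element $t\in\bC^{*F}$ and $\chi\in\cE(\bG^F,st)$ there is $\gamma\in\cE(\bG^F,s)$ with $\langle d^{1,\bG}(\chi),\gamma\rangle\ne0$, forcing $\chi$ into the $\ell$-block of $\gamma$ and hence into $b$. The main obstacle I foresee is the structural claim in the second paragraph --- that $\bG=\bG_\ba$ really does force single-class uniqueness of unipotent $e$-cuspidal pairs inside the centralizer $\bC^*$; once this is established, the remaining steps are a systematic combination of Lemmas~\ref{lem:in Levi} and~\ref{lem:RTG} with the standard compatibilities of Jordan decomposition and Lusztig induction.
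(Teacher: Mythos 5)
Your first two paragraphs essentially reproduce the paper's argument: the reduction to $\bC^*=C_{\bG^*}(s)$ via Proposition~\ref{prop:En 15}, the fact that $C_{\bG^*}(s)_\ba=C_{\bG^*}(s)$ forces a single $\bC^{*F}$-class of unipotent $e$-cuspidal pairs (this is \cite[Prop.~3.3]{CE94}), and the transfer of the resulting $e$-Harish-Chandra decomposition back to $\cE(\bG^F,s)$ using the commutation of Jordan decomposition with Lusztig induction in type~$A$. That gives the first and the last assertions of the lemma correctly, and your final paragraph (using Lemma~\ref{lem:in Levi} to pull every $\cE(\bG^F,st)$ into the block meeting $\cE(\bG^F,s)$) is a legitimate way to pass from ``$\cE(\bG^F,s)$ lies in one block'' to ``$\cE_\ell(\bG^F,s)$ is one block''.

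The gap is in the middle assertion, where you conclude from Lemma~\ref{lem:RTG} that ``all of $\cE(\bG^F,s)$ lies in a single $\ell$-block''. Lemma~\ref{lem:RTG} only places each $\chi$ in the block of \emph{some} constituent of $\RTG(\hat s)$; it does not say which one, and the constituents of a single $\RTG(\theta)$ with $\theta$ of $\ell'$-order do in general distribute over several $\ell$-blocks (already $R_{\bT}^{\bG}(1)$ for a Coxeter torus of $Sp_4(q)$ and $\ell\mid q-1$ has constituents in two blocks). So the step ``hence all of $\cE(\bG^F,s)$ lies in a single block'' needs exactly the input you have not supplied: that the whole $e$-Harish-Chandra series above $(\bL,\la)$ — equivalently all constituents of $\RTG(\hat s)$ — is contained in one block. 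Trying to get this from your second paragraph is circular, since that is precisely the assertion being proved. The paper closes this by invoking the main theorem of \cite{CE99} (the restriction $\ell\ge5$ there being harmless in the type-$\ba$ situation), or alternatively the main theorem of \cite{CE94} applied to $\bC^*$ combined with the Bonnaf\'e--Rouquier Morita equivalence, which applies because $C_{\bG^*}(s)$ is a Levi subgroup of $\bG^*$ when all components are of type~$A$. You need one of these (or Theorem~\ref{thm:[KM15, Thm A]}(b), if its hypotheses are met in your setting) to make the block-uniqueness step go through; with that input Lemma~\ref{lem:RTG} becomes unnecessary, as does the verification of its torus-conjugacy hypothesis which you currently leave unaddressed.
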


\begin{proof}
We have $C_{\bG^*}(s)_\ba = C_{\bG^*}(s)$ (see for instance remark after
\cite[Not.~2.3]{CE94}), hence by \cite[Prop.~3.3] {CE94}, there is only one
class of unipotent $e$-cuspidal pairs in $C_{\bG^*}(s)$. The first assertion
now follows from Proposition~\ref{prop:En 15}. The second assertion is a
consequence of the first, the main theorem of \cite{CE99}, and the fact that
an $e$-cuspidal pair is also $e$-Jordan cuspidal (see \cite[Sec.~1.3]{CE99}).
Here we note that in \cite{CE99} it is assumed that $\ell\geq 5$, but this is
not necessary for the case that we are considering (the second assertion can
also be obtained from the main theorem of \cite{CE94} in combination with the
Bonnaf\'e--Rouquier Jordan decomposition theorem since $C_{\bG^*}(s)$ is a Levi
subgroup of $\bG^*$ in our case). The final assertion follows from the main
theorem of \cite{CE94} applied to $C_{\bG^*}(s)$ and the fact that Jordan
decomposition commutes with Lusztig induction in groups of type $A$ (see
\cite[Thm~4.7.2]{GM20}).
\end{proof} 

For later use, we record here the following structural result.

\begin{prop}   \label{prop:3D4}
 Let $\bH$ be connected reductive with $[\bH,\bH]$ simple, $F:\bH\to\bH$ a
 Frobenius map, $\bG\le\bH$ an $F$-stable Levi subgroup and let $s\in\bG^*$ be
 semisimple. If $C_{\bG^*}(s)$ has an $e$-split Levi subgroup, $e\in\{1,2\}$,
 whose $F$-fixed points have a component of type $\tw3D_4$, then one of the
 following hold:
 \begin{enumerate}[\rm(1)]
  \item $[\bH,\bH]=[\bG,\bG]$ is of type $D_4$, $F$ induces triality and
   $s$ is central; or
  \item $\bH$ is of exceptional type, and either $\bG$ is also of exceptional
   type $E_n$, $6\le n\le 8$, or $\bG^F$ has a component of type $\tw3D_4$ and
   $s$ is central in $\bG^*$ or non-isolated.
 \end{enumerate}
\end{prop}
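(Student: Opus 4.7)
The plan is a case analysis on the isogeny type of $[\bH,\bH]$. Let $\bM^*\le C_{\bG^*}(s)$ be the given $e$-split Levi subgroup. The component of $\bM^{*F}$ of type $\tw3D_4$ must correspond to a single $F$-stable simple factor $\bM_0^*$ of $[\bM^*,\bM^*]$ of type $D_4$ on which $F$ induces a triality (order-$3$ diagram) automorphism, since three cyclically permuted $D_4$-factors would yield $D_4(q^3)$ rather than $\tw3D_4(q)$. Pulling back through $\bM_0^*\le C_{\bG^*}(s)\le\bG^*\le\bH^*$, this triality must be realised as the composition of the outer part of $F$ with an element of the ambient Weyl group $W(\bH^*)$ normalising the $D_4$-subsystem.

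The decisive observation is that the triality of $D_4$ can only be realised by Weyl group elements of root systems of type $D_4$, $F_4$, $E_6$, $E_7$ or $E_8$. In type $A_n$ no $D_4$-subsystem exists; in types $B_n$, $C_n$, or $D_n$ with $n>4$, the normaliser of any $D_4$-subsystem in the ambient Weyl group induces only the order-$2$ diagram automorphism. Combined with the fact that the outer twist of $F$ on $\bH^*$ has order at most $2$ in all classical cases, this rules out every classical $[\bH,\bH]$ except type $D_4$.

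If $[\bH,\bH]$ is of type $D_4$, then any proper Levi $\bG<\bH$ has derived subgroup a product of type-$A$ factors and contains no $D_4$-subsystem; so $\bG=\bH$, and the only $D_4$-subsystem of $\bG^*$ is $[\bG^*,\bG^*]$ itself. Hence $\bM_0^*=[\bG^*,\bG^*]\le C_{\bG^*}(s)$ forces $s\in Z(\bG^*)$, and the triality is induced by $F$ on $\bH$, giving conclusion~(1). If $[\bH,\bH]$ is of exceptional type, then $G_2$ contains no $D_4$-subsystem and is excluded, and $F_4$ is excluded by direct inspection of its Levi subgroups and connected semisimple centralisers (of types $F_4$, $B_4$, $A_1\times C_3$, $A_2\times A_2$, $B_3$, $C_3$ and smaller), none of which supports a triality twist for $e\in\{1,2\}$. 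Hence $[\bH,\bH]$ is of type $E_n$ for some $n\in\{6,7,8\}$. If $[\bG,\bG]$ is also of exceptional type it must be of type $E_m$ with $6\le m\le n$ (since $E_n$ has no Levi of type $G_2$ or $F_4$), giving the first alternative of~(2). Otherwise $[\bG,\bG]$ is of classical type, and combining the list of Levi subgroups of $E_n$ carrying a $D_4$ factor (such as $D_4\times A_1$ in $E_6$ or $D_4\times A_2$ in $E_7$) with the classification of connected semisimple centralisers in $E_n^*$, one checks that a $\tw3D_4$-component of $\bM^{*F}$ can arise only when $[\bG,\bG]$ itself has a $D_4$ factor (so $\bG^F$ has a $\tw3D_4$ component) and when $s$ is either central in $\bG^*$ or non-isolated.

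The main obstacle is this last sub-case: for exceptional $\bH$ with $\bG$ of classical type, one must rule out that any non-central isolated $s\in\bG^*$ gives rise to a triality $F$-twist on some $e$-split Levi of $C_{\bG^*}(s)$ for $e\in\{1,2\}$. This can be carried out by going through the explicit list of isolated classes in the $D_4$-containing classical Levi subgroups of $E_n$ together with their connected centralisers, verifying in each case that the restriction $e\in\{1,2\}$ precludes an order-$3$ twist on a $D_4$ sub-factor; the computations can be aided by \Chevie.
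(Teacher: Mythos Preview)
Your overall strategy—case analysis on the type of $[\bH,\bH]$, tracking a $D_4$ subsystem carrying a triality twist—matches the paper's, and your treatment of classical $\bH$ and of the case $[\bH,\bH]=D_4$ is essentially the same. Your assertion that in $W(B_n)$, $W(C_n)$, $W(D_n)$ (for $n>4$) no element induces triality on a $D_4$ subsystem is correct, but the paper grounds the analogous step in Howlett's description of normalisers of parabolic subgroups (applied to $W_I=W(\bG)$ inside $W(\bH)$), together with the fact (\cite{FS89}) that centralisers of semisimple elements in finite orthogonal groups have only classical components, and so do their $e$-split Levi subgroups. This two-step argument—first constrain $\bG^F$, then constrain $C_{\bG^*}(s)$—avoids having to discuss non-parabolic $D_4$ subsystems directly.

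The real difference is in the final sub-case, exceptional $\bH$ with $\bG$ of classical type, where you defer to an explicit enumeration of isolated classes and centralisers (possibly via \Chevie). The paper bypasses this entirely with a short recursive argument: by rank, $[\bG,\bG]$ has at most one $D_n$-factor ($4\le n\le 8$) and type~$A$ factors otherwise; there must be exactly one $D_n$-factor since type~$A$ centralisers and their Levi subgroups never produce a $\tw3D_4$ component. Now apply the already-proven classical case to this $D_n$-factor: if its $F$-fixed points are an orthogonal group, no $\tw3D_4$ arises in any centraliser, contradiction; hence $n=4$ and $F$ induces triality on it, so $\bG^F$ has a $\tw3D_4$ component. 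Finally, since the remaining factors of $\bG^*$ are of type~$A$, their only isolated semisimple elements are central, forcing $s$ to be central in $\bG^*$ or non-isolated. This gives conclusion~(2) with no computation; you should replace your proposed case check by this argument.
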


\begin{proof}
Clearly we may replace $\bH$ by $[\bH,\bH]$ and $\bG$ by $\bG\cap[\bH,\bH]$.
Now first assume $\bH$ is of classical type. Let $\bT_0$ be a maximally split
torus of $\bH$, with Weyl group $W$, set of simple reflections $S$, and $F$
acting by $\sigma$ on $W$. Then there is some subset
$I\subseteq S$ and $w\in N_{W}(W_I)$ such that $\bG$ has Weyl group $W_I$ and
$F$ acts by $w\sigma$ on $W_I$. Now by the explicit description in
\cite[p.~71]{How} the normalisers of parabolic subgroups of $W$ of type $D_n$
do not induce a triality automorphism on any simple factor, so nor does
$w\sigma$ unless $\sigma$ itself is triality. Thus, except we are in the
excluded case, $\bG$ is a product of groups of classical type such that $\bG^F$
has no component of type $\tw3D_4$. But the centralisers of semisimple elements
in finite classical groups only possess classical components
(see e.g.~\cite[Sect.~1]{FS89}), and their $e$-split Levi subgroups then have
the same property.   \par
Now assume $\bH$ is of exceptional type and $C_{\bG^*}(s)$ has an
$e$-split Levi subgroup whose $F$-fixed points have a component of type
$\tw3D_4$. If $\bG$ itself is not of exceptional type, then by rank
considerations it has at most one factor of type $D_n$, where $4\le n\le8$, and
type $A$-factors otherwise. In fact, there must be exactly one type $D_n$-factor
since Levi subgroups of element centralisers in type $A$-groups do not have
$\tw3D_4$-components. The $F$-fixed points of the unique $D_n$-factor are
either finite orthogonal groups, but as seen above these do not possess
semisimple elements with suitable Levi subgroups, or we have $n=4$ and $F$
induces triality, so $\bG^F$ has a component of type $\tw3D_4$. As all other
components of $\bG$ are of type $A$, their only isolated elements are central,
whence our claim.
\end{proof}

\begin{lem}   \label{lem:3D4b}
 Let $\bG$ be simple of type $D_4$ in characteristic different from~$3$ and
 $F:\bG\to\bG$ with $\bG^F=\tw3D_4(q)$. Let $\bT\le\bG$ be the centraliser of
 a Sylow $e_3(q)$-torus. Then for all $3$-elements $1\ne t\in\bG^{*F}$,
 $(\bT,1)$ is the unique unipotent $e_3(q)$-cuspidal pair of $C_{\bG^*}(t)$ (up
 to conjugation).
\end{lem}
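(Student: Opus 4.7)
My approach is a case-by-case analysis of the centraliser $\bL := C_{\bG^*}(t)$ for a non-trivial $3$-element $t \in \bG^{*F}$. Since $3$ is a good prime for $D_4$, the centraliser $\bL$ is connected. Inspection of the extended Dynkin diagram of $D_4$ shows that the only proper maximal-rank closed subsystem of $D_4$ not of Levi type is $A_1^4$, arising from the unique class of isolated involutions; hence no non-trivial $3$-element is isolated in $\bG^*$, and $\bL$ is a proper $F$-stable Levi subgroup of $\bG^*$.

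I would then enumerate the possible isomorphism types of $\bL^F$, using Kac coordinates for order-$3$ elements in $D_4$ together with the fact that $F$ acts on the Dynkin diagram by triality (fixing the central node and cyclically permuting the three leaves). The derived group $[\bL,\bL]^F$ is, up to central tori, one of: $A_1(q)$ (from the central node); $\operatorname{SL}_2(q^3)$ (from the three leaves fused by triality); a split $A_2$-Levi isomorphic to $\operatorname{GL}_3(q)$ sitting inside the triality-fixed $G_2$-subsystem of $D_4$; or a suitably twisted $A_3$-Levi. An $F$-fixedness analysis shows, crucially, that the $A_2$-Levi arises only when $q\equiv 1\pmod 3$, so that in this case $e_3(q)=1$.

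It remains to verify, in each case, that $(\bT,1)$ is the unique unipotent $e_3(q)$-cuspidal pair of $\bL^F$ up to $\bL^F$-conjugation. This splits into two checks: (i)~the minimal $e_3(q)$-split Levi subgroup of $\bL$ is a maximal torus, so that $(\bT,1)$ is indeed such a pair; and (ii)~no non-trivial unipotent character of any $e_3(q)$-split Levi of $\bL$ is $e_3(q)$-cuspidal. Check~(i) is a direct calculation of the $\Phi_{e_3(q)}$-multiplicity in $|\bL^F|$ together with verifying that no root of $\bL$ restricts trivially to the Sylow $e_3(q)$-torus. Check~(ii) follows from the parametrisation of unipotent $e$-cuspidal characters of $\operatorname{GL}_n(q^d)$ by $e$-core partitions of $n$: for $n\le 4$ and $e\in\{1,2\}$ the only non-trivial such partition is $(2,1)\vdash 3$ at $e=2$, which would produce the cuspidal $\chi^{(2,1)}$ of $\operatorname{GL}_3(q)$; but the $A_2$-Levi supporting it occurs only when $q\equiv 1\pmod 3$, in which case $e_3(q)=1\ne 2$, so this dangerous case is excluded. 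The main obstacle is making the Kac-coordinate enumeration precise and carefully tracking the interplay between the triality twist and the residue of $q$ modulo~$3$; once that is done, the remaining verifications in~(i) and~(ii) are routine order computations and standard facts from type-$A$ Harish--Chandra theory.
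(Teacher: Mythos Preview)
Your approach is essentially the paper's: both arguments amount to a case-by-case check that every centraliser $C_{\bG^*}(t)$ of a non-trivial $3$-element is a proper Levi subgroup with only type-$A$ factors, and that the relevant $e_3(q)$-cuspidal unipotent pair is unique in each case. The paper compresses this to two sentences by citing Table~2.2 of Deriziotis--Michler for the list of centralisers and invoking the fact that such type-$A$ centralisers (which all contain a conjugate of~$\bT$) have $(\bT,1)$ as their unique unipotent $e_3(q)$-cuspidal pair; your plan unpacks the same verification via Kac coordinates and explicit $e$-core combinatorics.

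One correction to your enumeration: there is \emph{no} $F$-stable Levi subgroup of type $A_3$ in $D_4$ with triality. The three $A_3$ parabolics (obtained by deleting one of the outer nodes) lie in three distinct $W(D_4)$-orbits, and triality permutes these orbits cyclically, so none is $F$-stable. You should simply drop that case. Conversely, your congruence argument for the $A_2$ case is the crux and is correct, though you should phrase it for both rational forms: a split $A_2(q)$-factor can only centralise a $3$-element when $3\mid q-1$ (so $e_3(q)=1$, and $\operatorname{GL}_3(q)$ has no non-trivial $1$-cuspidal unipotent), while a twisted ${}^2\!A_2(q)$-factor can only do so when $3\mid q+1$ (so $e_3(q)=2$, and by Ennola duality $\operatorname{GU}_3(q)$ has no non-trivial $2$-cuspidal unipotent). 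This is precisely the content hidden behind the paper's ``the claim follows''.
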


\begin{proof}
By inspection of \cite[Tab.~2.2]{DM87}, for all $3$-elements $t\ne 1$,
$C_{\bG^*}(t)=C_{\bG^*}(t)_\ba$ contains a conjugate of $\bT$. The
claim follows.
\end{proof}

\section{On the $\ell$-block distribution of $\ell'$-characters}   \label{sec:extensions}

In this section we formulate some straightforward extensions of results
obtained in our predecessor papers \cite{KM} and \cite{KM15} to the following
setting: let $\bX$ be connected reductive with connected centre such that
$[\bX,\bX]$ is simple of simply connected type, with a Frobenius map
$F:\bX\to\bX$ with respect to an $\FF_q$-structure. Let $\bG\le\bX$ be an
$F$-stable Levi subgroup. Thus, in particular, all centralisers of semisimple
elements in $\bG^*$ are connected.
Let $\ell{\not|}q$ be a prime and set $e=e_\ell(q)$.
\par
Recall that a character $\chi\in\cE(\bG^F,\ell')$ is
called \emph{$e$-Jordan quasi-central cuspidal} if $\chi$ is $e$-Jordan
cuspidal and its Jordan correspondent is of quasi-central $\ell$-defect (see
\cite[Def.~2.12]{KM15}). An \emph{$e$-Jordan quasi-central cuspidal pair of
$\bG$} is a pair $(\bL,\la)$ such that $\bL$ is an $e$-split Levi subgroup of
$\bG$ and $\la\in\cE(\bL^F,\ell')$ is an $e$-Jordan quasi-central cuspidal
character of $\bL^F$. 

\subsection{Parametrisation of $\ell$-blocks in connected centre groups}
Parts~(a), (b) and~(c) of the following extend Theorems~1.2(a) and~1.4 of
\cite{KM} whilst parts~(d) and~(e) were implicit from the computations made in
\cite{KM} (where $\bX$ was assumed simple of simply connected exceptional type):

\begin{thm}   \label{thm:[KM13, 1.2]}
 Assume $\bG=\bX$ above and that $\ell$ is bad for $\bG$, and let
 $s\in\bG^{*F}$ be an isolated semisimple $\ell'$-element. Then:
 \begin{enumerate}[\rm(a)]
  \item There is a natural bijection
   $b_{\bG^F}(\bL,\la)\longleftrightarrow(\bL,\la)$ between $\ell$-blocks of
   $\bG^F$ in $\cE_\ell(\bG^F,s)$ and $e$-cuspidal pairs $(\bL,\la)$ below
   $(\bG^F,s)$ of quasi-central $\ell$-defect.
  \item The sets $\cE(\bG^F,(\bL,\la))$, where $(\bL,\la)$ runs over a set of
   representatives of the $\bG^F$-classes of $e$-cuspidal pairs below
   $(\bG^F,s)$, partition $\cE(\bG^F,s)$.
  \item $\bG^F$ satisfies an $e$-Harish-Chandra theory above each $e$-cuspidal
   pair $(\bL,\la)$ below $(\bG^F\!,s)$.\!\!\!
  \item If two $e$-cuspidal pairs below $(\bG^F,s)$ define the same block
   of $\bG^F$, then their Jordan correspondents define the same unipotent block
   of $C_{\bG^*}(s)^F$, up to twins.
  \item For any $e$-cuspidal pair $(\bL,\la)$ below $(\bG^F,s)$, Jordan
   decomposition commutes with $R_\bL^\bG$ up to twins.
  \item Let $\bL$ be an $F$-stable Levi subgroup of $\bG$ such that $s\in\bL^*$
   and let
   $\la\in\cE(\bL^F,s)$. Then $(\bL,\la)$ is an $e$-Jordan cuspidal pair of
   $\bG$ if and only if $(\bL,\la)$ is an $e$-cuspidal pair of $\bG$. Further,
   if $(\bL,\la)$ is an $e$-Jordan cuspidal pair of $\bG$, then $(\bL,\la)$ is
   $e$-Jordan quasi-central cuspidal if and only if $\la$ is of quasi-central
   $\ell$-defect.
 \end{enumerate}
\end{thm}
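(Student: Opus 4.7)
The plan is to reduce each assertion to the corresponding result in~\cite{KM} for the simple simply connected group $\bH:=[\bX,\bX]$, exploiting that $\bH\hookrightarrow\bX$ is a regular embedding (dualising to a surjection $\bX^*\twoheadrightarrow\bH^*$ with central torus kernel). The image $\bar s\in\bH^{*F}$ of $s$ is again an isolated semisimple $\ell'$-element (isolatedness being preserved since the kernel is central), and $\ell$ remains bad for $\bH$. Thus Theorems~1.2(a) and~1.4 of~\cite{KM} apply to $(\bH,\bar s)$ and yield the block/cuspidal-pair bijection, the partition of the Lusztig series, and $e$-Harish-Chandra theory at the level of $\bH^F$.

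Lifting these to $\bX^F$ uses the standard Clifford theory for regular embeddings: $\bH^F\triangleleft\bX^F$ with abelian quotient, every constituent of $\Res^{\bX^F}_{\bH^F}\chi$ for $\chi\in\cE(\bX^F,s)$ lies in $\cE(\bH^F,\bar s)$, every $\ell$-block of $\bX^F$ covers an $\bX^F$-orbit of $\ell$-blocks of $\bH^F$ in that series, $e$-split Levi subgroups of $\bX$ restrict to $e$-split Levi subgroups of $\bH$, and Lusztig induction commutes with the embedding. These compatibilities, combined with~\cite{KM}, give parts~(a), (b), and~(c) for $\bX^F$. Parts~(d) and~(e) then follow from the explicit case-by-case identifications of Jordan correspondents and induced blocks carried out in~\cite{KM}, the ``up to twins'' qualifier absorbing the genuine $E_8$ ambiguity treated in Section~\ref{subsec:t-twins}.

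For part~(f), the implication ``$e$-cuspidal $\Rightarrow$ $e$-Jordan cuspidal'' is \cite[Prop.~1.10(ii)]{CE99}, and the converse in our setting (isolated $s$, bad $\ell$, exceptional type) is among the cases explicitly listed in~\cite[Rem.~2.2 and Sec.~4]{KM15}. The characterisation of $e$-Jordan quasi-central cuspidal pairs by quasi-central $\ell$-defect of $\la$ is immediate from the definition together with Lemma~\ref{lem:qcisc}, which identifies ``quasi-central $\ell$-defect'' and ``central $\ell$-defect'' for unipotent characters. The main obstacle throughout is parts~(d) and~(e): tracking Jordan correspondents and induced blocks in parallel requires revisiting the case analysis of~\cite{KM} and, crucially, handling the twin-block ambiguity for $E_8(q)$ at $q\equiv-1\bmod\ell$, the sole genuinely delicate point.
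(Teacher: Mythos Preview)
Your overall strategy—pass to $\bH=[\bX,\bX]$ via the regular embedding and invoke \cite{KM}—is indeed the backbone of the paper's argument for non-central $s$, and for types $G_2$, $F_4$, $E_8$ it is even simpler since then $\bX=\bH\times Z^\circ(\bX)$ is a direct product. For types $E_6$ and $E_7$ the paper carries out precisely the lift you sketch, but not as a one-line Clifford-theoretic appeal: Proposition~\ref{prop:E6 RLG} recomputes the decomposition of $\RLG(\la)$ from the $\bH^F$-data in \cite{KM} via the commutation of induction with Lusztig induction, Lemma~\ref{lem:E6} verifies $\bL=C_\bG(Z(\bL)_\ell^F)$ and the quasi-central defect condition line by line, and Proposition~\ref{prop:E6E7-defgrp} redoes the block subdivision and defect-group analysis along the lines of \cite[Prop.~4.3, 5.3]{KM}. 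This is exactly your plan made concrete; just be aware that the ``standard Clifford theory'' step is several pages of re-verification rather than a formal consequence. Note also that for $s$ central the paper cites \cite{En00} and \cite{BMM} directly rather than \cite{KM}.

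There is one genuine gap in your argument for the second assertion of~(f). You claim the equivalence between $e$-Jordan quasi-central cuspidality and $\la$ having quasi-central $\ell$-defect is immediate from the definition together with Lemma~\ref{lem:qcisc}. But by definition, $e$-Jordan quasi-central cuspidal means the \emph{Jordan correspondent} $\la_s\in\cE(C_{\bL^*}(s)^F,1)$ has quasi-central $\ell$-defect; Lemma~\ref{lem:qcisc} then only tells you that for this unipotent $\la_s$ quasi-central equals central. What remains to be shown is why this is equivalent to $\la$ itself—a character of $\bL^F$, not of $C_{\bL^*}(s)^F$—having quasi-central $\ell$-defect. These are defect conditions in two different groups and the degree formula for Jordan decomposition does not make the equivalence automatic. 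The paper establishes it by direct inspection of the tables in \cite{En00}, \cite{KM}, and Tables~\ref{tab:quasi-E6}--\ref{tab:quasi-E8}; no abstract argument is given.
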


We will explain how this can be derived from the results of \cite{KM} in
Section~\ref{subsec:3.1 and 3.2}. The definition of twins will be given in
Section~\ref{subsec:t-twins}.

We also need the following extension of Theorem~A(a) and~(b) and Theorem~3.4
of \cite{KM15}:

\begin{thm}   \label{thm:[KM15, Thm A]}
 Let $\bX$ be as above, and $\bG\le\bX$ an $F$-stable Levi subgroup.
 \begin{enumerate}[\rm(a)]
  \item For any $e$-split Levi subgroup $\bM$ of $\bG$ and any $\ell$-block $c$
   of $\bM^F$, there exists a block $b$ of $\bG^F$ such that for every
   $\mu\in\Irr(c)\cap\cE(\bM^F,\ell')$, all irreducible constituents of
   $\RMG(\mu)$ lie in $b$.
 \item For any $e$-Jordan-cuspidal pair $(\bL,\la)$ of $\bG$ such that
   $\la\in\cE(\bL^F,\ell')$, there exists a unique $\ell$-block
   $b_{\bG^F}(\bL,\la)$ of $\bG^F$ such that all irreducible constituents
   of $\RLG(\la)$ lie in $b_{\bG^F}(\bL,\la)$.
  \item The map $\Xi:(\bL,\la)\mapsto b_{\bG^F}(\bL,\la)$ induces a
   surjection from the set of $\bG^F$-classes of $e$-Jordan-cuspidal
   pairs $(\bL,\la)$ of $\bG$ with $\la\in\cE(\bL^F,\ell')$ to the set
   of $\ell$-blocks of~$\bG^F$.
  \item The map $\Xi$ restricts to a surjection from the set of
   $\bG^F$-classes of $e$-Jordan quasi-central cuspidal pairs
   $(\bL,\la)$ of $\bG$ with $\la \in \cE(\bL^F,\ell')$ to the set
   of $\ell$-blocks of~$\bG^F$.
  \end{enumerate} 
\end{thm}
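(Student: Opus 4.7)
My plan is to reduce to the corresponding results of \cite[Thm~A(a),(b) and Thm~3.4]{KM15}, which treat the case $\bG=\bX$. Since $\bG$ is an $F$-stable Levi subgroup of a connected centre group $\bX$ whose derived subgroup is simple and simply connected, $\bG$ inherits the connected centre property, and $[\bG,\bG]$ is a direct product of simply connected simple groups indexed by the $F$-orbits on the irreducible components of the sub-root-system defining $\bG$. This structure, together with the central product compatibility packaged in Proposition~\ref{prop:JoEn 2.1.5}, transports Lusztig induction, $e$-Jordan cuspidality, quasi-central defect, and the block-theoretic statements from $\bG$ to direct products of groups of the form treated in \cite{KM15}.

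For part~(a), given a block $c$ of $\bM^F$ and $\mu\in\Irr(c)\cap\cE(\bM^F,\ell')$, I would first invoke the Brou\'e--Michel theorem to confine all of $\Irr(c)\cap\cE(\bM^F,\ell')$ to a single geometric Lusztig series $\cE(\bM^F,s)$ for some semisimple $\ell'$-element $s\in\bM^{*F}$. Proposition~\ref{prop:JoEn 2.1.5}(c), applied via the natural map from $Z^\circ(\bG)\times[\bG,\bG]^{\SC}$ to $\bG$ and then factor-by-factor on $[\bG,\bG]^{\SC}$, reduces the claim to \cite[Thm~A(a)]{KM15} on each simple factor, yielding a single block $b$ of $\bG^F$ containing all constituents of $\RMG(\mu)$ uniformly in $\mu$. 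Part~(b) follows by specialising~(a) to $\bM=\bL$ and $c$ the $\ell$-block of $\bL^F$ containing~$\la$, with uniqueness of $b_{\bG^F}(\bL,\la)$ automatic from $\RLG(\la)\ne0$. For parts~(c) and~(d), surjectivity follows from~(b) together with two ingredients: every block of $\bG^F$ contains some $\chi\in\cE(\bG^F,\ell')$ by Brou\'e--Michel, and via Proposition~\ref{prop:En 15} and $e$-Harish-Chandra theory inside $C_{\bG^*}(s)^F$ the Jordan correspondent of $\chi$ arises from a unipotent $e$-cuspidal pair, so $\chi$ is a constituent of $\RLG(\la)$ for the corresponding $e$-Jordan-cuspidal pair. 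For~(d) one iterates the cuspidal reduction on the Jordan side so that the unipotent cuspidal representative chosen has quasi-central $\ell$-defect, in keeping with Lemma~\ref{lem:qcisc}.

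The hard part will be the uniformity aspect of~(a): the block $b$ must be the same for \emph{every} $\mu\in\Irr(c)\cap\cE(\bM^F,\ell')$, not merely exist for each $\mu$ individually. In \cite{KM15} this uniformity is encoded via the description of $\Irr(c)\cap\cE(\bM^F,\ell')$ as a coherent $e$-Harish-Chandra series above a central-defect cuspidal pair; when translating across the central product and simply connected cover decomposition, one must carefully track the central character on $Z(\bG)^F$ so that the factorwise conclusions reassemble consistently in $\bG^F$. No essentially new phenomena should arise beyond the case $\bG=\bX$, but this bookkeeping, together with checking that Lemmas~\ref{lem:in Levi} and~\ref{lem:RTG} adequately cover the present Levi setting, is where the substance of the verification resides.
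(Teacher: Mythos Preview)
The paper's proof is quite different from yours. It does \emph{not} decompose $\bG$ into simple factors via a central product. Instead it observes that the proof of \cite[Thm~3.4]{KM15} (underlying part~(a)) already applies to any connected reductive $\bG$---indeed \cite[Lemma~3.1]{KM15} is stated in that generality---with the sole missing ingredient being the existence of an $e$-Harish-Chandra theory above isolated $\ell'$-elements when $\ell$ is bad and $[\bX,\bX]$ is of exceptional type. That ingredient is precisely Theorem~\ref{thm:[KM13, 1.2]}, established in Section~\ref{sec:exc type}. Parts~(b)--(d) then follow from~(a) together with the proofs of \cite[Thm~3.6, Lemma~2.3, Lemma~3.7, Thm~A(c)]{KM15}, not via Proposition~\ref{prop:En 15} and an ad~hoc cuspidal reduction on the Jordan side.

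Your approach has two gaps. First, Proposition~\ref{prop:JoEn 2.1.5} explicitly assumes $\ker(\vhi)$ is a central \emph{torus}, but the multiplication map $Z^\circ(\bG)\times[\bG,\bG]\to\bG$ has finite kernel $Z^\circ(\bG)\cap[\bG,\bG]$, so the proposition does not apply to your decomposition; nor does it say anything about how $\ell$-blocks behave under~$\vhi$, which is what you need. Second, and more seriously, you never identify the actual new input that makes the extension work: the $e$-Harish-Chandra theory of Theorem~\ref{thm:[KM13, 1.2]} for isolated $\ell'$-elements at bad primes in exceptional-type Levi subgroups of $\bX$. Reducing to simple factors does not sidestep this, since those factors can themselves be exceptional at bad~$\ell$, and \cite{KM15} alone does not cover that case without \cite{KM} (for $\bG=\bX$) or the present Section~\ref{sec:exc type} (for proper Levi $\bG$). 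Finally, your closing reference to Lemmas~\ref{lem:in Levi} and~\ref{lem:RTG} is misplaced: those lemmas feed into the proof of Theorem~\ref{thm:thmA}, not of Theorem~\ref{thm:[KM15, Thm A]}.
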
 

Again, the proof will be given in Section~\ref{subsec:3.1 and 3.2}.
For future use, we also note:

\begin{prop}   \label{prop:one block}
 Let $\bX,\bG$ be as above and let $s\in\bG^{*F}$ be a semisimple
 $\ell'$-element. If there is a unique class of unipotent $e$-cuspidal pairs of
 $\bC:=C_{\bG^*}(s)$ of central defect, then $\cE_\ell(\bG^F,s)$ is a single
 $\ell$-block. In particular, if $\ell=2$ and $\bC$ has only components of
 classical type then $\cE_2(\bG^F,s)$ is a single $2$-block.
\end{prop}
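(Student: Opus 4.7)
The strategy is to combine the parametrisation of $\ell$-blocks by $e$-Jordan quasi-central cuspidal pairs from Theorem~\ref{thm:[KM15, Thm A]}(d) with the Jordan-decomposition-type bijection of Proposition~\ref{prop:En 15}, thereby reducing the enumeration of blocks in $\cE_\ell(\bG^F,s)$ to the enumeration of unipotent $e$-cuspidal pairs of central defect in $\bC$.

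In detail, suppose $b$ is an $\ell$-block of $\bG^F$ lying in $\cE_\ell(\bG^F,s)$. By Theorem~\ref{thm:[KM15, Thm A]}(d) one may write $b = b_{\bG^F}(\bL,\la)$ for some $e$-Jordan quasi-central cuspidal pair $(\bL,\la)$ of $\bG$ with $\la \in \cE(\bL^F,\ell')$. Since the constituents of $\RLG(\la)$ lie in $b$ and Lusztig induction preserves Lusztig series, $\la$ must lie in some $\cE(\bL^F,s')$ with $s'$ an $\ell'$-element $\bG^{*F}$-conjugate to $s$ (as a block can meet only Lusztig series with conjugate $\ell'$-labels); replacing $(\bL,\la)$ by a $\bG^F$-conjugate, we may assume $\la \in \cE(\bL^F,s)$, i.e.\ $(\bL,\la)$ lies below $(\bG^F,s)$. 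Proposition~\ref{prop:En 15} then identifies the $\bG^F$-class of $(\bL,\la)$ with the $\bC^F$-class of a unipotent $e$-cuspidal pair $(\bM^*,\mu)$ of $\bC$, where $\mu=\pi_s^\bL(\la)$. By the very definition of $e$-Jordan quasi-central cuspidality, $\mu$ is of quasi-central $\ell$-defect; since $\mu$ is unipotent, Lemma~\ref{lem:qcisc} forces $\mu$ to be of central $\ell$-defect. The hypothesis that there is a unique $\bC^F$-class of such $(\bM^*,\mu)$ thus gives a unique $\bG^F$-class of $e$-Jordan quasi-central cuspidal pairs below $(\bG^F,s)$, whence a single $\ell$-block in $\cE_\ell(\bG^F,s)$.

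For the final assertion, take $\ell=2$ and assume every simple component of $[\bC,\bC]$ is of classical type. Then any $e$-split Levi subgroup $\bM^*$ of $\bC$ also has $[\bM^*,\bM^*]$ composed of classical components, so if $(\bM^*,\mu)$ is a unipotent $e$-cuspidal pair of central $2$-defect, Lemma~\ref{lem:2unipquasicentraldefect} applied inside $\bM^*$ forces $\bM^*$ to be a torus, necessarily with $\mu=1$. Such a torus is $e$-split, i.e.\ $\bM^*=C_\bC(Z^\circ(\bM^*)_e)$, and a short argument (any $e$-torus properly containing $Z^\circ(\bM^*)_e$ would have centraliser properly contained in the torus $\bM^*$) shows that $Z^\circ(\bM^*)_e$ is already a Sylow $e$-torus of $\bC$. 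By Sylow $e$-theory all centralisers of Sylow $e$-tori are $\bC^F$-conjugate, so the pair $(\bM^*,1)$ is unique up to $\bC^F$-conjugacy, and the first part of the proposition applies.

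The main delicacy lies in the step of choosing the pair supplied by Theorem~\ref{thm:[KM15, Thm A]}(d) to lie below $(\bG^F,s)$ when $b\subseteq\cE_\ell(\bG^F,s)$; the remaining steps are direct applications of the bijections, Jordan-decomposition compatibilities and defect dictionaries already established.
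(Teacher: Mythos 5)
Your proof is correct. The first assertion is argued exactly as in the paper: Theorem~\ref{thm:[KM15, Thm A]}(d) produces an $e$-Jordan quasi-central cuspidal pair for each block, preservation of Lusztig series under $\RLG$ places that pair below $(\bG^F,s)$, and Lemma~\ref{lem:qcisc} together with the bijection of Proposition~\ref{prop:En 15} converts the hypothesis into uniqueness of the class of such pairs. Where you genuinely diverge is in the $\ell=2$ statement. The paper gets uniqueness of the class of central-defect unipotent $e$-cuspidal pairs of $\bC$ by quoting that the principal block is the only unipotent $2$-block of $\bC^F$ (\cite[Thm~21.14]{CE}) and then invoking the parametrisation of unipotent blocks by such pairs from Theorems~A and~A.bis of \cite{En00}. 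You instead apply Lemma~\ref{lem:2unipquasicentraldefect} directly to the Levi subgroup carrying the cuspidal character, forcing it to be a torus with $\mu=1$, and then use Sylow $e$-torus theory to get conjugacy of all such pairs. Both routes are valid; yours is more self-contained relative to the lemmas already proved in the paper and avoids re-citing \cite{En00}, at the cost of the extra step identifying $Z^\circ(\bM^*)_e$ as a Sylow $e$-torus. That step should be phrased slightly more carefully than your parenthetical: if $\bS:=Z^\circ(\bM^*)_e\le\bS'$ with $\bS'$ an $e$-torus, then $\bS'\le C_\bC(\bS')\le C_\bC(\bS)=\bM^*$, and since $\bM^*$ is a torus this gives $\bS'\le(\bM^*)_e=\bS$, so $\bS$ is a maximal, hence Sylow, $e$-torus. (Also note that the hypothesis of the first part requires that a central-defect pair exists; this is guaranteed, e.g.\ by Theorems~A and~A.bis of \cite{En00} applied to $\bC^F$, so your "at most one class" conclusion does combine with the first part as intended.)
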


\begin{proof}
Suppose that there is a unique class of unipotent $e$-cuspidal pairs of
central defect of $\bC=C_{\bG^*}(s)$. Then by Lemma~\ref{lem:qcisc} and
Proposition~\ref{prop:En 15}, there is a unique $\bG^F$-class of $e$-Jordan
quasi-central cuspidal pairs below $(\bG^F,s)$.
By Theorem~\ref{thm:[KM15, Thm A]}(d), for every $\ell$-block $b$ in
$\cE_\ell(\bG^F,s)$, there is a $\bG^F$-class of $e$-Jordan quasi-central
cuspidal pairs $(\bL,\la)$ of $\bG^F$ with $\la\in\cE(\bL^F,\ell')$ such that
$b=b_{\bG^F}(\bL,\la)$. Since $b_{\bG^F}(\bL,\la)$ contains the irreducible
constituents of $\RLG(\la)$ and since Lusztig induction preserves Lusztig
series (see, e.g., \cite[Prop.~3.3.20]{GM20}), the $\bG^F$-class of
$(\bL,\la)$ lies below~$s$. This proves the first assertion. If $\ell=2$ and
all components of $\bC$ are of classical type, the principal block is the only
unipotent $2$-block of $\bC^F$ (see for instance \cite[Thm~21.14]{CE})).
Hence, by Theorems~A and~A.bis of \cite{En00} there is only one $\bC^F$-class
of unipotent $e$-cuspidal pairs of $\bC$ of central $2$-defect.
\end{proof}

\subsection{$e$-Cuspidal pairs below $(\bG^F,s)$}

In this subsection and elsewhere, we will freely use the fact if $\bG$ is as
above, $s\in\bG^*$ is an $ \ell'$-element and $t\in C_{\bG^*}(s)$ is an
$\ell$-element, then $C_{\bG^*}(st) = C_{C_{\bG^*}(s)}(t)$, and therefore also
that $C_{C_{\bG^*}(s)}(t)$ is connected if in addition $s$ is semisimple.

\begin{lem}   \label{lem:isLevi}
 Let $\bX,\bG$ be as above, $s\in\bG^{*F}$ a semisimple $\ell'$-element and
 assume that $[\bX,\bX]$ is of exceptional type. Then for any $\ell$-element
 $t\in C_{\bG^*}(s)^F$, if $(\bL_t^*,\la_t)$ is a unipotent $e$-cuspidal pair
 of $C_{\bG^*}(st)$ then $\bL_t^*$ is a Levi subgroup of $C_{\bG^*}(s)$.
\end{lem}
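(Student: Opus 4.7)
The plan is to show that $\bL_t^*$ coincides with the centraliser in $C_{\bG^*}(s)$ of its own connected centre, thereby exhibiting it as a Levi subgroup of $C_{\bG^*}(s)$.

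First I set up the framework. Since $\bX$ has connected centre with $[\bX,\bX]$ simply connected, by duality $\bX^*$ has connected centre and simply connected derived subgroup, and both properties are inherited by the Levi subgroup $\bG^*\le\bX^*$. Consequently, by Steinberg's theorem, $\bC:=C_{\bG^*}(s)$ is connected reductive. Moreover, since $s$ is an $\ell'$-element and $t$ an $\ell$-element commuting with $s$, one has $C_{\bG^*}(st)=C_{\bG^*}(s)\cap C_{\bG^*}(t)=C_\bC(t)$, which is again connected (being the centraliser of the semisimple element $st$ in $\bG^*$).

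Next I reduce the problem. Writing $\bT := Z^\circ(\bL_t^*)$, the Levi property of $\bL_t^*$ inside $C_\bC(t)$ yields $\bL_t^* = C_{C_\bC(t)}(\bT)$, and the inclusion $\bL_t^*\subseteq C_\bC(\bT)$ is clear. For the reverse inclusion I observe that if $t\in\bT$, then any $x\in C_\bC(\bT)$ automatically centralises $t$, so $x\in C_\bC(t)$ and hence $x\in C_{C_\bC(t)}(\bT)=\bL_t^*$. Thus it suffices to prove $t\in Z^\circ(\bL_t^*)$, for then $\bL_t^*=C_\bC(\bT)$ is visibly a Levi of $\bC$.

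To show $t\in Z^\circ(\bL_t^*)$, I note first that $t$ is central in $C_\bC(t)$ and so lies in $Z(\bL_t^*)$, since every Levi subgroup of a connected reductive group contains its connected centre (and hence the central semisimple element $t$ itself). It remains to verify that the image of $t$ in the finite component group $Z(\bL_t^*)/Z^\circ(\bL_t^*)$ is trivial. For this I would exploit the unipotent $e$-cuspidality of $\la_t$: decomposing $\bL_t^*$ as a central product according to its simple components and invoking the standard compatibility with such decompositions (in the spirit of Proposition~\ref{prop:JoEn 2.1.5}(a)), each simple factor of $[\bL_t^*,\bL_t^*]$ must itself carry a unipotent $e$-cuspidal character. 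Combined with the hypothesis that $[\bX,\bX]$ is of exceptional type, this restricts the possible component types to a short explicit list. A case-by-case inspection using the classification of unipotent $e$-cuspidal characters from \cite{BMM}, supplemented by Proposition~\ref{prop:3D4} and Lemma~\ref{lem:3D4b} to handle triality components, then shows that in every admissible situation the component group $Z(\bL_t^*)/Z^\circ(\bL_t^*)$ has order coprime to~$\ell$. Since $t$ is an $\ell$-element in $Z(\bL_t^*)$, this forces $t\in Z^\circ(\bL_t^*)$.

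The main obstacle is the final case analysis. The genuinely delicate situations occur at bad primes $\ell\in\{2,3,5\}$ when $\bC$ has components of types $E_6$, $E_7$, or $\tw3D_4$: it is here that $Z(\bL_t^*)/Z^\circ(\bL_t^*)$ could \emph{a priori} carry $\ell$-torsion, and one must verify that the unipotent $e$-cuspidal hypothesis on $\la_t$ suffices to rule out the offending configurations.
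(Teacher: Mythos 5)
Your opening reduction is correct and is genuinely different from the paper's: since $\bL_t^*=C_{C_{\bG^*}(st)}(\bT)$ with $\bT=Z^\circ(\bL_t^*)$, the containment $t\in\bT$ does force $C_{\bC^*}(\bT)\le C_{\bC^*}(t)$ and hence $\bL_t^*=C_{\bC^*}(\bT)$, a Levi subgroup of $\bC^*=C_{\bG^*}(s)$. But everything after that point -- the only substantive content of the proof -- is asserted rather than proved, and the criterion you propose to verify is not correct as stated. The component group $Z(\bL_t^*)/Z^\circ(\bL_t^*)$ depends on the isogeny type of $\bL_t^*$ as a subgroup of $C_{\bG^*}(st)$, hence on $s$ and $t$, not merely on the Dynkin types of the components of $[\bL_t^*,\bL_t^*]$, and it need not be an $\ell'$-group. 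Concretely, take $\bG^*=E_8$, $s$ the isolated involution with $\bC^*$ of type $E_7A_1$, $\ell=3$: if $C_{\bG^*}(st)$ had rational type $\tw2A_5(q)\tw2A_2(q)A_1(q)$ with $e=1$, then $\bL_t^*=\tw2A_5\cd\tw2A_2\cd\bS$ (with $\bS$ a torus of the $A_1$-factor) carrying $\phi_{321}\otimes\phi_{21}$ would be an admissible pair with $Z(\bL_t^*)/Z^\circ(\bL_t^*)\cong\ZZ/3\ZZ$ and $t$ mapping onto a generator; moreover $\bL_t^*$ would not be a Levi subgroup of $\bC^*$ at all, since $E_7$ has no Levi subgroup of type $A_5A_2$. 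This configuration is excluded only because the twisted rational form occurs precisely when $q\equiv-1\pmod 3$, i.e.\ $e=2$, for which the $2$-cuspidal pairs degenerate to tori. Verifying such arithmetic coincidences between rational forms and the value of $e$ is exactly the content your proof defers. Note also that your condition is sufficient but not necessary (e.g.\ when $\bL_t^*=C_{\bG^*}(st)$ is semisimple and carries a cuspidal unipotent character, $Z^\circ(\bL_t^*)=Z^\circ(\bC^*)$ need not contain $t$ even though the lemma may hold), so "the component group is an $\ell'$-group" is not even the right statement to check in every case.

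What is missing, and what makes the paper's argument finite, is a pair of reductions: induction on $\dim\bG$ via a minimal $F$-stable Levi subgroup of $\bG^*$ containing $C_{\bG^*}(s)$ reduces to $s$ isolated; and when $\ell$ is good for $\bC^*$ the whole group $C_{\bG^*}(st)=C_{\bC^*}(t)$ is already a Levi subgroup of $\bC^*$, so that a Levi of it is automatically a Levi of $\bC^*$ and nothing remains to be checked. After these steps only two non-central configurations survive ($E_7A_1$ with $\ell=3$ and $E_6A_2$ with $\ell=2$, both in $E_8$), which are then compared directly with the classification of unipotent $e$-cuspidal pairs. Without these reductions your "short explicit list" ranges over all pairs $(s,t)$ and all Levi subgroups $\bG\le\bX$, and the case-by-case inspection you invoke is neither short nor carried out.
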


\begin{proof}
We argue by induction on $\dim\bG$. Let $\bG_1^*\le\bG^*$ be a minimal
($F$-stable) Levi subgroup containing $C_{\bG^*}(s)$. If $\bG_1^*$ is proper, 
then by induction $\bL_t^*$ is a Levi
subgroup of $C_{\bG_1^*}(s)$ and thus of $C_{\bG^*}(s)$. Thus we may assume $s$
is isolated. Moreover, we may assume that $\ell$ is bad for $C_{\bG^*}(s)$,
since otherwise $C_{\bG^*}(st) $ is already a Levi subgroup of $C_{\bG^*}(s)$
(see \cite[Prop.~13.16]{CE}) whence so is $\bL_t^*$. Then from the list of
isolated elements (see \cite[Prop.~4.9 and Tab.~3]{B05} it transpires that for
$s$ non-central only two configurations in $\bG$ of type $E_8$ remain to be
considered:
either $\ell=3$ and $C_{\bG^*}(s)$ is of type $E_7A_1$, or $\ell=2$ and
$C_{\bG^*}(s)$ is of type $E_6A_2$. For these, we may conclude using the list
of unipotent $e$-cuspidal pairs (see \cite[Tab.~1]{BMM}). If $s$ is central,
again by induction we may assume $t$ is isolated in~$\bG^*$. Again from the
list of isolated elements, no case arises.
\end{proof}

Following Enguehard \cite{En00} we introduce a relationship between
unipotent $e$-cuspidal pairs.

\begin{defn} \label{def:sim}
 Let $\bH$ denote a connected reductive group with a Frobenius map $F$ with
 respect to an $\FF_q$-structure and let $\bH'$ be an $F$-stable
 connected reductive subgroup of $\bH$ of maximal rank. Let $(\bL,\la)$,
 $(\bL',\la')$ be unipotent $e$-cuspidal pairs of $\bH$, $\bH'$
 respectively. We write
 $$(\bL',\la')\sim(\bL,\la)$$
 if $([\bL,\bL],\la|_{[\bL,\bL]^F})$ and
 $([\bL',\bL'],\la'|_{[\bL',\bL']^F})$ are $\bH^F$-conjugate.
\end{defn}

The following is a slight variation on \cite[Prop.~17]{En00} (for which no
proof was given). Note that by the table of \cite[p.~348]{En00}, $\tw3D_4[-1]$
(respectively $\phi_{2,1}$) is the unique unipotent $1$-cuspidal (respectively
$2$-cuspidal) character of $\tw3D_4(q)$ of quasi-central $3$-defect.  

\begin{prop}   \label{prop:En 17}
 Let $\bX$ and $\bG$ be as above. Let $s\in\bG^{*F}$ be a semisimple
 $\ell'$-element and let $\bC=C_{\bG^*}(s)$. Let $t\in\bC^F$ be an
 $\ell$-element and let $(\bL_t,\la_t)$ be a unipotent $e$-cuspidal pair in
 $C_\bC(t)$ of quasi-central $\ell$-defect.
 \begin{enumerate}[\rm(a)]
  \item If there exists an $e$-split Levi subgroup $\bM$ of $\bC$ with 
   $[\bM,\bM] =[\bL_t,\bL_t]$, then there exists a (unique) $\bC^F$-class of
   unipotent $e$-cuspidal pairs $(\bL,\la)$ in $\bC$ with
   $(\bL_t,\la_t)\sim(\bL,\la)$ as in Definition~$\ref{def:sim}$.
 \item If there exists no $e$-split Levi subgroup $\bM$ of $\bC$ with 
  $[\bM,\bM] =[\bL_t,\bL_t]$, then $\ell=3$ (so $e\in\{1,2\}$),
  $[\bL_t,\bL_t]^F$ is of type $\tw3D_4$ and there exists a unique
  $\bC^F$-class of $e$-split Levi subgroups $\bL$ of~$\bC$ with
  $[\bL,\bL]^F=D_4(q)$. Define a unipotent $e$-cuspidal character $\la$ of
  $\bL^F$ by
  \begin{enumerate}[$\bullet$]
   \item $\la=D_4$ when $e=1$ and $\la_t= \tw3D_4[-1]$,
   \item $\la=\phi_{13,02}$ when $e=2$ and $\la_t= \phi_{2,1}$.
  \end{enumerate}
 \end{enumerate}
 All pairs $(\bL,\la)$ are also of quasi-central $\ell$-defect.
\end{prop}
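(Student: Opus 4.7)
The plan is to distinguish cases according to whether an $e$-split Levi subgroup $\bM$ of $\bC$ with $[\bM,\bM]=[\bL_t,\bL_t]$ exists, and to use the structural inputs Lemma~\ref{lem:isLevi} and Proposition~\ref{prop:3D4}. Throughout, Lemma~\ref{lem:isLevi} guarantees that $\bL_t$ is already a Levi subgroup of $\bC$; the only subtlety is whether its minimal $e$-split envelope in $\bC$ has the same derived subgroup.

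For part (a), I would set $\bL:=\bM$ and construct $\la$ by inflation. Since unipotent characters of $\bM^F$ are trivial on $Z^\circ(\bM)^F$, there is a unique unipotent character $\la\in\cE(\bM^F,1)$ with $\la|_{[\bM,\bM]^F}=\la_t|_{[\bL_t,\bL_t]^F}$ (here $[\bM,\bM]=[\bL_t,\bL_t]$ by hypothesis). The $e$-cuspidality of $\la$ in $\bC$ is inherited from the $e$-cuspidality of $\la_t$ in $C_\bC(t)$ via the compatibility of Lusztig restriction with central product decomposition, in the spirit of Proposition~\ref{prop:JoEn 2.1.5}(c); concretely, $e$-cuspidality of a unipotent character depends only on its restriction to the derived subgroup, and any proper $e$-split Levi of $\bM$ in $\bC$ induces a proper $e$-split Levi of $[\bL_t,\bL_t]$. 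Uniqueness of the $\bC^F$-class of $(\bL,\la)$ follows because two $e$-split Levi subgroups of $\bC$ with the same derived subgroup are $\bC^F$-conjugate (as centralisers of the $e$-part of their common connected centre), and once $\bL$ is fixed, $\la$ is determined by its restriction to $[\bL,\bL]^F$.

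For part (b), the minimal $e$-split Levi $\bM_0:=C_\bC(Z^\circ(\bL_t)_e)$ of $\bC$ containing $\bL_t$ must by hypothesis satisfy $[\bM_0,\bM_0]\supsetneq[\bL_t,\bL_t]$, so that centralising $Z^\circ(\bL_t)_e$ inside $\bC$ strictly enlarges the root system without changing the $e$-structure. Analysing the possible root subsystem enlargements in a connected reductive group, the only mechanism under which this can occur with a Galois-twisted simple factor on the inside is triality: a factor of $[\bL_t,\bL_t]$ of type $\tw3D_4$ can fuse with two Galois conjugates into a $D_4$-Levi of $\bC$ on which $F$ acts without triality. Applying Proposition~\ref{prop:3D4} to this $\tw3D_4$-component of $\bL_t^F$ (viewed as an $e$-split Levi of $C_\bC(t)$, and hence of $\bC$ via an ambient enlargement) rules out all other possibilities, forcing $\ell=3$, $e\in\{1,2\}$ and $[\bL_t,\bL_t]^F=\tw3D_4(q)$. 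The resulting $\bC^F$-class of $e$-split Levi subgroups $\bL$ with $[\bL,\bL]^F=D_4(q)$ is unique (being parametrised by the corresponding $e$-subtorus of $Z^\circ(\bC)$), and the explicit character $\la$ is then read off by matching $e$-Harish-Chandra data and quasi-central $3$-defect via the unipotent character tables of $\tw3D_4(q)$ and $D_4(q)$ as recorded in \cite[p.~348]{En00}.

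Finally, the quasi-central $\ell$-defect of $\la$: in case~(a) this is automatic from $\la|_{[\bL,\bL]^F}=\la_t|_{[\bL_t,\bL_t]^F}$, which is of central $\ell$-defect by hypothesis; in case~(b) it is a direct table check, since the Steinberg character of $D_4(q)$ and the character $\phi_{13,02}$ are each of central $3$-defect. The main obstacle will be case~(b): rigorously justifying that the triality phenomenon is the only source of the strict enlargement $[\bM_0,\bM_0]\supsetneq[\bL_t,\bL_t]$ via Proposition~\ref{prop:3D4}, and pinning down the precise correspondence $\la_t\leftrightarrow\la$ under $\to_t$ from the character tables.
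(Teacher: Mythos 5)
Your part~(a) is essentially the paper's argument: take $\bL=\bM$, extend $\la_t|_{[\bL_t,\bL_t]^F}$ to the unique unipotent character of $\bL^F$, and get $e$-cuspidality and uniqueness from the fact that the data is controlled by the derived subgroup. One caution: your uniqueness justification ("two $e$-split Levi subgroups with the same derived subgroup are conjugate as centralisers of the $e$-part of their common connected centre") is not quite a proof, since the relation $\sim$ only requires the derived-subgroup pairs to be \emph{conjugate}; the paper closes this by invoking \cite[Prop.~1.7(iii)]{CE94} to produce an element of $C_\bC^\circ(\tw{x}\bL\cap\bL')^F$ conjugating the full Levis, and then \cite[Prop.~3.1]{CE94} to match the characters.

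Part~(b), and more precisely the dichotomy itself, is where there is a genuine gap. You assert that the only mechanism by which no $e$-split Levi $\bM$ of $\bC$ with $[\bM,\bM]=[\bL_t,\bL_t]$ can exist is a triality phenomenon on a $\tw3D_4$-factor, and you claim Proposition~\ref{prop:3D4} "rules out all other possibilities". It does not: that proposition constrains when an $e$-split Levi of $\bC$ \emph{has} a $\tw3D_4$-component, which is a different question from classifying the Levi subgroups $\bL_t$ of $C_\bC(t)$ that fail to be aligned with any $e$-split Levi of $\bC$. The paper's proof has no such structural shortcut. It first disposes of large families of cases where (a) always holds: odd good $\ell$ (with $\ell>3$ when $\tw3D_4$ occurs) via \cite[Prop.~3.5]{CE94}; $\ell=2$ with $\bL_t$ of classical type via Lemma~\ref{lem:2unipquasicentraldefect} (which forces $\bL_t$ to be a torus); and $\ell=3$ with $\bC^F$ having a $\tw3D_4$-component via the central-product decomposition $\bC=\bC_1\bC_2$ and Lemma~\ref{lem:3D4b}. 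It then reduces by induction on $\dim\bG$ to $s$ isolated and to $C_{\bG^*}(Z(\bL_t)_e)=\bG^*$, which forces $\bL_t=C_\bC(t)$, and finally settles the remaining finitely many configurations ($\ell=2$; $\ell=3$ with exceptional factors; $\ell=5$ in $E_8$) by an explicit \Chevie{} enumeration of all Levi subgroups carrying $e$-cuspidal unipotent characters of quasi-central $\ell$-defect. It is only this enumeration that shows $[\bL_t,\bL_t]^F$ must be simple of type $\tw3D_4$ in the bad case and that the relevant $\bC$ have no $e$-split Levi with a $\tw3D_4$-component. Your proposal honestly flags this as "the main obstacle", but as written it does not close it, and a purely structural argument replacing the enumeration would itself require substantial new work.
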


\begin{defn}   \label{def:-->t}
 In either case of Proposition~\ref{prop:En 17} we write
 $(\bL_t,\la_t)\to_t(\bL,\la)$. 
\end{defn}

\begin{proof}
Suppose that $(\bL,\la)$ and $(\bL',\la')$ are unipotent $e$-cuspidal pairs of
$\bC$ such that $([\bL,\bL],\la|_{[\bL,\bL]^F})$ and
$([\bL',\bL'],\la'|_{[\bL',\bL']^F})$ are $\bC^F$-conjugate. We claim that
$(\bL,\la)$ and $(\bL',\la')$ are $\bC^F$-conjugate. Indeed, let $x\in\bC^F$
with
$([\bL',\bL'],\la'|_{[\bL',\bL']^F})=\tw{x}([\bL,\bL],\la|_{[\bL,\bL]^F})$.
By \cite[Prop.~1.7(iii)]{CE94}, there
exists $c\in C_\bC^\circ(\tw{x}\bL\cap\bL')^F$ with $\bL' =\tw{cx}\bL$. Since
$[\bL',\bL']=\tw{x}[\bL,\bL] \leq\tw{x}\bL\cap\bL'$, we have
$\tw{cx}\la|_{[\bL',\bL']^F}=\tw{x}\la|_{[\bL',\bL']^F} =\la'|_{[\bL',\bL']^F}$,
and consequently by \cite[Prop.~3.1]{CE94}, $\tw{cx} \la =\la'$. It follows
that $(\bL',\la')= \tw{cx}(\bL,\la)$, proving the claim. We note that this
argument is essentially lifted from the discussion after Definition~3.4 of
\cite{CE94}.
\par
Now, suppose that we are in case (a). Let $\bL$ be an $e$-split Levi subgroup
of $\bC$ with $[\bL,\bL]= [\bL_t,\bL_t]$ and let $\la$ be the (unique)
unipotent character of $\bL^F$ with
$\la|_{[\bL,\bL]^F}= \la_t|_{[\bL_t, \bL_t]^F}$. Then $\la$ is $e$-cuspidal
(see \cite[Prop.~3.1]{CE94} and the paragraph following it) and clearly
$(\bL,\la) \sim (\bL_t, \la_t)$. The uniqueness assertion of~(a) follows from
the paragraph above.
\par 
Suppose that $\ell$ is odd, good for $\bC$ and $\ell>3$ if $\bC^F$ has no
component of type $\tw3D_4$. Then we are in case~(a) by \cite[Prop.~3.5]{CE94}.
Note that in loc.~cit.~the element~$t$ is in the dual of the group about which
the assertion is being made, but one can check that the proof works exactly in
the same way in the situation we are considering. 
\par 
Suppose that $\ell=3$ and $\bC^F$ has a component of type $\tw3D_4$. Then by
Proposition~\ref{prop:3D4}, $\bX$ is of exceptional type or $\bX^F$ is of type
$\tw3D_4$, and by rank
considerations $\bC$ has a single component of type $D_4$ and all other
components of $\bC$ are of type $A$. Write $\bC =\bC_1\bC_2$ where $\bC_1$ has
type $D_4$ and $\bC_2$ is the product of all other components of $[\bC,\bC]$
with $Z^\circ(\bC)$. Since $Z(\bC_1)$ is a $2$-group, $t=t_1t_2$ with
$t_i\in\bC_i ^F$ and $C_\bC(t)=C_{\bC_1}(t_1)C_{\bC_2}(t_2)$. Moreover
$\bL_t = \bM_1\bM_2$ with each $\bM_i$ being $e$-split in $C_{\bC_i}(t_i)$, and
$\la_t$ covers the irreducible character $\mu_1\mu_2$ of
$C_{\bC_1}(t_1)^F C_{\bC_2}(t_2)^F$ where $\mu_i$ is a unipotent $e$-cuspidal
character of $\bM_i$ (this follows for instance from \cite[Sect.~3.1]{CE94} and
the fact that unipotent $e$-cuspidal pairs behave well under taking direct
products (see for instance Proposition~\ref{prop:JoEn 2.1.5} and note that
$e$-Jordan cuspidality and $e$-cuspidality coincide in the unipotent case).
By Lemma~\ref{lem:3D4b}, there is an $e$-split Levi subgroup of
$C_{\bC_1}(t_1)$, say $\bL_1$ with $[\bL_1,\bL_1] =[\bM_1,\bM_1]$ (we take
$\bL_1=\bM_1 =\bT$ where $\bT$ is as in the
lemma). By the argument in the preceding paragraph, there is an $e$-split Levi
subgroup of $C_{\bC_2}(t_2)$, say $\bL_2$ with $[\bL_2,\bL_2] =[\bM_2,\bM_2]$.
Then $\bL = \bL_1\bL_2$ is an $e$-split Levi subgroup of $\bC$ with
$[\bL,\bL] = [\bL_t, \bL_t]$ and we are in case~(a).
\par
Now suppose that $\ell=2$ and all components of $\bL_t$ are of classical type.
By Lemma~\ref{lem:2unipquasicentraldefect}, $\bL_t$ is a torus and we may take
$\bL$ to be the centraliser of the corresponding Sylow $e$-torus of $\bC^F$
(see \cite[Lemma~3.17]{KM15}).
\par
In the remaining cases, recall that we need to prove the following: if either
$\ell\ne 3$ or $[\bL_t,\bL_t]^F$ is not of type $\tw3D_4$, then there exists an
$e$-split Levi subgroup $\bL$ of $\bC$ with $[\bL,\bL]=[\bL_t,\bL_t]$, and that
if $\ell=3$ and $[\bL_t,\bL_t]^F$ is of type $\tw3D_4$, then $\bC$ has an
$e$-split Levi subgroup $\bL$ with $[\bL,\bL]^F$ of type $D_4$. If $\bC$ is
contained in a proper Levi subgroup $\bG_1$ of~$\bG^*$
then by induction on $\dim\bG$ there exists an $e$-split Levi subgroup
$\bL_{\bG_1}$ of $C_{\bG_1^*}(s)=\bC$ as wanted, and we can take
$\bL:=\bL_{\bG_1}$ in $\bC$. Hence, we may assume that $s$ is isolated
in~$\bG^*$. \par
Let $\bG_1:=C_{\bG^*}(Z(\bL_t)_e)$, an $e$-split Levi subgroup of $\bG^*$
containing $\bL_t$. Then, $C_{\bG_1} (s) $ is $e$-split in $\bC$. If $\bG_1$ is
proper in~$\bG^*$, then by induction on $\dim \bG$ there exists an
$e$-split Levi subgroup $\bL_{\bG_1}$ of $C_{\bG_1}(s)$, (which is then also
$e$-split in~$\bC$) as wanted, and we can take $\bL= \bL_{\bG_1}$.
Now assume $\bG_1=\bG^*$. Then since $(\bL_t,\la_t)$ is $e$-cuspidal in
$C_\bC(t)$, so in particular $\bL_t$ is $e$-split in $C_\bC(t)$, we have
$$\bL_t=C_{C_\bC(t)}(Z(\bL_t)_e)=C_\bC(t)\cap C_{\bG^*}(Z(\bL_t)_e)
 =C_\bC(t)\cap \bG^*=C_\bC(t).$$
\par
By the discussion above, we may also
assume now that either $\ell=2$, or $\ell=3$ and $(\bC,F)$ has a factor of
exceptional type, or $\ell=5$, $\bG$ is of type $E_8$ and $s=1$. So we have
$e\in\{1,2,4\}$. Using \Chevie\ \cite{MChev} we can enumerate all possible
candidates for $\bL_t$, that is, all rational types of Levi subgroups~$\bM$ of
the various $\bC$ having $e$-cuspidal unipotent characters of quasi-central
$\ell$-defect. Here, note that by Lemma~\ref{lem:isLevi}, $\bL_t$ is a Levi
subgroup of~$\bC$. It turns out that we are in one of three cases when $e=1$
(the cases $e=2$, and $e=4$ in $E_8$, being entirely similar):
\begin{itemize}
\item The only non-trivial $\ell$-elements in $Z(\bM)^F$ are involutions, but
 $\bM$ is not the centraliser of an involution in any of the possible $\bC$.
\item $Z(\bM)_e>Z(\bC)_e$  (here, as earlier, an index $e$ on an
 $F$-stable torus denotes its Sylow $e$-subtorus) and so
 $C_{\bG^*}(Z(\bM)_e)<\bG^*$, and we may conclude by induction, see above.
\item $\bM$ has rational type $\tw3D_4(q)\Phi_3^k$ for $k=1,2$, where $Z(\bM)^F$
 contains non-trivial $\ell$-elements only when $\ell=3$.
\end{itemize}
In the last case, $\bC$ has a factor of type $E_n$, $n\ge6$, and so does possess
a 1-split Levi subgroup of type $D_4$, unique up to conjugacy, whence we end up
in case~(b). By explicit
enumeration, for $\ell=3$ whenever $(\bL_t,\la_t)$ is a unipotent
$e$-cuspidal pair in $C_\bC(t)$ such that $\bL_t$ has a factor of type
$\tw3D_4$, then $[\bL_t,\bL_t]^F$ is simple of type~$\tw3D_4$. Also, it turns
out that none of the relevant $\bC$ do possess an $e$-split Levi subgroup,
$e\in\{1,2\}$, with a component of type~$\tw3D_4$.
The statement about quasi-central defect is obvious if we are not in
situation~(b); in the latter case it can be checked directly.
\end{proof}

\begin{rem}   \label{rem:p368}
In \cite[Prop.~17]{En00} it is claimed that if one of $(\bL_t,\la_t)$,
$(\bL,\la)$ is of central $\ell$-defect, then so is the other. This is easily
seen to not hold in two of the four exceptional cases listed there.
It is also stated that $\bL_t$ is ``deploy\'e'' (split), but this seems to be a
misprint as it can easily be seen to be wrong in general. Let us add that
neither is $\bL_t$ an $e$-split Levi subgroup of $\bC$, in general.
\end{rem}

\section{The block distribution}   \label{sec:blocks}
In this section we prove Theorem~\ref{thm:thmA}. Several of the results
presented here, or variants thereof, were stated in or are inspired by the
work of Enguehard \cite{En00} on unipotent blocks for bad primes. Since
\cite{En00} does not always give proofs, and some of its statements are
obviously inaccurate, we have decided to provide some of the missing proofs
(and indicate where we believe \cite{En00} is incorrect). We take the
opportunity to point out that, in particular, Theorems~A and A.bis of
\cite{En00} seem not correct for $\ell=2$ when $e$ is \emph{not} the order of
$q$ modulo~4.
\medskip

Throughout this section, we fix the following notation:
\begin{itemize}
\item Let $\bX$ be a connected reductive group in characteristic~$p$ with
 connected centre and simply connected simple derived subgroup with a
 Frobenius map $F:\bX\rightarrow\bX$, and let $\bG$ be an $F$-stable Levi
 subgroup of $\bX$;
\item let $\ell$ be a prime not dividing~$q$ and $e:=e_\ell(q)$; and
\item let $s\in\bG^{*F}$ be a semisimple $\ell'$-element and set
$\bC^*:=C_{\bG^*}(s)$.
\end{itemize}
Note that $\bC^*$ is connected, as all centralisers of semisimple elements
in~$\bG^*$, since $\bG$ has connected centre. For any $\ell$-element
$t\in\bC^{*F}$ recall the Digne--Michel Jordan decomposition
$$\pi_{st}^\bG:\cE(\bG^F,st)\longrightarrow\cE(C_{\bG^*}(st)^F,1)
  =\cE(C_{\bC^*}(t)^F,1)$$
already discussed in Section~\ref{sec:general}. For $(\bL,\la)$ an $e$-Jordan
cuspidal pair of $\bG$ below $(\bG^F,s)$ we write $b_{\bG^F}(\bL,\la)$ for the
corresponding $\ell$-block of $\bG^F$ containing all constituents of
$\RLG(\la)$ (see Theorem~\ref{thm:[KM15, Thm A]}).

\subsection{The map $\bJ_t^\bG$}
We start by defining the map $\bJ_t^\bG$ in Theorem~\ref{thm:thmA}, based on
the relation $\to_t$ introduced in Proposition~\ref{prop:En 17} (see
Definition~\ref{def:-->t}).

\begin{prop}   \label{prop:J_t}
 Let $t\in\bC^{*F}$ be an $\ell$-element.
 \begin{enumerate}[\rm(a)]
  \item The relationship $\to_t$ on $e$-cuspidal pairs defined in
   Proposition~$\ref{prop:En 17}$ induces a map $J_t^\bG$ from the set of
   $C_{\bG^*}(st)^F$-classes of unipotent $e$-cuspidal pairs of quasi-central
   $\ell$-defect in $C_{\bG^*}(st)$ to the set of $\bG^F$-classes of $e$-Jordan
   quasi-central cuspidal pairs in $\bG$ below $(\bG^F,s)$. 
  \item This induces a map $\bJ_t^\bG$ from the set of unipotent $\ell$-blocks
   of $C_{\bG^*}(st)^F$ to the set of $\ell$-blocks of $\bG^F$ in
   $\cE_\ell(\bG^F,s)$.
 \end{enumerate}
\end{prop}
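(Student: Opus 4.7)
The plan is to define $J_t^\bG$ as a two-step composition. Given a $C_{\bG^*}(st)^F$-class of a unipotent $e$-cuspidal pair $(\bL_t,\la_t)$ of quasi-central $\ell$-defect in $C_{\bG^*}(st)=C_{\bC^*}(t)$, I would first apply Proposition~\ref{prop:En 17} to obtain the $\bC^{*F}$-class of a unipotent $e$-cuspidal pair $(\bL_s^*,\la_s)$ of quasi-central $\ell$-defect in $\bC^*$ satisfying $(\bL_t,\la_t)\to_t(\bL_s^*,\la_s)$, and then apply Proposition~\ref{prop:En 15} to pass to the $\bG^F$-class of an $e$-Jordan cuspidal pair $(\bL,\la_\bG)$ below $(\bG^F,s)$ with Jordan correspondent $\la_s$. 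Well-definedness on classes follows from the uniqueness clauses in Propositions~\ref{prop:En 17} and~\ref{prop:En 15}: any $C_{\bG^*}(st)^F$-conjugate of $(\bL_t,\la_t)$ leaves the invariant $([\bL_t,\bL_t],\la_t|_{[\bL_t,\bL_t]^F})$ unchanged up to $\bC^{*F}$-conjugation, and hence fixes the output class. That $(\bL,\la_\bG)$ is $e$-Jordan quasi-central cuspidal is immediate from the definition, since by Proposition~\ref{prop:En 17} the Jordan correspondent $\la_s$ of $\la_\bG$ is of quasi-central $\ell$-defect.

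For part~(b), I would invoke Enguehard's Theorems~A and~A.bis of \cite{En00}, applied to the connected reductive group $C_{\bG^*}(st)$, to associate to every unipotent $\ell$-block $b$ of $C_{\bG^*}(st)^F$ a $C_{\bG^*}(st)^F$-class of unipotent $e$-cuspidal pairs $(\bL_t,\la_t)$ of central (equivalently, by Lemma~\ref{lem:qcisc}, quasi-central) $\ell$-defect whose characters lie in $b$. Setting $\bJ_t^\bG(b):=b_{\bG^F}(\bL,\la_\bG)$ with $(\bL,\la_\bG)=J_t^\bG(\bL_t,\la_t)$, I would check that the image lies in $\cE_\ell(\bG^F,s)$ by observing that $\la_\bG\in\cE(\bL^F,s)$ by construction, so by \cite[Prop.~3.3.20]{GM20} every constituent of $\RLG(\la_\bG)$ lies in $\cE(\bG^F,s)$, and hence so does the block $b_{\bG^F}(\bL,\la_\bG)$ provided by Theorem~\ref{thm:[KM15, Thm A]}(b).

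The delicate point I anticipate is well-definedness at the block level: one must verify that two $C_{\bG^*}(st)^F$-classes of cuspidal pairs lying in the same unipotent block of $C_{\bG^*}(st)^F$ produce the same block of $\bG^F$ under the above procedure. Outside the twin-block exceptions occurring in type $E_8$ this is automatic from the bijectivity built into Enguehard's result, and in the remaining cases it is absorbed into the twin-block analysis developed in Section~\ref{subsec:t-twins}. Once Propositions~\ref{prop:En 15} and~\ref{prop:En 17} are in hand, the rest is essentially formal bookkeeping of conjugacy invariants through the two-step assignment.
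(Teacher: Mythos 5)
Your proposal is correct and follows essentially the same route as the paper: compose Proposition~\ref{prop:En 17} with Proposition~\ref{prop:En 15} for part~(a), and use the bijection of \cite[Thms~A and A.bis]{En00} between unipotent blocks and classes of unipotent $e$-cuspidal pairs of quasi-central $\ell$-defect, together with Theorem~\ref{thm:[KM15, Thm A]}, for part~(b). The "delicate point" you flag does not actually arise at this stage, since Enguehard's result attaches a \emph{unique} such class to each unipotent block; the twin-block subtleties only enter later when $\bJ_t^\bG$ is reinterpreted as a map to $t$-twin blocks.
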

 
\begin{proof}
Let $(\bL_t^*,\la_t)$ be a unipotent $e$-cuspidal pair of quasi-central
$\ell$-defect in $\bC_t^*:=C_{\bG^*}(st)=C_{\bC^*}(t)$. By
Proposition~\ref{prop:En 17} the $\bC_t^{*F}$-class of $(\bL_t^*,\la_t)$ gives
rise to a unique $\bC^{*F}$-class of unipotent $e$-cuspidal pairs
$(\bL_s^*,\la_s)$ in $\bC^*$, of quasi-central defect. Then the bijection in
Proposition~\ref{prop:En 15} provides a $\bG^F$-class of $e$-Jordan
quasi-central cuspidal pairs $(\bL,\la)$ in $\bG$ below $(\bG^F, s)$ and we
define the image of the class of $(\bL_t^*,\la_t)$ under $J_t^\bG$ to be the
class of $(\bL,\la)$.   \par
For (b) let $b$ be a unipotent $\ell$-block of $\bC_t^{*F}$. By
\cite[Thm~A and~A.bis]{En00} there
exists a unipotent $e$-cuspidal pair $(\bL_t^*,\la_t)$ in $\bC_t^*$, of
quasi-central $\ell$-defect and unique up to $\bC_t^{*F}$-conjugacy such that
$b=b_{\bC_t^{*F}}(\bL_t^*,\la_t)$. The map $J_t^\bG$ from~(a) provides an
$e$-Jordan quasi-central cuspidal pair $(\bL,\la)$ in $\bG$ below $(\bG^F,s)$,
unique up to $\bG^F$-conjugacy, and by
Theorem~\ref{thm:[KM15, Thm A]} this determines an $\ell$-block
$\bJ_t^\bG(b):=b_{\bG^F}(\bL,\la)$ of $\bG^F$ in series $s$.
\end{proof}

We note that when $s=1 $, the map $\bJ_t^{\bar G}$ above coincides with the
map $\bJ_t^{\bG, F}$ of \cite[Thm~B]{En00} (where our semisimple $\ell$-element
$t$ is denoted $s$).

\begin{rem}
In the setting of Proposition~$\ref{prop:J_t}$ let $(\bL_t^*, \la_t)$ be a
unipotent $e$-cuspidal pair in $C_{\bG^*}(st)=C_{\bC^*}(t)$. If
$\bL_t^*$ is an $e$-split Levi subgroup of $\bG^*$ with dual $\bL_t\leq\bG$,
then by Proposition~\ref{prop:En 17}(a), $J_t^\bG$ sends the
$C_{\bG^*}(st)^F$-class of $(\bL_t^*,\lambda_t)$ to the $\bG^F$-class of
$(\bL_t,\lambda')$ for some $\la'\in\cE(\bL_t^F,s)$.
\end{rem}

In the following, by \emph{semisimple block} in the union of Lusztig series
$\cE_\ell(\bG^F,s)$ we mean the block containing the semisimple character of
$\cE(\bG^F,s)$. 
Note that Lusztig induction of a
semisimple character contains a semisimple character. This follows from the
fact that Jordan decomposition preserves uniform functions and the fact that
the trivial character is a constituent of any Lusztig induction of the trivial
character (see e.g.~\cite[proof of Cor.~10.1.7]{DM20}).

\begin{prop}   \label{prop: ss}
 Let $t\in C_{\bG^*}(s)^F$ be an $\ell$-element and $b$ the principal
 $\ell$-block of $C_{\bG^*}(st)^F$. Then $\bJ_t^\bG(b)$ is the semisimple
 block in $\cE_\ell(\bG^F,s)$.
\end{prop}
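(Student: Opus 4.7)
The plan is to trace the principal block $b$ of $C_{\bG^*}(st)^F$ through the three steps in the construction of $\bJ_t^\bG$ given in Proposition~\ref{prop:J_t}. First, I would identify the quasi-central defect $e$-cuspidal pair $(\bL_t^*,\la_t)$ of $C_{\bG^*}(st)$ parameterising $b$ via \cite[Thm~A]{En00}. The principal block contains $1_{C_{\bG^*}(st)^F}$, which lies in the $e$-Harish-Chandra series above $(\bL_0,1_{\bL_0})$ for $\bL_0$ a Sylow $\Phi_e$-torus centraliser; the uniqueness (up to twins) of the quasi-central defect pair attached to $b$ should then force $\la_t=1_{\bL_t^{*F}}$.

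Next, applying Proposition~\ref{prop:En 17}, the relation $\to_t$ yields a unipotent $e$-cuspidal pair $(\bL_s^*,\la_s)$ of $\bC^*$ with $(\bL_t^*,1)\sim(\bL_s^*,\la_s)$. Case~(b) of Proposition~\ref{prop:En 17} concerns only the nontrivial characters $\tw3D_4[-1]$ and $\phi_{2,1}$ of $\tw3D_4(q)$, and so does not occur here; in case~(a), $\la_s$ has trivial restriction to $[\bL_s^*,\bL_s^*]^F$, and by \cite[Prop.~3.1]{CE94} this determines $\la_s=1_{\bL_s^{*F}}$. Proposition~\ref{prop:En 15} then produces the $\bG^F$-class of pairs $(\bL,\la)$ below $(\bG^F,s)$ with $\bL^*=C_{\bG^*}(Z^\circ(\bL_s^*)_e)$ dual to $\bL$, and $\la=\Psi_{\bL,\bL^*,s}(1)$. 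By the defining normalisation of Jordan decomposition, $\la$ is the semisimple character $\chi_s^\bL$ of $\cE(\bL^F,s)$.

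Finally, $\bJ_t^\bG(b)=b_{\bG^F}(\bL,\chi_s^\bL)$, which by Theorem~\ref{thm:[KM15, Thm A]} contains every irreducible constituent of $R_\bL^\bG(\chi_s^\bL)$. The remark immediately preceding the proposition states that Lusztig induction of a semisimple character contains a semisimple character, so $R_\bL^\bG(\chi_s^\bL)$ has the semisimple character $\chi_s^\bG$ of $\cE(\bG^F,s)$ among its constituents. Consequently $\bJ_t^\bG(b)$ is the semisimple block in $\cE_\ell(\bG^F,s)$, as claimed.

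The main obstacle I anticipate is making the first step rigorous: identifying the quasi-central defect pair that parameterises the principal block. One must select $\bL_t^*$ so that $1_{\bL_t^{*F}}$ genuinely has quasi-central $\ell$-defect and then invoke Enguehard's uniqueness statement; in bad-prime situations, in particular when a Sylow $\Phi_e$-Levi of $C_{\bG^*}(st)$ has a $\tw3D_4$ component at $\ell=3$, one must verify using the list on \cite[p.~348]{En00} that the alternative parameterisations by nontrivial cuspidal characters do not arise, so that $\la_t=1$ is the unique possibility.
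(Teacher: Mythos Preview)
Your proposal is correct and follows essentially the same route as the paper: identify the principal block with the $e$-cuspidal pair $(\bL_t^*,1)$ via \cite{En00}, observe this avoids the exceptional cases of Proposition~\ref{prop:En 17} so that $(\bL_s^*,\la_s)=(\bL_s^*,1)$, pass to $(\bL,\la)$ with $\la$ semisimple via Proposition~\ref{prop:En 15}, and conclude using the remark on Lusztig induction of semisimple characters. Your closing worry is unnecessary: the paper simply takes as input from \cite[Thm~A, A.bis]{En00} that the principal block is labelled by $(\bL_t^*,1)$ with $\bL_t^*$ the centraliser of a Sylow $e$-torus, and this pair is automatically of quasi-central $\ell$-defect (in particular $\bL_t^*$ cannot have a $\tw3D_4$ component for $e\in\{1,2\}$, since $\tw3D_4(q)$ itself contains nontrivial $\Phi_e$-tori).
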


\begin{proof}
The principal block of $C_{\bG^*}(st)^F$ is labelled by the $e$-Harish-Chandra
series of $(\bL_t^*,1)$ for $\bL_t^*$ the centraliser of a Sylow $e$-torus of
$C_{\bG^*}(st)$. Thus we are not in one of the exceptional cases of
Proposition~\ref{prop:En 17} whence the corresponding unipotent $e$-cuspidal
pair of $C_{\bG^*}(s)$ is of the form $(\bL_s^*,1)$.
Let $(\bL,\la)$ be associated to $(\bL_s^*,1)$ as in
Proposition~\ref{prop:En 15}, so $\la$ is the semisimple character in
$\cE(\bL^F,s)$. Hence the semisimple block in $\cE_\ell(\bG^F,s)$ lies above
$(\bL,\la)$, so equals $\bJ_t^\bG(b)$.
\end{proof}

By Theorem~\ref{thm:[KM15, Thm A]}, for any $e$-split Levi $\bM$ of $\bG$
there is a map $\RMG$ from the set of $\ell$-blocks in $\cE_\ell(\bM^F,s)$ to
the set of $\ell$-blocks in $\cE_\ell(\bG^F,s)$ such that if $c$ is an
$\ell$-block of $\bM^F$ in series~$s$, then every constituent of $\RMG(\mu)$
for any $\mu\in\Irr(c)\cap\cE(\bM^F,s)$, lies in $\RMG(c)$. 

The next statement mirrors \cite[Prop.~16]{En00}:

\begin{prop}   \label{prop:En 16}
 Let $\bM\le\bG$ be $e$-split with $s\in\bM^*$. Let $(\bL_i,\la_i)$, $i=1,2$,
 be two $e$-Jordan cuspidal pairs below $(\bM^F,s)$. Then these are
 $e$-Jordan cuspidal pairs below $(\bG^F,s)$, and if
 $b_{\bM^F}(\bL_1,\la_1)=b_{\bM^F}(\bL_2,\la_2)$ then also
 $b_{\bG^F}(\bL_1,\la_1)=b_{\bG^F}(\bL_2,\la_2)$.   \par
 Further, if $(\bL,\la)$ is an $e$-Jordan quasi-central cuspidal pair below a
 block $b$ in $\cE_\ell(\bM^F,s)$ then it is so below $\RMG(b)$.
\end{prop}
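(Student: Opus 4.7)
The plan is to derive the proposition directly from Theorem~\ref{thm:[KM15, Thm A]} combined with transitivity of Lusztig induction.

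First I would dispense with the ``$e$-Jordan cuspidal below $(\bG^F,s)$'' claim: since $\bL_i\le\bM$ is $e$-split in $\bM$ and $\bM$ is $e$-split in $\bG$, transitivity of centralisers of $e$-tori gives that $\bL_i$ is $e$-split in $\bG$. The $e$-Jordan cuspidality of $\la_i$ depends only on the intrinsic data $(\bL_i,\la_i)$, namely on $C_{\bL_i^*}^\circ(s)$, its Sylow $e$-torus, and the Jordan correspondent of $\la_i$; none of these involve the ambient reductive group. Likewise $\la_i\in\cE(\bL_i^F,s)$ witnesses that $(\bL_i,\la_i)$ lies below $(\bG^F,s)$.

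For the key block-equality assertion I would argue as follows. Write $c_i:=b_{\bM^F}(\bL_i,\la_i)$. By Theorem~\ref{thm:[KM15, Thm A]}(b) applied in $\bM$, every constituent of $R_{\bL_i}^\bM(\la_i)$ lies in $c_i$, and since Lusztig induction preserves Lusztig series (\cite[Prop.~3.3.20]{GM20}) these constituents all lie in $\cE(\bM^F,s)\subseteq\cE(\bM^F,\ell')$. Theorem~\ref{thm:[KM15, Thm A]}(a), applied with $c=c_i$, then yields a unique $\ell$-block $\RMG(c_i)$ of $\bG^F$ containing every irreducible constituent of $R_\bM^\bG(\mu)$ for each $\mu\in\Irr(c_i)\cap\cE(\bM^F,\ell')$. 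Transitivity of Lusztig induction (with a choice of compatible parabolics) gives
$$R_{\bL_i}^\bG(\la_i)=R_\bM^\bG\bigl(R_{\bL_i}^\bM(\la_i)\bigr),$$
so every constituent of $R_{\bL_i}^\bG(\la_i)$ lies in $\RMG(c_i)$. On the other hand, Theorem~\ref{thm:[KM15, Thm A]}(b) applied in $\bG$ locates all such constituents in $b_{\bG^F}(\bL_i,\la_i)$. It follows that
$$b_{\bG^F}(\bL_i,\la_i)=\RMG(c_i)=\RMG\bigl(b_{\bM^F}(\bL_i,\la_i)\bigr),$$
and therefore $c_1=c_2$ immediately implies $b_{\bG^F}(\bL_1,\la_1)=b_{\bG^F}(\bL_2,\la_2)$.

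The final assertion is then essentially a restatement: $(\bL,\la)$ lying below $b$ in $\cE_\ell(\bM^F,s)$ means $b=b_{\bM^F}(\bL,\la)$, whence $\RMG(b)=b_{\bG^F}(\bL,\la)$ by the displayed identity, and the quasi-central defect condition is intrinsic to $(\bL,\la)$. There is no real obstacle in the argument; the only subtle point is the compatibility of transitivity with the specific parabolics needed to define $\RMG$ on blocks, which is harmless in the connected-centre setting we are in, and is exactly the content that makes $\RMG$ well-defined on blocks in Theorem~\ref{thm:[KM15, Thm A]}(a).
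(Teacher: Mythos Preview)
Your proof is correct and follows essentially the same approach as the paper's: both use transitivity of $e$-splitness for the first assertion, then combine Theorem~\ref{thm:[KM15, Thm A]}(a),(b) with transitivity of Lusztig induction $R_{\bL_i}^\bG=R_\bM^\bG\circ R_{\bL_i}^\bM$ to obtain $b_{\bG^F}(\bL_i,\la_i)=\RMG(b_{\bM^F}(\bL_i,\la_i))$. Your write-up is in fact slightly more explicit than the paper's, which leaves the identification with $\RMG(c)$ and the ``Further'' clause implicit in the phrase ``This proves the result.''
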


\begin{proof}
The first statement is clear from the definition of $e$-Jordan cuspidal pairs
and the fact that if $\bL$ is $e$-split in $\bM$ and $ \bM$ is $e$-split in
$\bG$, then $\bL$ is $e$-split in $\bG$. Suppose that
$b_{\bM^F}(\bL_1,\la_1)=b_{\bM^F}(\bL_2,\la_2)=:c$. Then every irreducible
constituent of $R_{\bL_i}^\bM (\la_i)$, $i=1,2$, lies in $c$. On the other
hand, if $\chi$ is an irreducible constituent of $R_{\bL_i}^\bG(\la_i)$ for
any $i=1,2$, then $\chi$ is a constituent of $\RMG(\mu_i)$ for some irreducible
constituent $\mu_i$ of $R_{\bL_i}^\bM (\la_i)$ and as observed before $\mu_i$
belongs to $\Irr(c)$. This proves the result.
\end{proof}

The next statement extends \cite[Cor.~19]{En00}.

\begin{cor}   \label{cor:J_t}
 Let $t\in \bC^{*F}$ be an $\ell$-element. If $\bM^*\le\bG^*$ is an $e$-split
 Levi subgroup such that $C_{\bG^*}(st)\le\bM^*$, with dual $\bM\le\bG$, then
 $\RMG\circ \bJ_t^\bM=\bJ_t^\bG$ on the set of unipotent $\ell$-blocks of
 $C_{\bM^*}(st)^F$.
\end{cor}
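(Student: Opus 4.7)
The plan is to unpack both sides of the stated equality at the level of $e$-Jordan quasi-central cuspidal pairs, and to verify that they produce the same $\bG^F$-class. Fix a unipotent $\ell$-block $b$ of $C_{\bM^*}(st)^F$. Since $C_{\bG^*}(st)\le\bM^*$, we have $C_{\bM^*}(st)=C_{\bG^*}(st)$, so $b$ is equally a unipotent $\ell$-block of $C_{\bG^*}(st)^F$, and is labelled up to conjugacy by a unipotent $e$-cuspidal pair $(\bL_t^*,\la_t)$ of $C_{\bG^*}(st)$ of quasi-central $\ell$-defect. Applying Proposition~\ref{prop:En 17} with the role of $\bC$ played successively by $C_{\bM^*}(s)$ and by $\bC^*=C_{\bG^*}(s)$ yields unipotent $e$-cuspidal pairs $(\bL^*_\bM,\la^*_\bM)$ of $C_{\bM^*}(s)$ and $(\bL_s^*,\la_s)$ of $\bC^*$ of quasi-central $\ell$-defect. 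Feeding these through Proposition~\ref{prop:En 15} in $\bM$ and in $\bG$ respectively produces $e$-Jordan quasi-central cuspidal pairs $(\bL_\bM,\la_\bM)$ below $(\bM^F,s)$ and $(\bL,\la)$ below $(\bG^F,s)$, with $\bJ_t^\bM(b)=b_{\bM^F}(\bL_\bM,\la_\bM)$ and $\bJ_t^\bG(b)=b_{\bG^F}(\bL,\la)$ by construction.

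Since $\bL_\bM$ is $e$-split in $\bM$ and $\bM$ is $e$-split in $\bG$, $\bL_\bM$ is $e$-split in $\bG$, so $(\bL_\bM,\la_\bM)$ is also an $e$-Jordan cuspidal pair below $(\bG^F,s)$. By transitivity of Lusztig induction and the uniqueness statement in Theorem~\ref{thm:[KM15, Thm A]}(b), $\RMG(b_{\bM^F}(\bL_\bM,\la_\bM))=b_{\bG^F}(\bL_\bM,\la_\bM)$. The corollary therefore reduces to showing $b_{\bG^F}(\bL_\bM,\la_\bM)=b_{\bG^F}(\bL,\la)$. Through the bijection of Proposition~\ref{prop:En 15} applied in $\bG$, this will follow from the $\bC^{*F}$-conjugacy of the unipotent $e$-cuspidal pairs of $\bC^*$ attached to $(\bL_\bM,\la_\bM)$ and to $(\bL,\la)$. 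Chasing the constructions these pairs are, respectively, $(\bL^*_\bM,\la^*_\bM)$ and $(\bL_s^*,\la_s)$; the former legitimately lies in $\bC^*$ because $C_{\bM^*}(s)=C_{\bC^*}(Z^\circ(\bM^*)_e)$ is itself an $e$-split Levi of $\bC^*$, whence any $e$-split Levi of $C_{\bM^*}(s)$ is again $e$-split in $\bC^*$.

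Both $(\bL^*_\bM,\la^*_\bM)$ and $(\bL_s^*,\la_s)$ are related to $(\bL_t^*,\la_t)$ via Proposition~\ref{prop:En 17}. In the generic case~(a) of that proposition, the derived-restricted pairs $([\bL^*_\bM,\bL^*_\bM],\la^*_\bM|_{[\bL^*_\bM,\bL^*_\bM]^F})$ and $([\bL_s^*,\bL_s^*],\la_s|_{[\bL_s^*,\bL_s^*]^F})$ are both $\bC^{*F}$-conjugate to $([\bL_t^*,\bL_t^*],\la_t|_{[\bL_t^*,\bL_t^*]^F})$, and the uniqueness argument opening the proof of Proposition~\ref{prop:En 17} lifts this to $\bC^{*F}$-conjugacy of the pairs themselves, finishing the reduction. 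The hard part is the exceptional case~(b), where $[\bL_t^*,\bL_t^*]^F$ is of type $\tw3D_4$: one must rule out that $C_{\bM^*}(s)$ falls into case~(b) while $\bC^*$ falls into case~(a), since in that scenario the two constructions would attach Levi subgroups of incompatible rational types ($D_4$ versus $\tw3D_4$). This is handled by the observation that any $e$-split Levi of $C_{\bM^*}(s)$ with $\tw3D_4$-derived group is automatically $e$-split in $\bC^*$ by the previous paragraph, so case~(a) for $C_{\bM^*}(s)$ implies case~(a) for $\bC^*$; when both groups fall into case~(b), the $D_4$ Levi subgroup and the associated character prescribed by Proposition~\ref{prop:En 17}(b) are unique up to conjugacy in each group thanks to Proposition~\ref{prop:3D4}, and the remaining configuration is excluded by the explicit enumeration of centralisers at the end of the proof of Proposition~\ref{prop:En 17}.
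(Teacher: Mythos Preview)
Your argument is correct and follows essentially the same route as the paper: both reduce to showing that the unipotent $e$-cuspidal pair $(\bL_\bM^*,\la_\bM^*)$ produced by $\to_t$ inside $C_{\bM^*}(s)$ also serves as a representative for $\to_t$ inside $\bC^*=C_{\bG^*}(s)$, so that the two applications of Proposition~\ref{prop:En 15} yield $\bG^F$-conjugate pairs. The paper compresses this into one line (``$(\bL_t,\la_t)\to_t(\bL',\la')$ in $\bG^*$ by Proposition~\ref{prop:En 17}'') together with the explicit computation $C_{\bG^*}(Z^\circ(\bL')_e)=C_{\bM^*}(Z^\circ(\bL')_e)$, which shows the $e$-Jordan cuspidal pairs below $(\bM^F,s)$ and below $(\bG^F,s)$ are \emph{literally equal}; your version instead applies the inverse of the bijection in Proposition~\ref{prop:En 15} to $(\bL_\bM,\la_\bM)$ and carries out the case~(a)/(b) analysis of Proposition~\ref{prop:En 17} by hand. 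The content is the same, and your more explicit treatment of the exceptional $\tw3D_4$ case is a reasonable unpacking of what the paper leaves implicit.
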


\begin{proof} 
Let $c$ be a unipotent $\ell$-block of $C_{\bG^*}(st)$ and $(\bL_t,\la_t)$ be a
unipotent $e$-cuspidal pair of quasi-central $\ell$-defect defining $c$. Let
$(\bL',\la')$ be a unipotent $e$-cuspidal pair (also of quasi-central
$\ell$-defect) of $C_{\bM^*}(s) $ such that $(\bL_t,\la_t)\to_t (\bL',\la')$
in $\bM^*$.
Then, $C_{\bM^*}(s)$ is $e$-split in $\bC^*$, hence $(\bL',\la')$ is an
$e$-cuspidal pair of $\bC^*$ and $(\bL_t,\la_t)\to_t (\bL',\la')$ in $\bG^*$
by Proposition~\ref{prop:En 17}. Further, since
$Z^\circ(\bM^*)_e\leq Z^\circ(C_{\bM^*}(s))_e\leq Z^\circ(\bL')_e$ and
$\bM^*=C_\bG^*(Z^\circ(\bM^*)_e)$, it follows that
$C_{\bG^*}(Z^\circ(\bL')_e) = C_{\bM^*}(Z^\circ(\bL')_e)$.
So $(\bL,\la) :=J_t^\bG (\bL_t,\la_t) = J_t^\bM(\bL_t,\la_t)$.
Now it follows from Proposition~\ref{prop:En 16} that
\begin{equation}
  \bJ_t^\bG(c)=b_{\bG^F}(\bL,\la)=\RMG(b_{\bM^F}(\bL',\la'))=\RMG(\bJ_t^\bM(c)).
  \qedhere\end{equation}
\end{proof}

\subsection{$t$-Twin blocks}   \label{subsec:t-twins}
Our main theorem relies on the compatibility between Lusztig induction and
Jordan decomposition. In order to deal with certain ambiguities in groups of
type $E_8$, we group together certain unipotent $2$-cuspidal pairs of
exceptional groups into \emph{twins} as follows:
\begin{itemize}
 \item $(E_6,E_6[\theta])$ and $(E_6,E_6[\theta^2])$,
 \item $(\tw2E_6,\tw2E_6[\theta])$ and $(\tw2E_6,\tw2E_6[\theta^2])$,
 \item $(E_7,\phi_{512,11})$ and $(E_7,\phi_{512,12})$.
\end{itemize}
Now for any Levi subgroup of $\bG$ having one of the above as a component (note
that there can be at most one such component), we call \emph{twin
$2$-Harish-Chandra series} the corresponding unions of $2$-Harish-Chandra
series of $\bG$ lying above these (see also \cite[p.~350]{GM20}). Note that the
two members in each of the first two pairs are Galois conjugate over
$\QQ(\theta)$,
for $\theta$ a primitive third root of unity, while those in the last one are
Galois conjugate over $\QQ(\sqrt{q})$ when $q$ is not a square, and rational
otherwise.

For $s\in\bG^{*F}$ a semisimple $\ell'$-element, we define \emph{$t$-twin
blocks}, for $t\in C_{\bG^*}(s)$ an
$\ell$-element, as follows: If $\bG$ is not of type $E_8$, or if
$e_\ell(q)\ne2$, then $t$-twin blocks are blocks. Suppose that $\bG=E_8$ and
$e_\ell(q)=2$. Let $(\bL_t^*,\la_t)$ be a unipotent $e$-cuspidal
pair of $C_{\bG^*}(st) = C_{\bC^*}(t)$ of quasi-central $\ell$-defect and let
$b=\bJ_t^\bG(b_{C_{\bG^*}(st)^F}(\bL_t^*,\la_t))$. Then the $t$-twin block
containing $b$ is the pair consisting of $b$ and its ``twin" corresponding to
the twin $(\bL_t^*,\la_t')$ of $(\bL_t^*,\la_t)$ if $(\bL_t^*,\la_t)$ is as
above; see the list shown in Table~\ref{tab:twins}. Otherwise, the $t$-twin
block of $b$ contains only~$b$. With this, the map $\bJ_t^\bG$ from
Proposition~\ref{prop:J_t} can and will be considered as a map from the set of
unipotent blocks of $C_{\bG^*}(st)^F$ to the set of $t$-twin blocks of $\bG^F$.

\begin{table}[htbp]
\caption{$t$-Twin blocks, $\ell|(q+1)$}   \label{tab:twins}
$$\begin{array}{|c|cc|cc|}
\hline
 \bG& C_{\bG^*}(s)^F& \ell& \bL_t^{*F}& \la_t,\la_t'\\
\hline\hline
 E_8& \bG^{*F}& \ne5& \Ph2^2.\tw2E_6(q)& \tw2E_6[\theta],\tw2E_6[\theta^2]\\
 E_8& \tw2E_6(q).\tw2A_2(q)& \ne3& \Ph2^2.\tw2E_6(q)& \tw2E_6[\theta],\tw2E_6[\theta^2]\\
 E_8& E_7(q).A_1(q)& \ne2& \Ph2^2.\tw2E_6(q)& \tw2E_6[\theta],\tw2E_6[\theta^2]\\
\hline
 E_8& \bG^{*F}& \ne2,3& \Ph2.E_7(q)& \phi_{512,11},\phi_{512,12}\\
 E_8& E_7(q).A_1(q)& \ne2& \Ph2.E_7(q)& \phi_{512,11},\phi_{512,12}\\
\hline
\end{array}$$
\end{table}

\subsection{Inductive arguments}
The following statement will allow us to inductively deal with many cases of
Theorem~\ref{thm:thmA}.

\begin{prop}   \label{prop:in Levi}
 Let $t\in \bC^{*F}$ be an $\ell$-element and $\chi\in\cE(\bG^F,st)$.
 Assume that $C_{\bG^*}(st)$ is contained in an $e$-split proper Levi subgroup
 $\bM^*$ of $\bG^*$ and that Theorem~$\ref{thm:thmA}$ holds for its dual~$\bM$.
 Let $b_t$ be the unipotent $\ell$-block of $C_{\bG^*}(st)^F$ containing
 $\pi_{st}^\bG(\chi)$. Then $b_{\bG^F}(\chi)$ is contained in $\bJ_t^\bG(b_t)$.
\end{prop}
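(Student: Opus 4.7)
The strategy is to reduce to the proper Levi subgroup $\bM$ via the Bonnafé--Rouquier bijection of Lusztig series, apply the inductive Theorem~\ref{thm:thmA} for $\bM$, and then push the conclusion back up to $\bG$ using Theorem~\ref{thm:[KM15, Thm A]}(a) and Corollary~\ref{cor:J_t}.

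Since $C_{\bG^*}(st)\le\bM^*$, in particular $st\in\bM^{*F}$ and $C_{\bM^*}(st)=C_{\bG^*}(st)$. In this situation the Bonnafé--Rouquier theorem (equivalently, the compatibility of Jordan decomposition with Lusztig induction when the centraliser of the semisimple label lies in a Levi subgroup, cf.~\cite[Thm~7.1]{DM90}) yields a sign $\eps\in\{\pm1\}$ such that $\chi_0\mapsto\eps\RMG(\chi_0)$ is a bijection $\cE(\bM^F,st)\to\cE(\bG^F,st)$ intertwining the two Jordan decompositions, i.e.\ $\pi_{st}^\bG(\eps\RMG(\chi_0))=\pi_{st}^\bM(\chi_0)$ in $\cE(C_{\bM^*}(st)^F,1)=\cE(C_{\bG^*}(st)^F,1)$. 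Let $\chi_0\in\cE(\bM^F,st)$ be the preimage of $\chi$ under this bijection. Then $\pi_{st}^\bM(\chi_0)=\pi_{st}^\bG(\chi)\in\Irr(b_t)$, so $b_t$ is also the unipotent $\ell$-block of $C_{\bM^*}(st)^F$ containing $\pi_{st}^\bM(\chi_0)$. By the hypothesis that Theorem~\ref{thm:thmA} holds for $\bM$, the block $b_{\bM^F}(\chi_0)$ is contained in the $t$-twin block $\bJ_t^\bM(b_t)$ of~$\bM^F$.

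Since $\chi_0\in\cE(\bM^F,\ell')$ (as $s$ is an $\ell'$-element), Theorem~\ref{thm:[KM15, Thm A]}(a) applied to the $e$-split Levi $\bM\le\bG$ and the $\ell$-block $c:=b_{\bM^F}(\chi_0)$ guarantees that every irreducible constituent of $\RMG(\chi_0)$---and in particular $\chi$---lies in the single $\ell$-block $\RMG(c)$ of $\bG^F$. Combining this with Corollary~\ref{cor:J_t}, which gives $\RMG\circ\bJ_t^\bM=\bJ_t^\bG$ as maps to $t$-twin blocks, we conclude
$$b_{\bG^F}(\chi)=\RMG(b_{\bM^F}(\chi_0))\subseteq\RMG(\bJ_t^\bM(b_t))=\bJ_t^\bG(b_t),$$
as required. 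The main technical point will be the first step: making the Bonnafé--Rouquier bijection precise and verifying the commutation with the chosen Jordan decompositions $\pi_{st}^\bM$ and $\pi_{st}^\bG$; this is a standard but delicate matter of tracking signs and identifications. The $t$-twin bookkeeping is essentially automatic, since both the twin structure on $\bM^F$-blocks (via $\bJ_t^\bM$) and that on $\bG^F$-blocks (via $\bJ_t^\bG$) are governed by unipotent $e$-cuspidal pairs in the common group $C_{\bM^*}(st)^F=C_{\bG^*}(st)^F$, and Corollary~\ref{cor:J_t} shows that $\RMG$ transports the former to the latter.
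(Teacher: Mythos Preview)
There is a genuine gap in the crucial step. You write ``Since $\chi_0\in\cE(\bM^F,\ell')$ (as $s$ is an $\ell'$-element)'', but this is false whenever $t\ne1$: the character $\chi_0$ lies in $\cE(\bM^F,st)$, and since $t$ is an $\ell$-element, $st$ is not an $\ell'$-element in general. Theorem~\ref{thm:[KM15, Thm A]}(a) and the block-level map $\RMG$ are only stated for characters $\mu\in\Irr(c)\cap\cE(\bM^F,\ell')$; the paper gives no statement to the effect that constituents of $\RMG(\chi_0)$ lie in $\RMG(b_{\bM^F}(\chi_0))$ when $\chi_0$ sits in a non-$\ell'$ Lusztig series. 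So the passage from $b_{\bM^F}(\chi_0)\subseteq\bJ_t^\bM(b_t)$ to $b_{\bG^F}(\chi)\subseteq\RMG(\bJ_t^\bM(b_t))$ is unjustified.

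This is exactly why the paper's proof takes a detour through the decomposition map. Using Lemma~\ref{lem:in Levi} one first replaces $\chi$ by some $\gamma\in\cE(\bG^F,s)$ in the \emph{same} $\ell$-block (via $\langle d^{1,\bG}(\chi),\gamma\rangle\ne0$); now $\gamma$ genuinely lies in an $\ell'$-series. One then computes $\langle d^{1,\bM}(\chi'),\sRMG(\gamma)\rangle\ne0$ (with $\chi'$ playing the role of your $\chi_0$) to produce a constituent $\gamma'\in\cE(\bM^F,s)$ of $\sRMG(\gamma)$ lying in the same $\bM^F$-block as $\chi'$. At that point the inductive hypothesis places $b_{\bM^F}(\gamma')$ in $\bJ_t^\bM(b_t)$, and since $\gamma'\in\cE(\bM^F,\ell')$ one may legitimately apply Proposition~\ref{prop:En 16} and Corollary~\ref{cor:J_t} to conclude $b_{\bG^F}(\gamma)=\RMG(b_{\bM^F}(\gamma'))\subseteq\bJ_t^\bG(b_t)$. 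Your outline becomes correct once this $\ell'$-correction via $d^{1,\bG}$ is inserted.
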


\begin{proof}
By Lemma~\ref{lem:in Levi}, there is $\gamma\in\cE(\bG^F,s)$ with
$\langle d^{1,\bG}(\chi),\gamma\rangle\ne0$, so in particular lying in the
$\ell$-block $b_{\bG^F}(\chi)=b_{\bG^F}(\gamma)$. Let $\bM\le\bG$ be dual to
$\bM^*$ and let $\chi'\in\cE(\bM^F,st)$ with $\chi=\pm\RMG(\chi')$
and $\pi_{st}^\bG(\chi)=\pi_{st}^\bM(\chi')$ (see \cite[Thm~7.1]{DM90}). Then
$$\blangle d^{1,\bM}(\chi'),\sRMG(\gamma)\brangle
  =\blangle d^{1,\bG}(\RMG(\chi')),\gamma\brangle
  =\pm\langle d^{1,\bG}(\chi),\gamma\rangle\ne0,$$
whence there is a constituent $\gamma'\in\cE(\bM^F,s)$ of $\sRMG(\gamma)$ with
$$\blangle d^{1,\bM}(\chi'),\gamma'\brangle
  \blangle \sRMG(\gamma),\gamma'\brangle \ne0.$$
By assumption and since $\blangle d^{1,\bM}(\chi'),\gamma'\brangle\ne0$ we know
that $b_{\bM^F}(\gamma')=b_{\bM^F}(\chi')$ is contained in 
 $\bJ_t^\bM(b_t)$. Application of Proposition~\ref{prop:En 16} and
Corollary~\ref{cor:J_t} then yields
$$b_{\bG^F}(\chi)=b_{\bG^F}(\gamma)=\RMG(b_{\bM^F}(\gamma'))\quad
  \text{lies in}\quad\RMG(\bJ_t^\bM(b_t))=\bJ_t^\bG(b_t),$$
as claimed.
\end{proof}

Another useful reduction is the following:

\begin{prop}   \label{prop:isolated reduction}
 Let $\bM$ be a proper $F$-stable Levi subgroup of $\bG$ whose dual in $\bG^*$
 contains $C_{\bG^*}(s)$ and let $ t \in C_{\bG^*}(s)^F$ be an $\ell$-element.
 If Theorem~$\ref{thm:thmA}$ holds for $\bM$ and $t$, then it
 holds for $\bG$ and $t$.
\end{prop}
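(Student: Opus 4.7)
The plan is to transfer the statement from $\bM$ to $\bG$ by means of the Bonnaf\'e--Rouquier Morita equivalence. The hypothesis $C_{\bG^*}(s)\leq\bM^*$, combined with the connectedness of $Z(\bG)$ (so that centralisers of semisimple elements are connected), gives $C_{\bM^*}(st)=C_{\bG^*}(st)$ for every $\ell$-element $t\in C_{\bG^*}(s)^F$; hence unipotent $\ell$-blocks of $C_{\bG^*}(st)^F$ and $C_{\bM^*}(st)^F$ coincide, and the domain of $\bJ_t^\bG$ and $\bJ_t^\bM$ is the same. By \cite[Thm~11.5.3]{DM20}, $\pm\RMG$ defines a bijection $\cE(\bM^F,st)\to\cE(\bG^F,st)$ compatible with Jordan decomposition, so for $\eta\in\cE(C_{\bG^*}(st)^F,1)$ its Jordan correspondents $\chi'\in\cE(\bM^F,st)$ and $\chi\in\cE(\bG^F,st)$ satisfy $\chi=\varepsilon\,\RMG(\chi')$ for some sign $\varepsilon\in\{\pm1\}$.

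Fix a unipotent $\ell$-block $b$ of $C_{\bG^*}(st)^F$ and $\eta\in\Irr(b)\cap\cE(C_{\bG^*}(st)^F,1)$; by the hypothesis on $\bM$ we then have $\chi'\in\bJ_t^\bM(b)$. The next step I would carry out is to verify that the block bijection underlying the Bonnaf\'e--Rouquier Morita equivalence sends $\bJ_t^\bM(b)$ to $\bJ_t^\bG(b)$. By the explicit construction of Proposition~\ref{prop:J_t}, both $\bJ_t^\bG(b)$ and $\bJ_t^\bM(b)$ equal $b_?(\bL,\la)$ for the \emph{same} $e$-Jordan quasi-central cuspidal pair $(\bL,\la)$: the relation $\to_t$ of Proposition~\ref{prop:En 17} together with the Jordan correspondence of Proposition~\ref{prop:En 15} only depend on $\bC^*=C_{\bG^*}(s)=C_{\bM^*}(s)$, and since $\bL^*\leq\bC^*\leq\bM^*$, the pair $(\bL,\la)$ lives in both $\bM$ and $\bG$. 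Transitivity of Lusztig induction $R_\bL^\bG=\RMG\circ R_\bL^\bM$ then shows that every constituent of $R_\bL^\bM(\la)$ is sent by $\pm\RMG$ to a constituent of $R_\bL^\bG(\la)$, so $b_{\bM^F}(\bL,\la)$ corresponds to $b_{\bG^F}(\bL,\la)$ under the Bonnaf\'e--Rouquier block bijection.

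Combining these two steps, $\chi'\in b_{\bM^F}(\bL,\la)=\bJ_t^\bM(b)$ and $\chi=\varepsilon\,\RMG(\chi')$ force $\chi\in b_{\bG^F}(\bL,\la)=\bJ_t^\bG(b)$, which is what is required. The main subtlety I anticipate is the consistent treatment of the $t$-twin block structure: when $\bG=E_8$ and $e=2$, some images $\bJ_t^\bG(b)$ consist of a pair of twin blocks (see Table~\ref{tab:twins}), so one must check that the twin partner $(\bL,\la')$ of $(\bL,\la)$ in $\bM$ (present precisely when $\bM$ still contains a relevant $\tw2E_6$- or $E_7$-component) transfers along $\pm\RMG$ to the twin partner of $b_{\bG^F}(\bL,\la)$ in $\bG$. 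This follows because the twinning is defined at the level of the cuspidal data $(\bL,\la),(\bL,\la')$ inside the common subgroup $\bC^*$, and the transitivity argument above applies identically to $\la'$ in place of $\la$.
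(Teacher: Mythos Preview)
Your overall strategy---use the Bonnaf\'e--Rouquier correspondence to transport the statement from $\bM$ to $\bG$---is exactly the paper's approach. However, there is a genuine gap in your second paragraph.

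You assert that $\bJ_t^\bG(b)$ and $\bJ_t^\bM(b)$ are given by the \emph{same} $e$-Jordan cuspidal pair $(\bL,\la)$, on the grounds that the construction ``only depends on $\bC^*$'' and that ``$\bL^*\leq\bC^*\leq\bM^*$''. Both claims are false. In Proposition~\ref{prop:En 15} the pair $(\bL,\la)$ is obtained from the unipotent pair $(\bL_s^*,\la_s)$ in $\bC^*$ by taking $\bL^*=C_{\bG^*}(Z^\circ(\bL_s^*)_e)$, a centraliser in $\bG^*$---not in $\bC^*$. Replacing $\bG^*$ by $\bM^*$ gives instead ${\bL'}^*=C_{\bM^*}(Z^\circ(\bL_s^*)_e)=\bL^*\cap\bM^*$, which is in general a \emph{proper} Levi subgroup of $\bL^*$. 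So the two pairs $(\bL',\la')$ in $\bM$ and $(\bL,\la)$ in $\bG$ differ, and your transitivity argument $R_\bL^\bG=\RMG\circ R_\bL^\bM$ breaks down because $\bL$ need not lie in $\bM$.

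What is needed is precisely the bijection $\Psi_\bM^\bG$ of \cite[Prop.~2.4, 2.6]{KM15}, which sends the $e$-Jordan cuspidal pair $(\bL',\la')$ below $(\bM^F,s)$ to $(\bL,\la)=(C_\bG(Z^\circ(\bL')_e),\,\eps_\bL\eps_{\bL'}R_{\bL'}^\bL(\la'))$ below $(\bG^F,s)$, and is compatible with the BR block bijection. The paper then checks that $C_{{\bL'}^*}(s)=C_{\bL^*}(s)$ and $\pi_s^{\bL'}(\la')=\pi_s^\bL(\la)$, so both pairs correspond to the \emph{same} unipotent $e$-cuspidal pair $(\bL_s^*,\la_s)$ in $\bC^*$; this is what links $J_t^\bM$ and $J_t^\bG$. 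Your discussion of twins is also slightly off: since $\bM$ is a proper Levi of $\bG$, $t$-twin blocks in $\bM$ are always single blocks (twins only arise when the ambient group is $E_8$), so no matching of twin partners in $\bM$ is needed.
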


\begin{proof}
Recall that by the results of Bonnaf\'e--Rouquier, $\pm\RMG$ induces a
bijection (referred to as Jordan correspondence in \cite{KM15}),
which we will abbreviate \emph{BR-correspondence} here, between the
$\ell$-blocks in $\cE_\ell(\bM^F,s)$ and the $\ell$-blocks in
$\cE_\ell(\bG^F,s)$. By \cite[Prop.~2.4, 2.6 and Thm~A]{KM15}, there exists a
bijection $\Psi_\bM^\bG$ between the set of
$e$-Jordan cuspidal pairs below $(\bM^F,s)$ and the set of $e$-Jordan cuspidal
pairs below $(\bG^F,s)$ such that for any $e$-Jordan cuspidal pair $(\bL',\la')$
below $(\bM^F,s)$, $b_{\bG^F}(\Psi_\bM^\bG(\bL',\la'))$ is the BR-correspondent
of $b_{\bM^F} (\bL',\la')$. The bijection $\Psi_\bM^\bG$ is described as
follows: If $(\bL',\la')$ is an $e$-Jordan cuspidal pair below $(\bM^F,s)$,
then $\Psi_\bM^\bG(\bL',\la')=(\bL,\la)$, where
$\bL= C_{\bG} (Z^\circ (\bL')_e)$ and $\la = \pm R_{\bL'}^\bL(\la')$. 

Passing to duals we have that $\bL^*= C_{\bG^*} (Z^\circ ({\bL'}^{*})_e)$.
Since ${\bL'}^*$ is $e$-split in $\bM^*$, ${\bL'}^*
=C_{\bM^*}(Z^\circ ({\bL'}^*)_e)$ and hence ${\bL'}^* = \bL^*\cap\bM^*$.
Consequently, $C_{\bL^*}(s) = C_{{\bL'}^*}(s)$ and by properties of
Digne--Michel's Jordan
decomposition, $\pi_s^\bL(\la)=\pi_s^{\bL'}(\la')$. Thus, if $(\bL',\la')$
corresponds to the unipotent $e$-cuspidal pair $(\bL_s^*,\la_s)$ of
$C_{\bM^*}(s)$ via Proposition~\ref{prop:En 15} (for the group $\bM$), then
$(\bL,\la)$ also corresponds to $(\bL_s^*,\la_s)$ via
Proposition~\ref{prop:En 15}. 

Let $t \in C_{\bG^*}(s)^F$ be an $\ell$-element. It follows from the above that
for any unipotent $e$-cuspidal pair $(\bL_t^*,\la_t)$ of $C_{\bG^*}(st)$, if
$(\bL',\la')$ is in the $\bM^F$-class of $J_t^\bM((\bL_t^*,\la_t))$, then
$(\bL,\la)$ is in the $\bG^F$-class of $J_t^\bG((\bL_t,\la_t))$. Consequently,
for any unipotent block $b$ of $C_{\bG^*}(st)$, $\bJ_t^\bG(b)$ is the
BR-correspondent block of $\bJ_t^\bM(b)$.
 
Now let $\chi\in\cE(\bG^F,st)$. Since $C_{\bG^*}(st)\leq\bM^*$, there exists
$\chi'\in\cE(\bM^F,st)$ with $\chi=\pm\RMG(\chi')$ and
$\gamma:=\pi_{st}^\bG(\chi)=\pi_{st}^\bM(\chi')$ (see \cite[Thm~7.1]{DM90}).
Denoting by $b$ the block containing~$\gamma$, by hypothesis we have
$\chi'\in\Irr(\bJ_t^\bM(b))$ (here we note that as $\bM$ is proper in $\bG$,
$t$-twin blocks of $\bM$ are the same as blocks). Thus, $\chi$ belongs to
the BR-correspondent of $\bJ_t^\bM(b)$, that is to $\bJ_t^\bG(b)$.
\end{proof}

\begin{prop}   \label{prop:s central}
 The conclusion of Theorem~$\ref{thm:thmA}$ holds whenever $s\in Z(\bG^{*F})$.
\end{prop}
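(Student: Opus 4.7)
The plan is to reduce the central case to the unipotent case $s=1$ (which follows from \cite[Thms~B and A.bis]{En00}, combined with the handling of twin blocks described in Section~\ref{subsec:t-twins}) by tensoring with the central linear character $\hat s\in\Irr(\bG^F)$ associated to $s$ via duality. The guiding observation is that when $s\in Z(\bG^{*F})$ we have $C_{\bG^*}(s)=\bG^*$, so $C_{\bG^*}(st)=C_{\bG^*}(t)$ for any $\ell$-element $t\in\bG^{*F}$; in particular the sets of unipotent $\ell$-blocks and of unipotent $e$-cuspidal pairs of quasi-central $\ell$-defect of $C_{\bG^*}(st)^F$ and $C_{\bG^*}(t)^F$ coincide, and the map $J_t^\bG$ of Proposition~\ref{prop:J_t} for series $s$ uses exactly the same combinatorial input as for series~$1$.

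First I would record the basic tensor-product properties. Since $Z(\bG^*)$ is connected and $s$ has $\ell'$-order, $\hat s$ is a linear character of $\bG^F$ of $\ell'$-order, and $\hat s|_{\bL^F}$ is the linear character dual to $s\in Z(\bL^*)^F$ for any $F$-stable Levi $\bL\le\bG$. Tensoring with $\hat s$ is a bijection $\cE(\bG^F,u)\to\cE(\bG^F,su)$ for every semisimple $u\in\bG^{*F}$, intertwines Jordan decomposition $\pi_u^\bG$ with $\pi_{su}^\bG$ (by \cite[Thm~11.4.12]{DM20}), commutes with Lusztig induction via $\RLG(\mu\otimes\hat s|_{\bL^F})=\RLG(\mu)\otimes\hat s$, and permutes the $\ell$-blocks of $\bG^F$ since $\hat s$ has $\ell'$-order. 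In particular, for an $e$-Jordan quasi-central cuspidal pair $(\bL,\la_0)$ of $\bG$ below $(\bG^F,1)$, the pair $(\bL,\la_0\otimes\hat s|_{\bL^F})$ is an $e$-Jordan quasi-central cuspidal pair below $(\bG^F,s)$, and by Theorem~\ref{thm:[KM15, Thm A]},
$$b_{\bG^F}(\bL,\la_0\otimes\hat s|_{\bL^F})
 = b_{\bG^F}(\bL,\la_0)\otimes\hat s.$$

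Next I would unwind the definition of $\bJ_t^\bG$. Given a unipotent $e$-cuspidal pair $(\bL_t^*,\la_t)$ of quasi-central $\ell$-defect in $C_{\bG^*}(t)=C_{\bG^*}(st)$, Proposition~\ref{prop:En 17} produces a unipotent $e$-cuspidal pair $(\bL^*,\la_s)$ of $\bG^*=C_{\bG^*}(s)$ that does not depend on whether we work in series $1$ or series $s$, and Proposition~\ref{prop:En 15} produces $(\bL,\la)$ with $\la=\Psi_{\bL,\bL^*,s}(\la_s)=\la_0\otimes\hat s|_{\bL^F}$, where $(\bL,\la_0)$ is the pair that would arise for $s=1$. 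Combined with the displayed identity above, this shows $\bJ_t^\bG(b)=\bJ_t^{\bG,\,1}(b)\otimes\hat s$ for every unipotent block $b$ of $C_{\bG^*}(t)^F$, where $\bJ_t^{\bG,\,1}$ denotes the map from the unipotent case.

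Finally, let $\eta\in\cE(C_{\bG^*}(st)^F,1)\cap\Irr(b)$ and let $\chi_0\in\cE(\bG^F,t)$ be its Jordan correspondent with respect to $\pi_t^\bG$. By the case $s=1$, $\chi_0$ lies in the $t$-twin block $\bJ_t^{\bG,\,1}(b)$. The Jordan correspondent of $\eta$ in $\cE(\bG^F,st)$ is then $\chi=\chi_0\otimes\hat s$ by the intertwining property, and it lies in $\bJ_t^{\bG,\,1}(b)\otimes\hat s=\bJ_t^\bG(b)$, as required. The one point to watch is the $E_8$ twin-block issue: since both blocks of a twin pair in series $1$ are simultaneously translated by $\otimes\hat s$ to the twin pair in series $s$ (as the unipotent $e$-cuspidal pairs $(\bL_t^*,\la_t)$ and $(\bL_t^*,\la_t')$ listed in Table~\ref{tab:twins} generate matching pairs at the $\bG$-level), the twin structure of Section~\ref{subsec:t-twins} is preserved by the reduction. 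The main obstacle is ensuring this last compatibility and the functoriality of the tensor translation through Propositions~\ref{prop:En 15} and~\ref{prop:En 17}; once these are in place the argument is essentially formal.
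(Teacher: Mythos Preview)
Your proposal is correct and follows essentially the same route as the paper: reduce to the unipotent case $s=1$ (handled by Enguehard's \cite[Thm~B]{En00}) by tensoring with the linear $\ell'$-character $\hat s$, using that this operation permutes $\ell$-blocks, commutes with Lusztig induction, and intertwines Jordan decomposition. The paper's proof is much terser (four sentences), while you spell out in detail why $\bJ_t^\bG$ in series~$s$ is just $\hat s\otimes\bJ_t^{\bG,1}$ and why the twin structure is preserved; but these are exactly the verifications implicit in the paper's phrase ``the claim follows since tensoring with $\hat s$ commutes with Lusztig induction.'' One small slip: your claim that $Z(\bG^*)$ is connected is not part of the hypotheses and is not needed---what matters is only that $s$ is an $\ell'$-element so that $\hat s$ has $\ell'$-order.
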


\begin{proof}
Let $s\in Z(\bG^{*F})$, so $\cE(C_{\bG^*}(t)^F,1)=\cE(C_{\bG^*}(st)^F,1)$. Let
$\hat s\in\Irr(\bG^F)$ be the linear character of $\ell'$-order determined
by~$s$. Then $\underline{\ \ }\otimes\hat s:\cE(\bG^F,t)\to\cE(\bG^F,st)$
preserves the partition into $\ell$-blocks. Now by Enguehard
\cite[Cor.~19, Thm~B]{En00}, the conclusion of Theorem~\ref{thm:thmA} holds
whenever $s=1$. The claim follows since tensoring with $\hat s$ commutes with
Lusztig induction (see \cite[Prop.~9.6]{CE} and note that all unipotent
elements of $\bG^F$ are contained in the kernel of any linear character
of $\bG^F$).
\end{proof}

\subsection{Theorem~\ref{thm:thmA} when $t$ is central}
\begin{prop}   \label{prop:several unipotents}
 Suppose that $s\in\bG^{*F}$ is isolated and non-central. Let $(\bL_s^*,\la_s)$
 be a unipotent $e$-cuspidal pair of $\bC^*$ and let $(\bL,\la)$ be an
 associated $e$-Jordan cuspidal pair below $(\bG^F,s)$ via
 Proposition~{\rm\ref{prop:En 15}}. Then $b_{\bG^F}(\bL,\la)$ is in
 $\bJ_1^\bG (b_{\bC^{*F}}(\bL_s^*,\la_s))$. 
\end{prop}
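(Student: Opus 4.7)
Set $b_t := b_{\bC^{*F}}(\bL_s^*, \la_s)$. The plan is to identify $b_{\bG^F}(\bL, \la)$ with $\bJ_1^\bG(b_t)$ by matching a common quasi-central-defect $e$-cuspidal pair representing each, both in $\bC^*$ and in $\bG$. First, by Enguehard's Theorems~A and A.bis of~\cite{En00}, the unipotent block $b_t$ contains a (unique up to $\bC^{*F}$-conjugacy and twins) unipotent $e$-cuspidal pair $(\tilde\bL_s^*, \tilde\la_s)$ of quasi-central $\ell$-defect; via Proposition~\ref{prop:En 15} this yields an $e$-Jordan quasi-central cuspidal pair $(\tilde\bL, \tilde\la)$ below $(\bG^F, s)$, and by the definition in Proposition~\ref{prop:J_t}(b) we have $\bJ_1^\bG(b_t) = b_{\bG^F}(\tilde\bL, \tilde\la)$.

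Conversely, I would apply Theorem~\ref{thm:[KM13, 1.2]}(a) to the block $b_{\bG^F}(\bL, \la)$ to obtain a representative $e$-Jordan quasi-central cuspidal pair $(\bL', \la')$ below $(\bG^F, s)$ with $b_{\bG^F}(\bL', \la') = b_{\bG^F}(\bL, \la)$ (using part~(f) of the same theorem to equate $e$-Jordan cuspidality with $e$-cuspidality, since $s$ is isolated). Let $(\bL_s^{*\prime}, \la_s')$ be the Jordan correspondent of $(\bL', \la')$ given by Proposition~\ref{prop:En 15}; by the very definition of $e$-Jordan quasi-central cuspidality, this pair is again of quasi-central $\ell$-defect in $\bC^*$. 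Since both $(\bL, \la)$ and $(\bL', \la')$ represent the block $b_{\bG^F}(\bL, \la)$, Theorem~\ref{thm:[KM13, 1.2]}(d) forces their Jordan correspondents $(\bL_s^*, \la_s)$ and $(\bL_s^{*\prime}, \la_s')$ to lie in a common unipotent block of $\bC^{*F}$, which is necessarily~$b_t$.

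The uniqueness of quasi-central-defect $e$-cuspidal pairs per unipotent block (Enguehard's Theorem~A.bis) now forces $(\bL_s^{*\prime}, \la_s')$ and $(\tilde\bL_s^*, \tilde\la_s)$ to coincide up to $\bC^{*F}$-conjugacy and twins. Applying the bijection of Proposition~\ref{prop:En 15} in reverse, $(\bL', \la')$ and $(\tilde\bL, \tilde\la)$ are $\bG^F$-conjugate up to twins, whence
\[ b_{\bG^F}(\bL, \la) \;=\; b_{\bG^F}(\bL', \la') \;=\; b_{\bG^F}(\tilde\bL, \tilde\la) \;=\; \bJ_1^\bG(b_t), \]
which is the desired containment (and in fact equality up to twins).

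The main obstacle will be verifying that Theorem~\ref{thm:[KM13, 1.2]}---stated under the restrictions $\bG=\bX$ and $\ell$ bad---applies in the present broader setting of a Levi subgroup $\bG\leq\bX$ and any prime $\ell\neq p$. For good $\ell$ the required parametrisation of $\ell$-blocks by $e$-cuspidal pairs and the compatibility of Jordan decomposition with Lusztig induction are available from Cabanes--Enguehard~\cite{CE99}; for $\bG\subsetneq\bX$ the needed extensions are developed in Section~\ref{sec:extensions} (via Theorem~\ref{thm:[KM15, Thm A]} and the relation $\to_t$ of Proposition~\ref{prop:En 17}), and the remaining isolated non-central configurations---which occur only in a short list in exceptional types \cite[Prop.~4.9, Tab.~3]{B05}---are amenable to direct verification using the case analysis in Section~\ref{sec:exc type}.
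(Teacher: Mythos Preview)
Your argument is correct and arrives at the same conclusion, but the logical route differs from the paper's. The paper shows directly the forward implication ``same unipotent block in $\bC^{*F}$ $\Rightarrow$ same $\ell$-block in $\bG^F$'': it first observes that if $(\bL_s^*,\la_s)$ is already of quasi-central $\ell$-defect the claim is immediate from the definition of $\bJ_1^\bG$ (since $\to_1$ is the identity), and then reduces the general case to showing that any two unipotent $e$-cuspidal pairs of $\bC^*$ defining the same unipotent block give rise to the same block of $\bG^F$. You instead go via the converse direction, invoking Theorem~\ref{thm:[KM13, 1.2]}(d) (same $\bG$-block $\Rightarrow$ same $\bC^*$-block up to twins) together with the uniqueness of the quasi-central pair in a unipotent block and the bijection of Proposition~\ref{prop:En 15}. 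Both routes ultimately rest on the same table computations in \cite{KM} and Section~\ref{sec:exc type}, so neither is more elementary; the paper's framing is simply a bit more direct, while yours packages the case analysis inside the citation of Theorem~\ref{thm:[KM13, 1.2]}.

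Where your proposal is weaker is in handling the scope of Theorem~\ref{thm:[KM13, 1.2]}, which you correctly flag but do not quite resolve. The paper's proof disposes of this cleanly: it first reduces (via \cite{CE94}) to the situation where a unipotent block of $\bC^{*F}$ contains at least two $e$-Harish-Chandra series, forcing $\ell=2$ or $\ell$ bad for $\bC^*$; if all components of $\bG$ are classical this gives $\ell=2$ and $\cE_2(\bG^F,s)$ is a single block; otherwise $\bG$ has an exceptional component and $\ell$ is bad for $\bG$, and then either $[\bG,\bG]$ is simple (where Theorem~\ref{thm:[KM13, 1.2]} applies) or $[\bG,\bG]$ has type $E_6A_1$, in which case $\bC^*$ has only type-$A$ components and one reduces to $\ell=2$ and Proposition~\ref{prop:one block}. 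Your last paragraph gestures at these reductions but does not carry them out; to make your argument complete you should incorporate precisely this case split rather than appealing loosely to \cite{CE99} and Section~\ref{sec:exc type}.
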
 

\begin{proof} 
If $(\bL_s^*, \la_s) $ is of quasi-central $\ell$-defect then the result
follows from the definition of $J_1^\bG$ and the fact that by
Proposition~\ref{prop:En 17}(a), the relation $\to_1$ is the identity. Since
every unipotent block has an associated unipotent $e$-cuspidal pair of
quasi-central $\ell$-defect, it remains to show that if $(\bL^*_s,\la_s)$ and
$({\bL'}_s^*,\la_s')$ are two unipotent $e$-cuspidal pairs of $\bC^*$,
corresponding via Proposition~\ref{prop:En 15} to pairs $(\bL,\la)$,
$(\bL',\la')$ respectively, of $\bG^F$, and if
$b_{\bC^{*F}}(\bL_s^*,\la_s) = b_{\bC^{*F}} ({\bL'}_s^*,\la'_s)$, then
$b_{\bG^F}(\bL,\la) = b_{\bG^F}(\bL',\la')$.
\par
Let $b$ be a unipotent block of $\bC^*$ containing two distinct
unipotent $e$-Harish-Chandra series. By the main result of \cite{CE94} we have
that either $\ell=2$, or $\ell$ is bad for $\bC^*$ (since $\tw3D_4(q)$
is not a component of $\bC^{*F}$ by Lemma~\ref{prop:3D4}).
\par
Suppose first that all components of $\bG$ (and hence of $\bC^*$) are of
classical type. Then by the above $\ell=2 $ and $\cE_2(\bG^F,s)$ is a single
$2$-block (see for instance \cite[Thm~21.14]{CE}). Thus we may assume
$\bG$ has a component of exceptional type and $\ell$ is bad for $\bG$.
\par
If $[\bG,\bG] $ is simple, then the result follows from
Theorem~\ref{thm:[KM13, 1.2]}.
Thus we may assume that $[\bG,\bG]$ is not simple and therefore also that the
semisimple rank of $\bG$ is at most~$7$, as $\bX$ is of exceptional type and
$\bG$ is proper in $\bX$. Since $\bG$ has a component of
exceptional type, it follows that $[\bG,\bG]$ is of type $E_6A_1$ and $\ell=2$
or~$3$. Since groups of type~$A$ have no non-central isolated elements, the
$E_6$-component of $s$ in $\bG^*$ is non-central and isolated. Thus, by
\cite[Tab.~3]{B05}, one sees that all components of $\bC^*$ are of
type~$A$. Since $3$ is good for groups of type $A$, this forces $\ell=2$.
Then the result follows by Proposition~\ref{prop:one block}.
\end{proof}

\begin{prop}   \label{prop:GeMa 4.7.7-2}
 Suppose that $s\in\bG^{*F}$ is isolated and non-central. Let $(\bL_s^*,\la_s)$
 be a unipotent $e$-cuspidal pair of $\bC^*$ and let $(\bL,\la)$ be an
 associated $e$-Jordan cuspidal pair below~$(\bG^F,s)$ via
 Proposition~{\rm\ref{prop:En 15}}. Then there exists a bijection
 $$\tilde\pi_s^\bG:\cE(\bG^F,(\bL, \la))\rightarrow
     \cE(\bC^{*F},(\bL_s^*,\la_s))$$
 such that any constituent $\chi$ of $\RLG(\la)$ has Jordan correspondent
 $\tilde\pi_s^\bG(\chi)$, unless possibly if $\bG=E_8$, and $\bC^*$
 and $(\bL_s^*,\la_s)$ belong to a ``twin", in which case the Jordan
 correspondent could be a constituent of $R_{\bL_s^*}^{\bC^*}(\la_s')$
 for $(\bL_s^*,\la_s')$ the twin of $(\bL_s^*,\la_s)$.

 In particular, $\tilde\pi_s^\bG(\chi)$ and the Jordan correspondent of
 $\chi$ have the same degree, so $\tilde\pi_s^\bG$ preserves $\ell$-defects.
\end{prop}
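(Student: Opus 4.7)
The plan is to combine $e$-Harish-Chandra theory on both the $\bG^F$ and $\bC^{*F}$ sides with the fact (Theorem~\ref{thm:[KM13, 1.2]}(e)) that Jordan decomposition commutes with Lusztig induction up to twins. By Theorem~\ref{thm:[KM13, 1.2]}(c), $\bG^F$ satisfies an $e$-Harish-Chandra theory above $(\bL,\la)$, so the set $\cE(\bG^F,(\bL,\la))$ of constituents of $\RLG(\la)$ is in natural bijection with $\Irr(W_{\bG^F}(\bL,\la))$, where $W_{\bG^F}(\bL,\la)$ is the relative Weyl group. In parallel, classical $e$-Harish-Chandra theory for unipotent characters of the connected reductive group $\bC^{*F}$ (see e.g.\ \cite[Thm~3.2.5]{GM20}) provides a bijection between $\cE(\bC^{*F},(\bL_s^*,\la_s))$ and $\Irr(W_{\bC^{*F}}(\bL_s^*,\la_s))$.

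The first key step is to identify these two relative Weyl groups. Using $\bL^*=C_{\bG^*}(Z^\circ(\bL_s^*)_e)$ and $\bL_s^*=C_{\bL^*}(s)$ from Proposition~\ref{prop:En 15}, together with the duality between $(\bL,F)$ and $(\bL^*,F)$ and the standard compatibility of Jordan decomposition with stabilisers of characters under normalisers (via the Digne--Michel formula $(\dagger)$ of Section~\ref{sec:general}), one obtains a canonical isomorphism
$$W_{\bG^F}(\bL,\la)\;\cong\;W_{\bC^{*F}}(\bL_s^*,\la_s).$$
Composing the two $e$-Harish-Chandra parametrisations with this isomorphism defines $\tilde\pi_s^\bG$.

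The second step is to compare $\tilde\pi_s^\bG$ with $\pi_s^\bG$ on constituents of $\RLG(\la)$. By Theorem~\ref{thm:[KM13, 1.2]}(e), Jordan decomposition commutes with $\RLG$ up to twins; hence for $\chi$ a constituent of $\RLG(\la)$, the character $\pi_s^\bG(\chi)$ is a constituent of $R_{\bL_s^*}^{\bC^*}(\la_s)$ (and thus lies in $\cE(\bC^{*F},(\bL_s^*,\la_s))$), except in the twin cases listed in Table~\ref{tab:twins} -- where the choice of Jordan decomposition may send $\chi$ instead into $\cE(\bC^{*F},(\bL_s^*,\la_s'))$ for the twin $\la_s'$. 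Thus on the nose $\tilde\pi_s^\bG$ agrees with $\pi_s^\bG$ outside these twin situations.

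Degree preservation then follows because the twin pairs $\la_s,\la_s'$ are Galois conjugate, hence of equal degree, and the $e$-Harish-Chandra multiplicity formula (the ``degree polynomial'' in the parameter of $\Irr(W_{\bC^{*F}}(\bL_s^*,\la_s))$) depends only on $\la_s(1)$ and the relative Weyl group. Consequently $\tilde\pi_s^\bG(\chi)$ and the Jordan correspondent of $\chi$ share the same degree, and since both are unipotent characters of $\bC^{*F}$, they have the same $\ell$-defect, giving the final assertion. The main obstacle is the Weyl group identification step together with handling the twin ambiguity: in type $E_8$ with $e=2$ the implicit choice of Jordan decomposition may interchange $\la_s$ with its Galois conjugate twin $\la_s'$, and this is exactly the obstruction to having $\tilde\pi_s^\bG=\pi_s^\bG$ without exception.
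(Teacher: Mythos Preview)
Your approach is genuinely different from the paper's, and it has a hypothesis mismatch that is a real gap.

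The paper's proof is short: when $[\bG,\bG]^F\neq E_8(2)$ the Mackey formula holds for $\bG^F$, and the result follows directly from \cite[Cor.~4.7.7]{GM20} (with a remark that the hypotheses there can be slightly relaxed). The residual case $[\bG,\bG]^F=E_8(2)$ is handled by reducing to the simple factor and invoking the explicit computations in \cite[Prop.~3.11]{Ho22} (for $\ell\ge7$) and \cite[Prop.~6.11]{KM} (for $\ell\in\{3,5\}$). In other words, the paper appeals to a single external theorem that already packages the commutation of Jordan decomposition with Lusztig induction, and then patches the one case where the Mackey formula is not known.

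Your route instead tries to rebuild this commutation from Theorem~\ref{thm:[KM13, 1.2]}(c),(e). The problem is that Theorem~\ref{thm:[KM13, 1.2]} is stated under the standing hypotheses $\bG=\bX$ (so $[\bG,\bG]$ simple simply connected) \emph{and} $\ell$ bad for $\bG$. Proposition~\ref{prop:GeMa 4.7.7-2} lives in the more general setting of Section~\ref{sec:blocks}, where $\bG$ is an arbitrary $F$-stable Levi subgroup of $\bX$ and no badness assumption on $\ell$ is in force; it is applied in Proposition~\ref{prop:t central} before any reduction to $\bG=\bX$ or to bad primes. So your citations of Theorem~\ref{thm:[KM13, 1.2]}(c),(e) do not cover the statement as written.

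There is also a looseness in the second step. Having shown that $\pi_s^\bG(\chi)$ lands in $\cE(\bC^{*F},(\bL_s^*,\la_s))$ (or its twin) does not by itself give $\tilde\pi_s^\bG(\chi)=\pi_s^\bG(\chi)$ for the Weyl-group-defined $\tilde\pi_s^\bG$; you would still need that the two $e$-Harish-Chandra parametrisations are intertwined by $\pi_s^\bG$. One can rescue the conclusion by instead \emph{defining} $\tilde\pi_s^\bG$ as the restriction of $\pi_s^\bG$ (using the partitions on both sides to see it is a bijection in the non-twin case), with the Weyl-group isomorphism only supplying the equal cardinalities needed in the twin case. But even with that fix, the dependence on Theorem~\ref{thm:[KM13, 1.2]} remains, so the argument does not establish the proposition in the stated generality.
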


\begin{proof}
Suppose first that $[\bG,\bG]^F\ne E_8(2)$. Then the Mackey formula holds
for~$\bG^F$ and the result follows by \cite[Cor.~4.7.7]{GM20}. Here, we note
that in the statement of \cite[Cor.~4.7.7]{GM20}, the assumption on the Mackey 
formula is on the ambient group, and that this group is itself simple, but it
can be easily checked that the proof and result carry over with our assumptions.
Now suppose that $[\bG,\bG]^F= E_8(2)$. This implies that there is an $F$-stable
decomposition $\bG=E_8\times\bT$ with a torus $\bT$. Using the compatibility of
Lusztig induction with direct products the result then follows from
\cite[Prop.~3.11]{Ho22} when $\ell\ge7$, and from \cite[Prop.~6.11]{KM} when
$\ell\in\{3,5\}$.
\end{proof}

\begin{prop}   \label{prop:t central}
 The conclusion of Theorem~$\ref{thm:thmA}$ holds for any $t\in Z(\bG^{*F})$.
\end{prop}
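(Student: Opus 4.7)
The plan is to reduce Theorem~\ref{thm:thmA} for a central $\ell$-element~$t$ to its $t=1$ case by tensoring with the linear character $\hat t\in\Irr(\bG^F)$ associated with~$t$ under duality. First, since $t\in Z(\bG^{*F})$ we have $C_{\bG^*}(st)=C_{\bC^*}(t)=\bC^*$, so the unipotent $\ell$-blocks of $C_{\bG^*}(st)^F$ are exactly those of $\bC^{*F}$, and the relation $\to_t$ of Proposition~\ref{prop:En 17} is the identity (we are always in case~(a) with $\bM=\bL_t$). Consequently the map $\bJ_t^\bG$ coincides with $\bJ_1^\bG$ on unipotent blocks of $\bC^{*F}$, with identical $t$-twin and $1$-twin targets in $\cE_\ell(\bG^F,s)$. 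By Proposition~\ref{prop:s central} we may also assume $s\notin Z(\bG^{*F})$.

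For $\eta\in\Irr(b)\cap\cE(\bC^{*F},1)$ let $\chi\in\cE(\bG^F,st)$ be its Jordan correspondent and $\chi_1\in\cE(\bG^F,s)$ the Jordan correspondent of~$\eta$ in the $s$-series. Compatibility of Jordan decomposition with tensoring by central linear characters (\cite[Thm~7.1(vii)]{DM90}) yields $\chi=\chi_1\otimes\hat t$. The $t=1$ case of Theorem~\ref{thm:thmA} for our non-central~$s$ is, at this point of the paper, available by combining Propositions~\ref{prop:several unipotents} and~\ref{prop:GeMa 4.7.7-2} together with \cite[Thm~A]{En00} (applied to the unipotent blocks of~$\bC^*$) for $s$ isolated, and the Bonnaf\'e--Rouquier style reduction of Proposition~\ref{prop:isolated reduction} in the general case. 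This gives $\chi_1\in\bJ_1^\bG(b)$. The theorem then follows once I verify that tensoring by~$\hat t$ preserves $\ell$-blocks of~$\bG^F$.

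This last step, the only ingredient specific to $t\ne 1$, is a formal consequence of $t$ being an $\ell$-element: $\hat t$ has $\ell$-power order, so all its character values are $\ell$-power roots of unity, which are congruent to~$1$ modulo the maximal ideal $\mathfrak m$ of any $\ell$-modular system $(K,\mathcal O,k)$. Hence the central characters satisfy $\omega_{\chi_1\otimes\hat t}(C)=\omega_{\chi_1}(C)\hat t(C)\equiv\omega_{\chi_1}(C)\pmod{\mathfrak m}$ on every $\ell$-regular class~$C$, and block membership is determined by central characters modulo~$\mathfrak m$ on $\ell$-regular classes, so $\chi=\chi_1\otimes\hat t$ lies in the same $\ell$-block as~$\chi_1$, namely $\bJ_1^\bG(b)=\bJ_t^\bG(b)$. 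I expect no serious obstacle; the only bookkeeping concerns the $E_8$, $e_\ell(q)=2$ twin situation, but since $\to_t$ is trivial for central~$t$ the twin pairs in Table~\ref{tab:twins} are preserved setwise by tensoring with~$\hat t$ by the same $\mathfrak m$-adic argument.
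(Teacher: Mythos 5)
Your proposal is correct and follows essentially the same route as the paper: reduce to $t=1$ by tensoring with the $\ell$-power order linear character $\hat t$ (noting $\bJ_t^\bG=\bJ_1^\bG$ for central $t$), then handle $t=1$ via Propositions~\ref{prop:isolated reduction}, \ref{prop:s central}, \ref{prop:GeMa 4.7.7-2} and~\ref{prop:several unipotents}. The only difference is that you spell out the standard central-character argument for why $\otimes\,\hat t$ preserves $\ell$-blocks, which the paper simply asserts.
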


\begin{proof}
Let $t\in Z(\bG^{*F})$ and $\hat t\in\Irr(\bG^F)$ be the linear character of
$\ell$-power order determined by~$t$. Then
$\underline{\ \ }\otimes\hat t:\cE(\bG^F,s)\to\cE(\bG^F,st)$ is a bijection
preserving the partition into $\ell$-blocks, and
$\cE_\ell(C_{\bG^*}(s)^F,1)=\cE_\ell(C_{\bG^*}(st)^F,1)$ so it suffices to
prove the result for $t=1$. 

Suppose that $t=1$. By Proposition~\ref{prop:isolated reduction} we may assume
$s$ is isolated in $\bG^*$ and by Proposition~\ref{prop:s central} we may
assume that $s$ is not central. Let $\eta \in \cE(\bC^{*F},1)$ and
$\chi\in \cE(\bG^F, s)$ with $\eta = \pi_s^\bG(\chi)$.
We are required to show that $b_{\bG^F}(\chi)$ belongs to
$\bJ_1^\bG (b_{\bC^{*F} }(\eta))$. Let $(\bL_s^*,\la_s)$ be a unipotent
$e$-cuspidal pair of $\bC^{*F}$ with $\eta\in\cE(\bC^{*F},(\bL_s^*,\la_s))$ and
let $(\bL,\la)$ be an $e$-Jordan cuspidal pair below~$(\bG^F,s)$ associated to
$(\bL_s^*,\la_s)$ via Proposition~\ref{prop:En 15} (note that the existence of
$(\bL_s^*,\la_s)$ is guaranteed by generalised $e$-Harish-Chandra theory). By
Proposition~\ref{prop:GeMa 4.7.7-2}, $\chi\in\cE(\bG^F,(\bL,\la))\cup
\cE(\bG^F,(\bL',\la'))$, where $(\bL',\la')$ is an $e$-Jordan cuspidal
pair below~$(\bG^F,s)$ associated to the twin of $(\bL_s^*,\la_s)$ via
Proposition~\ref{prop:En 15}. Consequently, $b_{\bG^F}(\chi)$ is either
$b_{\bG^F}(\bL,\la)$ or $b_{\bG^F}(\bL',\la')$. Since also
$b_{\bC^{*F}}(\eta)=b_{\bC^{*F}}(\bL_s^*,\la_s)$, the claim follows
from Proposition~\ref{prop:several unipotents}.
\end{proof}

\begin{cor}   \label{cor:in Levi}
 Suppose $[\bX,\bX]$ is of exceptional type. Assume $\ell>2$ is good for
 $\bC^*$ and $\ell\ne3$ if $\bC^{*F}$ has a component of type $\tw3D_4$.
 Suppose $\bC^* =\bC_\bb^*$ and that Theorem~{\rm\ref{thm:thmA}} holds for all
 proper $F$-stable Levi subgroups of~$\bG$. Then the conclusion of
 Theorem~$\ref{thm:thmA}$ holds for every $\ell$-element $t\in\bC^{*F}$.
\end{cor}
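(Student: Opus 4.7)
The plan is to reduce the corollary to situations already handled in this section. First, by Proposition~\ref{prop:isolated reduction} combined with the hypothesis that Theorem~\ref{thm:thmA} holds for all proper $F$-stable Levi subgroups of $\bG$, I may assume $s$ is isolated in $\bG^*$. The case $t\in Z(\bG^{*F})$ is then covered by Proposition~\ref{prop:t central}, while the case in which $C_{\bG^*}(st)$ is contained in a proper $e$-split Levi subgroup of~$\bG^*$ is covered by Proposition~\ref{prop:in Levi}. It therefore suffices to treat the residual case in which $s$ is isolated non-central, $t$ is non-central, and $C_{\bC^*}(t)=C_{\bG^*}(st)$ is not contained in any proper $e$-split Levi of~$\bG^*$; isolatedness of $s$ yields $Z^\circ(C_{\bC^*}(t))_e=Z^\circ(\bC^*)_e=Z^\circ(\bG^*)_e$, so $C_{\bC^*}(t)$ is also not contained in any proper $e$-split Levi of $\bC^*$.

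In this residual case, fix $\chi\in\cE(\bG^F,st)$, set $\eta=\pi_{st}^\bG(\chi)$, and let $b$ denote the unipotent $\ell$-block of $C_{\bC^*}(t)^F$ containing $\eta$. By \cite[Thms~A and A.bis]{En00}, $b=b_{C_{\bC^*}(t)^F}(\bL_t^*,\la_t)$ for some unipotent $e$-cuspidal pair $(\bL_t^*,\la_t)$ of quasi-central $\ell$-defect. Since $\ell>2$ is good for $\bC^*$ and $\ell\ne3$ whenever a $\tw3D_4$-factor is present, Proposition~\ref{prop:En 17}(a) produces a unipotent $e$-cuspidal pair $(\bL_s^*,\la_s)$ of $\bC^*$ with $(\bL_t^*,\la_t)\sim(\bL_s^*,\la_s)$; combining with Proposition~\ref{prop:En 15} yields an $e$-Jordan cuspidal pair $(\bL,\la)$ below $(\bG^F,s)$ such that, by construction, $\bJ_t^\bG(b)=b_{\bG^F}(\bL,\la)$.

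The remaining task is to show $b_{\bG^F}(\chi)=b_{\bG^F}(\bL,\la)$. Lemma~\ref{lem:in Levi} supplies $\gamma\in\cE(\bG^F,s)$ lying in the block $b_{\bG^F}(\chi)$. Applying the already-established case $t=1$ via Proposition~\ref{prop:t central}, $\gamma$ then lies in $\bJ_1^\bG(b')$, where $b'$ is the unipotent $\ell$-block of $\bC^{*F}$ containing $\pi_s^\bG(\gamma)$. The hypothesis $\bC^*=\bC_\bb^*$, together with the goodness of $\ell$ for $\bC^*$, is next invoked in the spirit of Lemma~\ref{lem:Ga} applied inside $\bC^*$ to identify $b'$ with $b_{\bC^{*F}}(\bL_s^*,\la_s)$; Proposition~\ref{prop:several unipotents} then gives $\bJ_1^\bG(b')=b_{\bG^F}(\bL,\la)$, so that $b_{\bG^F}(\chi)=b_{\bG^F}(\gamma)=b_{\bG^F}(\bL,\la)$, closing the loop.

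The main obstacle will be this last identification of $b'$: one must use the structural consequences of $\bC^*=\bC_\bb^*$ to force sufficient rigidity in the unipotent $e$-Harish-Chandra structure of $\bC^{*F}$ so that the unique unipotent $e$-cuspidal pair of quasi-central defect labelling the block $b'$ is $\bC^{*F}$-conjugate to $(\bL_s^*,\la_s)$. Once this is in place, all the reductions stitch together and yield the desired block equality for every $\ell$-element $t\in\bC^{*F}$.
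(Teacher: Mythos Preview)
Your initial reductions are fine and match the paper's: Proposition~\ref{prop:isolated reduction} reduces to isolated~$s$, Proposition~\ref{prop:t central} handles $t\in Z(\bG^{*F})$, and Proposition~\ref{prop:in Levi} handles the case where $C_{\bG^*}(st)$ lies in a proper $e$-split Levi subgroup of~$\bG^*$.

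The gap is in your ``residual case''. You acknowledge that identifying the block~$b'$ containing $\pi_s^\bG(\gamma)$ with $b_{\bC^{*F}}(\bL_s^*,\la_s)$ is the main obstacle, and you do not actually overcome it. Your appeal to Lemma~\ref{lem:Ga} ``in spirit'' goes in the wrong direction: that lemma concerns $\bG_\ba$, whereas the hypothesis here is $\bC^*=\bC^*_\bb$, and groups with no type-$A$ factors generally have \emph{several} unipotent $\ell$-blocks even at good primes (e.g.\ classical groups with cuspidal unipotent characters). There is no mechanism in Lemma~\ref{lem:in Levi} forcing $\gamma$ to land in the particular $e$-Harish-Chandra series you need, so the identification of~$b'$ cannot be made this way.

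The paper's proof instead shows that the residual case is \emph{empty}, and this is precisely where the hypothesis $\bC^*=\bC^*_\bb$ earns its keep. The conditions $\ell>2$ good for $\bC^*$ and $\ell\ne3$ when a $\tw3D_4$-factor is present give $\ell\in\Gamma(\bC^*,F)$ in the sense of \cite{CE94}; then \cite[Prop.~2.5]{CE94} (which uses $\bC^*=\bC^*_\bb$) forces $C_{\bC^*}(t)$ into a proper $e$-split Levi of~$\bC^*$, hence of~$\bG^*$, whenever $t\notin Z(\bC^*)$. If $t\in Z(\bC^*)$, then $C_{\bG^*}(st)=\bC^*$, and inspection of the finite list of isolated centralisers in exceptional groups \cite[Prop.~4.9, Tab.~3]{B05} shows that $Z(\bC^*)^F_\ell\subseteq Z(\bG^*)^F$, so $t\in Z(\bG^{*F})$ after all. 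Thus every $t$ falls under Proposition~\ref{prop:in Levi} or Proposition~\ref{prop:t central}, and no direct block-tracking argument in the residual case is needed.
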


\begin{proof}
Let $t\in\bC^{*F}$ be an $\ell$-element. The assumptions imply that
$\ell\in\Gamma(\bC,F)$ in the notation of \cite{CE94}. If $t\notin Z({\bC^*})$,
then (the proof of) \cite[Prop.~2.5]{CE94} shows that
$C_{\bG^*}(st)=C_{\bC^*}(t)$ is contained in a proper $e$-split Levi subgroup
$\bM^*$ of~$\bC^*$ and thus also in the proper $e$-split Levi subgroup
$C_{\bG^*}(Z^\circ(\bM^*)_e)$ of $\bG^*$, and hence the conclusion follows
inductively by Proposition~\ref{prop:in Levi}. Thus $t\in Z({\bC^*})$, so
$C_{\bG^*}(st)=C_{\bG^*}(s)$. By Proposition~\ref{prop:isolated reduction}, we
may assume $s$ is isolated in $\bG^*$. By inspection of the lists of
isolated elements in \cite[Prop.~4.9, Tab.~3]{B05}, if $t\in Z({\bC^*})$
is an $\ell$-element then even $t\in Z(\bG^*)$, and we conclude by
Proposition~\ref{prop:t central}. 
\end{proof}

\subsection{Theorem~\ref{thm:thmA} for good primes}   \label{subsec:good l}
Suppose that $\ell$ is a good prime for $\bG$. Let
$t\in\bC^{*F}= C_{\bG^*}(s)^F$ be an $\ell$-element and let $\bG(t)\leq\bG$ be
a Levi subgroup of $\bG$ in duality with the Levi subgroup $C_{\bG^*}(t)$ of
$\bG^*$. 

Let $\chi\in\cE(\bG^F,st)$. By properties of Digne--Michel's Jordan
decomposition, $\chi = R_{\bG(t)}^\bG(\hat t\chi_t)$ where
$\chi_t\in\cE(\bG(t)^F,s)$ satisfies
$\pi_{st}^\bG(\chi) = \pi_s^{\bG(t)}(\chi_t)$. By the usual commutation
property of the decomposition map with Lusztig induction (apply
\cite[Prop.~9.6]{CE} with $f= d^{1,\bG}(1))$, $\chi$ lies in the
same $\ell$-block of $\bG^F$ as some constituent of $R_{\bG(t)}^\bG(\chi_t)$.
By results of Cabanes--Enguehard, we have the following. 

\begin{prop}   \label{prop:CE99 2.8}
 With the notation above, every irreducible constituent of
 $R_{\bG(t)}^\bG (\chi_t)$ lies in $b_{\bG^F}(\chi)$. Moreover,
 for every block $c$ of $\bG(t) ^F$ in $\cE _\ell(\bG(t)^F,s)$, there exists a
 unique block $R_{\bG(t)}^\bG(c)$ of $\bG^F$ in $\cE_\ell(\bG^F,s)$ such that
 every irreducible constituent of $R_{\bG(t)}^\bG (\mu)$, for
 $\mu\in\cE(\bG(t)^F,s)\cap\Irr(c)$, lies in $R_{\bG(t)}^\bG(c)$.
\end{prop}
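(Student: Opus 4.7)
The plan is to deduce both assertions from the block-theoretic compatibility of Lusztig induction for good primes due to Cabanes--Enguehard, applied to the Levi subgroup $\bG(t)$ (which is a genuine Levi subgroup of $\bG$ since $\ell$ is good and $t$ is an $\ell$-element).

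First, I would handle the passage from $\chi_t$ to $\hat t\chi_t$. Since $\hat t\in\Irr(\bG(t)^F)$ has $\ell$-power order, its values on $\ell$-regular elements are all equal to~$1$, so $\hat t\chi_t$ and $\chi_t$ have the same $\ell$-Brauer character on $\bG(t)^F$. Consequently they lie in a common $\ell$-block $c$ of $\bG(t)^F$, and since $\chi_t\in\cE(\bG(t)^F,s)$ we have $c\in\cE_\ell(\bG(t)^F,s)$.

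Next, I would invoke the main result of \cite{CE99} (together with its combination with the Bonnaf\'e--Rouquier equivalence attached to the $\ell'$-element $s$), which for $\ell$ good for $\bG$ asserts exactly the desired block-preserving property for Lusztig induction from $\bG(t)$: to each $\ell$-block $c'$ of $\bG(t)^F$ in $\cE_\ell(\bG(t)^F,s)$ is associated a unique $\ell$-block $R_{\bG(t)}^\bG(c')$ of $\bG^F$ in $\cE_\ell(\bG^F,s)$ containing every irreducible constituent of $R_{\bG(t)}^\bG(\mu)$, for $\mu\in\Irr(c')\cap\cE(\bG(t)^F,s)$. Uniqueness is built in, and the containment $R_{\bG(t)}^\bG(c')\subseteq\cE_\ell(\bG^F,s)$ follows because Lusztig induction preserves Lusztig series (see \cite[Prop.~3.3.20]{GM20}). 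This establishes the second assertion.

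For the first assertion, I would combine this with the Jordan decomposition identity $\chi=\pm R_{\bG(t)}^\bG(\hat t\chi_t)$ recalled in the paragraph preceding the proposition. Since $\hat t\chi_t$ and $\chi_t$ both lie in the block $c$, both $\chi$ and every irreducible constituent of $R_{\bG(t)}^\bG(\chi_t)$ are contained in $R_{\bG(t)}^\bG(c)$, giving $b_{\bG^F}(\chi)=R_{\bG(t)}^\bG(c)$ and the claim. The only real obstacle is to pinpoint the exact Cabanes--Enguehard statement needed, as Theorem~\ref{thm:[KM15, Thm A]} is formulated for $e$-split Levi subgroups while $\bG(t)$ is in general only the centraliser of an $\ell$-element; however, this refinement is precisely what the good-prime theory of \cite{CE99} provides, via the Bonnaf\'e--Rouquier Morita equivalence for the $\ell'$-element~$s$.
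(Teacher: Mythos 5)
Your argument is correct and follows essentially the same route as the paper: the whole content is delegated to the Cabanes--Enguehard good-prime theory, the key point being that $\bG(t)$, as the dual of the centraliser of an $\ell$-element for $\ell$ good, is an $E_{q,\ell}$-split Levi subgroup (this is \cite[Prop.~2.4]{CE99}), after which \cite[Thms~2.5 and~2.8]{CE99} give both assertions directly. The appeal to Bonnaf\'e--Rouquier is unnecessary, and the passage from $\hat t\chi_t$ to $\chi_t$ via equality of Brauer characters is just the $d^{1}$-commutation argument already recorded in the paragraph preceding the proposition.
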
 

\begin{proof}
By \cite[Prop.~2.4]{CE99}, $C_{\bG^*}(t)$ is $E_{q,\ell}$-split in $\bG^*$ and
therefore $\bG(t)$ is $E_{q,\ell}$-split in $\bG$. The assertion follows from
\cite[Thms~2.5 and~2.8]{CE99}.
\end{proof} 

\begin{prop} \label{prop:JoEn 2.3.3-2}
 Assume $\ell$ is odd and good for $\bG$ and $\ell>3$ if $\bG^F$ has a
 component of type $\tw3D_4$. Let $(\bL_t^*,\la_t)$ be a unipotent $e$-cuspidal
 pair of $C_{\bG^*}(st)$ of (quasi-)central $\ell$-defect and
 $(\bL,\la)\in J_t^\bG((\bL_t^*,\la_t))$,
 $(\bL(t),\la(t))\in J_t^{\bG(t)}((\bL_t^*,\la_t))$. Suppose that
 $\bGb\leq\bG(t)$. Then
 $$ R_{\bG(t)}^\bG(b_{\bG(t)^F}(\bL(t),\la(t))) = b_{\bG^F}(\bL,\la).$$
\end{prop}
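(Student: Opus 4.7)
The plan is to establish the equation by showing that, under the given hypotheses, $(\bL,\la)$ coincides up to $\bG^F$-conjugacy with $(\bL(t),\la(t))$ viewed as an $e$-Jordan cuspidal pair of $\bG$ below $(\bG^F,s)$, and then to invoke the compatibility of Lusztig induction with $e$-Harish-Chandra blocks supplied by Proposition~\ref{prop:En 16}.

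First I would exploit the good-prime hypotheses to fix the structural setting. Since $\ell$ is good for $\bG$ and $\ell>3$ whenever $\bG^F$ has a factor of type~$\tw3D_4$, we are automatically in case~(a) of Proposition~\ref{prop:En 17}: thus the $\bC^{*F}$-class of $(\bL_s^*,\la_s)$ is uniquely determined by $[\bL_s^*,\bL_s^*]=[\bL_t^*,\bL_t^*]$ and $\la_s|_{[\bL_s^*,\bL_s^*]^F}=\la_t|_{[\bL_t^*,\bL_t^*]^F}$. Simultaneously, \cite[Prop.~2.4]{CE99} (already invoked in Proposition~\ref{prop:CE99 2.8}) gives that $\bG(t)$ is an $e$-split Levi of $\bG$; hence so is $\bL(t)$, and $(\bL(t),\la(t))$ is an $e$-Jordan quasi-central cuspidal pair of $\bG$ below $(\bG^F,s)$.

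The technical core of the argument is to identify $(\bL,\la)$ with $(\bL(t),\la(t))$ in $\bG$. The $e$-Jordan cuspidality of $\la(t)$ in $\bG(t)$ yields $Z^\circ(\bL(t)^*)_e=Z^\circ(C_{\bL(t)^*}(s))_e=Z^\circ(\bL_t^*)_e$, so that $\bL(t)^*=C_{\bG(t)^*}(Z^\circ(\bL_t^*)_e)=C_{\bG^*}(t)\cap C_{\bG^*}(Z^\circ(\bL_t^*)_e)$. I would then use the hypothesis $\bGb\leq\bG(t)$ — which, via the definition of $\bG_\bb$ from \cite[Not.~2.3]{CE94} in the good-prime regime, forces $t\in Z^\circ(\bL_t^*)_e$ — to deduce $C_{\bG^*}(Z^\circ(\bL_t^*)_e)\leq C_{\bG^*}(t)$. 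Consequently $\bL(t)^*=C_{\bG^*}(Z^\circ(\bL_t^*)_e)$ and $\bL_t^*=C_{\bL(t)^*}(s)=C_{\bC^*}(Z^\circ(\bL_t^*)_e)$, i.e.\ $\bL_t^*$ is itself $e$-split in~$\bC^*$. The uniqueness in Proposition~\ref{prop:En 17}(a) now allows the choice $(\bL_s^*,\la_s)=(\bL_t^*,\la_t)$, and Proposition~\ref{prop:En 15} gives $\bL^*=C_{\bG^*}(Z^\circ(\bL_s^*)_e)=\bL(t)^*$. Since $\la$ and $\la(t)$ both have Jordan correspondent $\la_s=\la_t$ under the fixed Jordan decomposition $\pi_s^\bL=\pi_s^{\bL(t)}$, we obtain $\la=\la(t)$ as well.

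With $(\bL,\la)=(\bL(t),\la(t))$ in hand, the conclusion follows from the second assertion of Proposition~\ref{prop:En 16} applied with $\bM=\bG(t)$: the pair $(\bL,\la)$ is $e$-Jordan quasi-central cuspidal below $b_{\bG(t)^F}(\bL(t),\la(t))$ in $\cE_\ell(\bG(t)^F,s)$, hence also below $R_{\bG(t)}^\bG(b_{\bG(t)^F}(\bL(t),\la(t)))$, and the uniqueness in Theorem~\ref{thm:[KM15, Thm A]}(b) then forces the latter block to equal $b_{\bG^F}(\bL,\la)$. The hard part will be the verification that $t\in Z^\circ(\bL_t^*)_e$ from $\bGb\leq\bG(t)$: this is the sole place where the precise hypothesis enters, and it calls for a careful comparison of $\bG_\bb$ against the $e$-split structure of the $t$-centraliser in the good-prime regime.
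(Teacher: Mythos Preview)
Your approach has a genuine gap: the key claim that $\bGb\le\bG(t)$ forces $t\in Z^\circ(\bL_t^*)_e$ is not merely ``the hard part'' --- it is false in general, and with it your desired identification $(\bL,\la)=(\bL(t),\la(t))$ collapses. Take $\bG=GL_\ell$ (so $\bG_\bb$ is trivial and the hypothesis $\bGb\le\bG(t)$ is vacuous), $s=1$, $e=e_\ell(q)=1$ with $\ell\,\|\,q-1$, and let $t$ be a regular element of order~$\ell^2$ in a Coxeter torus $T$ of $\bG^*$. Then $C_{\bG^*}(st)=T$, so $\bL_t^*=T$ and $Z^\circ(\bL_t^*)_1$ is the rank-one split subtorus $Z(\bG^*)$; since $(q-1)_\ell=\ell$, no element of order $\ell^2$ lies in it, hence $t\notin Z^\circ(\bL_t^*)_1$. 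Tracing the definitions, $\bL(t)$ is the Coxeter torus while $\bL$ is the split maximal torus; these are not $\bG^F$-conjugate, and indeed $\bL(t)$ is not $1$-split in $\bG$ (its Sylow $1$-torus is $Z(\bG)$, whose centraliser is all of $\bG$), so $(\bL(t),\la(t))$ is not even an $e$-Jordan cuspidal pair of~$\bG$. The same example shows your appeal to \cite[Prop.~2.4]{CE99} is imprecise: that result gives $\bG(t)$ to be $E_{q,\ell}$-split, not $e$-split, so Proposition~\ref{prop:En 16} does not apply with $\bM=\bG(t)$.

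The paper avoids this by \emph{not} attempting to identify the two pairs globally. Instead it decomposes $\bG$ as a central product $\bG_\ba\cdot Z^\circ(\bG)\bG_\bb$ and, via Proposition~\ref{prop:JoEn 2.1.5}, compares the pairs componentwise. The hypothesis $\bGb\le\bG(t)$ forces $\bH_2=\bG_2$, which is exactly enough to match the $\bG_\bb$-components $(\bL_2,\la_2)=((\bL_\bH)_2,(\la_\bH)_2)$; on the $\bG_\ba$-side the pairs may genuinely differ, but Lemma~\ref{lem:Ga} (uniqueness of the $e$-Jordan cuspidal pair below $(\bG_\ba^F,s_1)$ and the equality $\cE(\bG_\ba^F,s_1)=\cE(\bG_\ba^F,(\bL_1,\la_1))$) absorbs this discrepancy. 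One then checks that any constituent of $R_{\bG(t)}^\bG(\chi_\bH)$ with $\chi_\bH\in\cE(\bG(t)^F,(\bL(t),\la(t)))$ lies in $\cE(\bG^F,(\bL,\la))\subseteq\Irr(b_{\bG^F}(\bL,\la))$, and Proposition~\ref{prop:CE99 2.8} finishes the argument.
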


\begin{proof}
Set $b:=b_{\bG^F}(\bL,\la)$, $\bH:= \bG(t)$, $\bL_\bH:= \bL(t)$, and
$\la_\bH:=\la(t)$. Set $\bG_1:=\bG_\ba$ and $\bG_2: = Z^\circ(\bG)\bG_\bb$.
Then $\bG=\bG_1 \bG_2$, the kernel of the multiplication map from
$\tbG:= \bG_1\times\bG_2$ to $\bG$ is a central torus, isomorphic to
$Z(\bG)=Z^\circ(\bG)$, and we may put ourselves
in the setting (and use the notation) of Proposition~\ref{prop:JoEn 2.1.5}.
So, letting $\tilde \bH$ be the inverse image of $\bH$ in $\tbG$, we have
$\tilde\bH= \bH_1\times\bH_2$ with $\bH_i$ an $F$-stable Levi subgroup of
$\bG_i$. Note that by hypothesis $\bH_2=\bG_2$. 
\par
Let $(\bL^*_s,\al)$ be a unipotent $e$-cuspidal pair of $C_{\bG^*}(s)^F$ such
that $\bL^*=C_{\bG^*}(Z^\circ(\bL_s^*)_e)$ is dual to $\bL$ in $\bG$, 
$\pi_s^{\bL}(\la)=\al$ and $(\bL^*_s,\al)\to_{t} (\bL_t^*,\la_t)$.
Similarly, let $(\bL^*_{s,\bH},\al_\bH)$ be a unipotent $e$-cuspidal pair of
$C_{\bH^*}(s)^F$ such that $\bL_\bH^*=C_{\bH^*}(Z^\circ(\bL_{s,\bH}^*)_e)$ is
dual to $\bL_\bH$ in $\bH$, $\pi_s^{\bL_\bH}(\la_\bH)=\al_\bH$ and
$(\bL_{s,\bH}^*,\al_\bH)\to_t (\bL_t^*,\la_t)$ in $C_{\bH^*}(s)$. Note that by 
Proposition~\ref{prop:3D4}, applied with $st$ in place of $s$, $\bL_t^{*F}$
does not have a component of type $\tw3D_4$ and hence $\to_t$ equals $\sim$ in
$C_{\bG^*}(s)$ as well as in $C_{\bH^*}(s)$. Here, we set 
$\bH^*=\bG(t)^*=C_{\bG^*}(t)$. Further, since
$C_{C_{\bH^*}(s)}(t) = C_{\bH^*}(s)$, we may in fact assume that
$(\bL^*_{s,\bH},\al_\bH) = (\bL_t^*,\la_t)$. So, in particular,
$({\bL^*_s}',\al|_{{\bL^*_s}'^F})$ and
$({\bL^*_{s,\bH}}', \al_\bH|_{{\bL^*_{s,\bH}}'^F})$ 
are $C_{\bG^*}(s)^F$-conjugate, where again we denote by $X'$ the derived
subgroup of $X$. By Proposition~\ref{prop:JoEn 2.1.5}(d), it follows that 
$((\bL^*_s)_2',\al_2|_{(\bL^*_s)_2'^F})$ and $((\bL^*_{s,\bH})_2', 
(\al_\bH)_2|_{(\bL^*_{s,\bH})_2'^F})$ are $C_{\bG_2^*}(s_2)^F$-conjugate.
Further, by Proposition~\ref{prop:JoEn 2.1.5}(a), $((\bL^*_s)_2,\al_2)$ and
$((\bL^*_{s,\bH})_2,(\al_\bH)_2)$ are unipotent $e$-cuspidal pairs of
$C_{\bG_2^*}(s_2) = C_{\bH_2^*}(s_2)$. Therefore, by the argument at the
beginning of the proof of Proposition~\ref{prop:En 17} and up to conjugation in 
$C_{\bG_2^*}(s_2)^F$, we may assume that
$$((\bL^*_s)_2,\al_2) = ((\bL^*_{s,\bH})_2,(\al_\bH)_2),$$
and hence by Proposition~\ref{prop:JoEn 2.1.5}(b) that 
$$(\bL_2,\la_2) = ((\bL_\bH)_2,(\la_\bH)_2).$$
\par
Now let $\chi_\bH\in\cE(\bH^F,(\bL_\bH,\la_\bH))$ and let $\chi$ be a
constituent of $R_\bH^\bG(\chi_\bH)$. By Proposition~\ref{prop:JoEn 2.1.5}(c),
$\tchi_\bH= (\chi_\bH)_1\otimes(\chi_\bH)_2$ for some
$(\chi_\bH)_i\in \cE(\bH_i^F,((\bL_H)_i,(\la_\bH)_i))$, $i=1,2$, and similarly
$\tchi = \chi_1\otimes \chi_2$ with $\chi_i\in\cE(\bG^F,(\bH_i,(\chi_\bH)_i))$.
Since $\bH_2= \bG_2$, we have $\chi_2=(\chi_\bH)_2$ and as observed above,
$(\bL_2,\la_2) = ((\bL_\bH)_2,(\la_\bH)_2)$. Hence,
$\chi_2\in\cE(\bG_2^F,(\bL_2,\la_2))\subseteq\cE(\bG_2^F,s_2)\cap\Irr(b_2)$.
On the other hand, $\chi_1\in \cE(\bG_1^F,s_1)$ and by Lemma~\ref{lem:Ga},
$\cE(\bG_1^F,s_1) = \cE(\bG_1^F, (\bL_1,\la_1))$. It follows by
Proposition~\ref{prop:JoEn 2.1.5}(c) that
$\chi\in \cE(\bG^F,(\bL,\la))\subseteq\Irr(b)$. Since
$\chi_\bH\in\Irr(b_{\bH^F}(\bH,\la_\bH)$,
the result follows by Proposition~\ref{prop:CE99 2.8}.
\end{proof} 

\begin{prop}   \label{prop:good}
 Suppose that $\ell$ is odd and good for $\bG$, and $\ell >3$ if $\bG^F$ has a
 component of type $\tw3D_4$. Suppose also that Theorem~$\ref{thm:thmA}$ holds
 for all proper $F$-stable Levi subgroups of $\bG$. Then
 Theorem~$\ref{thm:thmA}$ holds for $\bG$ and $\ell$.
\end{prop}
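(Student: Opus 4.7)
The plan is to combine the reductions already established with the compatibility between Lusztig induction and Jordan decomposition available when $\ell$ is good for~$\bG$. First, if $s$ is not isolated in $\bG^*$, choose a proper $F$-stable Levi subgroup $\bM\le\bG$ whose dual contains $C_{\bG^*}(s)$; Proposition~\ref{prop:isolated reduction}, applied using the hypothesis that Theorem~\ref{thm:thmA} holds for~$\bM$, reduces us to the case $s$ isolated in $\bG^*$. Next, Proposition~\ref{prop:t central} disposes of the case $t\in Z(\bG^{*F})$, so we may assume throughout that $s$ is isolated in $\bG^*$ and $t$ is a non-central $\ell$-element of $\bC^{*F}$.

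Since $\ell$ is good for $\bG$, the centraliser $C_{\bG^*}(t)$ is a Levi subgroup of $\bG^*$; let $\bH:=\bG(t)$ denote its dual Levi in $\bG$, which is proper because $t\notin Z(\bG^*)$. Because $s$ and $t$ commute and are of coprime orders, $C_{\bG^*}(st)=C_{\bH^*}(s)$, so any unipotent $\ell$-block $b$ of $C_{\bG^*}(st)^F$ is simultaneously a unipotent $\ell$-block of $C_{\bH^*}(s)^F$. Given $\eta\in\Irr(b)\cap\cE(C_{\bG^*}(st)^F,1)$ with Jordan correspondent $\chi\in\cE(\bG^F,st)$, the compatibility of Jordan decomposition with the Levi $\bG(t)$ recalled at the start of Section~\ref{subsec:good l} produces $\chi_t\in\cE(\bH^F,s)$ with $\chi=\pm\RHG(\hat t\chi_t)$ and $\pi_s^\bH(\chi_t)=\eta$.

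By hypothesis, Theorem~\ref{thm:thmA} holds for the proper Levi $\bH$ at the $\ell$-element~$t$, so $\chi_t$ lies in $\bJ_t^\bH(b)$. Since $\bH$ is proper in $\bG$ and hence not of type $E_8$, this $t$-twin block is a genuine single $\ell$-block, equal to $b_{\bH^F}(\bL(t),\la(t))$ for some $(\bL(t),\la(t))\in J_t^\bH(\bL_t^*,\la_t)$ associated to a unipotent $e$-cuspidal pair $(\bL_t^*,\la_t)$ of quasi-central $\ell$-defect parametrising~$b$. Proposition~\ref{prop:CE99 2.8} then places $\chi$ in the single $\ell$-block $\RHG(b_{\bH^F}(\bL(t),\la(t)))$ of $\bG^F$. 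Finally, Proposition~\ref{prop:JoEn 2.3.3-2} identifies this block with $b_{\bG^F}(\bL,\la)$, where $(\bL,\la)\in J_t^\bG(\bL_t^*,\la_t)$, which by construction lies in the $t$-twin block $\bJ_t^\bG(b)$, as required.

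The main technical point to confirm is the hypothesis $\bGb\le\bG(t)$ built into Proposition~\ref{prop:JoEn 2.3.3-2}; this is to be verified by inspection of the structure of isolated centralisers in \cite[Prop.~4.9, Tab.~3]{B05}, exploiting the good-prime assumption, together with the remark that for non-isolated $s$ we have already reduced via Proposition~\ref{prop:isolated reduction}. The exceptional $\tw3D_4$-cases of Proposition~\ref{prop:En 17} do not intervene, because applying Proposition~\ref{prop:3D4} to $st$ shows that $\bL_t^{*F}$ has no component of type $\tw3D_4$ under the running hypothesis $\ell>3$ in the relevant cases; and twin-block ambiguities in $E_8$ are absorbed harmlessly into the definition of $\bJ_t^\bG(b)$ as the appropriate union of blocks.
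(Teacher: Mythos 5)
Your overall route coincides with the paper's in its main thrust: reduce to $s$ isolated via Proposition~\ref{prop:isolated reduction}, dispose of central $t$ via Proposition~\ref{prop:t central}, and for the remaining $t$ pass through the dual Levi $\bG(t)$ of $C_{\bG^*}(t)$ using the compatibility recalled before Proposition~\ref{prop:CE99 2.8}, then Propositions~\ref{prop:CE99 2.8} and~\ref{prop:JoEn 2.3.3-2}. (Two small imprecisions: $t$ should be taken non-central in $\bG^{*F}$, not merely in $\bC^{*F}$; and since $\chi_t\in\cE(\bH^F,s)$, the instance of Theorem~\ref{thm:thmA} you invoke for $\bH$ is really the one for the $\ell$-element $1$ applied to $\eta\in\cE(C_{\bH^*}(s)^F,1)$ — or the one for $t$ applied to $\hat t\chi_t$, combined with the fact that tensoring with the linear character $\hat t$ preserves blocks.)

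The genuine gap is the hypothesis $\bGb\le\bG(t)$ of Proposition~\ref{prop:JoEn 2.3.3-2}. You defer it to ``inspection of isolated centralisers'', but this condition is a constraint on the $\ell$-element $t$, not on $C_{\bG^*}(s)$, and it fails for most non-central $t$. For instance, if $\bG$ is of type $E_8$ and $\ell\ge7$, then $\bGa$ is trivial and $\bGb=\bG$, so $\bGb\le\bG(t)$ forces $t\in Z(\bG^*)$ — precisely the case you have already excluded; no inspection can rescue this. The paper instead runs a dichotomy. Either $\bG(t)$ is contained in a proper $e$-split Levi subgroup of $\bG$: then one bypasses Proposition~\ref{prop:JoEn 2.3.3-2} entirely and applies Proposition~\ref{prop:in Levi} together with the inductive hypothesis for that Levi. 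Or it is not: then \cite[Prop.~2.5]{CE94}, which applies since $\ell\in\Gamma(\bG,F)$ under the stated hypotheses, yields $\bGb\le\bG(t)$, after which your chain of reasoning goes through (and reduces, when $\bG(t)=\bG$, to the central case already settled). Your argument is missing the first branch of this dichotomy, and with it the treatment of all $t$ whose centralisers sit inside proper $e$-split Levi subgroups.
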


\begin{proof}
By hypothesis and Proposition~\ref{prop:in Levi}, we may assume $\bG(t)$ is not
contained in any proper $e$-split Levi subgroup of $\bG$. Thus by
\cite[Prop.~2.5]{CE94} we may assume that $\bGb\leq\bG(t)$. 
By Propositions~\ref{prop:CE99 2.8} and ~\ref{prop:JoEn 2.3.3-2} and the
paragraph preceding Proposition~\ref{prop:CE99 2.8} it suffices to
prove the result for the case $\bG=\bG(t)$ and $t=1$. This case was settled in
Proposition~\ref{prop:t central}.
\end{proof}

\subsection{Proof of Theorem~\ref{thm:thmA}}   \label{subsec:cases}
We are now ready to complete the proof of our main theorem by checking
individually the various isolated blocks at bad primes in simple groups of
exceptional type.

\begin{proof}[Proof of Theorem~\ref{thm:thmA}]
Let $\bG$ be as in the statement and assume that Theorem~\ref{thm:thmA} holds
for every proper $F$-stable Levi subgroup of $\bG$. If $\ell$ is odd and good
for $\bG$ and $\ell> 3$ if $\bG^F$ has a component of type $\tw3D_4$, the
result follows by Proposition~\ref{prop:good}. If $\bG$ does not have any
factors of exceptional type and $\ell=2$, then $\cE_\ell(\bG^F,s)$ is a single
$\ell$-block (see \cite[Thm~21.14]{CE}) and the claim holds trivially.
So we are reduced to the case that $\bG^F$ has a factor of exceptional type and
that $\ell$ is bad for $\bG$, or $\ell=3$ and $\bG^F$ has a component of type
$\tw3D_4$. By Proposition~\ref{prop:isolated reduction} we may assume that $s$
is isolated in $\bG^*$ and by Proposition~\ref{prop:s central} that $s$ is not
central in $\bG^*$.
\par
Suppose that $\ell=3$ and $\bG^F$ has a component of type $\tw3D_4$. By
Proposition~\ref{prop:3D4} this implies $[\bX,\bX]$ is of exceptional type.
Then, $\bG$ is of type $D_4$, $D_4A_1$ or $D_4A_2$. Since groups of type~$A$
have no non-central isolated elements, the $D_4$-component of $s$ in $\bG^*$ is
non-central and isolated. Thus, by \cite[Tab.~9]{KM}, all components of
$C_{\bG^*}(s)$ are of type $A$ and unless $\bG$ has type $D_4A_2$ they are all
of type $A_1$. By Corollary~\ref{cor:in Levi}, we may inductively assume that
$\bG$ is of type $D_4A_2$ and hence again by \cite[Tab.~9]{KM} the rational
type of $C_{\bG^*}(s)$ is $A_1(q^3)A_1(q).A_2(\eps q)$ for some
$\eps\in\{\pm1\}$. If $3{\not|}(q-\eps)$ then we conclude by
Corollary~\ref{cor:in Levi}, while in the opposite case, by
\cite[Prop.~3.3]{CE}, $C_{\bG^*}(s)$ has only one conjugacy class of unipotent
$e$-cuspidal pairs. It follows that $\cE_3(\bG^*,s)$ is a single 3-block and
the claim holds trivially.
\par
Thus, we may assume that $\ell$ is bad for $\bG$, $s$ is isolated and
non-central, and either $[\bG,\bG]$ is simple of exceptional type or of type
$E_6A_1$.
\par
We now discuss these remaining possibilities according to the structure of
$[\bG,\bG]^F$.
\medskip

\noindent
{\bf Groups $G_2(q)$} 
\par
Here the centralisers of isolated elements $s\ne1$ are of rational types
$A_1(q)^2$, $A_2(q)$ and $\tw2A_2(q)$, and in each case, $\cE_\ell(\bG^F,s)$ is
a single $\ell$-block for the appropriate primes $\ell$, see \cite[Tab.~9]{KM}.
Thus Theorem~\ref{thm:thmA} is trivially satisfied.
\medskip

\noindent
{\bf Groups $F_4(q)$}\par
For $\ell=3$ let $1\ne s\in\bG^{*F}$ be an isolated $2$-element. Then
$\bC^*=C_{\bG^*}(s)$ has only factors of classical type, so~3 is a good prime
for $\bC^*$, and moreover the assumptions of Corollary~\ref{cor:in Levi} are
satisfied, whence we are done.
For $\ell=2$ there is nothing to prove as by \cite[Tab.~2]{KM} for all
isolated 3-elements~$s\ne1$, $\cE_2(G,s)$ is a single 2-block.
\medskip

\noindent
{\bf Groups $E_6(q)$ and $\tw2E_6(q)$}\par
We give the arguments for $E_6(q)$, the case of $\tw2E_6(q)$ being entirely
similar.
For $\ell=3$ the centralisers of isolated 2-elements $1\ne s\in\bG^*$ are of
rational type $A_5(q)A_1(q)$. We discuss the various possibilities for
$3$-elements
$t\in \bC^{*F}$. If $t$ is central, so $\ell=3$ divides $|Z(\bG^*)^F|$ and
hence $e=1$, we conclude by Proposition~\ref{prop:t central}.
Otherwise, by inspection $C_{\bG^*}(st)$ lies in a proper $e$-split Levi
subgroup of $\bG^*$ and we can apply Proposition~\ref{prop:in Levi}. For
$\ell=2$ again by Table~\ref{tab:quasi-E6} for all isolated 3-elements~$s\ne1$,
$\cE_2(G,s)$ is a single 2-block.
\medskip

\noindent
{\bf Groups $E_6(q)A_1(q)$ and $\tw2E_6(q)A_1(q)$}\par
Here we can argue in a completely similar fashion as in the previous case since
the $A_1$-factor has no non-central isolated elements.
\medskip

\noindent
{\bf Groups $E_7(q)$}\par
For $\ell=3$ by \cite[Tab.~4]{KM} we only need to consider the blocks in
Table~\ref{tab:quasi-E7} below.
Here all centralisers of isolated 2-elements satisfy the assumptions of
Corollary~\ref{cor:in Levi} and we are done. Note that $E_7(2)$ does not need
to be considered as it has no (isolated) semisimple 2-elements.
For $\ell=2$ again $\cE_2(G,s)$ is a single 2-block for all isolated
3-elements~$s\ne1$ by \cite[Tab.~4]{KM}.
\medskip

\noindent
{\bf Groups $E_8(q)$}\par
First assume that $\ell=5$. Let $1\ne s\in\bG^{*F}$ be an isolated $5'$-element
and $\bC^*=C_{\bG^*}(s)$. Then~5 is good for $\bC^*$ and the assumptions of
Corollary~\ref{cor:in Levi} are satisfied for $\bC^*$ (see \cite[Tab.~7
and~8]{KM} and Tables~\ref{tab:q=1(5)} and~\ref{tab:q=2(5)} below). This
completes the argument when $\ell=5$.
\par

Now assume that $\ell=3$ and let $e=e_3(q)$. Let $1\ne s\in\bG^{*F}$ be an
isolated $3'$-element and $\bC^*=C_{\bG^*}(s)$. If $\bC^*$ has only classical
factors not of rational type $\tw3D_4$, then~3 is a good prime for $\bC^*$ and
we can apply exactly the same
argument as in the case $\ell=5$ to conclude. The only centraliser for which
this approach fails is when $s$ is an involution with $\bC^*$ of rational
type $E_7(q)A_1(q)$.

The Harish-Chandra series in this case are listed in Table~\ref{tab:quasi-E8}
below (copied from \cite[Tab.~6]{KM}) for $e=1$; for $e=2$ we have the Ennola
dual situation which can be treated in exactly the same manner. We discuss the
various $3$-elements $t\in\bC^{*F}$. Assume that $t$ has a non-central
component in the $A_1$-factor. Then $C_{\bG^*}(st)$ is contained in a 1-split
Levi subgroup of $\bG^*$ of rational type $E_7(q).\Ph1$, and we may conclude by
Proposition~\ref{prop:in Levi}. Thus, the centraliser of~$t$ does not contain
the whole $E_7$-factor. But now by inspection the centraliser of any element of
order~3 in $E_7(q)$ is either of type $A_5A_2$, or it is contained in a proper
1-split Levi subgroup of $E_7(q)$. Thus, any non-trivial 3-element of $E_7(q)$
none of whose powers has centraliser $A_5A_2$ has its centraliser in a proper
1-split Levi subgroup and Proposition~\ref{prop:in Levi} applies.
\par
So finally assume $t$ is such that $C_{\bG^*}(st^k)^F=A_5(q)A_2(q)A_1(q)$ for
some~$k\ge1$. It can be checked using \Chevie{} and the known block
distribution for $\cE(\bG^F,s)$ from \cite[Prop.~6.7]{KM} that all but four
classes of maximal tori $\bT^*$ of $A_5(q)A_2(q)A_1(q)$ have the property that
all constituents of $\RTG(\hat s)$ lie in the same 3-block of $\bG^F$, namely
the semisimple block in $\cE_3(\bG^F,s)$. Now let $\bT^*$ be the maximally
split torus of $C_{\bG^*}(st)$; note that all factors of this centraliser have
untwisted type~$A$. So $R_{\bT^*}^{C_{\bG^*}(st)}(1)$ contains all unipotent
characters of $C_{\bG^*}(st)$. Note that the maximally split torus is uniquely
determined up to conjugacy inside $C_{\bG^*}(st)$ by its order. Thus, if
$\bT^*$ is not one of the four exceptions mentioned before, then by
Lemma~\ref{lem:RTG} all $\chi\in\cE(\bG^F,st)$ lie in the semisimple block in
$\cE_3(\bG^F,s)$, and we may conclude with Proposition~\ref{prop: ss} that
Theorem~\ref{thm:thmA} holds in this case. The excluded maximal tori $\bT^*$,
intersected with the $A_5(q)A_2(q)$-factor of $C_{\bG^*}(st^k)^F$,
have orders $\Phi_3^3\Phi_1$ or $\Phi_2\Phi_3^2\Phi_6$, and they are maximally
split in a centraliser in $A_5(q)A_2(q)$ only when this centraliser is
contained in a subgroup $\Phi_3^3A_1(q)$, respectively in $\bT^*$. But the
second does not occur as the centraliser of a 3-element in $A_5(q)A_2(q)$,
while for elements with centraliser the first type, no power has centraliser
$A_5(q)A_2(q)A_1(q)$.
\par
For $\ell=2$, by \cite[Tab.~5]{KM} there are only two types of non-central
isolated elements $s\in\bG^{*F}$ of order~3 to consider with centralisers as
listed in Table~\ref{tab:quasi-E8l=2}, which is taken from loc.~cit., for
$q\equiv1\pmod4$. The case $q\equiv3\pmod4$ is Ennola dual to this one and
analogous arguments apply to it.

\begin{table}[htbp]
\caption{Harish-Chandra series in some isolated 2-blocks of $E_8(q)$, $q\equiv1\pmod4$}   \label{tab:quasi-E8l=2}
$$\begin{array}{|c|r|r|l|ll|}
\hline
 \text{No.}& C_{\bG^*}(s)^F& \bL^F& C_{\bL^*}(s)^F& \la& W_{\bG^F}(\bL,\la)\\
\hline
 3& E_6(q).A_2(q)& \emptyset& \bL^{*F}& 1& E_6\ti A_2\\
  &         & D_4& \bL^{*F}& D_4[1]& G_2\ti A_2\\
 4&         & E_6& \bL^{*F}& E_6[\theta^{\pm1}]& A_2\\
\hline
 5& \tw2E_6(q).\tw2A_2(q)& A_1^3& \Ph1^5\Ph2^3& 1& F_4\ti A_1\\
  &         & D_4& \Ph1^4\Ph2^2.\tw2A_2(q)& \phi_{21}& F_4\\
  &         & D_6& \Ph1^2\Ph2.\tw2A_5(q)& \phi_{321}& A_1\ti A_1\\
  &         & E_7& \Ph1.\tw2A_5(q)\tw2A_2(q)& \phi_{321}\otimes\phi_{21}& A_1\\
  &         & E_7& \Ph1\Ph2.\tw2E_6(q)& \tw2E_6[1]& A_1\\
  &         & E_8& C_{\bG^*}(s)^F& \tw2E_6[1]\otimes\phi_{21}& 1\\
 6&         & E_7& \Ph1\Ph2.\tw2E_6(q)& \tw2E_6[\theta^{\pm1}]& A_1\\
  &         & E_8& C_{\bG^*}(s)^F& \tw2E_6[\theta^{\pm1}]\otimes\phi_{21}& 1\\
\hline
\end{array}$$
\end{table}

The involution centralisers in $C_{\bG^*}(s)^F=E_6(q).A_2(q)$ either
lie in a proper 1-split Levi subgroup, or equal $A_5(q)A_2(q)A_1(q)$. Thus,
again, we only need to worry about $2$-elements $t\ne1$ such that
$C_{\bG^*}(st^k)=A_5(q)A_2(q)A_1(q)$ for some $k\ge1$. In this case all
constituents of the Deligne--Lusztig characters for maximal tori of
$C_{\bG^*}(st^k)$ lie in the semisimple $2$-block in $\cE_2(\bG^F,s)$ as
described in \cite[Prop.~6.4]{KM} and we conclude with Lemma~\ref{lem:RTG}.
\par
Finally assume $C_{\bG^*}(s)^F=\tw2E_6(q).\tw2A_2(q)$. Here, centralisers of
involutions are either contained in a proper 1-split Levi subgroup and we
are done by induction, or are of rational type $\tw2A_5(q)A_1(q).\tw2A_2(q)$ or
$\tw2D_5(q)\Phi_2.\tw2A_2(q)$. By a \Chevie-computation, all maximal tori
$\bT^*$ of either of the latter two subgroups have the property that all
constituents of $\RTG(\hat s)$ lie in the semisimple block of $\cE_2(\bG^F,s)$
as described in \cite[Prop.~6.4]{KM}, and so we may conclude as before.
\end{proof}

\begin{proof}[Proof of Corollary~$\ref{cor:cor1}$]
By \cite[Thm~A, Thm~A.bis]{En00}, the Jordan correspondents of $\chi$ and
$\chi'$ lie in the same unipotent $\ell$-block of $C_{\bG^*}(r)^F$. Now the
result is immediate from Theorem~\ref{thm:thmA}.
\end{proof}

\section{Descent to quasi-simple groups of types $E_6$ and $E_7$}   \label{sec:sc type}

Let $\bX$ be as in Theorem~\ref{thm:thmA} such that $\bG:=[\bX,\bX]$ is
simple of simply connected type $E_6$ or $E_7$; so
$\bG\hookrightarrow\bX$ is a regular embedding. Now $\bG_\ad:=\bX/Z(\bX)$ is
simple of adjoint type, and $\bG_\ad^F=(\bX/Z(\bX))^F\cong \bX^F/Z(\bX^F)$ since
$Z(\bX)$ is connected, so Theorem~\ref{thm:thmA} immediately gives a
description of the $\ell$-blocks of $\bG_\ad^F$, by only considering those
characters that are trivial on $Z(\bX^F)$.   \par
We also obtain strong information on the $\ell$-blocks of groups of simply
connected type. Namely, with $m:=|Z(\bG^F)|$ we have: if $m=1$ then
$\bG^F=(\bG/Z(\bG))^F=\bG_\ad^F$ which was discussed above,
while if $m\ne1$ then $\bG^F$ is quasi-simple with $m=3$ for type $E_6$ and
$m=2$ for type~$E_7$. For general facts on covering blocks see \cite[Sec.~6.8]{Li18}.

\begin{prop}   \label{prop:sc}
 In the setting introduced above, let $\ell$ be a non-defining prime for $\bG$,
 let $b=b_{\bG^F}(\bL,\la)$ be an $\ell$-block of $\bG^F$ in $\cE_\ell(\bG^F,s)$
 for $s\in\bG^{*F}$ a semisimple $\ell'$-element,  and let $B$ be an
 $\ell$-block of $\bX^F$ covering~$b$.
 \begin{enumerate}[\rm(a)]
  \item If $b$ is $\bX^F$-invariant, the members of $\Irr(b)$ are the
   constituents of the restriction to $\bG^F$ of the members of $\Irr(B)$. If
   moreover $C_{\bG^*}(s)^F=C_{\bG^*}^\circ(s)^F$ and $\ell\ne m$, restriction
   defines a height preserving bijection from $\Irr(B)$ to $\Irr(b)$.
  \item If $b$ is not $\bX^F$-invariant, then
   $C_{\bG^*}^\circ(s)^F<C_{\bG^*}(s)^F$, in particular $C_{\bG^*}(s)$ is
   disconnected, $\ell\ne m$, the block $b$ is $\bX^F$-conjugate to $m$
   distinct blocks of $\bG^F$, all $\tchi\in\Irr(B)$ have reducible restriction
   to $\bG^F$ and the restriction of every member of $\Irr(B)$ to $\bG^F$
   contains one constituent in each of these blocks. This defines a height
   preserving bijection from $\Irr(B)$ to $\Irr(b)$.
 \end{enumerate}
 In particular, the $\ell$-block distribution of $\bG^F$ is determined, up to
 the labelling of characters in $\bX^F$-orbits in case~{\rm(b)}, by
 Theorem~$\ref{thm:thmA}$.
\end{prop}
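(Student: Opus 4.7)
The argument will go via Clifford theory applied to the normal inclusion $\bG^F\trianglelefteq\bX^F$, using the standard dictionary between the action of $\bX^F/\bG^F$ on Lusztig series of $\bG^F$ and the component groups in the dual group. Concretely, for any $\chi\in\cE(\bG^F,s)$, the $\bX^F$-orbit of $\chi$ has cardinality $|A(s)^F|$, where $A(s):=C_{\bG^*}(s)/C_{\bG^*}^\circ(s)$ is an abelian group whose order divides $|Z(\bG^F)|=z$; see, e.g., \cite[Sect.~11.4]{DM20}. For $\bG$ of type $E_6$ or $E_7$, the integer $z\in\{2,3\}$ is prime, so this orbit length is either $1$ or $z$. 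Since $\bX^F/\bG^FZ(\bX^F)$ is a quotient of the character group of $A(s)$, the entire setup reduces to analysing the two cases in the statement.

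For part~(a), suppose $b$ is $\bX^F$-invariant. Then the simplest form of Fong--Reynolds gives that $B$ covers only $b$, so every $\tchi\in\Irr(B)$ restricts to a sum of members of $\Irr(b)$, and conversely every element of $\Irr(b)$ arises this way. Assume moreover $C_{\bG^*}(s)^F=C_{\bG^*}^\circ(s)^F$, i.e.\ $A(s)^F=1$. Then by the orbit count mentioned above every $\chi\in\cE(\bG^F,s)$ is $\bX^F$-invariant, hence extends to $\bX^F$ (here we use that $\bX^F/\bG^F$ is cyclic, since it is a quotient of $Z(\bX)^F/\,\ldots$ with the relevant obstruction vanishing); it follows that $\tchi|_{\bG^F}$ is irreducible for every $\tchi\in\Irr(B)$. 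The hypothesis $\ell\ne z$ makes $[\bX^F:\bG^FZ(\bX^F)]$ an $\ell'$-integer, so the defects of $\tchi$ and $\tchi|_{\bG^F}$ coincide, giving the claimed height-preserving bijection.

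For part~(b), assume $b$ is not $\bX^F$-invariant. Then some $\chi\in\Irr(b)$ fails to be $\bX^F$-stable, so $|A(s)^F|>1$ and a fortiori $C_{\bG^*}^\circ(s)^F<C_{\bG^*}(s)^F$; since $|A(s)^F|$ divides the prime $z$, the $\bX^F$-orbit of $b$ has exactly $z$ elements. To deduce $\ell\ne z$ I would invoke the standard fact that an $\ell$-group acting on a finite group via automorphisms fixes every $\ell$-block (apply this to the action of the $\ell$-part of $\bX^F/\bG^FZ(\bX^F)$ on $\bG^F$, arguing on block idempotents); were $\ell=z$, this would contradict the non-invariance of $b$. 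With $z$ coprime to $\ell$ in hand, standard Clifford theory for a normal inclusion with abelian $\ell'$-quotient yields that $\tchi|_{\bG^F}$ is multiplicity-free with exactly $z$ distinct irreducible constituents, one in each $\bX^F$-conjugate of $b$, and conversely $\tchi=\Ind_{\bG^F}^{\bX^F}(\chi)$ for every such constituent $\chi$; height preservation is again immediate from $[\bX^F:\bG^F]$ being coprime to $\ell$. The final assertion then follows by combining (a), (b) with Theorem~\ref{thm:thmA}, which determines the $\ell$-blocks of $\bX^F$.

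The main obstacle I anticipate is the clean verification of the link between $\bX^F$-stabilisers in $\Irr(\bG^F)$ and the component group $A(s)^F$ under our precise hypotheses (simply connected $\bG$ of type $E_6$ or $E_7$ embedded in a connected-centre $\bX$), and the consequent argument that characters in an $A(s)^F=1$ series genuinely extend to $\bX^F$; once this technical input is cited from the literature (e.g.\ Lusztig and its exposition in \cite{DM20}), both parts follow from routine block-theoretic Clifford arguments.
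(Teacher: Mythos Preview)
Your overall Clifford-theoretic strategy matches the paper's, and your argument for~(b) is essentially the same (your deduction of $\ell\ne z$ via ``an $\ell$-group of automorphisms fixes every $\ell$-block'' is a valid alternative to the paper's use of \cite[Prop.~14.20]{MT}).

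There is, however, a genuine gap in part~(a). You argue that $A(s)^F=1$ forces every $\chi\in\cE(\bG^F,s)$ to be $\bX^F$-stable and hence to extend, and then conclude that every $\tchi\in\Irr(B)$ restricts irreducibly. But $\Irr(b)$ is not contained in $\cE(\bG^F,s)$: it lies in $\cE_\ell(\bG^F,s)=\bigcup_t\cE(\bG^F,st)$ with $t$ ranging over $\ell$-elements of $C_{\bG^*}(s)^F$. For the restriction bijection you need every $\chi\in\Irr(b)$ to be $\bX^F$-stable, which requires $C_{\bG^*}(st)^F=C_{\bG^*}^\circ(st)^F$ for \emph{all} such $t$, not just $t=1$. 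This is exactly the missing step, and it is where the hypothesis $\ell\ne z$ enters the bijection (not merely the height preservation): since $C_{\bG^*}(st)=C_{C_{\bG^*}(s)}(t)$ and the component group of the centraliser of $t$ has order dividing $o(t)$ (by \cite[Prop.~14.20]{MT}), while $z$ is prime to $\ell$ and hence to $o(t)$, the index $|C_{\bG^*}(st)^F:C_{\bG^*}^\circ(st)^F|$ (which is $1$ or $z$) must be~$1$. Without this argument your conclusion ``$\tchi|_{\bG^F}$ is irreducible for every $\tchi\in\Irr(B)$'' is unjustified.
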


\begin{proof}
The first claim in~(a) is a standard fact about covering blocks. Assume
$C_{\bG^*}(s)^F=C_{\bG^*}^\circ(s)^F$ and $m\ne\ell$. We claim that
$C_{\bG^*}(st)^F=C_{\bG^*}^\circ(st)^F$ for all $\ell$-elements
$t\in C_{\bG^*}(s)^F$.
If not, then $|C_{\bG^*}(st)^F:C_{\bG^*}^\circ(st)^F|=m>1$. But
$C_{\bG^*}(st)=C_{C_{\bG^*}(s)}(t)$, and $m$ is prime to ~$\ell$, hence to the
order of~$t$, contradicting \cite[Prop.~14.20]{MT}. Thus all characters in
$\Irr(B)$ restrict irreducibly to $\bG^F$. This gives the last claim in~(a).
\par
So now assume that $b$ is not $\bX^F$-invariant. Then no $\chi\in\Irr(b)$ is
$\bX^F$-invariant, so necessarily $C_{\bG^*}(s)^F>C_{\bG^*}^\circ(s)^F$ (and
hence $C_{\bG^*}(s)$ is disconnected), which again by \cite[Prop.~14.20]{MT}
implies that $o(s)$ is divisible by $m$ and so $\ell\ne m$. Further, $B$ covers
the $m=|\bX^F:\bG^FZ(\bX^F)|$ distinct $\bX^F$-conjugates of $b$ and the
restriction to $\bG^F$ of every character $\tchi\in\Irr(B)$ has
constituents in each of them. Since $\bX^F/\bG^F$ is cyclic, all restrictions
are multiplicity-free and thus have $m$ constituents. The last claim of~(b)
then follows.
\end{proof}

\begin{rem}
(1) If $\bG^F$ is of adjoint type then $m=1$ and we are in the situation of
case~(a).\par
(2) If $\ell\ne2$ then case~(a) occurs precisely when the $\bG^F$-class of
$(\bL,\la)$ is $\bX^F$-invariant, by \cite[Thm~A]{KM15} (but see the
counter-example in \cite[Exmp.~3.16]{KM15} when $\ell=2$.)\par
(3) In (a), if either $C_{\bG^*}^\circ(s)^F<C_{\bG^*}(s)^F$ or $\ell=m$, there
will in general not exist a height preserving bijection.   \par
(4) The proof shows that in case~(b) the centralisers $C_{\bG^*}(st)$ are
disconnected for all $\ell$-elements $t\in C_{\bG^*}(s)$.
\end{rem}

\begin{exmp}
 In case~(a) of Proposition~\ref{prop:sc}, $C_{\bG^*}(s)$ could be connected or
 disconnected: an example for the first case is the block~13 in
 Table~\ref{tab:quasi-E6} below, an example for the second is the block~1 in
 Table~\ref{tab:quasi-E6}. Even if $C_{\bG^*}(s)$ is connected but $\ell=m$
 there may exist $\ell$-elements $t\in C_{\bG^*}(s)^F$ with $C_{\bG^*}(st)$
 disconnected (e.g., taking $s=1$).
\end{exmp}

\section{On $e$-Harish-Chandra series in exceptional groups}   \label{sec:exc type}
In this section we complement our results from \cite{KM} in several ways. First
we parametrise the isolated blocks for bad primes in exceptional groups of type
$E_6$ and $E_7$ with connected centre and verify the validity of an
$e$-Harish-Chandra theory in this situation. Second, we give the
block distribution for some isolated 5-blocks in groups of type $E_8$
inadvertently omitted in \cite{KM}. Finally, we give the proofs of the
extensions of results from \cite{KM} and \cite{KM15} to the situation
considered in the present paper. The $e$-cuspidal pairs in the subsequent
tables can be determined, for example, as explained in \cite{Tay22}.

\subsection{Isolated blocks in exceptional groups of adjoint type} \label{subsec:adjoint}

In this section $\bG$ is a group with connected centre such that $[\bG,\bG]$ is
simple of simply connected type $E_6$ or $E_7$ and $F:\bG\rightarrow\bG$ is a
Frobenius endomorphism with respect to an $\FF_q$-structure.
\par
It follows by looking at the root datum that $\bG$ is isomorphic to its dual
$\bG^*$. Since $\bG$ has connected centre, all centralisers of semisimple
elements in $\bG^*$ are connected and thus the notions of isolated and
quasi-isolated elements do coincide. If $s\in\bG^*$ is isolated,
then so is $sz$ for any $z\in Z(\bG^*)$, with the same centraliser, and this
defines an equivalence relation on the set of conjugacy classes of isolated
semisimple elements. We describe the $\ell$-block subdivision of $\cE(\bG,s)$
for $s\in\bG^{*F}$ an isolated $\ell'$-element and $\ell\in\{2,3\}$ not
divding~$q$. Note that
the blocks corresponding to two isolated elements $s$ and $sz$, for
$z\in Z(\bG^{*F})$, are obtained from one another by tensoring with a linear
character $\hat z$ of $\bG^F$ (see \cite[Prop.~2.5.21]{GM20}). Also note that
the case of $s=1$, that is, of unipotent blocks has been dealt with by Enguehard
\cite{En00}, so we may assume $s$ is non-central. As before, let $e=e_\ell(q)$
be the order of $q$ modulo $\ell$ when $\ell$ is odd, respectively the order of
$q$ modulo~4 when $\ell=2$.

We first determine the decomposition of Lusztig induction for
$e$-Harish-Chandra series of $\bG$ in $\cE(\bG^F,s)$.

\begin{prop}   \label{prop:E6 RLG}
 Let $\bG$ be as above, $\ell\in\{2,3\}$ not dividing~$q$, $s\in\bG^{*F}$ a
 non-central isolated $\ell'$-element, and set $e=e_\ell(q)$. Then we have:
 \begin{enumerate}[\rm(a)]
  \item If $[\bG,\bG]^F=E_6(q)_\SC$ then $\cE(\bG^F,s)$ is the disjoint union
   of the $e$-Harish-Chandra series listed in Table~$\ref{tab:quasi-E6}$.
  \item If $[\bG,\bG]^F=\tw2E_6(q)_\SC$ then the $e$-Harish-Chandra series in
   $\cE(\bG^F,s)$ are the Ennola duals of those in Table~$\ref{tab:quasi-E6}$.
  \item If $[\bG,\bG]^F=E_7(q)_\SC$ then $\cE(\bG^F,s)$ is the disjoint union
   of the $e$-Harish-Chandra series listed in Table~$\ref{tab:quasi-E7}$ when
   $e=1$, respectively their Ennola duals when $e=2$.
  \item The assertion of \cite[Thm~1.4]{KM} continues to hold for $\bG$.
 \end{enumerate}
\end{prop}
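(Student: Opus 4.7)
The plan is to reduce everything to unipotent characters of the centralisers $\bC^\ast := C_{\bG^\ast}(s)$ via Jordan decomposition, and then to pass through the correspondence of Proposition~\ref{prop:En 15}. Since $\bG$ has connected centre, $\bC^\ast$ is connected for every semisimple $s\in\bG^{\ast F}$, so the hypotheses of the general results of Section~\ref{sec:general} apply. For each of the cases (a)--(c) I would first enumerate the non-central isolated $\ell'$-elements $s\in\bG^{\ast F}$ from \cite[Prop.~4.9, Tab.~3]{B05}, and read off the rational type of $\bC^\ast$ (this information is already compiled in the tables of \cite{KM} used throughout Section~\ref{sec:blocks}). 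By Theorem~\ref{thm:[KM13, 1.2]}(f) the notions of $e$-Jordan cuspidality and $e$-cuspidality coincide below $(\bG^F,s)$, so Proposition~\ref{prop:En 15} provides a bijection between $\bC^{\ast F}$-classes of unipotent $e$-cuspidal pairs $(\bL_s^\ast,\la_s)$ of $\bC^\ast$ and $\bG^F$-classes of $e$-cuspidal pairs $(\bL,\la)$ below $(\bG^F,s)$.

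Next, the unipotent $e$-cuspidal pairs in $\bC^\ast$ and their associated relative Weyl groups are accessible via \Chevie, since each relevant $\bC^\ast$ has components of classical or (small-rank) exceptional type. The decomposition of $R_{\bL_s^\ast}^{\bC^\ast}(\la_s)$ into $e$-Harish-Chandra series is known from \cite{BMM}, together with Lemma~\ref{lem:dec RLG} that is invoked in the introduction for the last remaining open instances. Combining this unipotent data in $\bC^{\ast F}$ with Jordan decomposition (and its commutation with Lusztig induction from $\bL$ to $\bG$, cf.\ \cite[Prop.~3.11]{Ho22} and the arguments in Proposition~\ref{prop:GeMa 4.7.7-2}), I would produce the $e$-Harish-Chandra decomposition of $\cE(\bG^F,s)$ character-by-character. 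Filling in the resulting labels $\bL^F$, $C_{\bL^\ast}(s)^F$, $\la$, $W_{\bG^F}(\bL,\la)$ then reproduces the rows of Tables~\ref{tab:quasi-E6} and~\ref{tab:quasi-E7}. The fact that these series actually partition $\cE(\bG^F,s)$ follows from the corresponding partition in $\cE(\bC^{\ast F},1)$, which is generalised $e$-Harish-Chandra theory for the (connected) group $\bC^\ast$.

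For part (b), the list for $\tw2E_6(q)$ at $e=1$ is obtained by applying the argument of (a) directly (the isolated $\ell'$-elements and their centralisers match), while the cases $e=2$ in (b) and (c) follow by Ennola duality, since both sides of the claim are compatible with Ennola: the unipotent $e$-Harish-Chandra data of $\bC^\ast$ transforms under $q\mapsto -q$ in the standard way \cite[\S3.5]{BMM}, and Jordan decomposition, $e$-cuspidality and the partition into series commute with this transformation. For part (d), once the $e$-Harish-Chandra series in $\cE(\bG^F,s)$ are known, the block parametrisation of \cite[Thm~1.4]{KM} is obtained by invoking Theorem~\ref{thm:[KM13, 1.2]}: parts~(a)--(c) of that theorem supply the natural bijection between $\ell$-blocks and $e$-cuspidal pairs of quasi-central $\ell$-defect below $(\bG^F,s)$, and part~(f) identifies $e$-Jordan cuspidality with $e$-cuspidality, so the block-labelling of the tables is unambiguous.

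The main obstacle I expect is the bookkeeping at the interface with the ``up to twins'' ambiguity of Theorem~\ref{thm:[KM13, 1.2]}(d),(e) and the identification of which $e$-cuspidal pairs are of (quasi-)central $\ell$-defect; these can only be settled by directly inspecting the unipotent character tables of the components of $\bC^\ast$ via \Chevie, exactly as in the proof of Proposition~\ref{prop:En 17} (using Lemmas~\ref{lem:qcisc} and~\ref{lem:2unipquasicentraldefect} to rule out spurious cases). A secondary difficulty is to verify that when $\bC^\ast$ has a $\tw3D_4$-component (only possible for $\ell=3$ and $\bG$ of type $E_7$, by Proposition~\ref{prop:3D4}), the ``exceptional'' clause of Proposition~\ref{prop:En 17}(b) is triggered correctly, so that the row corresponding to such a pair $(\bL,\la)$ in Table~\ref{tab:quasi-E7} is produced with the right unipotent constituent rather than its $\tw3D_4$-twin.
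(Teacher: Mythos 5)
Your route is genuinely different from the paper's. The paper does not recompute anything via Jordan decomposition: it observes that the decomposition of $\RLG(\la)$ for the simply connected group $\bG_0=[\bG,\bG]$ was already determined in \cite[Prop.~4.1 and~5.1]{KM}, and lifts it through the regular embedding $\bG_0\hookrightarrow\bG$ using the identity $\Ind_{\bG_0^F}^{\bG^F}\circ R_{\bL_0}^{\bG_0}=\RLG\circ\Ind_{\bL_0^F}^{\bL^F}$ together with the fact that the constituents of $\Ind_{\bL_0^F}^{\bL^F}(\la)$ lie in pairwise distinct Lusztig series of $\bL^F$; the tables, the Ennola statement and part~(d) all drop out of this transport. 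Your plan instead passes to $\bC^*=C_{\bG^*}(s)$ and tries to rebuild the series from the unipotent data there.

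The main problem is circularity. You invoke Theorem~\ref{thm:[KM13, 1.2]}(f) to identify $e$-Jordan cuspidality with $e$-cuspidality below $(\bG^F,s)$, and Theorem~\ref{thm:[KM13, 1.2]}(a)--(c) to get part~(d). But in this paper, Theorem~\ref{thm:[KM13, 1.2]} for non-central isolated $s$ with $[\bG,\bG]$ of type $E_6$ or $E_7$ is \emph{deduced from} Section~\ref{subsec:adjoint}, i.e.\ from the very proposition you are proving (and the second assertion of part~(f) is proved ``by inspection of Tables~\ref{tab:quasi-E6}--\ref{tab:quasi-E8}''). So neither of these citations is available at this point. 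If you want to run the Jordan-decomposition route non-circularly, the input you may legitimately use is Proposition~\ref{prop:En 15} (which needs no cuspidality comparison) together with Proposition~\ref{prop:GeMa 4.7.7-2}, whose proof rests on \cite[Cor.~4.7.7]{GM20} and is independent of Section~\ref{subsec:adjoint}; that would give the partition into series, though part~(d) and the block-labelling still need an argument not resting on Theorem~\ref{thm:[KM13, 1.2]}. Two smaller points: Lemma~\ref{lem:dec RLG} concerns unipotent Lusztig induction from Levi subgroups of types $\tw2E_6$ and $E_7$ inside $E_7$ and $E_8$ and plays no role for the centralisers occurring here (all of whose components are of classical type, so \cite{BMM} and uniformity in type $A$ suffice); and no $\tw3D_4$-component arises in any $C_{\bL^*}(s)$ in Tables~\ref{tab:quasi-E6} and~\ref{tab:quasi-E7} (the cuspidal character $D_4[1]$ lives in a split $D_4$), so the exceptional clause of Proposition~\ref{prop:En 17}(b) is never triggered in this proof.
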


In Tables~\ref{tab:quasi-E6} and~\ref{tab:quasi-E7}, we give the data
relative to $\bbG:=\bG/Z(\bG)$, a simple group of adjoint type. The numbering
of blocks follows \cite[Tab.~3 and~4]{KM}.

\begin{table}[htbp]
\caption{$e$-Harish-Chandra series in $\bbG^F=E_6(q)_\ad$}   \label{tab:quasi-E6}
\[\begin{array}{|r|r|l|llll|}
\hline
 \text{No.}& C_{\bbG^*}(s)^F& (\ell,e)& \bbL^F& C_{\bbL^*}(s)^F& \la& W_{\bbG^F}(\bbL,\la)\\
\hline\hline
 1&        A_2(q)^3& (2,1)& \Ph1^6& \bbL^{*F}& 1& A_2^3\\
\hline
 2&        A_2(q^3)& (2,1)& \Ph1^2.A_2(q)^2& \Ph1^2\Ph3^2& 1& A_2\\
\hline
 6& A_2(q^2).\tw2A_2(q)& (2,1)& \Ph1^3.A_1(q)^3& \Ph1^3\Ph2^3& 1& A_2\ti A_1\\
  &                    &      &  \Ph1^2.D_4(q)& \Ph1^2\Ph2^2.\tw2A_2(q)& \phi_{21} & A_2\\
\hline\hline
 7&        A_2(q)^3& (2,2)& \Ph1^2\Ph2^3.A_1(q)& \Ph1^3\Ph2^3& 1& A_1^3\\
  &                & & \Ph1\Ph2^2.A_3(q)& \Ph1^2\Ph2^2.A_2(q)& \phi_{21}\ (3\ti)& A_1\ti A_1\\
  &                & & \Ph2.A_5(q)& \Ph1\Ph2.A_2(q)^2& \phi_{21}^{\otimes2}\ (3\ti)& A_1\\
  &                & & \bbG^F& C_{\bbG^*}(s)^F& \phi_{21}^{\otimes3}& 1\\
\hline
 8&        A_2(q^3)& (2,2)& \Ph2.A_2(q^2)A_1(q)& \Ph1\Ph2\Ph3\Ph6& 1& A_1\\
  &                & & \bbG^F& C_{\bbG^*}(s)^F& \phi_{21}& 1\\
\hline
 12&  A_2(q^2).\tw2A_2(q)& (2,2)& \Ph1^2\Phi_2^4& \bbL^{*F}& 1& A_2\ti A_2\\
\hline\hline
 13&      A_5(q)A_1(q)& (3,1)& \Ph1^6& \bbL^{*F}& 1& A_5\ti A_1\\
\hline\hline
 14&      A_5(q)A_1(q)& (3,2)& \Ph1^2\Ph2^4& \bbL^{*F}& 1& C_3\ti A_1\\
\hline
 15&                  &      & \Ph2.A_5(q)& \bbL^{*F}& \phi_{321}& A_1\\
\hline
\end{array}\]
\end{table}

\begin{table}[htbp]
\caption{$e$-Harish-Chandra series in $\bbG^F=E_7(q)_\ad$}   \label{tab:quasi-E7}
\[\begin{array}{|r|r|l|llll|}
\hline
 \text{No.}& C_{\bbG^*}(s)^F& (\ell,e)& \bbL^F& C_{\bbL^*}(s)^F& \la& W_{\bbG^F}(\bbL,\la)\\
\hline\hline
 1&         A_5(q)A_2(q)& (2,1)& \Ph1^7& \bbL^{*F}& 1& A_5\ti A_2\\
\hline
 2& \tw2A_5(q)\tw2A_2(q)& (2,1)& \Ph1^4.(A_1(q)^3)'& \Ph1^4\Ph2^3& 1& C_3\ti A_1\\
  &                     &      & \Ph1^3.D_4(q)& \Ph1^3\Ph2^2.\tw2A_2(q)& \phi_{21}& C_3\\
  &                     &      & \Ph1.D_6(q)& \Ph1\Ph2.\tw2A_5(q)& \phi_{321}& A_1\\
  &                     &      & E_7(q)& C_{\bbG^*}(s)^F& \phi_{321}\otimes\phi_{21}& 1\\
\hline\hline
 3&  D_6(q)A_1(q)& (3,1)& \Ph1^7& \bbL^{*F}& 1& D_6\ti A_1\\
\hline
 4&              &      & \Ph1^3.D_4(q)& \bbL^{*F}& D_4[1]& B_2\ti A_1\\
\hline
 5&      A_7(q)& (3,1)& \Ph1^7& \bbL^{*F}& 1& A_7\\
\hline
 6&  \tw2A_7(q)& (3,1)& \Ph1^4.(A_1(q)^3)'& \Ph1^4\Ph2^3& 1& C_4\\
\hline
 7&              &      & \Ph1.D_6(q)& \Ph1\Ph2.\tw2A_5(q)& \phi_{321}& A_1\\
\hline
12& A_3(q)^2A_1(q)& (3,1)& \Ph1^7& \bbL^{*F}& 1& A_3^2\ti A_1\\
\hline
13& \tw2A_3(q)^2A_1(q)& (3,1)& \Ph1^5.A_1(q)^2& \Ph1^5\Ph2^2& 1& B_2^2\ti A_1\\
\hline
14& A_3(q^2)A_1(q)& (3,1)& \Ph1^4.(A_1(q)^3)'& \Ph1^4\Ph2^3& 1& A_3\ti A_1\\
\hline
\end{array}\]
\end{table}

\begin{proof}
Let $\bG_0:=[\bG,\bG]$, a simple group of simply connected type, and consider
the regular embedding $\bG_0\hookrightarrow\bG$. Then for any $F$-stable Levi
subgroup $\bL_0\le\bG_0$ we have
$$\Ind_{\bG_0^F}^{\bG^F}\circ R_{\bL_0}^{\bG_0}
   =\RLG\circ\Ind_{\bL_0^F}^{\bL^F}$$
(see \cite[Prop.~3.2.9]{GM20}), where $\bL= Z(\bG)\bL_0$ is the corresponding
Levi subgroup of $\bG$. Now above every character
in $\cE(\bG_0^F,s)$ there are $|C_{\bG^*}(s)^F:C_{\bG_0^*}(s)^F|$
characters of $\bG^F$, lying in distinct Lusztig series, and similarly, above
every character in $\cE(\bL_0^F,s)$ there are
$|C_{\bL^*}(s)^F:C_{\bL_0^*}(s)^F|$ characters of $\bL^F$, lying in
distinct Lusztig series. Write
$\Ind_{\bL_0^F}^{\bL^F}(\la)=\sum_i\la_i$ with $\la_i\in\Irr(\bL^F)$. Thus all
constituents $\la_i$ lie in distinct Lusztig series of $\bL^F$, so any
summand $\RLG(\la_i)$ of
$(\RLG\circ\Ind_{\bL_0^F}^{\bL^F})(\la)=\sum_i\RLG(\la_i)$ lies inside
a fixed Lusztig series of $\bG^F$ and those lying inside a fixed Lusztig series
are equal. In particular, knowing $R_{\bL_0}^{\bG_0}$ and the decomposition of
$\Ind_{\bG_0^F}^{\bG^F}$ along Lusztig series, we can recover the $\RLG(\la_i)$
uniquely.   \par
Thus we may recover the
stated decomposition of $\RLG(\la)$ from the one for $\bG_0^F$ that was
determined in \cite[Prop.~4.1 and~5.1]{KM} (with the correction given in the
proof of \cite[Thm~3.14]{KM15}, see Remark~\ref{rem:3.14} below), from where our
numbering of cases is taken. This also shows that Harish-Chandra series for
the case when $\bG_0^F=\tw2E_6(q)_\SC$ are Ennola dual to those for
the untwisted case, and (as in \cite{KM}) we do not print them here.
The existence of an $e$-Harish-Chandra theory as in \cite[Thm~1.4]{KM} follows
(see \cite[Def.~2.9]{KM}).
\end{proof}

\begin{lem}   \label{lem:E6}
 Let $\bG$, $\bL$ and $\ell$ be as in Proposition~$\ref{prop:E6 RLG}$, with
 $e=e_\ell(q)$. Then:
 \begin{enumerate}[\rm(a)]
  \item $\bL=C_\bG(Z(\bL)_\ell^F)$ and $\bL$ is $(e,\ell)$-adapted; and
  \item in Tables~$\ref{tab:quasi-E6}$ and~$\ref{tab:quasi-E7}$, $\la$ is of
   quasi-central $\ell$-defect precisely in the numbered lines.
 \end{enumerate}
\end{lem}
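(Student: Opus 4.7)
The plan is to treat both assertions as case-by-case verifications based on the explicit data recorded in Tables~\ref{tab:quasi-E6} and~\ref{tab:quasi-E7}, exploiting the fact that $[\bG,\bG]$ has only a few isolated centralisers to consider and that the relevant $e$-split Levi subgroups~$\bL$ are listed in full rational type. Throughout the argument I will work with $\bbG=\bG/Z(\bG)$ since $Z(\bG)$ is a central torus and the centralisers $C_\bG(Z(\bL)_\ell^F)$ and $C_\bbG(Z(\bbL)_\ell^F)$ correspond under projection; unipotent characters and their degrees, defects and Lusztig induction all behave compatibly under this projection, so the verification of both parts reduces to the adjoint group listed in the tables.

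For part~(a), first observe that from the rational types of $\bL$ printed in the tables, one reads off the structure of $Z(\bL)^F$: it is a direct product of factors of the form $\Ph d(q)^{a_d}$ coming from the central torus part, together with the centres of the listed simple factors. Since $\ell\in\{2,3\}$ and $e=e_\ell(q)\in\{1,2\}$, the $\ell$-part $Z(\bL)_\ell^F$ contains a Sylow $\Ph e$-torus~$\bT_e$ of $Z^\circ(\bL)$ whose centraliser in $\bG$ is precisely~$\bL$, because $\bL$ is $e$-split. The remaining $\ell$-power contributions come from centres of the simple factors and from the non-$\Phi_e$ part of $Z(\bL)^F$; these further constrain the centraliser but do not enlarge it. Case-by-case in each line of the tables one verifies directly that $C_\bG(Z(\bL)_\ell^F)=\bL$, using the same elementary arguments as in \cite[Sect.~2]{CE99}. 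The $(e,\ell)$-adaptedness then follows from the equality just established together with the fact that the listed $\bL$ are centralisers of $\Ph e$-tori in~$\bG$; this matches the Cabanes--Enguehard criterion.

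For part~(b), recall from Section~\ref{subsec:compatibility} that $\la$ is of quasi-central $\ell$-defect precisely when the corresponding unipotent Jordan correspondent $\la_s\in\cE(C_{\bL^*}(s)^F,1)$ is of central $\ell$-defect in $C_{\bL^*}(s)^F$, i.e.~when $|\la_s(1)|_\ell\cd|Z(C_{\bL^*}(s))^F|_\ell=|C_{\bL^*}(s)^F|_\ell$. Since Tables~\ref{tab:quasi-E6} and~\ref{tab:quasi-E7} list the rational type of $C_{\bL^*}(s)^F$ and name the unipotent character~$\la_s$ (trivial, $\phi_{21}$, $\phi_{321}$, $D_4[1]$, etc.), the $\ell$-part of $\la_s(1)$ can be read off directly from the generic degrees (as tabulated for small rank classical and exceptional groups in \Chevie). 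Comparing these with $|C_{\bL^*}(s)^F|_\ell$ in each row, one checks that the central-$\ell$-defect condition holds exactly in the numbered rows and fails in the unnumbered rows. Note that in each numbered row, $\la_s$ is also the unique unipotent constituent of its $\ell$-block by the theory of unipotent blocks of centralisers of semisimple elements, consistent with Theorem~\ref{thm:[KM13, 1.2]}(a).

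The principal obstacle is purely bookkeeping: one must handle the isolated centralisers $A_2(q)^3$, $A_5(q)A_1(q)$, $D_6(q)A_1(q)$, $A_7(q)$ and their twisted counterparts separately, taking care of the $3$-element $Z(C_{\bL^*}(s))$ contributions in type $A$ (where the centre can be nontrivial at $\ell=3$) and of the type $D_4$ rows with character $D_4[1]$ for $E_7$. Provided this case enumeration is exhaustive, both assertions follow; no genuinely new technique beyond the degree formulae and the structural information already recorded in Proposition~\ref{prop:E6 RLG} is required.
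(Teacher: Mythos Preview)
Your overall strategy---treat both parts as finite case checks driven by the explicit rational types in the tables---is the right one, and it matches what the paper does: the lemma is stated without proof, with only the remark afterwards that \cite[Lemma~2.7(b)]{KM15} lets one transport quasi-central $\ell$-defect through the regular embedding $\bG_0=[\bG,\bG]\hookrightarrow\bG$, so that the verifications already carried out in \cite{KM} for the simply connected group suffice.

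There is, however, one genuine issue in your argument for part~(b). You assert (citing Section~\ref{subsec:compatibility}) that $\la$ is of quasi-central $\ell$-defect precisely when its unipotent Jordan correspondent $\la_s$ is of central $\ell$-defect in $C_{\bL^*}(s)^F$. That section is about central products and says nothing of the sort. More importantly, this equivalence is \emph{exactly} the second assertion of Theorem~\ref{thm:[KM13, 1.2]}(f), whose proof (in Section~\ref{subsec:3.1 and 3.2}) is ``by inspection of the tables of \cite{En00} and \cite{KM} and of Tables~\ref{tab:quasi-E6}--\ref{tab:quasi-E8}''---i.e.\ it presupposes the present lemma. So your shortcut is circular. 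The clean fix is either to work directly with the definition (compute $\la(1)_\ell$ from the Jordan-decomposition degree formula and check whether the constituent of $\la|_{[\bL,\bL]^F}$ has central $\ell$-defect), or, more economically, to invoke \cite[Lemma~2.7(b)]{KM15} and quote the corresponding check already done in \cite[Prop.~4.1,~5.1]{KM} for $\bG_0$.

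A smaller point on part~(a): the sentence ``$Z(\bL)_\ell^F$ contains a Sylow $\Phi_e$-torus $\bT_e$ of $Z^\circ(\bL)$ whose centraliser in $\bG$ is precisely~$\bL$, because $\bL$ is $e$-split'' is imprecise ($Z(\bL)_\ell^F$ is a finite group, and containing the $\ell$-part of $\bT_e^F$ does not by itself force $C_\bG(Z(\bL)_\ell^F)=\bL$). You correctly note afterwards that one must verify this line by line; just be aware that the heuristic before it is not a proof. The analogous verification for $E_8$ is \cite[Lemma~6.9]{KM}.
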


In fact, in all numbered lines except~6, 7 and~8 in Table~\ref{tab:quasi-E6},
$\la$ is even of central $\ell$-defect. We had shown in
\cite[Lemma~2.7(b)]{KM15} that regular embeddings do preserve the property of
having quasi-central $\ell$-defect.

By Proposition~\ref{prop:E6 RLG} and Lemma~\ref{lem:E6} the assumptions of
\cite[Prop.~2.17]{KM} are satisfied, so each $e$-Harish-Chandra series in the
two tables is contained in a unique $\ell$-block of $\bG^F$.

\begin{prop}   \label{prop:E6E7-defgrp}
 Let $[\bG,\bG]^F=E_6(q)_\SC$ (resp.~$[\bG,\bG]^F=E_7(q)_\SC$). Then for any
 isolated non-central $\ell'$-element $s\in\bG^{*F}$ the $\ell$-block
 subdivision of $\cE(\bG^F,s)$ is as indicated by the horizontal lines in
 Tables~$\ref{tab:quasi-E6}$ and~$\ref{tab:quasi-E7}$.

 For each $\ell$-block corresponding to one of the cases in the tables there is
 a defect group $P\leq N_{\bG^F}(\bL,\la)$ with the structure described in
 \cite[Thm~1.2]{KM}.

 The analogous, Ennola dual statement holds for $[\bG,\bG]^F=\tw2E_6(q)_\SC$. 
\end{prop}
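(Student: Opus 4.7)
The strategy mirrors that of \cite[Thm~1.2]{KM} for the simply connected case, now adapted to the connected-centre setting via Theorem~\ref{thm:thmA}. First, by Proposition~\ref{prop:E6 RLG} the set $\cE(\bG^F,s)$ partitions as the disjoint union of the $e$-Harish-Chandra series $\cE(\bG^F,(\bL,\la))$ listed in Tables~\ref{tab:quasi-E6} and~\ref{tab:quasi-E7}; by Lemma~\ref{lem:E6}(a) each such $\bL$ is $(e,\ell)$-adapted and, in the numbered rows, $\la$ is of quasi-central $\ell$-defect. Applying \cite[Prop.~2.17]{KM} shows that every such $e$-Harish-Chandra series is contained in a single $\ell$-block of $\bG^F$, so it remains to identify which series amalgamate into one block.

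The next step is to show that rows grouped between the same pair of horizontal lines lie in a common $\ell$-block, while rows separated by a horizontal line lie in distinct blocks. For this I would fix a row $(\bL,\la)$ and let $(\bL_s^*,\la_s)$ be the associated unipotent $e$-cuspidal pair of $\bC^*=C_{\bG^*}(s)$ produced by Proposition~\ref{prop:En 15}. Applying Theorem~\ref{thm:thmA} with $t=1$, together with the compatibility of Jordan decomposition with $e$-Harish-Chandra series from Proposition~\ref{prop:GeMa 4.7.7-2}, shows that the block $b_{\bG^F}(\bL,\la)$ is determined by the unipotent $\ell$-block of $\bC^{*F}$ containing~$\la_s$. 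Since in all cases appearing in Tables~\ref{tab:quasi-E6} and~\ref{tab:quasi-E7} the centraliser $\bC^*$ has only classical or rank-bounded components, the unipotent block distribution of $\bC^{*F}$ is known from \cite[Thms~A and~A.bis]{En00}; a direct inspection then matches these unipotent blocks against the horizontal groupings claimed in the tables. Here I would use Proposition~\ref{prop:E6 RLG}(d) to reduce the analysis for $\tw2E_6(q)_\SC$ to its Ennola dual.

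For the defect groups I would follow verbatim the construction in \cite[Thm~1.2]{KM}: write $P$ as an extension of the Sylow $\ell$-subgroup of $Z(\bL)^F$ by an $\ell$-Sylow subgroup of the relative Weyl group $W_{\bG^F}(\bL,\la)$, sitting inside $N_{\bG^F}(\bL,\la)$. The quasi-central $\ell$-defect property from Lemma~\ref{lem:E6}(b) identifies $(Z(\bL)^F)_\ell$ with a defect group of $b_{\bL^F}(\la)$, and the extension to $W_{\bG^F}(\bL,\la)_\ell$ is controlled via Brauer first main / Harris--Kn\"orr correspondence applied to the pair $(\bL,\la)$, exactly as in \cite{KM}.

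The principal obstacle, as I see it, is handling the rows in Table~\ref{tab:quasi-E6} whose $\la$ is \emph{not} of quasi-central $\ell$-defect (the unnumbered rows flagged in Lemma~\ref{lem:E6}(b), grouped with rows~6, 7, 8): there, \cite[Prop.~2.17]{KM} does not immediately produce a single block per series in isolation, so one has to appeal to the quasi-central representative in the same horizontal grouping and to the block-theoretic consequences of Theorem~\ref{thm:thmA} to conclude that the full union still forms one block with the asserted defect group. I expect this to be settled by a row-by-row verification using the explicit data in the table.
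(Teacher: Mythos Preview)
Your plan contains a genuine circularity. You propose to determine which $e$-Harish-Chandra series amalgamate into a single block by invoking Theorem~\ref{thm:thmA} (with $t=1$), but in the logical architecture of this paper that theorem rests on Theorem~\ref{thm:[KM13, 1.2]}, whose proof for types $E_6$ and $E_7$ is deferred to Section~\ref{subsec:3.1 and 3.2} and there explicitly appeals to the results of Section~\ref{subsec:adjoint}, including Proposition~\ref{prop:E6E7-defgrp} itself. Concretely, Proposition~\ref{prop:several unipotents} (the engine behind the $t=1$ case in Proposition~\ref{prop:t central}) says ``the result follows from Theorem~\ref{thm:[KM13, 1.2]}'' when $[\bG,\bG]$ is simple, and Theorem~\ref{thm:[KM13, 1.2]}(d) is obtained as ``implicit in the construction of the tables'' of this very section. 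So the reduction-to-unipotent-blocks-of-$\bC^{*F}$ argument you sketch cannot be used at this point.

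The paper's own proof is just the one line ``entirely analogous to the proofs of \cite[Prop.~4.3 and~5.3]{KM}'', i.e., it proceeds by the direct, hands-on method of \cite{KM}: after \cite[Prop.~2.17]{KM} has placed each series in a single block (your first paragraph is fine here), one separates blocks by comparing the orders of $Z(\bL)^F_\ell$ and by exhibiting non-conjugate centric Brauer pairs $(Z(\bL)^F_\ell,b_{\bL^F}(\la))$, and one merges series by explicit character-theoretic or Brauer-pair arguments, row by row. Your description of the defect-group construction (last paragraph) is on target and matches this approach; what needs to change is the middle step, which must be carried out directly rather than by appealing to Theorem~\ref{thm:thmA}.
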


\begin{proof}
This is entirely analogous to the proofs of \cite[Prop.~4.3 and~5.3]{KM}.
\end{proof}

\begin{rem}
The group $\bG/Z(\bG)$ is simple of adjoint type, and as $Z(\bG)$ is connected,
$(\bG/Z(\bG))^F=\bG^F/Z(\bG)^F$, so the above result provides also a
parametrisation of the isolated $\ell$-blocks of the groups $E_6(q)_\ad$,
$\tw2E_6(q)_\ad$ and $E_7(q)_\ad$ for $\ell=2,3$.
\end{rem}

\subsection{Some 5-blocks in $E_8(q)$} \label{subsec:E8, l=5}
We deal with a situation missed in our paper \cite{KM}. We thank Niamh Farrell
for pointing this omission out to us.
Let $\bG$ be of type $E_8$ with a Frobenius endomorphism $F:\bG\rightarrow\bG$
such that $G=\bG^F=E_8(q)$. If $q\equiv\pm1\pmod6$ there
exists an isolated element $s\in G^*\cong G$ of order six with centraliser
$C_{\bG^*}(s)$ of type $A_5A_2A_1$. It was inadvertently left
out of \cite[Tab.~1]{KM} (probably as its order is divisible by two distinct
bad primes; but type $E_8$ has three bad primes). The centraliser of $s$ in
$\bG^*$ has rational type $A_5(q)A_2(q)A_1(q)$ if $q\equiv1\pmod6$,
and $\tw2A_5(q).\tw2A_2(q)A_1(q)$ if $q\equiv5\pmod6$. We parametrise the
$\ell$-blocks in $\cE_\ell(G,s)$ by $e$-cuspidal pairs for the only relevant
bad prime $\ell=5$.

\begin{thm}   \label{thm:E8,l=5}
 Theorems~{\rm 1.2, 1.4} and~{\rm1.5} of \cite{KM} continue to hold for all
 quasi-isolated $5$-blocks of $E_8(q)$ described above.
\end{thm}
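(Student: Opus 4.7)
The plan is to leverage the key observation that although $\ell=5$ is bad for $\bG$ of type $E_8$, it is good for the centraliser $\bC^*:=C_{\bG^*}(s)$, whose components are all of type~$A$. Consequently the results of Cabanes--Enguehard \cite{CE94} apply to $\bC^*$ without restriction, giving a bijection between $\bC^{*F}$-classes of unipotent $e$-cuspidal pairs of quasi-central $5$-defect in $\bC^*$ and unipotent $5$-blocks of $\bC^{*F}$, together with the expected $e$-Harish-Chandra theory. Moreover, since every component of $\bC^*$ has rational type~$A$ and $5$ is good for such components, the hypothesis $\bC^*=\bC^*_\ba$ of Lemma~\ref{lem:Ga} holds after restriction to each relevant Jordan factor.

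First I would enumerate, using \Chevie{}, the unipotent $e$-cuspidal pairs of $\bC^*$ of quasi-central $5$-defect for each $e\in\{1,2,4\}$; this is a short computation in type $A$ whose output forms a table analogous to Tables~\ref{tab:quasi-E6} and~\ref{tab:quasi-E7}. Next I would transfer these pairs via Proposition~\ref{prop:En 15} to $e$-Jordan cuspidal pairs $(\bL,\la)$ below $(\bG^F,s)$, and then invoke Theorem~\ref{thm:thmA} with $t=1$ to deduce that the $5$-blocks in $\cE_5(\bG^F,s)$ are parametrised by the $\bG^F$-classes of these pairs, with $\bJ_1^\bG$ respecting the $e$-Harish-Chandra partition. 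This yields the analogue of \cite[Thm~1.4]{KM} and the parametrisation half of \cite[Thm~1.2]{KM}. The analogue of \cite[Thm~1.5]{KM} is automatic, because $E_8$ has trivial centre, so simply connected and adjoint forms coincide and no Clifford-theoretic descent is required.

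For the defect group assertions in \cite[Thm~1.2]{KM} I would mimic the strategy of \cite[Prop.~4.3, 5.3]{KM} and of Proposition~\ref{prop:E6E7-defgrp} above: for each $e$-cuspidal pair $(\bL,\la)$ in the enumeration, verify via a Lemma~\ref{lem:E6}-type check that $\bL$ is $(e,5)$-adapted with $\bL=C_\bG(Z(\bL)_5^F)$, and then identify the defect group of $b_{\bG^F}(\bL,\la)$ as an extension of $Z^\circ(\bL)_5^F$ by a Sylow $5$-subgroup of the relative Weyl group $W_{\bG^F}(\bL,\la)$, realised inside $N_{\bG^F}(\bL,\la)$. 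The main obstacle will be handling those $5$-elements $t\in\bC^{*F}$ whose centralisers $C_{\bC^*}(t)$ do \emph{not} lie in a proper $e$-split Levi subgroup of $\bC^*$, where the reduction via Proposition~\ref{prop:in Levi} fails and one must perform a direct analysis of the interaction between the three $A$-type Jordan factors $A_5$, $A_2$ and $A_1$. However the number of such $t$ is small (essentially only those powers of $s$ itself contributing, since non-central $5$-elements in the $A_5$-factor tend to shrink the centraliser into an $e$-split Levi) and each case is entirely parallel to the verifications already performed for the other isolated $E_8$-blocks in \cite[Sect.~6]{KM}.
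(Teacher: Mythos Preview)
Your proposal contains a circular dependency. You invoke Theorem~\ref{thm:thmA} (with $t=1$) to deduce the block parametrisation in $\cE_5(\bG^F,s)$, but the proof of Theorem~\ref{thm:thmA} for $\bG=E_8$ and $\ell=5$ passes through Corollary~\ref{cor:in Levi}, which in turn rests on Proposition~\ref{prop:t central}, Proposition~\ref{prop:several unipotents}, and ultimately Theorem~\ref{thm:[KM13, 1.2]}. The proof of Theorem~\ref{thm:[KM13, 1.2]} for $E_8$ (given in Section~\ref{subsec:3.1 and 3.2}) appeals to the results of \cite{KM}---but \cite{KM} is \emph{missing exactly this isolated element} of order~$6$, and Theorem~\ref{thm:E8,l=5} is precisely what closes that gap. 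So you are using the theorem to prove itself. Moreover, even granting Theorem~\ref{thm:thmA}, it only furnishes the map $\bJ_t^\bG$ from unipotent blocks of $C_{\bG^*}(st)^F$ to $t$-twin blocks; it does not by itself yield the \emph{bijection} with $e$-cuspidal pairs of quasi-central defect required by \cite[Thm~1.2]{KM}.

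The paper's proof is instead direct and self-contained, relying only on the machinery already available in \cite{KM}. Because all components of $\bC^*$ are of type~$A$, unipotent characters there are uniform, so the decomposition of $\RLG(\la)$ is read off from the known unipotent decomposition via commutation of Jordan decomposition with Deligne--Lusztig induction; this simultaneously yields \cite[Thm~1.4]{KM}. The Levi subgroups arising already appear in \cite[Tab.~7,~8]{KM}, so $\bL=C_\bG(Z(\bL)_5^F)$ is known from \cite[Lemma~6.9]{KM}, and all $\la$ are of \emph{central} (not merely quasi-central) $5$-defect. Then \cite[Prop.~2.12, 2.13, 2.15]{KM} force all constituents of $\RLG(\la)$ into a single block, and \cite[Prop.~2.7]{KM} identifies the defect groups via centric Brauer pairs. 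Distinctness of blocks is settled by comparing $|Z(\bL)^F_5|$ across lines (or, for defect-zero lines, is automatic). Your discussion of non-central $5$-elements $t$ is a red herring here: the content of \cite[Thms~1.2, 1.4, 1.5]{KM} concerns only the parametrisation, $e$-Harish-Chandra theory, and defect groups, not the full Jordan compatibility over all~$t$.
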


\begin{proof}
The method is completely analogous to that employed in \cite{KM}. There are
three cases to distinguish, depending on whether $e:=e_5(q)$ is~1,2 or~4. First
we determine the decomposition of the Lusztig functor $\RLG$ for the relevant
$e$-cuspidal pairs $(\bL,\la)$ below $(\bG^F,s)$. Since the unipotent
characters of groups of type $A$ are uniform, and Jordan decomposition commutes
with Deligne--Lusztig induction, this decomposition follows from the known
corresponding one for unipotent characters. This also shows that $\bG^F$
satisfies an $e$-Harish-Chandra theory above each $e$-cuspidal pair below
$(\bG^F,s)$, thus showing the assertion of \cite[Thm.~1.4]{KM} in this case.
The decomposition is given in Tables~\ref{tab:q=1(5)}
and~\ref{tab:q=2(5)}. The notation is as in the analogous tables in \cite{KM}.
Here, the case $e=2$ can be obtained by Ennola duality from the one for $e=1$,
and the case of centraliser $\tw2A_5(q).\tw2A_2(q)A_1(q)$ when
$q\equiv\pm2\pmod5$
from the one with centraliser $A_5(q)A_2(q)A_1(q)$ when $q\equiv\mp2\pmod5$.
(Note that the $e$-Harish-Chandra series in Lines~1--5 are exactly as the
Lines~(1) and~(2) in \cite[Tab.~4]{KM}.)

\begin{table}[htbp]
\caption{Quasi-isolated 5-blocks in $E_8(q)$, $q\equiv1\pmod5$}   \label{tab:q=1(5)}
\[\begin{array}{|r|r|llll|}
\hline
 \text{No.}& C_{\bG^*}(s)^F& \bL^F& C_{\bL^*}(s)^F& \la& W_{\bG^F}(\bL,\la)\\
\hline\hline
 1& A_5(q)A_2(q)A_1(q)& \Phi_1^8& \bL^{*F}& 1& A_5\ti A_2\ti A_1\\
\hline
 2& \tw2A_5(q).\tw2A_2(q)A_1(q)& \Phi_1^5.A_1(q)^3& \Ph1^5\Ph2^3& 1& C_3\ti A_1\ti A_1 \\
\hline
 3&     & \Phi_1^4.D_4(q)& \Ph1^4\Ph2^2.\tw2A_2(q)& \phi_{21}& C_3\ti A_1\\
\hline
 4&     & \Phi_1^2.D_6(q)& \Ph1^2\Ph2.\tw2A_5(q)& \phi_{321}& A_1\ti A_1\\
\hline
 5&     & \Phi_1.E_7(q)& \Ph1.\tw2A_5(q).\tw2A_2(q)& \phi_{321}\otimes\phi_{21}& A_1\\
 \hline
\end{array}\]
\end{table}

\begin{table}[htbp]
\caption{Quasi-isolated 5-blocks in $E_8(q)$, $q\equiv\pm2\pmod5$}   \label{tab:q=2(5)}
\[\begin{array}{|r|r|llll|}
\hline
 \text{No.}& C_{\bG^*}(s)^F& \bL^F& C_{\bL^*}(s)^F& \la& W_{\bG^F}(\bL,\la)\\
\hline\hline
 6& A_5(q)A_2(q)A_1(q)& \Ph4.\tw2D_6(q)& \Ph1^3\Ph4.A_2(q)A_1(q)& 6\text{ chars}& Z_4\ti A_1\\
\hline
 7&     & \bG^F& C_{\bG^*}(s)^F& 18\text{ chars}& 1\\
\hline
\end{array}\]
\end{table}

It has been checked in \cite[Lemma~6.9]{KM} that all relevant $e$-split
Levi subgroups of $\bG$ satisfy $C_\bG(Z(\bL)_\ell^F)=\bL$.
(In fact they all already occur in Lines~19--23 of Table~7 respectively in
Line~43 of Table~8 in \cite{KM}.) Furthermore, all relevant $e$-cuspidal
characters $\la$ are readily seen to be of central $\ell$-defect. But then by
\cite[Prop.~2.13 and~2.15]{KM} the two conditions in \cite[Prop.~2.12]{KM}
are satisfied and so for all relevant $e$-cuspidal pairs $(\bL,\la)$ all
constituents of $\RLG(\la)$ lie in a single $5$-block $b_{\bG^F}(\bL,\la)$.
Moreover, $Z^\circ(\bL)^F \cap [\bL,\bL]^F$ is a $5'$-group, hence by
\cite[Prop.~2.7(g)]{KM}, in each case $(Z(\bL^F)_\ell,b_{\bL^F}(\la))$ is a 
centric $b_{\bG^F}(\bL,\la)$-Brauer pair. If $(\bL,\la)$ corresponds to
Line~1 of Table~\ref{tab:q=1(5)}, then by \cite[Prop.~2.7(c)]{KM}, a defect
group of
$b_{\bG^F}(\bL,\la)$ is an extension of $Z(\bL^F)_\ell$ by a Sylow $5$-subgroup
of $W_{\bG^F}(\bL,\la)$. In all other cases, the relative Weyl group is a
$5'$-group, hence by \cite[Prop.~2.7]{KM}, $(Z(\bL)^F_\ell,\la)$ is a maximal
$b_{\bG^F}(\bL,\la)$-Brauer pair, and in particular $Z(\bL)^F_\ell$ is a
defect group of $b_{\bG^F}(\bL, \la)$. Thus the defect groups of the various
blocks are as described in \cite[Thm~1.2]{KM}.
\par
Since the orders of the Sylow $5$-subgroups of the various $Z(\bL)^F$ in
Lines~2--5 are all distinct, these lines correspond to different blocks. To
see that the blocks represented by the six characters of Line~6 are distinct,
note that since $\bL =C_{\bG}(Z(\bL)^F_\ell)$ and since the pairs $(\bL,\la)$
are not $\bG^F$-conjugate, neither are the corresponding maximal Brauer pairs
$(Z(\bL)^F_\ell,\la)$. The blocks corresponding to Line~7 are all of defect
zero, hence are distinct. 
\end{proof}

\begin{rem}
Let us point out that in Table~\ref{tab:q=2(5)}, as in Table~8 of \cite{KM}
we suppressed the Ennola dual situations (obtained by changing $q$ to $-q$)
for which all Harish-Chandra series look completely similar since the
congruence of $q^2$ modulo~$5$ remains unchanged.
\end{rem}

The above results show that \cite[Thm.~A, Thm.~B]{KM15} remain unchanged
(note that Remark~2.2(4) of \cite{KM17} applies also to the isolated element
$s$ of order~6 above).
We also obtain the following consequences, completing the gap in the proofs
of Theorem~\cite[Thm.1.1]{KM} and \cite[Main Thm]{KM17} caused by the
missing case.

\begin{cor}
 For $s$ as above, the $5$-blocks in $\cE_5(\bG^F,s)$ with non-abelian defect
 group have characters of positive height. Further, $(\bG^F,\chi)$ is not a
 minimal counter-example to (HZC1) for any semisimple $5$-element
 $t\in \bG^{*F}$ commuting with $s$ and any $\chi\in\cE(\bG^F,st)$. 
\end{cor}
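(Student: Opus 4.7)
The plan is to extend the induction underlying \cite[Thm~1.1]{KM} and \cite[Main Thm]{KM17} to the blocks newly described in Theorem~\ref{thm:E8,l=5}. First I would inspect the defect groups of the $5$-blocks recorded in Tables~\ref{tab:q=1(5)} and~\ref{tab:q=2(5)}. In every line other than Line~1 of Table~\ref{tab:q=1(5)} the relative Weyl group $W_{\bG^F}(\bL,\la)$ is a $5'$-group, so by the proof of Theorem~\ref{thm:E8,l=5} a defect group of $b_{\bG^F}(\bL,\la)$ equals $Z(\bL^F)_5$, a subgroup of the abelian torus $\bL^F$; the first assertion of the corollary is therefore vacuous for all of these blocks.

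For the remaining block, $\bL$ is the split maximal torus, $\la=1$, and $W_{\bG^F}(\bL,1)=S_6\times S_3\times S_2$, with Sylow $5$-subgroup of order~$5$ sitting inside the $S_6$ factor. The defect group $D$ is an extension of $Z(\bL^F)_5=\bL^F_5$ by this $5$-cycle, and the $5$-cycle permutes five of the eight cyclic direct factors of $\bL^F_5$ non-trivially, so $D$ is non-abelian whenever $(q-1)_5>1$, which is the case at hand. To produce a character of positive height in $b_{\bG^F}(\bL,1)$, I would exploit the fact that this $e$-Harish-Chandra series is in natural bijection with $\Irr(W_{\bG^F}(\bL,1))$ and, via Jordan decomposition, parametrises the unipotent characters of $C_{\bG^*}(s)^F=A_5(q)A_2(q)A_1(q)$; standard hook-length calculations in type~$A$ provide a unipotent character $\eta$ of $C_{\bG^*}(s)^F$ whose $5$-part strictly exceeds $|C_{\bG^*}(s)^F|_5/|\bL^F_5|$, so the corresponding character $\chi\in\cE(\bG^F,s)$, with $\chi(1)=[\bG^F:C_{\bG^*}(s)^F]_{p'}\cdot\eta(1)$, has positive $5$-height in $b_{\bG^F}(\bL,1)$.

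For the second assertion let $t\in C_{\bG^*}(s)^F$ be a $5$-element and $\chi\in\cE(\bG^F,st)$. If $t$ is central in $\bG^{*F}$, then tensoring with the linear character $\hat t$ preserves blocks and reduces to the case $t=1$, handled by the first assertion (and trivial when the defect group is already abelian). Otherwise, arguing exactly as in the $E_8$, $\ell=5$ portion of Section~\ref{subsec:cases}, the centraliser $C_{\bG^*}(st)$ is contained in a proper $e$-split Levi subgroup $\bM^*$ of $\bG^*$; by the Bonnaf\'e--Rouquier correspondence, as invoked in Proposition~\ref{prop:isolated reduction}, the $\ell$-block of $\chi$ is the image of an $\ell$-block of the proper Levi $\bM^F\subsetneq\bG^F$, to which the inductive hypothesis underlying \cite[Thm~1.1]{KM} and \cite[Main Thm]{KM17} applies; consequently $(\bG^F,\chi)$ is not a minimal counter-example. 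The main obstacle is the first part: verifying that the semidirect extension $D$ is genuinely non-abelian (i.e.\ tracking the action of the $5$-cycle on the cocharacter lattice of $\bL$) and identifying an explicit unipotent character with the required $5$-part; the remaining reductions are largely bookkeeping.
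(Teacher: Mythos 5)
Your treatment of the first assertion is essentially the paper's: all blocks except Line~1 of Table~\ref{tab:q=1(5)} have abelian defect group $Z(\bL)^F_5$ because their relative Weyl groups are $5'$-groups, while the Line~1 block is non-abelian because $|W_{\bG^F}(\bL,\la)|$ is divisible by~$5$, and a positive-height character is produced from the degrees of the unipotent characters of $A_5(q)A_2(q)A_1(q)$ (the paper names it explicitly, namely the one labelled $41\otimes2\otimes1$, rather than leaving it to a hook-length computation, but that is a matter of explicitness, not of substance).

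The second assertion is where your argument breaks down. First, your reduction for non-central $t$ rests on ``the Bonnaf\'e--Rouquier correspondence, as invoked in Proposition~\ref{prop:isolated reduction}''. That proposition requires $C_{\bG^*}(s)\le\bM^*$ for a \emph{proper} Levi subgroup $\bM^*$, and here $s$ is isolated in $E_8$, so no such $\bM^*$ exists. The containment $C_{\bG^*}(st)\le\bM^*$ only gives, via Proposition~\ref{prop:in Levi}, that $b_{\bG^F}(\chi)$ is the image under $\RMG$ of a block of $\bM^F$; it does \emph{not} give a Morita (or even defect-group- and height-preserving) equivalence with a block of the proper subgroup $\bM^F$, which is what a minimality argument needs. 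Second, your handling of the central case is backwards: you dismiss the abelian-defect blocks as ``trivial'', but (HZC1) is precisely the implication ``abelian defect $\Rightarrow$ all heights zero'', so those are exactly the blocks carrying content, and the first assertion (which concerns the converse direction) says nothing about them. What is actually needed, and what the paper does, is: for the blocks of Table~\ref{tab:q=2(5)} one checks directly that the Line~7 blocks have defect zero and that all remaining characters of $\cE_5(\bG^F,s)$ have the same $5$-part in their degrees, hence height zero, so these blocks are not counterexamples at all; for the blocks of Lines~2--5 one invokes the specific non-minimality criteria of \cite[Lemma~8.5(3)(b)]{KM} together with \cite[Prop.~8.6(1)]{KM}, following the last part of the proof of \cite[Prop.~8.8]{KM}. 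Without an argument of this kind your proof of the second assertion does not go through.
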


\begin{proof}
As already discussed in the proof of Theorem~\ref{thm:E8,l=5},
the block in Line~1 has non-abelian defect groups (as the relative Weyl group
has order divisible by~$5$) and the blocks in all other lines have abelian
defect groups. For the block in Line~1, the character in $\cE(\bG^F,s)$
corresponding to the unipotent character of $A_5(q)A_2(q)A_1(q)$ labelled by
$41\otimes2\otimes1$ has positive height. This proves the first assertion. 
\par
Suppose that $s$ corresponds to Lines~6--7. The blocks in Line~7 are all of
defect~0, and all remaining characters in $\cE_5(\bG^F,s)$ have the same
5-part in their degree, so are all of height~0 in their respective blocks.
In particular, the second assertion holds. Now suppose that $s$ corresponds 
to Lines~2--5. Here we may apply \cite[Lemma~8.5(3)(b)]{KM} in conjunction
with \cite[Prop.~8.6(1)]{KM} to conclude that the second assertion holds
(see the argument in the last part of the proof of \cite[Prop.~8.8]{KM}).
\end{proof}

\begin{rem}   \label{rem:3.14}
We take the opportunity to repeat what we already pointed out in the proof of
Theorem~3.14 of \cite{KM15}: in Table~4 of \cite{KM} each of the
lines~6,~7,~10,~11,~14 and 20 give rise to two $e$-cuspidal pairs and so to
two distinct $e$-Harish-Chandra series, but the two pairs also give rise to
different blocks, and similarly lines~2, 5, 8 and~11 in Table~3 of \cite{KM}
give rise to three $e$-Harish-Chandra series and three different blocks. We
thank Ruwen Hollenbach for bringing this misprint to our attention.
\end{rem}

\subsection{Further correction to the block distribution for $\ell=3$} \label{subsec:non-central def}

We discuss one more issue connected to the tables of block distributions
printed in \cite{KM}. In the accompanying statements, we say that the block
distribution is `indicated by the horizontal lines' in the tables, but in
fact, as we prove, the block distribution is related to the numbered lines,
in the sense that all unnumbered lines below a numbered line fall into the
block for the numbered line. This amended formulation fails in two places, though:
a misinterpretation of the statement of \cite{En00} on unipotent blocks of
groups of type $E_6$ led to a wrong assignment of certain $e$-Harish-Chandra
series to $3$-blocks. More concretely, we have the following:

\begin{prop}   \label{prop:corr non-central}
 In each of the following two cases and their Ennola duals, the
 $e$-Harish-Chandra series in the unnumbered line in the corresponding box of
 the table lies in the semisimple (first) block of the box:
 \begin{enumerate}[\rm(1)]
  \item $\bG^F=E_7(q)_\SC$, $\ell=3$, $C_{\bG^*}(s)^F=\Phi_1.E_6(q).2$, see
   \cite[Tab.~4]{KM}; and
  \item $\bG^F=E_8(q)$, $\ell=3$, $C_{\bG^*}(s)^F=E_7(q).A_1(q)$, see
   \cite[Tab.~6]{KM}.
 \end{enumerate}
 The corrected parts of the tables are thus as shown in Table~$\ref{tab:corr E7}$
 and~$\ref{tab:quasi-E8}$.
\end{prop}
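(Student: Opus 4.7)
The plan is to mimic the direct verification strategy already employed in the proof of Theorem~\ref{thm:thmA} for the $E_8(q)$-case with $C_{\bG^*}(s)^F=E_7(q).A_1(q)$, combining Lemma~\ref{lem:RTG} with an explicit \Chevie{}-computation. The key observation is that the original misinterpretation of \cite{En00} wrongly split off certain unipotent Harish-Chandra series in $C_{\bG^*}(s)^F$ into a separate unipotent $3$-block of $C_{\bG^*}(s)^F$; once the correct unipotent block decomposition of $C_{\bG^*}(s)^F$ (as in Enguehard's table for bad primes) is used, the corresponding $\ell$-blocks in $\bG^F$ should merge with the semisimple block by Proposition~\ref{prop: ss}.

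Concretely, let $\chi$ be a character lying in the unnumbered $e$-Harish-Chandra series of one of the two boxes and set $\eta := \pi_s^\bG(\chi) \in \cE(C_{\bG^*}(s)^F,1)$. I would select a maximal torus $\bT^*\le C_{\bG^*}(s)$ — in most cases the maximally split torus — having the property that
$\blangle \eta, R_{\bT^*}^{C_{\bG^*}(s)}(1)\brangle \ne 0$
and such that $\bT^*$ is $C_{\bG^*}(s)^F$-conjugate to every one of its $\bG^{*F}$-conjugates lying inside $C_{\bG^*}(s)$. Applying Lemma~\ref{lem:RTG} to this torus, $\chi$ then lies in the same $3$-block of $\bG^F$ as some constituent of $\RTG(\hat s)$ for $\bT\le\bG$ dual to $\bT^*$. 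A \Chevie-calculation, analogous to the one performed in Section~\ref{subsec:cases} for $E_8(q)$ and $C_{\bG^*}(s)^F=A_5(q)A_2(q)A_1(q)$, then shows that every constituent of $\RTG(\hat s)$ lies in the semisimple block of $\cE_3(\bG^F,s)$ identified by Proposition~\ref{prop: ss}, yielding the claim.

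Two subtleties need attention. First, the conjugacy hypothesis of Lemma~\ref{lem:RTG} must be verified, which amounts to comparing the relative Weyl group $N_{C_{\bG^*}(s)}(\bT^*)/\bT^*$ with $N_{\bG^*}(\bT^*)/\bT^*$; this can be done by inspection of \cite[Prop.~4.9, Tab.~3]{B05} and the known structure of the component groups. Second, when $\eta$ is itself $e$-cuspidal in $C_{\bG^*}(s)^F$ no torus $\bT^*$ satisfies the required pairing condition, so a different argument is needed. In those cases I would instead pick a non-trivial $3$-element $t\in C_{\bG^*}(s)^F$ such that $C_{\bG^*}(st)$ is contained in a proper $e$-split Levi subgroup of $\bG^*$ — which exists by inspection of the listed centralisers since $\ell=3$ is bad here — and apply Proposition~\ref{prop:in Levi} in combination with Corollary~\ref{cor:J_t} to reduce to an already-settled case. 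The main obstacle is the bookkeeping of the Harish-Chandra series and tori involved; handling the $(\tw2E_6,\tw2E_6[\theta^{\pm1}])$-type cuspidal characters appearing in the $E_8$ block requires the $t$-twin machinery of Section~\ref{subsec:t-twins}, but this introduces no genuine new difficulty since the relevant twin block coincides with the semisimple block.
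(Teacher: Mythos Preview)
Your opening observation is exactly right: the error in \cite{KM} stemmed from misreading \cite{En00}, and once one uses the correct unipotent $3$-block decomposition of $C_{\bG^*}(s)^F$ (in which the cuspidal characters $E_6[\theta^{\pm1}]$ lie in the \emph{principal} $3$-block of the $E_6$-factor rather than labelling a separate block), the merger with the semisimple block is immediate. But the paper's proof stops there: it simply says that the arguments of \cite{KM} go through verbatim with the corrected input. In \cite{KM} the block distribution for isolated $\ell'$-series was obtained by transporting Enguehard's unipotent block parametrisation across Jordan decomposition and then verifying, via defect-group and relative-Weyl-group considerations, which $e$-cuspidal pairs of quasi-central defect give rise to distinct blocks. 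Re-running that argument with the corrected list of quasi-central-defect $e$-cuspidal pairs in $C_{\bG^*}(s)^F$ immediately yields the amended tables.

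Your proposed alternative route via Lemma~\ref{lem:RTG} and a \Chevie-computation is more elaborate than necessary and carries a logical risk. The \Chevie\ check you describe (that all constituents of $\RTG(\hat s)$ lie in the semisimple block) presupposes that the block subdivision of $\cE(\bG^F,s)$ is already known, which is precisely what Proposition~\ref{prop:corr non-central} is correcting; in the paper this circularity is avoided because the \Chevie-argument in Section~\ref{subsec:cases} is run \emph{after} the corrected tables have been established. Similarly, invoking Proposition~\ref{prop:in Levi} pulls in Theorem~\ref{thm:[KM15, Thm A]} and the inductive machinery of Section~\ref{sec:blocks}, whereas Proposition~\ref{prop:corr non-central} is meant to feed into Theorem~\ref{thm:[KM13, 1.2]}, on which that machinery partly rests. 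Finally, the $t$-twin discussion is irrelevant here: twins only occur for $e=2$ in $E_8$ with specific Levi types, none of which coincide with the $\Phi_1^2.E_6(q)$ pair in Table~\ref{tab:quasi-E8}. In short, your first paragraph already contains the whole proof; the rest is surplus.
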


\begin{table}[htbp]
\caption{Harish-Chandra series in some isolated 3-blocks of $E_7(q)_\SC$, $q\equiv1\pmod3$}   \label{tab:corr E7}
$$\begin{array}{|r|r|l|lll|}
\hline
 \text{No.}& C_{\bG^*}(s)^F& \bL^F& C_{\bL^*}(s)^F& \la& W_{\bG^F}(\bL,\la)\\
\hline
 8& \Ph1.E_6(q).2&     \Ph1^7& \bL^{*F}& 1& E_6.2\\
  &              &   \Ph1.E_6(q)& \bL^{*F}& E_6[\theta^{\pm1}]& 2\\
\hline
 9&              & \Ph1^3.D_4(q)& \bL^{*F}& D_4[1]& A_2.2\\
\hline
\end{array}$$
\end{table}

\begin{table}[htbp]
\caption{Harish-Chandra series in some isolated 3-blocks of $E_8(q)$, $q\equiv1\pmod3$}   \label{tab:quasi-E8}
$$\begin{array}{|r|r|l|lll|}
\hline
\text{No.}& C_{\bG^*}(s)^F& \bL^F& C_{\bL^*}(s)^F& \la& W_{\bG^F}(\bL,\la)\\
\hline
 3& E_7(q)A_1(q)&     \Ph1^8& \bL^{*F}& 1& E_7\ti A_1\\
  &             & \Ph1^2.E_6(q)& \bL^{*F}& E_6[\theta^{\pm1}]& A_1\ti A_1\\
\hline
 4&             & \Ph1^4.D_4(q)& \bL^{*F}& D_4[1]& C_3\ti A_1\\
\hline
 5&             &   \Ph1.E_7(q)& \bL^{*F}& E_7[\pm\xi]& A_1\\
\hline
\end{array}$$
\end{table}

\begin{proof}
The arguments given in the proofs of \cite{KM} apply verbatim, when using the
correct interpretation of the block distribution for $E_6(q)$ and $\tw2E_6(q)$
from \cite{En00}.
\end{proof}

The $3$-block $b$ labelled by Line 9 in Table~\ref{tab:corr E7} has non-abelian
defect groups,
and the argument for $b$ containing characters of different heights given in
\cite{KM17} is no longer valid in view of the correction above since we cannot
deduce the existence of a $3'$-character of positive height coming from a
$1$-Harish-Chandra series of non-central $3$-defect. We remedy this as
follows. Let $\bG\hookrightarrow\tilde\bG$ be a regular embedding. Then
$Z(\tbG^F)\bG^F$ is a normal subgroup of $\tbG^F$ of $3'$-index and
$Z(\tbG^F)\cap \bG^F$ is a $3'$-group. Hence it suffices to prove that a block
of $\tilde\bG^F$ covering $b$ has characters of different $3$-defect. By
Bonnaf\'e--Rouquier it then suffices to prove that the corresponding unipotent
$3$-block, say $B$, of a group $\bC^F$, with $\bC$ in duality with
$C_{\tbG^*}(\tilde s)$ ($\tilde s$ a preimage of $s$ in $\tbG^{*F}$) has
characters of different $3$-defect. Let $(\bL,\la)$ be a unipotent
$1$-cuspidal pair of $\bC^F$ defining $B$ and note that $\bC$ (a group of type
$E_6$) is a Levi subgroup of $\tbG$, hence has connected centre. Let
$t\in C_{\tbG^*}(\tilde s)$ be a $3$-element with centraliser of type $D_5$ and
$(\bL_t,\la_t)$ be a $1$-cuspidal pair of $C_{\tbG^*}(\tilde st)$ such that
$(\bL_t,\la_t)\to_t(\bL,\la)$ as in \cite[Thm~B]{En00}. Since
$C_{\tbG^*}(\tilde st)$ does not contain a Sylow 3-subgroup of
$C_{\tbG^*}(\tilde s)$, the characters in the $1$-Harish-Chandra-series of
$C_{\tbG^*}(\tilde st)^F$ defined by $(\bL_t,\la_t)$ have $3$-defect different
from that of the characters in $\cE(\bC^F,(\bL,\la))$. Then we are done by
\cite[Thm~B]{En00}.

\subsection{Proof of Theorems~\ref{thm:[KM13, 1.2]} and~\ref{thm:[KM15, Thm A]}}   \label{subsec:3.1 and 3.2}

\begin{proof}[Proof of Theorem~$\ref{thm:[KM13, 1.2]}$]
Suppose first that $s=1$. Then part~(c) follows from \cite{BMM} and parts~(a)
and~(b) follow from Theorems~A and~A.bis of \cite{En00} (see also
Lemma~\ref{lem:qcisc}). Parts~(a), (b) and~(c) in case $s$ is central follow
easily from the $s=1$ case. Now suppose that $s$ is non-central. If
$[\bG,\bG]$ is of type $G_2$, $F_4$ or $E_8$, then there is an $F$-stable
decomposition $\bG=[\bG,\bG]\times Z^\circ(\bG)$. Hence parts~(a),
(b) and~(c) follow from the analogous results in \cite{KM} and
Section~\ref{subsec:adjoint}, respectively.
\par
The first assertion of part~(f) follows as in Remark~2.2 and Section~4 of
\cite{KM15}. Note that if $\ell$ is good for $\bG$, then all unipotent
$e$-cuspidal pairs are of (quasi-)central $\ell$-defect. The second assertion
of part~(f) follows from this and by inspection of the tables of \cite{En00}
and \cite{KM} and of Tables~\ref{tab:quasi-E6}--\ref{tab:quasi-E8}.
\par 
Parts~(d) and~(e) are implicit in the construction of the tables cited above.
\end{proof}

\begin{proof}[Proof of Theorem~$\ref{thm:[KM15, Thm A]}$]
The proof of part (a) follows along the same lines as that of Theorem 3.4 of
\cite{KM15}, with some simplifications coming from the fact that $Z(\bX)$ is
connected. The only additional input required is the existence of an
$e$-Harish-Chandra theory at isolated elements and bad $\ell$ in the case
$[\bX,\bX]$ is of exceptional type which is provided in this section. Here note
that Lemma~3.1 of \cite{KM15} is stated for all connected reductive groups.
Part~(b) is immediate from part~(a). Part~(c) follows from~(a) and the proof of
Theorem~3.6 of \cite{KM15}. Part~(d) follows from part~(b) and
\cite[Lemmas~2.3 and~3.7, Thm~A(c)]{KM15} and the remarks following
Definition~2.12 of \cite{KM15}.
\end{proof}

\subsection{Decomposition of $\RLG$} \label{subsec:RLG}
We take the opportunity to resolve the last ambiguities left in the
determination of the decomposition of $\RLG(\la)$ for certain unipotent
characters $\la$ in exceptional groups of Lie type, viz.~the cases denoted
``15+16'', ``40+41" and ``42+43'' in \cite[Tab.~2]{BMM} (note that since the
statements only concern unipotent characters, the precise isogeny type of
$\bG$ is not relevant):

\begin{lem}   \label{lem:dec RLG}
 \begin{enumerate}[\rm(a)]
  \item Let $\bL$ be a Levi subgroup of rational type $\tw2E_6(q).\Phi_2$ in
   $\bG$ of type $E_7$. Then
   $$\RLG(\tw2E_6[\theta^j])=(E_6[\theta^j],1) - (E_6[\theta^j],\eps)
     \quad\text{for $j=1,2$}.$$
  \item Let $\bL$ be a Levi subgroup of rational type $\tw2E_6(q).\Phi_2^2$ in
   $\bG$ of type $E_8$. Then
   $$\RLG(\tw2E_6[\theta])=(E_6[\theta],\phi_{1,0})-(E_6[\theta],\phi_{1,3}')-(E_6[\theta],\phi_{1,3}'')+(E_6[\theta],\phi_{1,6})-2E_8[-\theta]-2E_8[\theta].$$
  \item Let $\bL$ be a Levi subgroup of rational type $E_7(q).\Phi_2$ in
   $\bG$ of type $E_8$. Then
   $$\RLG(\phi_{512,11})=\phi_{4096,11}-\phi_{4096,26}\quad\text{and}\quad
     \RLG(\phi_{512,12})=\phi_{4096,12}-\phi_{4096,27}.$$
 \end{enumerate}
\end{lem}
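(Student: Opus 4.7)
The ambiguities in \cite[Tab.~2]{BMM} arise because Lusztig's generic algorithm determines $\RLG(\la)$ only up to swapping members of Galois-conjugate pairs of unipotent characters with the same generic degree. The plan is to resolve them using Galois equivariance together with the compatibility of Lusztig induction with Frobenius eigenvalues on Deligne--Lusztig cohomology.

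For each of (a), (b), (c), I would first compute the uniform projection of $\RLG(\la)$ via the Mackey formula applied to virtual characters of tori; this determines the signed multiset of constituents within each family, but does not distinguish the two elements of a Galois-conjugate pair. The resolution is then obtained by invoking preservation of Frobenius eigenvalues: the non-uniform characters $E_6[\theta^j]$, $\tw2E_6[\theta^j]$, $E_8[\pm\theta]$ and the pair $\phi_{512,11},\phi_{512,12}$ (respectively $\phi_{4096,11},\phi_{4096,12},\phi_{4096,26},\phi_{4096,27}$) carry as their labels the eigenvalues of Frobenius on the cohomology of the Deligne--Lusztig varieties supporting them, and $\RLG$ respects these eigenvalues. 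This forces, in (a), constituents labelled by $E_6[\theta^j]$ rather than by $E_6[\theta^{3-j}]$; combined with the uniform projection this pins down the signs $+1,-1$ of $E_6[\theta^j],1$ and $E_6[\theta^j],\epsilon$. Analogous arguments handle (b) and (c).

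For parts (b) and (c) I would additionally exploit transitivity of Lusztig induction, factoring $\RLG$ through an intermediate Levi of type $E_7.\Phi_2$ in $E_8$; this provides an independent cross-check by reducing (b) to the composition of (a) with~(c). The main obstacle is the bookkeeping of Frobenius eigenvalues---in particular the signs distinguishing $E_8[\theta]$ from $E_8[-\theta]$, and the precise root of unity attached to each of $\phi_{4096,11}$, $\phi_{4096,12}$, $\phi_{4096,26}$, $\phi_{4096,27}$---for which I would rely on the tables in \cite[Chap.~4]{GM20} and on an explicit verification using the generic implementation of Lusztig induction in \Chevie\ \cite{MChev}.
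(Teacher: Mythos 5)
Your plan puts all the weight on the assertion that ``$\RLG$ respects Frobenius eigenvalues'', and that is precisely the step you cannot take off the shelf. For Harish-Chandra induction the constancy of Frobenius eigenvalues along series is classical, but in all three cases here $\bL$ is a genuinely twisted, non-split Levi subgroup (rational types $\tw2E_6(q).\Ph2$, $\tw2E_6(q).\Ph2^2$, $E_7(q).\Ph2$), and there is no general theorem that twisted Lusztig induction preserves the root-of-unity (or $\pm\sqrt q$) part of the eigenvalue that you could simply invoke. Establishing that compatibility in these cases is essentially equivalent to computing the relevant Shintani descents explicitly --- which is exactly what the paper does: its primary proof is a citation of Shoji's explicit description of $\RLG$ on unipotent characters via the Fourier transform \cite{Sh87}, supplemented for (a) and (c) by a block-theoretic argument (genericity, the fact that the two cuspidal characters of $\bL^F$ label distinct $\ell$-blocks of $\bG^F$ for a suitable large $\ell\mid q+1$, and indecomposability of Harish-Chandra induction read off from the Hecke algebra parameter, resp.\ \cite[Table~F.6]{GP}). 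Note also that your first step (uniform projections via the Mackey formula) and your appeal to Galois equivariance are exactly the tools already exhausted in \cite{BMM}; they produce the ambiguity rather than resolve it, since the Galois conjugates have identical uniform projections. So the one step in your argument that does any new work is the unreferenced one.

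Two further problems. First, the proposed \Chevie{} verification is circular: the generic tables of Lusztig induction implemented there are the tables of \cite{BMM}, i.e.\ they contain the very ambiguities ``15+16'', ``40+41'', ``42+43'' you are trying to resolve. Second, the transitivity cross-check for (b) is misstated: factoring $\RLG$ for $\bL$ of type $\tw2E_6(q).\Ph2^2$ through a Levi subgroup of type $E_7(q).\Ph2$ composes (a) with the inductions of $E_6[\theta],1$ and $E_6[\theta],\epsilon$ from $E_7$ to $E_8$ --- not with (c), which concerns the unrelated characters $\phi_{512,11},\phi_{512,12}$. (A small additional inaccuracy: the subscripts in $\phi_{512,11}$ etc.\ are not Frobenius eigenvalues but the usual $(\dim,b)$-labels of Weyl group characters; the eigenvalues of these characters do differ by a sign of $\sqrt q$, but that information has to be taken from Lusztig's tables, not from the labels.) To make your approach work you would either have to prove the eigenvalue compatibility for these particular twisted inductions --- at which point you are redoing \cite{Sh87} --- or replace that step by a genuinely independent argument such as the block-theoretic one in the paper.
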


\begin{proof}
In all three cases this follows from Shoji's explicit description of Lusztig
induction of unipotent characters in terms of the Fourier transform
\cite{Sh87}. In cases~(a) and~(c) we can also provide a block theoretic
argument. It will be sufficient to show the claim for one value of $q$ since by
\cite[Thm~1.33]{BMM} the decomposition of Lusztig induction of unipotent
characters is generic. We choose $q$ such that $q+1$ is divisible by a prime
$\ell$ larger than~7, say $q=37$, and take $\ell=19$, so $e_\ell(q)=2$. The
four unipotent characters $\la=
\tw2E_6[\theta],\tw2E_6[\theta^2],\phi_{512,11},\phi_{512,12}$ considered are
2-cuspidal in their respective Levi subgroups and thus of $\ell$-defect zero.
It then follows by \cite[Thm]{CE94} that all constituents of $\RLG(\la)$ lie
in the same $\ell$-block of $\bG^F$. On the other hand, the two characters $\la$
given in either case are not conjugate under any automorphism of $\bL^F$
\cite[Thm~4.5.11]{GM20} and thus by \cite{CE94} they label distinct blocks of
$\bG^F$. It was already shown in \cite{BMM} that
in all cases $\RLG(\la)$ has norm~2, and that the constituents are among the
ones listed in the statement. Thus we are done if we can show that the stated
decompositions agree with the $\ell$-block distribution. \par
For~(a) consider the unipotent character $\rho:=E_6[\theta]$ of the split Levi
subgroup $\bL_1$ of $\bG=E_7$ of rational type $E_6(q).\Phi_1$. It is
1-cuspidal, and its
relative Hecke algebra in $\bG^F$ is of type $A_1$ with parameter~$q^9$
\cite[Tab.~4.8]{GM20}. Thus its Harish-Chandra induction $R_{\bL_1}^\bG(\rho)$
is indecomposable modulo all primes $\ell$ dividing~$q^9+1$ by
\cite[Prop.~3.1.29]{GM20}, whence its two constituents $E_6[\theta],1$ and
$E_6[\theta],\epsilon$ lie in the same $\ell$-block of $\bG^F$.
In case~(c) it follows from \cite[Table~F.6]{GP} that the block distribution
is as claimed.
\end{proof}

\section{Robinson's conjecture on defects}   \label{sec:Rob}
Robinson's conjecture on character defects \cite{Ro96} presented in the
introduction was reduced to the case of minimal counter-examples in
quasi-simple groups in \cite[Thm~2.3]{FLLMZ} based on work of Murai, and it was
shown to hold for all odd primes $\ell$ and for the $2$-blocks of any
quasi-simple group not of exceptional Lie type in odd characteristic in
\cite{FLLMZ,FLLMZb}.
Here, we investigate its validity in the following situation. Let $\bG$ be a
simple algebraic group of simply connected type with a Frobenius map $F$ such
that $G=\bG^F$ is of type $G_2(q)$, $\tw3D_4(q)$, $F_4(q)$, $E_6(\pm q)$,
$E_7(q)$ or $E_8(q)$, for some odd prime power $q$. Let $S$ be a central
quotient of~$G$. Note that $G=S$ unless possibly when $G$ is of type $E_6$ or
$E_7$. Let $\bar B$ be a block of $S$ dominated by an isolated $2$-block $B$
of~$G$.

\begin{lem}   \label{lem:RobRed}
 Suppose that $\bG$ as above is of type $E_6$ and let $\bG\hookrightarrow\tbG$
 be a regular embedding. Let $S$, $B$ and $\bar B$ be as above and let
 $\tilde B$ be a block of $\tbG^F$ covering $B$. Then any one of $B$, $\bar B$
 and $\tilde B$ satisfies Robinson's conjecture if and only if any of the other
 two does.
\end{lem}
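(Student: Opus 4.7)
The plan is to prove the three-way equivalence by establishing two pairwise equivalences, $B \Leftrightarrow \bar B$ and $B \Leftrightarrow \tilde B$. Both rest on the arithmetic fact that $\bG$ is simply connected of type $E_6$, so $|Z(\bG^F)|$ divides $\gcd(3, q \mp 1)$; in particular $\ell = 2$ divides neither $|Z(G)|$ nor the Lang--Steinberg obstruction $[\tbG^F : \bG^F Z(\tbG^F)] = |H^1(F, Z(\bG))|$.

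For $B \Leftrightarrow \bar B$ I would argue directly: the kernel $Z := \ker(G \twoheadrightarrow S)$ is a central $2'$-subgroup of $G$. Inflation is a bijection $\Irr(\bar B) \to \Irr(B)$ preserving degrees; since $[G:S]_2 = 1$, it preserves 2-defects, i.e.~$\df_G(\chi) = \df_S(\bar\chi)$. Any defect group $D$ of $B$ is a 2-group with $D \cap Z = 1$, so $D \to DZ/Z =: \bar D$ is a group isomorphism onto a defect group of $\bar B$, and $Z(D) \cong Z(\bar D)$. Robinson's inequality and its equality case therefore transfer verbatim.

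For $B \Leftrightarrow \tilde B$ I would insert the intermediate subgroup $H := \bG^F Z(\tbG^F)$, normal in $\tbG^F$ of $2'$-index $|Z(\bG^F)|$. The argument then splits into two Clifford-theoretic steps. Step~(a): for the extension $H \le \tbG^F$ of odd index, $\tilde B$ covers a block $\hat B$ of $H$ with the same defect groups, and any $\tilde\chi \in \Irr(\tilde B)$ above $\hat\chi \in \Irr(\hat B)$ satisfies $\tilde\chi(1)/\hat\chi(1) = [\tbG^F : T_{\tbG^F}(\hat\chi)]$, which divides $[\tbG^F:H]$ and is therefore odd; so 2-defects match and Robinson for $\tilde B$ coincides with Robinson for $\hat B$. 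Step~(b): set $A := Z(\tbG^F)_2$. Then $A$ is central in $\tbG^F$, $A \cap \bG^F = 1$ (because $Z(\bG^F)$ has odd order), and $|H|_2 = |\bG^F|_2 \cdot |A|$. Characters of $\hat B$ above $\chi \in \Irr(B)$ are of the form $\chi \otimes \psi$ with $\psi \in \Irr(Z(\tbG^F))$ extending the inverse of the central character of $\chi$ on $Z(\bG^F)$, and $(\chi \otimes \psi)(1) = \chi(1)$; so $\df_H(\chi \otimes \psi) = \df_{\bG^F}(\chi) + \log_2 |A|$. Centrality of $A$ together with $A \cap \bG^F = 1$ forces a defect group $\hat D$ of $\hat B$ to split as $\hat D = D \times A$ for some defect group $D$ of $B$; hence $|Z(\hat D)| = |Z(D)| \cdot |A|$, and $\hat D$ is abelian iff $D$ is. Dividing Robinson's inequality for $\hat B$ by $|A|$ on both sides recovers Robinson's inequality for $B$, with matching equality condition.

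The main technical hurdle is the clean structural identification $\hat D = D \times A$ in step~(b); once this is secured via the centrality and disjointness properties of $A$, the whole proof is a compatible rescaling of both sides of Robinson's inequality by the same 2-power $|A|$, while the passage $B \leftrightarrow \bar B$ contributes no rescaling at all. Chaining these observations delivers the full three-way equivalence.
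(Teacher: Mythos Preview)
Your proof is correct and follows essentially the same strategy as the paper: both arguments use that $|Z(\bG^F)|$ divides~$3$ to handle $B\Leftrightarrow\bar B$ trivially, and both pass through the intermediate subgroup $H=Z(\tbG^F)\bG^F$ to handle $B\Leftrightarrow\tilde B$, splitting into the $2'$-index step $H\le\tbG^F$ and the central-product step $\bG^F\le H$. Your write-up is somewhat more explicit than the paper's in step~(b), where you isolate $A=Z(\tbG^F)_2$ and verify $\hat D=D\times A$; the paper simply asserts $Z(\tbG)^F_2 D\cong Z(\tbG)^F_2\times D$ is a defect group of $C$. Two minor remarks: in step~(b) the condition on $\psi$ should be that it \emph{extends} the central character of $\chi$ on $Z(\bG^F)$, not its inverse; and in step~(a) your formula $\tilde\chi(1)/\hat\chi(1)=[\tbG^F:T_{\tbG^F}(\hat\chi)]$ tacitly uses that the ramification index is~$1$, which holds here because $\tbG^F/H$ is cyclic of order dividing~$3$ so stable characters extend.
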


\begin{proof} 
Since $Z(G)$ is a $2'$-group, $B$ and $\bar B$ are isomorphic blocks, and in
particular $B$ and $\bar B$ have isomorphic defect groups and the same set of
character degrees. Thus $B$ satisfies Robinson's conjecture if and only if
$\bar B$ does. Again, since $Z(G)$ is a $2'$-group and since
$[\tbG,\tbG] = \bG$, $Z(\tbG)^F\bG^F$ is a normal subgroup of odd index in
$\tbG^F$, and thus $\tilde B$ satisfies Robinson's conjecture if and only if
some (hence any) block, say $C$, of $Z(\tbG)^F\bG^F$ covering $B$ satisfies the
conjecture. Since $Z(\tbG)^F$ is central in $Z(\tbG)^F\bG^F$, $B$ and $C$ have
the same set of character degrees. Moreover, since
$Z(\bG^F)=Z(\tbG)^F\cap\bG^F$ is a $2'$-group, if $D$ is a defect group of $B$,
then $Z(\tbG)^F_2 D \cong Z(\tbG)^F_2 \times D $ is a defect group of $C$. So
$C$ satisfies Robinson's conjecture if and only if $B$ does.
\end{proof} 

\begin{prop}   \label{prop:Robinson unip}
 Let $B$ be a unipotent $2$-block of $G$ as above. Then the block $\bar B$ of
 $S$ is not a minimal counter-examples to Robinson's conjecture.
\end{prop}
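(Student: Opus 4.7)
The plan is to verify Robinson's conjecture directly for each unipotent $2$-block of~$G$ (equivalently, for the dominated block~$\bar B$ of~$S$). First, by Lemma~\ref{lem:RobRed} we may, when $G$ is of type~$E_6$, move freely between $B$, $\bar B$ and the corresponding block of a regular embedding; for types $G_2$, $\tw3D_4$, $F_4$, $E_8$ the centre of~$G$ has odd order (trivial for the first three) and so $B=\bar B$ up to isomorphism. For $G=E_7(q)_{\SC}$ with $q$ odd the centre has order~$2$, but since unipotent characters are trivial on $Z(G)$, a defect group $\bar D$ of~$\bar B$ is the quotient of a defect group $D$ of~$B$ by $Z(G)\le Z(D)$, so $|Z(\bar D)|=|Z(D)|/2$ while defects of characters drop uniformly by~$1$. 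Hence $\bar B$ satisfies Robinson's conjecture if and only if~$B$ does, and it suffices to treat~$B$.

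Second, I would invoke Enguehard's classification \cite[Thm~A, A.bis]{En00}: unipotent $2$-blocks of~$G$ are in bijection with $G$-classes of unipotent $2$-cuspidal pairs $(\bL,\la)$ in $\bG$ of quasi-central $2$-defect. Their defect groups are made explicit in \cite{Ruh22} (with the structure of \cite[Thm~1.2]{KM} as extended in the present paper); each defect group $D$ is essentially an extension of $Z(\bL)^F_2$ by a Sylow $2$-subgroup of the relative Weyl group $W_G(\bL,\la)$, contained in $N_G(\bL,\la)$.

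The verification then proceeds according to the type of defect group. Blocks of defect zero satisfy Robinson's conjecture trivially. Blocks with abelian defect group have only height zero characters (by Brauer's height zero conjecture), so Robinson's conjecture reduces to the equality $2^{\df(\chi)}=|D|=|Z(D)|$, which holds. The remaining case is the principal $2$-block, where the defect group is a Sylow $2$-subgroup $P$ of~$G$; here the trivial character has defect $\log_2|P|\ge\log_2|Z(P)|$, and for characters of positive height the inequality $2^{\df(\chi)}\ge|Z(P)|$ is to be checked by explicit degree computations via \Chevie{}, exploiting Lusztig's parametrisation and the known degree polynomials for unipotent characters.

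The main obstacle will be the principal $2$-block of $E_7(q)$ and $E_8(q)$, where $P$ is non-abelian and of rather complex structure. Here, a case analysis of positive-height unipotent characters, combined with precise knowledge of $|Z(P)|$ in odd characteristic, is required to ensure that $2^{\df(\chi)}\ge|Z(P)|$ holds for every $\chi\in\Irr(B)$; in tight cases one may invoke additional reductions using Clifford theory along the regular embedding, reducing the task to smaller subquotients and thereby establishing that $\bar B$ cannot be a minimal counter-example.
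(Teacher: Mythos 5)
Your overall strategy (case analysis on defect group structure plus degree bounds) is in the spirit of the paper's proof, but there are two substantive gaps. The first and most serious: in the hard cases you only propose to check \emph{unipotent} characters of positive height. A unipotent $2$-block $B$ contains far more than its unipotent characters: by Enguehard's Theorems A and B of \cite{En00}, $\Irr(B)$ meets $\cE(G,t)$ for every $2$-element $t\in G^*$ whose Jordan correspondents lie in the appropriate ($e$-)Harish-Chandra series of $C_{G^*}(t)$. Verifying $2^{\df(\chi)}\ge|Z(D)|$ for these non-unipotent $\chi$ is the bulk of the work; it requires running through the centralisers of $2$-elements (e.g.\ for $E_7(q)$ one must analyse $D_6(q)A_1(q)$, $A_7(\pm q).2$, $E_6(\pm q)(q\mp1).2$ and the series they support) and using the Harish-Chandra vertex condition of \cite[Thm~B]{En00} to decide which characters actually lie in $B$. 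Without this input your ``explicit degree computations for unipotent characters'' do not cover the block. Relatedly, you have not identified the key shortcut that makes most cases trivial, namely \cite[Lemma~3.1]{FLLMZ}: Robinson's conjecture holds automatically whenever $|Z(D)|=2$. Combined with an explicit computation of $Z(P)$ for $P\in\Syl_2(S)$ (done in the paper via centralisers of $2$-elements of odd index), this disposes of the principal blocks of $G_2(q)$, $\tw3D_4(q)$, $F_4(q)$, $E_8(q)$ and $\overline{E_7(q)}$ at once, and of the non-principal non-abelian-defect blocks of $E_7(q)$; only $E_6(\pm q)$, $E_7(q)_{\SC}$ and two families of non-principal blocks in $E_8(q)$ genuinely require degree estimates. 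Your assessment that the ``main obstacle'' is the principal block of $E_8(q)$ is therefore off target.

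A smaller but real issue is your reduction for $E_7(q)_{\SC}$: from $\bar D=D/Z(G)$ you cannot conclude $|Z(\bar D)|=|Z(D)|/2$, since the centre of a quotient may strictly contain the image of the centre; if $|Z(\bar D)|$ were larger than $|Z(D)|/2$, the conjecture for $\bar B$ would not follow from that for $B$ merely because defects drop by one. The paper avoids this by computing $|Z(\bar P)|=2$ for $\overline{E_7(q)}$ directly (realising $S$ inside the adjoint group), after which \cite[Lemma~3.1]{FLLMZ} applies. Finally, your appeal to the height zero property for abelian defect groups is consistent with what the paper uses (via the main result of \cite{KM}), and the identification of non-principal unipotent $2$-blocks with non-abelian defect via \cite{En00} is the right framework; those parts are fine.
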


\begin{proof}
First assume $B$ is the principal block of $G$. Then its defect groups are the
Sylow 2-subgroups of $G$. Their centres are given in Table~\ref{tab:exc syl};
here $T_\eps$ denotes a torus of order $q-\eps1$, and
$\overline{E_7(q)}:=E_7(q)/Z(E_7(q))$.

\begin{table}[htb]
\caption{Centres of Sylow $2$-subgroups $P\in\Syl_2(S)$}   \label{tab:exc syl}
$$\begin{array}{|c|cc||c|cc|}
 S& C_G(t)& Z(P)& S& C_G(t)& Z(P)\\
 \noalign{\hrule}
     G_2(q)&        A_1(q)^2&    C_2&  \tw2E_6(q)& \tw2D_5(q).T_-& C_{|q+1|_2}\\
 \tw3D_4(q)&  A_1(q^3)A_1(q)&    C_2&  E_7(q)& D_6(q)A_1(q)& C_2^2\\
     F_4(q)&          B_4(q)&    C_2&  \overline{E_7(q)}& D_6(q)A_1(q)& C_2\\
     E_6(q)& D_5(q).T_+& C_{|q-1|_2}&  E_8(q)& D_8(q)& C_2\\
\end{array}$$
\end{table}

This is obtained as follows. If $P\in\Syl_2(G)$ and $t\in Z(P)$ then $C_G(t)$
has odd index in $G$ and $Z(P)\le P\le C_G(t)$. The centralisers of semisimple
elements in $G$ can be enumerated with the algorithm of Borel--de Siebenthal
(see e.g. \cite[13.2]{MT}).
It turns out that the only centralisers $C_G(t)$ of 2-elements
$t\in G\setminus Z(G)$ of odd index in $G$ are as listed in
Table~\ref{tab:exc syl}. (In fact, these can also be found on the website
\cite{Lue}.) Then $|Z(P)|$ can be read off from the structure of $C_G(t)$.
If $S=\overline{E_7(q)}$ then we may consider $S$ as the derived subgroup of an
adjoint type group, and with the same argument as before we find that
$|Z(P)|=2$ in this case.

By \cite[Lemma~3.1]{FLLMZ} Robinson's conjecture holds when $|Z(P)|=2$. Thus,
by Table~\ref{tab:exc syl} we only need to concern ourselves with $S$ of type
$E_6(q)$, $\tw2E_6(q)$, or $S=E_7(q)$. Let's first assume that $\bG$ is of
type $E_6$. Then by Lemma~\ref{lem:RobRed} we may instead argue for the
principal block $\tilde B$ of $\tbG^F$. Now, by \cite[Thm~12]{CE93} for every
$\chi\in\Irr(\tilde B)$ there is a character of the same height in
the principal block of
$G_\ad:=\tilde G/Z(\tilde G)$, a group of adjoint type. Note that $G$ and
$\tilde G/Z(\tilde G)$ have isomorphic Sylow 2-subgroups as $G$ is a central
extension of degree dividing~3 of the derived subgroup of $G_\ad$. We may hence
argue for the principal 2-block of $G_\ad$; here $G_\ad^*\cong G$.
\par
Now, according to Enguehard's description
in \cite[Thm~B]{En00} a character $\chi\in\cE(G_\ad,t)$ lies in the principal
2-block if and only if $t\in G\cong G_\ad^*$ is a 2-element and moreover the
Jordan correspondent of~$\chi$ in $\cE(C_G(t),1)$ lies in a Harish-Chandra
series with Harish-Chandra vertex either a torus or a Levi subgroup of
type~$D_4$. First assume that $\chi$ has Harish-Chandra vertex $D_4$. Then
$\bH:=C_\bG(t)$ has a Levi subgroup of type $D_4$ and hence is either
of type $D_4$, $D_5$ or $E_6$. In either case, \cite[Prop.~6.2]{FLLMZ}
shows that $(|S|/\chi(1))_2\ge(q-1)_2^2>|Z(P)|$ and we are done. If $\chi$ has
trivial Harish-Chandra vertex then by the same argument we are done when $\bH$
has $\FF_q$-rank at least~2. Note that $\bH$ has $\FF_q$-rank at least~1 as by
Table~\ref{tab:exc syl} every 2-element of $\bG_\ad^*$ centralises a split
torus of rank~1. Now if $\bH$ has $\FF_q$-rank~1, then all of its unipotent
characters have odd
degree, and it is easy to see that $(|S|/\chi(1))_2\ge2(q-1)_2>|Z(P)|$.
\par
The same discussion applies to $G=\tw2E_6(q)$ by interchanging the cases
corresponding to the two possible congruences of $q$ modulo~4.
\par
Now assume $S=G=E_7(q)$. Let $\chi$ lie in the principal 2-block $B$ of $G$,
so in $\cE(G,t)$ for some 2-element $t\in G^*$. If $t=1$, so $\chi$ is
unipotent, then by inspection $\df(\chi)\ge3$ unless $\chi$ lies in an ordinary
Harish-Chandra series above a Levi subgroup of type $E_6$, but by
\cite[p.~354]{En00} these are not in the principal 2-block of $G$. Now
assume $t\ne1$, and let $t_1$ be the involution in $\langle t\rangle$. Then
$C_{G^*}(t_1)$ has one of the structures $D_6(q)A_1(q)$, $A_7(\pm q).2$ or
$E_6(\pm q)(q\mp1).2$. Assume $C_{G^*}(t)$ involves an $E_6$-factor. Let
$\bG\hookrightarrow\tbG$ be a regular embedding, $\tilde t\in\tbG^{*F}$ a
preimage of~$t$ of 2-power order, and let $\tilde B$ be the principal block
of~$\tbG^F$. By \cite[Thm~B]{En00} the characters in $\tilde B$ in series
$\tilde t$ are those in Jordan correspondence with unipotent characters of
$C_{\tbG^{*F}}(\tilde t)$ with a rational Frobenius eigenvalue. Thus, the
characters in $B$ in series $t$ are again among those in Jordan correspondence
with unipotent characters with a rational Frobenius eigenvalue, and by
inspection all of these 
have defect at least~3. So we may assume that $C_{G^*}(t)$ has only factors of
classical type. Again by using the lists of unipotent character degrees and
arguing as before, one sees that all characters in $\cE(G,t)\cap\Irr(B)$
are of defect at least~3, so $B=\bar B$ is not a counter-example.
\par
The non-principal unipotent 2-blocks of groups of exceptional type were
determined in \cite{En00}. In Table~\ref{tab:exc np} we list those blocks
having non-abelian defect groups, as well as properties of their defect groups
$D$ which have also been taken from \cite{En00}. We label the blocks by their
Harish-Chandra vertex $(\bL,\la)$ of quasi-central defect, in the notation
of loc.~cit.

\begin{table}[htb]
\caption{Non-principal unipotent $2$-blocks of non-abelian defect}   \label{tab:exc np}
$$\begin{array}{c|c|ccc}
 \noalign{\hrule}
 G& \text{cond.}& ([\bL,\bL],\la)& D& |Z(D)|\\
\noalign{\hrule}
 E_7(q)& q\equiv1\,(4)& (E_6,E_6[\theta]),(E_6,E_6[\theta^2])& Z_{(q-1)_2}.2& 2\\
       & q\equiv3\,(4)& (\tw2E_6,\tw2E_6[\theta]),(\tw2E_6,\tw2E_6[\theta^2])& Z_{(q+1)_2}.2& 2\\
 E_8(q)& q\equiv1\,(4)& (E_6,E_6[\theta]),(E_6,E_6[\theta^2])& Z_{(q-1)_2}^2.2^2& \le4\\
       & q\equiv3\,(4)& (\tw2E_6,\tw2E_6[\theta]),(\tw2E_6,\tw2E_6[\theta^2])& Z_{(q+1)_2}^2.2^2& \le4\\
\noalign{\hrule}
\end{array}$$
\end{table}

Since the conjecture holds for blocks with $|Z(D)|=2$ by
\cite[Lemma~3.1]{FLLMZ}, we only need to consider the 2-blocks $B$ in $E_8(q)$.
First from the list of unipotent degrees it is easy to check that all unipotent
characters $\chi$ in these blocks, as described in \cite[Thm~A]{En00}, have
$\df(\chi)\ge3$.
By \cite[Thm~B]{En00} all characters in $\cE_2(G,1)\cap\Irr(B)$ have
Harish-Chandra vertex of type $E_6$. Thus the centralisers of the relevant
2-elements $1\ne t\in \bG^{*F}$ contain either an $E_6$- or an
$E_7$-factor. Again, it is straightforward to verify that all such characters
$\chi$ have $\df(\chi)\ge3$, whence our claim holds.
\end{proof}

We suspect that in fact the defect groups of the blocks $B$ for $E_8(q)$ as in
Table~\ref{tab:exc np} are isomorphic to Sylow 2-subgroups of $G_2(q)$, in
which case their centres would have order~2 and the proof would be even easier.

\begin{prop}   \label{prop:Robinson isolated1}
 Let $B$ be an isolated, non-unipotent $2$-block of $G$ in $\cE_2(G,s)$, with
 $\bG$ not of type $E_7$. Assume $\bC^*:=C_{\bG^*}(s)$ has only factors of
 classical type. Then the block $\bar B$ of $S$ is not a minimal
 counter-example to Robinson's conjecture.
\end{prop}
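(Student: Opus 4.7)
The plan is to reduce Robinson's conjecture for $\bar B$ to the case of principal $2$-blocks of quasi-simple groups of classical type, which is known by \cite{FLLMZ,FLLMZb}. Since $Z(G)$ is a $2'$-group in all types under consideration (trivial for $G_2,\tw3D_4,F_4,E_8$; of order $1$ or $3$ for $E_6,\tw2E_6$), the argument of Lemma~\ref{lem:RobRed} applies verbatim to show that Robinson's conjecture holds for $\bar B$ if and only if it holds for a block $\tilde B$ of $\tbG^F$ covering $B$, where $\bG\hookrightarrow\tbG$ is a regular embedding. I will therefore focus on $\tilde B$.

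Let $\tilde s\in\tbG^{*F}$ be a semisimple $2$-regular preimage of~$s$. Adding a central torus to $\bG^*$ does not alter the derived group of centralisers, so $C_{\tbG^*}(\tilde s)$ still has only components of classical type, and moreover is connected since $\tbG$ has connected centre. Proposition~\ref{prop:one block} then yields that $\cE_2(\tbG^F,\tilde s)$ consists of a single $2$-block, necessarily equal to $\tilde B$. In particular $\tilde B$ is the semisimple block in its series, so a defect group $\tilde D$ of $\tilde B$ has order $|C_{\tbG^*}(\tilde s)^F|_2$; combined with Jordan decomposition in the connected-centre setting \cite[Thm~7.1]{DM90}, the map $\chi\mapsto\pi_{\tilde s}^{\tbG}(\chi)$ provides a defect-preserving bijection between $\Irr(\tilde B)$ and $\cE(C_{\tbG^*}(\tilde s)^F,1)$, which by Proposition~\ref{prop:one block} applied in the unipotent case constitutes the (unique, principal) unipotent $2$-block of $C_{\tbG^*}(\tilde s)^F$.

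Under this bijection, together with the identification of $\tilde D$ with a Sylow $2$-subgroup $P$ of $C_{\tbG^*}(\tilde s)^F$, Robinson's inequality $2^{\df(\chi)}\ge|Z(\tilde D)|$ for $\chi\in\Irr(\tilde B)$ translates into the analogous inequality for unipotent characters of $C_{\tbG^*}(\tilde s)^F$ with respect to $Z(P)$, with equality holding on one side precisely when it does on the other. Since $C_{\tbG^*}(\tilde s)^F$ is a central product of a torus with quasi-simple groups of classical type, Robinson's conjecture for its principal $2$-block reduces by standard covering arguments to the corresponding statement for the simple classical factors, which is settled by \cite{FLLMZ,FLLMZb}. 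The main obstacle will be to make the identification of $\tilde D$ with $P$ precise and to confirm that the defect-preserving property of the Jordan bijection is genuinely valid on the whole block; both points can be handled by inspection of the short list of isolated centralisers of classical type arising for each of the exceptional ambient groups excluded from the $E_7$ case.
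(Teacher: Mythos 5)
Your outline matches the paper's: reduce (via the regular embedding when $\bG$ is of type $E_6$), observe that $\cE_2(\bG^F,s)$ is a single block by Proposition~\ref{prop:one block}, transport Robinson's inequality through Jordan decomposition to the principal $2$-block of a classical-type group, and conclude. But the crux of that transport is precisely the point you defer: to compare $2^{\df(\chi)}$ with $|Z(\tilde D)|$ on one side and with $|Z(P)|$ on the other, you must know that a defect group $\tilde D$ of the block is \emph{isomorphic} to a Sylow $2$-subgroup $P$ of the dual centraliser, not merely that $|\tilde D|=|C_{\tbG^*}(\tilde s)^F|_2$. A $2$-group is far from determined by its order, and $|Z(\tilde D)|$ even less so; "inspection of the short list of isolated centralisers" does not produce this isomorphism. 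The paper supplies it by noting that the unique block in the series is the semisimple (minimal) block and invoking \cite[Prop.~E]{Ruh22}, which identifies its defect groups with Sylow $2$-subgroups of $C$. Without this input (or an equivalent), your argument does not close.

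Two further points of imprecision. First, $\Irr(\tilde B)=\cE_2(\tbG^F,\tilde s)$ is the union of the series $\cE(\tbG^F,\tilde s t)$ over $2$-elements $t$, so $\pi_{\tilde s}^{\tbG}$ alone does not give a bijection from $\Irr(\tilde B)$, and its target cannot be $\cE(C_{\tbG^*}(\tilde s)^F,1)$ (which is only the unipotent characters, not the principal $2$-block). One must compose the Jordan decompositions $\pi_{\tilde s t}^{\tbG}$ with the inverse Jordan decompositions for a group $\bC$ \emph{dual} to $\bC^*$, using $C_{\bC^*}(t)=C_{\bG^*}(st)$, to land in $\cE_2(\bC^F,1)$; this is what the paper does. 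Second, your endgame appeals to \cite{FLLMZ,FLLMZb} for classical groups plus "standard covering arguments" for the central product $\bC^F$; this can be made to work, but the statement you are proving only asserts that $\bar B$ is not a \emph{minimal} counter-example, so it is both simpler and cleaner to argue, as the paper does, that a counter-example $B$ would force the principal block of the strictly smaller group $\bC^F$ to be a counter-example, contradicting minimality.
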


\begin{proof}
Let $B$ be as in the statement. First assume $\bG$ is not of type $E_6$. Then
$S=G$, $\bC^*$ is connected, and Jordan decomposition on the level of $G$ and
of $C=\bC^F$ together yield a defect preserving bijection
$\cE_2(G,s)\to\cE_2(C,1)$. Since $\bC^*$ (and hence also $\bC$) has only
factors of classical type, $\cE_2(C,1)$ is a single 2-block, namely the
principal block $b_0$ of $C$, and by Proposition~\ref{prop:one block} so is
$\cE_2(G,s)$. In particular, $B$ is a \emph{minimal block}, that is, $B$
contains a semisimple character in $\cE(G,s)$. Then by \cite[Prop.~E]{Ruh22} a
defect group of $B$ is isomorphic to a Sylow 2-subgroup of $C$. Thus, if $B$ is
a counter-example then so is $b_0$. Since $C$ is strictly smaller than $G$, this
shows that $B$ cannot be minimal.   \par
Finally, if $\bG$ is of type $E_6$, then by Lemma~\ref{lem:RobRed} we may
replace $B$ by a block $\tilde B$ of $\tilde G$ instead and the same arguments
go through. 
\end{proof}

\begin{lem} \label{lem:Robinson E6E2}
 Suppose that $\bG$ is of type $E_8$. Let $s\in\bG^*$ be isolated such that
 $C_{\bG^*}(s)$ is of type $E_6A_2$ and let $\bC$ be dual to
 $\bC^*:= C_{\bG^*}(s)$. There is a $2$-defect preserving bijection
 $\psi:\cE_2(\bG^F,s)\to\cE_2(\bC^F,1)$ which preserves $2$-blocks.
\end{lem}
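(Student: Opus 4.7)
The plan is to construct $\psi$ as a composition of Jordan decomposition maps on the two sides, deduce defect preservation from the Jordan degree formula, and derive block preservation from Theorem~\ref{thm:thmA}.

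I would start with the partitions $\cE_2(\bG^F,s)=\bigsqcup_t\cE(\bG^F,st)$ and $\cE_2(\bC^F,1)=\bigsqcup_t\cE(\bC^F,t)$, where $t$ runs over $\bC^{*F}$-conjugacy class representatives of $2$-elements in $\bC^{*F}$. Since $s$ has order~$3$ and $t$ is a $2$-element with $[s,t]=1$, one has $\langle st\rangle=\langle s\rangle\times\langle t\rangle$ and hence
\[ C_{\bG^*}(st)=C_{\bG^*}(s)\cap C_{\bG^*}(t)=\bC^*\cap C_{\bG^*}(t)=C_{\bC^*}(t). \]
Consequently the Jordan decomposition maps $\pi_{st}^\bG\colon\cE(\bG^F,st)\to\cE(C_{\bG^*}(st)^F,1)$ and $\pi_t^\bC\colon\cE(\bC^F,t)\to\cE(C_{\bC^*}(t)^F,1)$ share a common codomain, and I would define $\psi:=(\pi_t^\bC)^{-1}\circ\pi_{st}^\bG$ piecewise. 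Defect preservation is then immediate from the Jordan degree formula on each side, combined with $|\bG^F|=|\bG^{*F}|$, $|\bC^F|=|\bC^{*F}|$, and the displayed identity: if $\eta:=\pi_{st}^\bG(\chi)$, then $\chi(1)_2=[\bG^{*F}:C_{\bG^*}(st)^F]_2\,\eta(1)_2$ and the analogous formula holds for $\psi(\chi)(1)_2$, so $\df(\chi)=\df(\psi(\chi))$.

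The main obstacle is block preservation. By Theorem~\ref{thm:thmA}, $b_{\bG^F}(\chi)$ is contained in the $t$-twin block $\bJ_t^\bG(b_t)$, where $b_t$ is the unipotent $2$-block of $C_{\bG^*}(st)^F$ containing~$\eta$. On the $\bC$-side, since $[\bC,\bC]$ is a central product of simple groups of types~$E_6$ and~$A_2$, applying Theorem~\ref{thm:thmA} to each simple factor (noting that $\ell=2$ is bad for $E_6$ but good for $A_2$) and combining via Proposition~\ref{prop:JoEn 2.1.5} yields an analogous map $\bJ_t^\bC$ such that $b_{\bC^F}(\psi(\chi))=\bJ_t^\bC(b_t)$. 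Since both $\bJ_t^\bG$ and $\bJ_t^\bC$ take as input the unipotent $2$-blocks of the common group $C_{\bG^*}(st)^F=C_{\bC^*}(t)^F$, the two block partitions of $\cE_2(\bG^F,s)$ and $\cE_2(\bC^F,1)$ are parametrised by the same set, so $\psi$ matches them. The delicate final step is to handle the potential twin-block ambiguity of Table~\ref{tab:twins} row~2, which arises when $C_{\bG^*}(s)^F=\tw2E_6(q).\tw2A_2(q)$ at $\ell=2$: there $\bJ_t^\bG$ could a~priori map a single unipotent $2$-block of $C_{\bG^*}(st)^F$ to a pair of distinct $\bG^F$-blocks, so one must verify that in the present isolated configuration either the twin block is in fact a single $\bG^F$-block (e.g.~by comparing defect groups via~\cite{Ruh22}) or the corresponding $e$-cuspidal pairs of~$\bC^F$ split analogously into matching ``twin'' blocks on the $\bC$-side, so that $\psi$ is a genuine block-preserving bijection.
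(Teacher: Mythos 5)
Your skeleton — decompose both sides into Lusztig series indexed by $\bC^{*F}$-classes of $2$-elements $t$, use $C_{\bG^*}(st)=C_{\bC^*}(t)$, define $\psi$ piecewise by composing Jordan decompositions, read off defects from the degree formula, and match blocks via the maps $\bJ_t$ on the two sides — is exactly the right idea and is what the paper does implicitly. The gap is on the $\bC$-side. The group $\bC$ is semisimple of type $E_6A_2$ whose centre is a non-trivial finite ($3$-)group, so it does not satisfy the hypotheses under which the paper's machinery is available: the Digne--Michel map $\pi_t^\bC$ is only canonical for connected centre, Theorem~\ref{thm:thmA} applies to Levi subgroups of a group with connected centre and \emph{simple} simply connected derived subgroup (the simple factors of $[\bC,\bC]$ need not have connected centre, and $\bC$ itself is not simple), and Proposition~\ref{prop:JoEn 2.1.5} explicitly assumes that the kernel of the multiplication map $\bG_1\times\bG_2\to\bG$ is a central \emph{torus} and that the factors have connected centre — both fail when you split $\bC$ into its $E_6$- and $A_2$-factors, whose intersection is a finite group of order $3$. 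So the step ``applying Theorem~\ref{thm:thmA} to each simple factor and combining via Proposition~\ref{prop:JoEn 2.1.5}'' is not licensed as written. The missing ingredient in the paper's proof is to first replace $\bC$ by its adjoint quotient $\bC/Z(\bC)$ using \cite[Thm~12]{CE93}: since the centre of the simply connected cover of $\bC$ is a $3$-group, this isogeny preserves unipotent characters, their $2$-defects and their distribution into $2$-blocks, and the adjoint quotient is a direct product of adjoint simple groups to which \cite[Thm~B]{En00} (equivalently Theorem~\ref{thm:thmA} with $s=1$) applies.

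You correctly flag the twin-block ambiguity for $\tw2E_6(q).\tw2A_2(q)$ at $e_2(q)=2$, but you do not resolve it, and the resolution is needed for ``preserves $2$-blocks'' to be a genuine statement rather than one up to twins. The paper disposes of it by observing that each of $\cE_2(\bG^F,s)$ and $\cE_2((\bC/Z(\bC))^F,1)$ consists of exactly \emph{two} $2$-blocks (by inspection of \cite{KM} and \cite{En00}; in particular the two members of a twin pair land in the same block here), and that the minimal block is characterised on both sides by the same condition on Jordan correspondents, namely lying in $e$-Harish-Chandra series above $e$-cuspidal characters with rational Frobenius eigenvalue; together with the bijection between classes of $2$-elements of $(\bC/Z(\bC))^{*F}$ and of $\bC^{*F}$ induced by the natural isogeny, this forces $\psi$ to match the two block partitions. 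Without some such explicit count or a comparison of defect groups, your argument only shows that $\psi$ preserves $t$-twin blocks.
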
 

\begin{proof}
Since the centre of a simply-connected covering of $\bC$ is a $3$-group, by
\cite[Thm 12]{CE93}, we may replace $\bC$ by
its adjoint quotient $\bar\bC = \bC/Z(\bC)$. By inspection of \cite{KM} and
\cite{En00}, both $\cE_2(\bG^F,s)$ and $\cE_2(\bar\bC^F,1)$ contain exactly two
2-blocks. Let $B$ be the minimal $2$-block in $\cE_2(\bG^F,s)$. Now by
Theorem~\ref{thm:thmA}, for any $2$-element $t\in C_{\bG^*}(s)^F$, the
characters of $\cE(\bG^F, st)$ which lie in $B$ are precisely those whose
Jordan correspondents in $\cE(C_{\bG^*}(st)^F,1)$ lie in $e$-Harish-Chandra
series above $e$-cuspidal characters with rational Frobenius eigenvalue. The
same description applies to the characters in the principal block
of~$\bar\bC^F$
by \cite[Thm~B]{En00}. The result follows by noting that the natural surjection
$\bar \bC^*\to\bC^*$ induces a bijection from the set of conjugacy classes of
$2$-elements of $\bar\bC^{*F}$ to those of $\bC^{*F}$ and that this surjection
induces isogenies between centralisers of corresponding elements, so in
particular preserves their orders.
\end{proof} 

\begin{prop}   \label{prop:Robinson isolated2}
 Let $B$ be an isolated, non-unipotent $2$-block of $G$. Then the block
 $\bar B$ of $S$ is not a minimal counter-example to Robinson's conjecture.
\end{prop}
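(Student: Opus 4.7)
The plan is to reduce to cases already handled and then exploit Jordan decomposition together with defect-group information from \cite{Ruh22}. By Proposition~\ref{prop:Robinson isolated1} and Lemma~\ref{lem:Robinson E6E2}, combined with an inspection of isolated semisimple classes in exceptional groups (\cite[Prop.~4.9, Tab.~3]{B05}), the only case not already covered is $\bG$ of type~$E_7$: here the unique isolated non-central $2'$-class of $\bG^{*F}$ has order~$3$ and centraliser $\bC^* := C_{\bG^*}(s)$ of rational type $A_5(\epsilon q).\tw{\epsilon}A_2(\epsilon q)$ for some $\epsilon\in\{\pm1\}$. Although $\bC^*$ has only classical components, this case was excluded from Proposition~\ref{prop:Robinson isolated1} because $Z(G)^F$ can be of order~$2$ and the descent $G\to S$ requires extra care.

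Next, I would pass to a regular embedding $\bG\hookrightarrow\tbG$, lifting $s$ to $\tilde s\in\tbG^{*F}$. An adaptation of Lemma~\ref{lem:RobRed} to type~$E_7$ shows that for a block $\tilde B$ of $\tbG^F$ covering $B$, the validity of Robinson's conjecture for $\tilde B$ is equivalent to its validity for $B$ and for $\bar B$; the key point is that characters in $B$ have trivial central character on $Z(G)^F$ (since $s$ has odd order), so the central quotient $G\to S$ divides both the $2$-defect of every character and the order of the centre of every defect group by $|Z(G)^F|_2$. Now $\tbG$ has connected centre and $\tilde\bC^* := C_{\tbG^*}(\tilde s)$ has only classical factors, so Proposition~\ref{prop:one block} yields that $\cE_2(\tbG^F,\tilde s)$ is a single $2$-block, which must equal $\tilde B$ and is the minimal (semisimple) block in this series.

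By \cite[Prop.~E]{Ruh22}, a defect group of $\tilde B$ is then isomorphic to a Sylow $2$-subgroup of $\tilde\bC^F$, and the Bonnaf\'e--Rouquier Jordan decomposition provides a defect-preserving bijection between $\Irr(\tilde B)$ and $\Irr(c)$, where $c$ is the principal $2$-block of $\tilde\bC^F$. Since $\tilde\bC^F$ has only components of classical type ($A$-type) and is strictly smaller than $\tbG^F$, Robinson's conjecture for $c$ is covered by the results of \cite{FLLMZ,FLLMZb}. Hence Robinson's conjecture holds for $\tilde B$, and therefore also for $\bar B$, so $\bar B$ is not a minimal counter-example.

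The main obstacle is the careful bookkeeping of defect groups and their centres through the three successive reductions (regular embedding, quotient by $Z(G)^F$, and Jordan decomposition in the connected-centre setting). This is governed by the explicit defect-group description in \cite[Prop.~E]{Ruh22} together with the observation that $Z(G)^F$ lies in the centre of every defect group of $B$, which follows from the fact that all characters in $B$ share the same central character on $Z(G)^F$.
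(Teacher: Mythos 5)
There are two genuine gaps in your proposal.

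First, you dismiss too quickly the case $\bG$ of type $E_8$ with $\bC^*=C_{\bG^*}(s)$ of rational type $E_6A_2$. This case is \emph{not} covered by Proposition~\ref{prop:Robinson isolated1} (which assumes $\bC^*$ has only classical factors), and Lemma~\ref{lem:Robinson E6E2} alone does not dispose of it: that lemma gives a defect-preserving, block-preserving bijection on \emph{characters}, but Robinson's conjecture also involves $|Z(D)|$ for a defect group $D$ of each block, and the lemma says nothing about defect groups. The paper has to work separately with the two blocks in this series: for the minimal block it invokes \cite[Prop.~E]{Ruh22} to identify $D$ with a Sylow $2$-subgroup of $\bC^F$; for the non-maximal block No.~6 it uses that the defect groups are metacyclic (\cite[Cor.~8.2]{Sam14}); and for the non-maximal block No.~4 it needs a genuine Brauer-pair argument to show that the defect group is a Sylow $2$-subgroup of the $A_2$-factor of $\bC^F$ before Lemma~\ref{lem:Robinson E6E2} can be brought to bear. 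None of this appears in your plan.

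Second, for type $E_7$ your proposed ``adaptation of Lemma~\ref{lem:RobRed}'' does not go through. That lemma relies essentially on $Z(G)^F$ being a $2'$-group (so that $B$ and $\bar B$ are isomorphic blocks, $Z(\tbG)^F\bG^F$ has odd index in $\tbG^F$, and defect groups behave as direct products); in type $E_7$ with $q$ odd one has $|Z(G)^F|=2$, which is precisely why $E_7$ was excluded from Proposition~\ref{prop:Robinson isolated1}. Your claim that passing to $S=G/Z(G)^F$ divides both $\ell^{\df(\chi)}$ and $|Z(D)|$ by the same factor is unjustified: the centre of $\bar D=D/Z(G)^F$ need not be $Z(D)/Z(G)^F$. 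The paper instead computes $|Z(D)|$ and $|Z(\bar D)|$ explicitly (separately for $q\equiv1$ and $q\equiv3\pmod4$, using \cite[Prop.~E]{Ruh22} and inspection) and compares with explicit lower bounds for the defects of the characters in $\cE_2(G,s)$ obtained through Jordan decomposition to $C^{*}\cong A_5(\eps q).{}^\eps\!A_2(\eps q)$. A further small point: the defect-preserving bijection $\cE_2(G,s)\to\cE_2(C,1)$ here is \emph{not} Bonnaf\'e--Rouquier (the isolated element $s$ is not contained in a proper Levi), but the composite of the Digne--Michel Jordan decompositions for the various series $\cE(G,st)$.
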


\begin{proof}
Let $B$ be isolated but not unipotent, labelled by a semisimple $2'$-element
$1\ne s\in G^*$. By inspection of the tables in \cite{KM} the only cases
where $\bC^*=C_{\bG^*}(s)$ has factors of exceptional type are for $\bG$ of
type $E_8$ and $\bC^*$ of type $E_6A_2$. Thus by Proposition~\ref{prop:Robinson
isolated1} we only need to discuss these blocks, and the case when $\bG$ has
type $E_7$.
\par
First assume $\bG$ is of type $E_7$. According to \cite[Tab.~4]{KM}, the
relevant 2-blocks are those with $\bC^*:=C_{\bG^*}(s)$ of rational type
$A_5(q)A_2(q)$ or $\tw2A_5(q).\tw2A_2(q)$ (depending on the congruence of $q$
modulo~$3$). In both cases, $\cE_2(G,s)$ is a single 2-block by \cite{KM}, with
defect group $D$ isomorphic to a Sylow 2-subgroup of $C_{\bG^{*F}}(s)^*$ by
\cite[Prop.~E]{Ruh22}. By Ennola duality, it suffices to consider the case
$C^*:=\bC^{*F}\cong A_5(q)A_2(q)$. Assume $q\equiv1\pmod4$. Then,
$|Z(D)|=2(q-1)_2^2$.
By the description of centralisers of elements $t$ of 2-power order in linear
groups, any character in Lusztig series $\cE(C^*,t)$ has defect at least
$\log_2(8(q-1)_2^2)$. Since the characters in $\cE_2(G,s)$ are in height
preserving bijection to the characters of $\cE_2(C,1)$ via Jordan
decomposition, where $C:= \bC^F$, $B$ is not a counter-example. Now consider
the corresponding block $\bar B$ of $S=G/Z(G)$, with defect group
$\bar D=D/Z(G)$. By inspection, $|Z(\bar D)|\le(q-1)_2^2$. Since any
character of $\bar B$ is a character of $B$, and defects decrease by~1,
neither is the block $\bar B$ a counter-example.
Now assume $q\equiv3\pmod4$. Then $|Z(D)|=2^3$. Here any character in
$\cE(C^*,t)$ has defect at least $5$. Furthermore, $\bar D=D/Z(G)$ has
centre of order~$2^2$. Thus we may conclude as in the previous case.
\par
It remains to consider the isolated blocks of $E_8$ in series $s$ with
$\bC^*$ of type $E_6A_2$. The defect groups of the minimal block
$B$ in $\cE_2(G,s)$ are isomorphic to Sylow 2-subgroups of $C$, by
\cite[Prop.~E]{Ruh22}. Hence by Lemma~\ref{lem:Robinson E6E2} and its proof,
if $B$ is a counter-example, then so is the principal block of $\bC^F$,
contradicting the minimality of $B$.
\par
Finally, let $B$ be the minimal block in $\cE_2(G,s)$ as above, see
Table~\ref{tab:Rob-E8} (taken from \cite[Tab.~5]{KM}) and their Ennola duals.
According to \cite[Thm~1.2]{KM}, the defect groups for $B$ the block No.~6 are
meta-cyclic and hence Robinson's conjecture holds by \cite[Cor.~8.2]{Sam14}.

\begin{table}[htbp]
\caption{Isolated non-unipotent non-maximal 2-blocks in $E_8(q)$, $q\equiv1\pmod4$}   \label{tab:Rob-E8}
\[\begin{array}{|c|r|llll|}
\hline
 \text{No.}& C_{\bG^*}(s)^F& \bL^F& C_{\bL^*}(s)^F& \la& W_{\bG^F}(\bL,\la)\\
\hline\hline
 4&         E_6(q).A_2(q)& E_6& \bL^{*F}& E_6[\theta^{\pm1}]& A_2\\
\hline
 6& \tw2E_6(q).\tw2A_2(q)& E_7& \Ph1\Ph2.\tw2E_6(q)& \tw2E_6[\theta^{\pm1}]& A_1\\
  &              & E_8& C_{\bG^*}(s)^F& \tw2E_6[\theta^{\pm1}]\otimes\phi_{21}& 1\\
\hline
\end{array}\]
\end{table}

Now let $B$ be the No.~4 block. As before, let $\bC\le\bG$ be $F$-stable of
type $E_6A_2$, corresponding to $\bC^*$ under an identification of $\bG$ with
$\bG^*$. Let $b_{\bL^F}(\la)$ be the $2$-block of $\bL^F$
containing~$\la$. As explained in the proof of \cite[Prop.~6.4]{KM},
$(Z(\bL)^F_2,b_{\bL^F}(\la))$ is a $B$-Brauer pair. Moreover, by
\cite[Lemma~6.2]{KM}, $C_{\bG^F}(Z(\bL)^F_2) = \bL^F\leq \bC^F$. So, by
general block-theoretic considerations, letting $d$ be the unique $2$-block of
$\bC^F$ such that $(Z(\bL)^F_2, b_{\bL^F}(\la))$ is a $d$-Brauer pair,
there is a defect group $R$ of $d$ and a defect group $P$ of $B$ such
that $Z(\bL)^F_2 \leq R \leq P$. Also, note from the table entry for $B$ that
$Z(\bL)^F_2 $ is of index $2$ in~$P$. Thus, either $R=Z(\bL)^F_2$ or $R=P$.
Since $W_{\bG^F}(\bL,\la)\cong W_{\bC^F}(\bL,\la)$ is not a $2'$-group, $R$ is
not abelian by \cite[Prop.~2.7(e)]{KM} and hence $R=P$.
\par
Let $\bC_1$ be the $E_6$-subgroup of $\bC$ and $\bC_2$ the $A_2$-subgroup of
$\bC$. Then $\bC_1^F\bC_2^F$ is normal of index $3$ in $\bC^F$, $\bC_1^F$ and
$\bC_2^F$ commute and $\bC_1 ^F\cap \bC_2^F$ is of order~$3$. It follows that
$R = R_1 R_2\cong R_1\times R_2$ with $R_i$ a defect group of a $2$-block of
$\bC_i^F$, $i=1,2$. Since $Z(\bL)^F_2\leq R \cap \bC_2^F =R_2$ is of index~$2$
in $R$ and $Z(\bL)^F_2$ is self-centralising in $R$, it follows that $R_1$ is
trivial and hence $Z(\bL)^F_2$ is of index $2$ in $P=R=R_2$. Since
$|\bC^F_2|_2= 2|Z(\bL)^F_2|$, $P=R_2$ is a Sylow $2$-subgroup of $\bC_2^F$.
Now the result follows by Lemma~\ref{lem:Robinson E6E2} and its proof.
\end{proof}

Complementing earlier investigations in \cite{FLLMZ,FLLMZb} this proves
Robinson's conjecture:

\begin{proof}[Proof of Theorem~$\ref{thm:Robinson}$]
According to \cite{FLLMZ,FLLMZb} a minimal counter-example would have to
occur as a quasi-isolated 2-block of an exceptional type quasi-simple group
$S$ in odd characteristic. Note that the exceptional covering group $3.G_2(3)$
was handled in \cite[Thm.~3.6]{FLLMZ}. By the defect group preserving Morita
equivalences from \cite{BDR17}, we can further restrict to isolated blocks.
Note that here the centre of $\bG^F$ is always cyclic, so the known gap in
\cite{BDR17} is not relevant.
By the main result of \cite{KM} Robinson's conjecture holds for blocks with
abelian defect groups, so we need not consider the Ree groups $^2G_2(q^2)$.
But now Propositions~\ref{prop:Robinson unip} and~\ref{prop:Robinson isolated2}
give the claim.
\end{proof}


\end{document}